\documentclass[leqno,12pt]{article}
\usepackage{amsfonts,amsmath,amssymb}
\usepackage{amsthm}
\usepackage{enumerate}
\usepackage{graphics}
\usepackage{epic}
\usepackage{eepic}
\usepackage{float}%
\usepackage[hypertex]{hyperref}%

\usepackage{enumerate}

\setlength{\oddsidemargin}{0pt}
\setlength{\evensidemargin}{0pt}
\setlength{\topmargin}{-2pt}
\setlength{\textheight}{22.5cm}
\setlength{\textwidth}{15.5cm}
\setlength{\footskip}{1cm}
\numberwithin{equation}{section} %
\numberwithin{table}{section}

  \newtheorem{thm}{Theorem}[section] %

\newtheorem{fact}[thm]{Fact}

  \newtheorem{prop}[thm]{Proposition} %
  \newtheorem{lemma}[thm]{Lemma} %
  \newtheorem{cor}[thm]{Corollary}%
  \newtheorem{df}[thm]{Definition} %
  \newtheorem{exmp}[thm]{Example}

  \newtheorem{remark}[thm]{Remark}%
\providecommand{\bysame}{\makebox[3em]{\hrulefill}\thinspace}
\pagestyle{plain} %
\begin{document}
\title{Classification of finite-multiplicity symmetric pairs
\\
\small{{\it{In honour of Professor Dynkin 
for his 90th birthday}}}
}
\author{Toshiyuki KOBAYASHI
\footnote
{Kavli IPMU (WPI) and Graduate School of Mathematical Sciences,
 the University of Tokyo, 
Meguro-ku, Tokyo, 153-8914, Japan, 
E-mail address: toshi@ms.u-tokyo.ac.jp}
\,\,
 and Toshihiko MATSUKI
\footnote
{
Faculty of Letters, Ryukoku University, Kyoto, 
612-8577, Japan, 
E-mail address: matsuki@let.ryukoku.ac.jp
}
} %
\date{} %
\maketitle %
\noindent
\begin{abstract}
We give a complete classification 
 of the reductive symmetric pairs $(G,H)$
 for which the homogeneous space
 $(G \times H)/\operatorname{diag}H$ is real spherical 
 in the sense
 that a minimal parabolic subgroup
 has an open orbit.  
Combining with a criterion established
 in [T. Kobayashi--T. Oshima, 
Adv. Math. 2013], 
we give a necessary and sufficient condition
 for a reductive symmetric pair $(G,H)$
 such that the multiplicities
 for the branching law
 of the restriction of 
 any admissible smooth representation
 of $G$ to $H$ have finiteness/boundedness property.  
\end{abstract}
{\bf{Keywords:}}\enspace
branching law, 
restriction of representation, 
reductive group, 
real spherical variety, 
symmetric pair, 
admissible representations.  
\par\noindent
{\bf{MSC 2010;}}\enspace
primary 22E46;
secondary 14M15, 53C35.  

\tableofcontents

\section{Introduction and statement of main results}
\setcounter{subsection}{0}

A complex manifold $X_{\mathbb{C}}$ 
 with action of a complex reductive group $G_{\mathbb{C}}$
 is called {\it{spherical}} 
 if a Borel subgroup of $G_{\mathbb{C}}$
 has an open orbit in $X_{\mathbb{C}}$.  
In the real setting, 
 in search of a good framework for global analysis 
 on homogeneous spaces
 which are broader than the usual 
 ({\it{e.g.}} symmetric spaces),
 we advocated
 in \cite{xtoshi95}
 the importance of an analogous notion
 for real reductive groups $G$ 
 and proposed to call:
\begin{df}
\label{def:realsp}
\rm{
We say a smooth manifold $X$ with $G$-action
 is {\it{real spherical}}
 if a minimal parabolic subgroup $P_G$
 of $G$ has an open orbit in $X$.  
}
\end{df}
In the case
 where $G$ acts transitively on $X$, 
 $P_G$ has finitely many orbits in $X$
 if $X$ is real spherical
 (see \cite[Remark 2.5 (4)]{xtoshitoshima}
 and references therein).

Suppose that $H$ is a closed subgroup
 which is reductive in $G$.  
Let $P_H$ be a minimal parabolic subgroup
 of $H$.  
\begin{df}
[{\cite{xtoshitoshima}}]
\label{def:pp}
\rm{
We say the pair $(G,H)$ satisfies (PP)
 if one of the following four equivalent
 conditions is satisfied.  
\begin{enumerate}
\item[{\rm{(PP1)}}]
$(G \times H)/\operatorname{diag}H$ is real spherical
 as a $(G \times H)$-space.  

\item[{\rm{(PP2)}}]
$G/P_H$ is real spherical as a $G$-space.  
\item[{\rm{(PP3)}}]
$G$ has an open orbit in $G/P_G \times G/P_H$
 via the diagonal action.  
\item[{\rm{(PP4)}}]
There are finitely many $G$-orbits in $G/P_G \times G/P_H$
 via the diagonal action.  
\end{enumerate}
}
\end{df}
The above four equivalent conditions are determined 
 only by the Lie algebras ${\mathfrak {g}}$ and ${\mathfrak {h}}$
 of the Lie groups $G$ and $H$, 
 respectively.  
Therefore we also say
 that the pair $({\mathfrak {g}}, {\mathfrak {h}})$
 of Lie algebras
 satisfies (PP).

A natural question is
 to find all the pairs
 $({\mathfrak {g}}, {\mathfrak {h}})$
 of real reductive Lie algebras
 satisfying (PP)
 when ${\mathfrak {h}}$ is maximal reductive in 
 ${\mathfrak {g}}$.  

We say $(G,H)$ is a {\it{reductive symmetric pair}}
 if $H$ is an open subgroup
 of the fixed point subgroup $G^{\sigma}$
 of some involutive automorphism $\sigma$ of $G$.  
Reductive symmetric pairs $(G,H)$
 give typical examples
 of maximal reductive subalgebras
 ${\mathfrak {h}}$
 in ${\mathfrak {g}}$,
 and provide important setups
 in branching laws
 of the restriction $G \downarrow H$.

The main goal of this paper is to establish a complete 
 classification
 of reductive symmetric pairs 
 $(G,H)$
 having the geometric condition {\rm{(PP)}}.  

\begin{thm}
\label{thm:1.1}
Suppose $(G,H)$ is a reductive symmetric pair.  
Then the following two conditions
 are equivalent:
\begin{enumerate}
\item[{\rm{(i)}}]
$(G,H)$ satisfies {\rm{(PP)}}, 
 namely,
 $(G \times H)/\operatorname{diag}H$
 is real spherical.  

\item[{\rm{(ii)}}]
The pair $({\mathfrak{g}},{\mathfrak{h}})$ of the Lie algebras 
 is isomorphic 
 (up to outer automorphisms)
 to the direct sum of the following pairs:
\begin{enumerate}
\item[{\rm{A)}}]
{\rm{Trivial case:}}
${\mathfrak{g}}={\mathfrak{h}}$.  
\item[{\rm{B)}}]
{\rm{Abelian case:}}
${\mathfrak {g}}={\mathbb{R}}$, 
${\mathfrak {h}}=\{0\}$.  
\item[{\rm{C)}}]
{\rm{Compact case:}}
${\mathfrak {g}}$ is the Lie algebra
 of a compact simple Lie group.  
\item[{\rm{D)}}]
{\rm{Riemannian symmetric pair:}}
${\mathfrak {h}}$ is the Lie algebra 
 of a maximal compact subgroup $K$
 of a non-compact simple Lie group $G$.  
\item[{\rm{E)}}]
{\rm{Split rank one case ($\operatorname{rank}_{{\mathbb{R}}}G=1$):}}
\begin{enumerate}
\item[{\rm{E1)}}]
$({\mathfrak{o}}(p+q,1),
{\mathfrak{o}}(p)+{\mathfrak{o}}(q,1))$
\quad\quad\,\, $(p+q \ge 2)$.
\item[{\rm{E2)}}]
$({\mathfrak{su}}(p+q,1),
{\mathfrak{s}}({\mathfrak {u}}(p)+{\mathfrak{u}}(q,1)))$
\quad
$(p+q \ge 1)$.
\item[{\rm{E3)}}]
$({\mathfrak{sp}}(p+q,1),
{\mathfrak{sp}}(p)+{\mathfrak{sp}}(q,1))$
\quad\,
$(p+q \ge 1)$.
\item[{\rm{E4)}}]
$({\mathfrak{f}}_{4(-20)},
{\mathfrak{o}}(8,1))$.  
\end{enumerate}
\item[{\rm{F)}}]
{\rm{Strong Gelfand pairs
 and their real forms:}}
\begin{enumerate}
\item[{\rm{F1)}}]
$({\mathfrak{sl}}(n+1,{\mathbb{C}}),
{\mathfrak{gl}}(n,{\mathbb{C}}))$
\, $(n\ge 2)$.  
\item[{\rm{F2)}}]
$({\mathfrak{o}}(n+1,{\mathbb{C}}),
{\mathfrak{o}}(n,{\mathbb{C}}))$
\,\,\,\, $(n\ge 2)$.  
\item[{\rm{F3)}}]
$({\mathfrak{sl}}(n+1,{\mathbb{R}}),
{\mathfrak{gl}}(n,{\mathbb{R}}))$ 
\,\,\,$(n\ge 1)$.  
\item[{\rm{F4)}}]
$({\mathfrak{su}}(p+1,q),{\mathfrak{u}}(p,q))$
\,\,\,\,\, $(p+q\ge 1)$.  
\item[{\rm{F5)}}]
$({\mathfrak{o}}(p+1,q),{\mathfrak{o}}(p,q))$ 
\,\,\,\,\,\,\,\,\,\,$(p+q\ge 2)$.  
\end{enumerate}
\item[{\rm{G)}}]
$({\mathfrak{g}}, {\mathfrak{h}})=
 ({\mathfrak{g}}'+{\mathfrak{g}}', \operatorname{diag} {\mathfrak{g}}')$
{\rm{Group case:}}
\begin{enumerate}
\item[{\rm{G1)}}]
${\mathfrak{g}}'$ is the Lie algebra
 of a compact simple Lie group.  
\item[{\rm{G2)}}]
$({\mathfrak{o}}(n,1)+{\mathfrak{o}}(n,1), \operatorname{diag}
{\mathfrak{o}}(n,1))$
\quad
$(n \ge 2)$.  
\end{enumerate}
\item[{\rm{H)}}]
{\rm{Other cases:}}
\begin{enumerate}
\item[{\rm{H1)}}]
$({\mathfrak{o}}(2n, 2),
{\mathfrak{u}}(n,1))$
\hphantom{MMMMMMMMM}
$(n \ge 1)$.
\item[{\rm{H2)}}]
$({\mathfrak{su}}^{\ast}(2n+2),
{\mathfrak{su}}(2)+{\mathfrak{su}}^{\ast}(2n)+\mathbb{R})$ 
\quad
$(n\ge 1)$.  
\item[{\rm{H3)}}]
$({\mathfrak{o}}^{\ast}(2n+2),
{\mathfrak{o}}(2)+{\mathfrak{o}}^{\ast}(2n))$
\hphantom{MMMMn} $(n\ge 1)$.
\item[{\rm{H4)}}]
$({\mathfrak{sp}}(p+1,q),
{\mathfrak{sp}}(p,q)+{\mathfrak{sp}}(1))$.  
\item[{\rm{H5)}}]
$({\mathfrak{e}}_{6(-26)},
{\mathfrak{so}}(9,1)+{\mathbb{R}})$.
\end{enumerate}
\end{enumerate}

\end{enumerate}
\end{thm}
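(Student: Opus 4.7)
The plan is to exploit characterization (PP3) of Definition 1.2: $(G,H)$ satisfies (PP) precisely when the diagonal $G$-action on $G/P_G \times G/P_H$ admits an open orbit, equivalently when some $(P_G,H)$-double coset is open in $G$. Since $P_{G_1\times G_2}=P_{G_1}\times P_{G_2}$, the condition (PP) respects direct sums of symmetric pairs in both directions, so it suffices to classify the \emph{irreducible} reductive symmetric pairs $(\mathfrak{g},\mathfrak{h})$ satisfying (PP). The Berger classification of irreducible reductive symmetric pairs then reduces the problem to a finite explicit list that must be inspected pair by pair.

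For the sufficiency direction, one verifies (PP) on each family A)--H). Cases A)--D) are essentially immediate: in the Riemannian case D) one has $P_H=H$ compact and the Iwasawa decomposition $G=P_G H$ supplies the open orbit, while A)--C) are degenerate. For the split rank one family E), a direct matrix computation in the standard model shows that $P_G$ acts with an open orbit on $G/P_H$; the key point is that $A\subset P_G$ is one-dimensional, so the transversality to $P_H$ is easy to establish. For the strong Gelfand family F), the complexified pair $(\mathfrak{g}_{\mathbb{C}},\mathfrak{h}_{\mathbb{C}})$ is spherical in the classical sense of Kr\"amer, and (PP) descends from the open Borel orbit on the complexification by a standard real-form argument. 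The group case G) reduces to finding an open $(P_{G'}\times P_{G'},\operatorname{diag}G')$-double coset in $G'\times G'$, which follows from the Bruhat decomposition for G1) and a direct check for G2). The exceptional pairs H1)--H5) require an explicit construction of the open double coset, adapted to the particular real form, using parabolic structure and root-space decompositions.

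For the necessity direction, one must show that every irreducible reductive symmetric pair not appearing in the list fails (PP). A first filter is the dimension inequality $\dim P_G+\dim P_H\geq \dim G$, which is necessary for an open orbit to exist at all. This inequality is computed from the restricted root multiplicities of $\mathfrak{g}$ and $\mathfrak{h}$, and eliminates the majority of candidates in the Berger list by a routine numerical check. The pairs that pass this numerical test but should still be excluded demand a finer analysis: one parametrizes the $H$-orbits on the real flag variety $G/P_G$ via Matsuki duality and checks that every such orbit has strictly positive codimension, which is equivalent to the non-existence of an open $(P_G,H)$-double coset.

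The principal obstacle will be the last step. The dimension bound alone is insufficient for certain borderline pairs, where $\dim P_G+\dim P_H$ exceeds $\dim G$ yet every double coset fails to be open. For these one needs a robust mechanism to certify that the finite Matsuki parametrization contains no open cell, typically by identifying a specific generic stabilizer of positive dimension or producing an $H$-invariant function on an open $P_G$-invariant subset of $G$. I expect the bulk of the technical work to lie in this case-by-case orbit computation, organized by Satake diagram type, and in double-checking that no pair on the excluded side of the dichotomy has been inadvertently omitted from the list.
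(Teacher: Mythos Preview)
Your reduction to irreducible pairs and the overall sufficiency/necessity split match the paper. The core techniques, however, are quite different.

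The paper's central device is a \emph{linearization} (Theorems~\ref{thm:pp} and~\ref{thm:qp}): (PP) holds if and only if $(M_H\cap M_G)A_H$ has an open orbit on the finite-dimensional vector space $\mathfrak{n}^{-\sigma}$ via the adjoint action, where $\mathfrak{n}^{-\sigma}$ is the $(-1)$-eigenspace of $\sigma$ in the nilradical of a $\sigma$-stable parabolic $Q\supset P_G$. This converts the double-coset problem into linear invariant theory for a small reductive group, and both implications are proved at that level. Your proposal works directly on $G/P_G$ and has no analogue of this reduction. Already on the sufficiency side this matters for (H5): the paper's argument rests on the triality action of $\mathrm{Spin}(8)$ on $\mathfrak{n}^{-\sigma}\simeq\mathbb{R}^{16}$ (Lemma~\ref{lem:SS}); an ``explicit construction of the open double coset'' on the real flag variety of $E_{6(-26)}$ without this linear model would be substantially harder.

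For necessity, your dimension inequality $\dim P_G+\dim P_H\ge\dim G$ is correct but too coarse. From the linearization the paper extracts a sharper necessary condition (Proposition~\ref{prop:QPrank}): the $\mathfrak{a}_H$-weights in $\mathfrak{n}^{-\sigma}$ must be \emph{linearly independent}, in particular $\operatorname{rank}_{\mathbb{R}}H\ge\#\Delta(\mathfrak{n}^{-\sigma})$. Combined with an auxiliary condition (QP) and the Oshima--Sekiguchi $K_\varepsilon$-family classification, this eliminates almost everything; for instance $(\mathfrak{e}_{7(-25)},\mathfrak{e}_{6(-26)}+\mathbb{R})$ survives your dimension count but is dispatched immediately by the weight criterion (Table~\ref{tab:7.1}). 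Your appeal to Matsuki's orbit parametrization for the residual cases is not wrong in principle, but it does not by itself supply a mechanism for certifying the absence of an open orbit---one still has to compute generic stabilizers orbit by orbit, and without the linearization the list of ``borderline'' pairs requiring this is considerably longer than you seem to anticipate. The invariant-function idea you mention is precisely what the paper uses, but on $\mathfrak{n}^{-\sigma}$ rather than on $G$, which is what makes it tractable.
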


In the above description of the classification,
 we do not intend 
 to write irreducible symmetric pairs
 in an exclusive way.  
Indeed some of the above pairs are  
 isomorphic to each other 
 when ${\mathfrak {g}}$ is of small dimension.  
For instance, 
 (E1) with $(p,q)=(4,1)$ is isomorphic to (H2) with $n=1$, 
 namely, 
\begin{equation*}
({\mathfrak{o}}(5,1), {\mathfrak{o}}(4)+{\mathfrak{o}}(1,1))
\simeq
({\mathfrak{su}}^{\ast}(4), 
{\mathfrak{su}}(2)+{\mathfrak{su}}^{\ast}(2)+{\mathbb{R}}).  
\end{equation*}

\vskip 1pc
\begin{remark}
\label{rem:Dynkin}
{\rm{
It would be interesting
 to give a complete list
 of the pairs $({\mathfrak {g}}, {\mathfrak {h}})$
 of reductive Lie algebras 
 having the property (PP)
 by dropping the assumption
 that $({\mathfrak {g}}, {\mathfrak {h}})$
 is a symmetric pair.  
(Cf. Dynkin \cite{Dynkin}
 for the description of maximal reductive 
 Lie algebras 
 in simple Lie algebras over ${\mathbb{C}}$.)
In view of the classification
 in Theorem \ref{thm:1.1}
 it is plausible
 that there are not many non-symmetric pairs
 $({\mathfrak {g}}, {\mathfrak {h}})$
 satisfying (PP)
 if $H$ is noncompact.  
}}
\end{remark}

Next we also consider another property, 
 to be denoted by (BB), 
 which is stronger than (PP).  
For this, 
suppose further
 that $G$ is an algebraic reductive group 
 and $H$ is a reductive subgroup
 defined algebraically over ${\mathbb{R}}$.  
Let $G_{\mathbb{C}}$ be a complex Lie group
 with Lie algebra 
 ${\mathfrak{g}}_{\mathbb{C}}={\mathfrak{g}}\otimes_{\mathbb{R}}{\mathbb{C}}$, 
 and $H_{\mathbb{C}}$ a subgroup of $G_{\mathbb{C}}$
 with complexified Lie algebra 
 ${\mathfrak{h}}_{\mathbb{C}}={\mathfrak{h}}\otimes_{\mathbb{R}}{\mathbb{C}}$.  Let $B_G$ and $B_H$ be Borel subgroups of $G_{\mathbb{C}}$ 
 and $H_{\mathbb{C}}$, 
 respectively.  

\begin{df}
\label{def:BB}
{\rm{
We say the pair
 $(G,H)$
 (or the pair $({\mathfrak{g}}, {\mathfrak{h}})$)
 satisfies {\rm{(BB)}}
 if one of the following equivalent conditions is satisfied:
\begin{enumerate}
\item[{\rm{(BB1)}}]
$(G_{\mathbb{C}} \times H_{\mathbb{C}})/
\operatorname{diag}H_{\mathbb{C}}$
 is spherical 
 as a $(G_{\mathbb{C}} \times H_{\mathbb{C}})$-space.   
\item[{\rm{(BB2)}}]
$G_{\mathbb{C}}/B_H$ is spherical 
 as a $G_{\mathbb{C}}$-space.  
\item[{\rm{(BB3)}}]
$G_{\mathbb{C}}$ has an open orbit in $G_{\mathbb{C}}/B_G \times G_{\mathbb{C}}/B_H$
 via the diagonal action.  
\item[{\rm{(BB4)}}]
There are finitely many $G_{\mathbb{C}}$-orbits
 in $G_{\mathbb{C}}/B_G \times G_{\mathbb{C}}/B_H$
 via the diagonal action.  
\end{enumerate}
}}
\end{df}
It follows from \cite[Lemmas 4.2 and 5.3]{xtoshitoshima}
 that we have the implication
\[
  \text{(BB)} \Rightarrow \text{(PP)}.  
\]

Among the pairs $({\mathfrak {g}}, {\mathfrak {h}})$
 in Theorem \ref{thm:1.1}
 satisfying (PP), 
 we list the pairs $({\mathfrak {g}}, {\mathfrak {h}})$ 
 satisfying (BB) as follows:

\begin{prop}
\label{prop:B}
Suppose $({\mathfrak {g}}, {\mathfrak {h}})$ is a 
 reductive symmetric pair.  
Then the following conditions
 are equivalent:
\begin{enumerate}
\item[{\rm{(i)}}]
$({\mathfrak {g}}, {\mathfrak {h}})$
 satisfies {\rm{(BB)}}.

\item[{\rm{(ii)}}]
The pair of the Lie algebras 
 $({\mathfrak{g}},{\mathfrak{h}})$
 is isomorphic 
 {\rm{(}}up to outer automorphisms{\rm{)}}
 to the direct sum of pairs {\rm{(A)}}, {\rm{(B)}}
 and {\rm{(F1)}} -- {\rm{(F5)}}.  
\end{enumerate}
\end{prop}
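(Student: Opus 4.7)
The plan rests on two structural observations. First, every condition in Definition \ref{def:BB} is formulated purely in terms of the complex groups $G_{\mathbb{C}}$ and $H_{\mathbb{C}}$, so (BB) depends only on the complexification $({\mathfrak{g}}_{\mathbb{C}},{\mathfrak{h}}_{\mathbb{C}})$. Second, the implication (BB) $\Rightarrow$ (PP) recalled after Definition \ref{def:BB} restricts the search to the families enumerated in Theorem \ref{thm:1.1}. The task therefore reduces to identifying, among those pairs, the ones whose complexification is a \emph{complex strong Gelfand pair}, in the sense that $G_{\mathbb{C}}/B_{H_{\mathbb{C}}}$ is $G_{\mathbb{C}}$-spherical.

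For the sufficiency direction, (A) and (B) are trivial. The complexifications of (F1)--(F5) are (up to a harmless doubling when the underlying Lie algebra is already complex) either $({\mathfrak{sl}}(n+1,{\mathbb{C}}),{\mathfrak{gl}}(n,{\mathbb{C}}))$ (for F1, F3, F4) or $({\mathfrak{o}}(n+1,{\mathbb{C}}),{\mathfrak{o}}(n,{\mathbb{C}}))$ (for F2, F5). Sphericity of $G_{\mathbb{C}}/B_{H_{\mathbb{C}}}$ for these two series is classical, equivalent to the multiplicity-free branching rules for the inclusions $SL_{n+1}\supset GL_n$ and $SO_{n+1}\supset SO_n$, and can also be confirmed directly by exhibiting an open $B_{G_{\mathbb{C}}}$-orbit using a complete flag adapted to the subgroup.

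For the necessity direction, we must rule out the remaining families (C), (D), (E), (G), (H). The simplest tool is the dimensional inequality forced by (BB3),
\[
  \dim B_{G_{\mathbb{C}}}+\dim B_{H_{\mathbb{C}}} \ge \dim G_{\mathbb{C}},
\]
equivalently $\dim H_{\mathbb{C}}+\operatorname{rank} H_{\mathbb{C}} \ge \dim G_{\mathbb{C}}-\operatorname{rank} G_{\mathbb{C}}$. A routine computation shows this fails for all of (E1)--(E4), and for (G2), (H1)--(H5), and (D) once the rank parameter or the split rank of $G$ is large enough. The residual cases---the compact families (C) and (G1) (where $G_{\mathbb{C}}/H_{\mathbb{C}}$ is automatically spherical, so the dimensional test is silent) and a few low-rank coincidences in (D) and (H)---are then matched, via exceptional isomorphisms such as the one noted in the introduction $({\mathfrak{o}}(5,1),{\mathfrak{o}}(4)+{\mathfrak{o}}(1,1))\simeq({\mathfrak{su}}^{\ast}(4),{\mathfrak{su}}(2)+{\mathfrak{su}}^{\ast}(2)+{\mathbb{R}})$, to pairs already appearing in (A), (B), or (F). The main obstacle lies precisely in this last step: ruling out the compact family (C) requires the classical classification of complex symmetric strong Gelfand pairs, which asserts that the only nontrivial irreducible examples are the two series represented by (F).
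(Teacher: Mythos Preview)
Your approach is logically sound but unnecessarily roundabout compared to the paper's. The paper (Propositions~\ref{prop:cpx-1} and~\ref{prop:cpx}) bypasses Theorem~\ref{thm:1.1} entirely: since (BB) depends only on the complexification $({\mathfrak{g}}_{\mathbb{C}},{\mathfrak{h}}_{\mathbb{C}})$, one applies Kr\"amer's classification of complex symmetric strong Gelfand pairs directly---every irreducible complex symmetric pair satisfying (BB) is $({\mathfrak{sl}}_{n+1},{\mathfrak{gl}}_n)$ or $({\mathfrak{so}}_{n+1},{\mathfrak{so}}_n)$, and the real irreducible symmetric pairs whose complexifications land in these two series are precisely (F1)--(F5). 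Your detour through (BB) $\Rightarrow$ (PP) and the full list in Theorem~\ref{thm:1.1}, followed by dimension counting to eliminate (C), (D), (E), (G), (H), is valid (there is no circularity, since you invoke only the exhaustion direction (i) $\Rightarrow$ (ii) of Theorem~\ref{thm:1.1}, and that direction does not rely on the present proposition). But as you yourself concede in the last sentence, the argument still rests on Kr\"amer's classification to handle family (C)---and once Kr\"amer is on the table it disposes of \emph{all} cases at once, rendering the intermediate appeal to Theorem~\ref{thm:1.1} and the dimensional inequality superfluous. The paper's route is both shorter and logically prior: Proposition~\ref{prop:cpx} is in fact an input to the proof of Theorem~\ref{thm:1.1} (Step~4), not a consequence of it. A minor additional issue is that your dimension-counting and ``residual low-rank coincidence'' steps are asserted rather than carried out; they are doable, but since the direct argument via Kr\"amer makes them unnecessary, it would be cleaner to drop them.
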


\begin{remark}
\label{rem:1.6}
{\rm{
The classification in Theorem \ref{thm:1.1}
 (or in the \lq\lq{irreducible case}\rq\rq, 
 see Theorem \ref{thm:1.2})
 was known earlier in the following special cases:
\begin{enumerate}
\item[{\rm{1)}}]
$({\mathfrak {g}}, {\mathfrak {h}})$
complex pairs:
\quad
{\rm{(BB)}} $\Leftrightarrow$ 
{\rm{(F1)}} or {\rm{(F2)}}
\quad
(M. Kr{\"a}mer \cite{Kr}).  
\item[{\rm{2)}}]
$\operatorname{rank}_{\mathbb{R}}G=1$:
(PP) $\Leftrightarrow$  {\rm{(E1)}} -- {\rm{(E4)}}
 \quad
(B. Kimelfeld \cite{Kimelfeld}).  
\item[{\rm{2)}}]
$({\mathfrak {g}}, {\mathfrak {h}})
=({\mathfrak {g}}'+{\mathfrak {g}}', \operatorname{diag}
{\mathfrak {g}}')$ 
(\lq\lq{group case}\rq\rq):
\quad
{\rm{(PP)}} $\Leftrightarrow$ 
{\rm{(G1)}} or {\rm{(G2)}}
\quad
(\cite{xtoshi95}).  
\end{enumerate}
}}
\end{remark}

Concerning Remark \ref{rem:1.6} (2), 
 neither the concept (PP) nor a minimal parabolic subgroup
 of $H$ appeared 
 in \cite{Kimelfeld}, 
 but one might read (E1)--(E4) from his work.  
The case (1)
 (\lq\lq{strong Gelfand pairs}\rq\rq)
 was studied
 in connection with finite-dimensional representations
 of compact Lie groups,
 and the \lq\lq{group case}\rq\rq (3)
 with the tensor product
 of two (infinite-dimensional) representations, 
 see Corollary \ref{cor:1.2-copy}
 for more details.  

\vskip 1pc
The significance
 of these geometric conditions
 (PP) and (BB) is
 their applications to branching problems
 of infinite-dimensional representations
 of real reductive groups $G$ to subgroups $H$:

\begin{fact}
[{\cite[Theorems C and D]{xtoshitoshima}}]
\label{fact:1.4}
Suppose $G$ is a real reductive Lie group,
 and $H$ a reductive subgroup 
 defined algebraically over ${\mathbb{R}}$.  
\begin{enumerate}
\item[{\rm{1)}}]
{\rm{(}}Finite-multiplicity for branching{\rm{)}}
The pair $(G,H)$ satisfies {\rm{(PP)}}
 if and only if 
\[
  \dim \operatorname{Hom}_H(\pi|_H, \tau)<\infty
\]
for any admissible smooth representation
 $\pi$ of $G$
 and for any admissible smooth representation $\tau$ of $H$.  
\item[{\rm{2)}}]
{\rm{(}}Bounded-multiplicity for branching{\rm{)}}
The pair $(G,H)$ satisfies {\rm{(BB)}}
 if and only if 
 there exists a constant $C< \infty$
such that 
\[
  \dim \operatorname{Hom}_H(\pi|_H, \tau) \le C
\]
for any irreducible smooth representation
 $\pi$ of $G$
 and for any irreducible smooth representation $\tau$ of $H$.  
\end{enumerate}
\end{fact}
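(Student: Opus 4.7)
The plan is to establish each equivalence by reducing the infinite-dimensional branching problem to a geometric question about flag variety orbits, as in the Kobayashi--Oshima strategy that underlies the cited result. For the implication $(\mathrm{PP}) \Rightarrow$ finite multiplicity, I would first realize every admissible smooth representation $\pi$ of $G$ as a smooth subquotient of a minimal principal series $\operatorname{Ind}_{P_G}^G(\chi)$ (Casselman's subrepresentation theorem, extended to the smooth Fr\'echet category via Casselman--Wallach), and likewise realize $\tau$ as a subquotient of $\operatorname{Ind}_{P_H}^H(\xi)$. By exactness and left/right adjointness on such subquotients, it suffices to bound $\dim\operatorname{Hom}_H\bigl(\operatorname{Ind}_{P_G}^G(\chi)|_H,\,\operatorname{Ind}_{P_H}^H(\xi)\bigr)$.

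The second step is smooth Frobenius reciprocity followed by a Bruhat-type stratification: the above Hom-space is isomorphic to the space of $(P_H\times P_G)$-equivariant distributions on $G$ (equivalently, $H$-invariant distribution sections of a line bundle on $G/P_G$, equivalently $G$-invariant sections on $G/P_G\times G/P_H$). Filter by support along the $(P_H,P_G)$-double cosets. Condition (PP), in the form (PP4), says there are only finitely many such double cosets, and on each stratum one shows that the space of transversal invariant distributions is finite-dimensional by a normal-slice/jet argument using the reductive nature of the isotropy and the Langlands decomposition of minimal parabolics. Summing over finitely many strata gives finite multiplicity. The converse is the delicate direction: if (PP) fails, one uses the continuous families of orbits (or the existence of $(P_H,P_G)$-double cosets of positive modulus) to produce, by varying a character along the moduli, an infinite-dimensional space of invariant distributions, and then by a careful choice preserving irreducibility/admissibility of some subquotient, one exhibits $\pi,\tau$ with $\dim\operatorname{Hom}_H(\pi|_H,\tau)=\infty$.

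For part (2) the plan is structurally the same but in the algebraic-holomorphic category: replace real minimal parabolics by complex Borel subgroups $B_G,B_H$, and use Beilinson--Bernstein localization to identify Harish-Chandra modules with holonomic $\mathcal{D}$-modules on the complex flag variety $G_{\mathbb{C}}/B_G$. Under (BB) there are finitely many $B_H$-orbits on $G_{\mathbb{C}}/B_G$, and the generic stalk of a holonomic $\mathcal{D}$-module has dimension bounded by a number depending only on the orbit geometry, not on the twisting character, so one gets a uniform constant $C$ bounding $\dim\operatorname{Hom}_H(\pi|_H,\tau)$ for all irreducible $\pi,\tau$. The converse again uses a moduli of $B_G\times B_H$-orbits to construct families with unbounded multiplicities, now with the sharper control available in the algebraic setting.

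The main obstacle in both parts is the contribution analysis at non-open orbits: transverse-jet directions can in principle contribute infinite-dimensional pieces, and ruling this out requires the careful slice/normal-form analysis carried out in \cite{xtoshitoshima}, together with the real-spherical/holonomic finiteness theorems that relate orbit counting to $\mathcal{D}$-module length. Once this orbit-by-orbit finiteness (resp.\ boundedness) is in hand, the equivalences (PP)~$\Leftrightarrow$~finite multiplicity and (BB)~$\Leftrightarrow$~bounded multiplicity follow from the reductions above, and the converses are completed by the explicit parameter-deformation constructions sketched above.
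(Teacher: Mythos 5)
This statement is labelled a \emph{Fact} in the paper and is cited directly from Theorems~C and~D of Kobayashi--Oshima \cite{xtoshitoshima}; the present article contains no proof of it and uses it strictly as an input to Theorem~\ref{thm:fm} and Corollary~\ref{cor:B}. There is therefore no internal argument here to compare against, so I can only assess your sketch as a reconstruction of the cited proofs.

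Your outline captures the broad architecture: Casselman--Wallach realization of admissible smooth representations via minimal principal series, Frobenius reciprocity to convert the Hom-space into a space of equivariant distributions on $G/P_G\times G/P_H$, a filtration by $(P_H,P_G)$-double cosets whose finiteness is precisely~(PP4), and in part~(2) a passage to the complex double flag variety with holonomic $\mathcal D$-module length bounds giving a character-independent constant. Two points deserve tightening. First, the reduction step requires a definite variance: one should realize $\pi$ as a \emph{quotient} of a minimal principal series and $\tau$ as a \emph{subrepresentation} (both hold for admissible modules of finite length by Casselman embedding plus duality), since ``subquotient on both sides'' does not by itself give the needed injection of Hom-spaces. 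Second, the clause ``one shows that the space of transversal invariant distributions is finite-dimensional by a normal-slice/jet argument'' is where the cited paper does the genuinely hard analysis; a naive transversal jet space is infinite-dimensional, and the finiteness comes from a nontrivial regularity/holonomicity argument along orbit boundaries rather than from isotropy reductivity alone. Likewise, the converse directions (producing infinite or unbounded multiplicity when (PP) or (BB) fails) are gestured at but are among the most delicate parts of the cited theorems. As a sketch whose details are delegated to~\cite{xtoshitoshima}, the proposal is consistent with the known strategy, but it should not be read as self-contained.
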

In Section \ref{sec:fm}
 we review briefly some basic notion
on admissible smooth representations of real reductive groups
 and discuss applications
 of our classification results
 to branching problems
 in details.  

\vskip 1pc
\par\noindent
{\bf{Organization of the paper.}}
\enspace
We give an outline
 of the proof of Theorem \ref{thm:1.1}
 in Section \ref{sec:strategy}
 by dividing it into five steps.  
Sections \ref{sec:method} to \ref{sec:Ke}
 are devoted to the proof of Theorem \ref{thm:1.1} of the paper.

In Section \ref{sec:fm} we explain 
 our initial motivation 
 for studying the real spherical property (PP) from 
 the viewpoint of the (infinite-dimensional) representation theory
 of real reductive groups,
 and give an application
 of our geometric results 
 to branching problems
 of smooth admissible representations.

\vskip 1pc
{\bf{Notation:}}\enspace
${\mathbb{R}}_+ :=\{t \in {\mathbb{R}}:t >0\}$, 
 and ${\mathbb{R}}_{\ge 0} :=\{t \in {\mathbb{R}}:t \ge0\}$.  
\vskip 1pc
{\bf{Acknowledgement:}}\enspace
The first author was partially supported
 by Grant-in-Aid for Scientific Research (A)(25247006) JSPS.

\vskip 1pc
\section{Strategy of the proof}
\label{sec:strategy}

We give an outline
 of the proof of Theorem \ref{thm:1.1}
 by dividing it into five steps.

\par\noindent
{{\bf{Step 1.}}}\enspace
Reduction to irreducible symmetric pairs.  
\par
A reductive symmetric pair
 $({\mathfrak {g}}, {\mathfrak {h}})$
 is said to be {\it{irreducible}}
 if ${\mathfrak {g}} \not \simeq {\mathbb{R}}$, 
 ${\mathfrak{h}}$ 
 and if $({\mathfrak {g}}, {\mathfrak {h}})$
 is not isomorphic to the direct sum
 of two reductive symmetric pairs
 $({\mathfrak {g}}_1, {\mathfrak {h}}_1)$
 and $({\mathfrak {g}}_2, {\mathfrak {h}}_2)$.  
The proof of Theorem \ref{thm:1.1} reduces
 to the case where $({\mathfrak {g}}, {\mathfrak {h}})$
 is an irreducible symmetric pair.  
This consists of two families
 up to outer automorphisms:
\begin{enumerate}
\item[1)]
(group case)
$({\mathfrak{g}}'+{\mathfrak{g}}',\operatorname{diag}{\mathfrak{g}}')$
with 
${\mathfrak{g}}'$ simple.  
\item[2)]
$({\mathfrak{g}},{\mathfrak{h}})$
with 
${\mathfrak{g}}$ simple.  
\end{enumerate}
Therefore the task of this article
 is to carry out the following classification:
\begin{thm}
\label{thm:1.2}
For an irreducible symmetric pair
 $({\mathfrak {g}}, {\mathfrak {h}})$, 
 the following two conditions are equivalent:
\begin{enumerate}
\item[{\rm{(i)}}]
$(G \times H)/\operatorname{diag} H$
 is real spherical.  
\item[{\rm{(ii)}}]
$({\mathfrak {g}}, {\mathfrak {h}})$ is isomorphic 
 to one of {\rm{(C)--(H)}}
 up to outer automorphisms.  
\end{enumerate}
\end{thm}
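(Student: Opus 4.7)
The plan is to prove Theorem \ref{thm:1.2} by case analysis over Berger's classification of irreducible reductive symmetric pairs. The guiding local criterion, equivalent to condition (PP3) of Definition \ref{def:pp}, is the following: $({\mathfrak{g}},{\mathfrak{h}})$ has property (PP) if and only if there exists $g \in G$ such that ${\mathfrak{g}} = {\mathfrak{p}}_G + \operatorname{Ad}(g)\,{\mathfrak{p}}_H$. Both implications of Theorem \ref{thm:1.2} are driven by checking (or obstructing) this identity on each candidate pair.

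For (ii) $\Rightarrow$ (i), I would verify (PP) pair by pair. The entries in which $G$ or $H$ is compact, namely (C), (D) and (G1), are immediate from the Iwasawa decomposition $G = P_G K$. The complex strong Gelfand pairs (F1), (F2) and the split-rank-one entries (E1)--(E4) are classical and follow from the results of Kr\"amer and Kimelfeld cited in Remark \ref{rem:1.6}. For the remaining pairs---the real strong Gelfand pairs (F3)--(F5), the group case (G2), and the exotic pairs (H1)--(H5)---one has to produce an explicit element: for each such $({\mathfrak{g}},{\mathfrak{h}})$ I would choose a concrete $g$ (typically a Weyl-type element adapted to the involution $\sigma$ defining the pair) and verify the dimension identity $\dim({\mathfrak{p}}_G + \operatorname{Ad}(g)\,{\mathfrak{p}}_H) = \dim {\mathfrak{g}}$ by bookkeeping on restricted root spaces of ${\mathfrak{g}}$ and their interaction with the $(\pm 1)$-eigenspaces of $\sigma$.

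For (i) $\Rightarrow$ (ii), I must eliminate every irreducible symmetric pair absent from the list (C)--(H). The tools I plan to use are: a dimension obstruction $\dim {\mathfrak{p}}_G + \dim {\mathfrak{p}}_H \ge \dim {\mathfrak{g}} + \dim({\mathfrak{p}}_G \cap \operatorname{Ad}(g){\mathfrak{p}}_H)$ at any generic $g$; a root-theoretic comparison using the action of $\sigma$ on the restricted root data of ${\mathfrak{g}}$; and a heredity principle, whereby a compatible embedding of a ``bad'' irreducible subpair into a larger pair forces the larger pair also to fail (PP). These suffice to rule out the bulk of Berger's list, in particular each generic member of the large classical families. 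The group case (subfamily 1) is handled separately, reducing via the identification ${(G'\times G')}/{\operatorname{diag}P_{G'}} \simeq G'\times G'/P_{G'}$ to an analysis of $(P_{G'}\times P_{G'})$-orbits on $G'$, which is essentially the classification of \cite{xtoshi95}.

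The main obstacle is expected to be the borderline necessary cases within the classical families (in types $A_n$, $BD_n$, $C_n$ at intermediate rank), where the dimension inequality is tight and numerics alone do not exclude an open $P_G$-orbit. In those cases one must either produce a nonconstant $P_G$-invariant on (an open subset of) $G/P_H$, or else exhibit, via a Matsuki-type double coset decomposition adapted to $\sigma$, a positive-dimensional family of $P_G$-orbits in each $K$-orbit stratum of $G/P_H$, thereby blocking real sphericality.
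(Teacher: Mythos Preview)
Your outline is a reasonable high-level strategy, but it differs from the paper in one essential structural point, and that difference is exactly what makes your ``borderline cases'' hard while the paper's are not.

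The paper does \emph{not} work directly with the condition ${\mathfrak{g}} = {\mathfrak{p}}_G + \operatorname{Ad}(g){\mathfrak{p}}_H$ over Berger's list. Instead it first \emph{linearizes} (PP): using a $\sigma$-stable parabolic $Q \supset P_G, P_H$ with Levi $L = Z_G({\mathfrak{a}}_H)$, it shows (Theorem~\ref{thm:pp}) that (PP) is equivalent to the group $(M_H\cap M_G)A_H$ having an open orbit on the \emph{linear} space ${\mathfrak{n}}^{-\sigma}$ under the adjoint action. A companion criterion (Theorem~\ref{thm:qp}) linearizes the weaker $\sigma$-stable condition (QP) in terms of $M_HA_H$ acting on ${\mathfrak{n}}^{-\sigma}$. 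This replaces your search for a good $g\in G$ and your global dimension bookkeeping by a finite-dimensional representation-theoretic question that is uniform across the classical families.

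For the exclusion direction (i) $\Rightarrow$ (ii), this linearization yields a clean necessary condition (Proposition~\ref{prop:QPrank}): if (QP) holds then the ${\mathfrak{a}}_H$-weights in ${\mathfrak{n}}^{-\sigma}$ are linearly independent, in particular $\operatorname{rank}_{\mathbb{R}}H \ge \#\Delta({\mathfrak{n}}^{-\sigma})$. Combined with a short Weyl-group orbit estimate (Lemma~\ref{lem:2.6}) and the $K_\varepsilon$-family dichotomy for the associated pair $({\mathfrak{g}},{\mathfrak{g}}^{\sigma\theta})$, this kills almost all pairs outside (C)--(H) by a single numerical inequality, with only a handful of small-rank coincidences to check by hand. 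Your proposed tools---the crude dimension inequality, a heredity principle, and ad hoc invariants for the tight cases---would in principle work, but they do not give a uniform obstruction, and the ``borderline'' cases you flag are precisely the ones where the dimension count is inconclusive. The paper's rank/weight inequality resolves those uniformly.

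For (ii) $\Rightarrow$ (i), the linearization likewise replaces your explicit choice of $g$ by the verification that a concrete compact-times-split group has an open orbit on an explicit vector space (e.g.\ $({\mathbb{F}}^\times)^p \times U(q,{\mathbb{F}})$ on $M(p,q;{\mathbb{F}})$), which is elementary. So your plan is not wrong, but it bypasses the one reduction that turns the classification into a tractable computation.
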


The main case is when ${\mathfrak {g}}$
 is simple.  
The {\lq\lq{group case}\rq\rq}\ (G) is relatively easy
 and the classification of those satisfying (PP)
 was already given in \cite{xtoshi95},
 but we supply a proof here
 for the sake of completeness.  
\vskip 1pc
\par\noindent
{\bf{Step 2.}}\enspace
Condition (QP).  

\par
Suppose $\sigma$ is an involutive automorphism of $G$.  
In general there is no $\sigma$-stable minimal parabolic subgroup of $G$.  
We introduce a condition (QP)
 which is slightly weaker
 than (PP)
 by using a $\sigma$-stable parabolic subgroup 
 (Subsection \ref{subsec:Q}).  
The difference between (QP) and (PP) 
 is described as in Theorem \ref{thm:QpPp} below.    

\vskip 1pc
\par\noindent
{\bf{Step 3.}}\enspace
Linearization of (PP) and (QP).  
\par
We find a necessary and sufficient condition 
 for a pair $(G,H)$
 to satisfy the conditions (PP) 
 (and also (QP)), 
 by means of the open-orbit property
 of a certain linear action
 (Theorems \ref{thm:pp} and \ref{thm:qp}).  
The case (QP) is easier
 because the parabolic subgroup $Q$ is $\sigma$-stable, 
 whereas the criterion of (PP)
 is more involved
 since the parabolic subgroup $P_G$
 (or its conjugate) 
 is not necessarily $\sigma$-stable.  

\vskip 1pc
\par\noindent
{\bf{Step 4.}}\enspace
The proof for (ii) $\Rightarrow$ (i)
 in Theorem \ref{thm:1.1}.  

The proof is carried out 
 in Sections \ref{sec:cpx}, \ref{sec:classical} and \ref{sec:rank1}.  
We shall verify the existence
 of an open orbit
 of the adjoint action of $(M_H \cap M_G)A_H$
 in ${\mathfrak {n}}^{-\sigma}$,
 by using the criterion of (PP)
 in Step 3.

Section \ref{sec:classical} deals
 with specific symmetric pairs
 in a case-by-case fashion,
 for instance, 
 $({\mathfrak {g}}, {\mathfrak {h}})=
  ({\mathfrak {o}}(i+j, k+l), {\mathfrak {o}}(i,k)+{\mathfrak {o}}(j,l))$.  
We use the invariant theory
 of quivers
 (Subsection \ref{subsec:upq}).  
In the section we classify 
 not only the irreducible symmetric pairs
 $({\mathfrak {g}}, {\mathfrak {h}})$
 satisfying (PP)
 but also those satisfying (QP).  
See Theorem \ref{thm:QpPp} below.  
Here is the precise place
 where the proof for (ii) $\Rightarrow$ (i)
 in Theorem \ref{thm:1.1}
 is given.  
We give two proofs
 for some families of symmetric pairs $(G,H)$.  

\begin{alignat*}{2}
&\text{Proposition \ref{prop:cpx}}
\qquad
&&
\text{(F1)(F2)(F3)(F4)(F5)}
\\
&\text{Proposition \ref{prop:upq}}
\qquad
&&
\text{(E1)(E2)(E3)(F4)(F5)(H4)}
\\
&\text{Proposition \ref{prop:glgl}}
\qquad
&&
\text{(F1)(F3)(H2)}
\\
&\text{Proposition \ref{prop:somn}}
\qquad
&&
\text{(F2)}
\\
&\text{Proposition \ref{prop:sostar}}
\qquad
&&
\text{(H3)}
\\
&\text{Proposition \ref{prop:e6}}
\quad
&&
\text{(H5)}
\\
&\text{Proposition \ref{prop:rankH}}
\qquad
&&
\text{(E1)(E2)(E3)(E4)}
\end{alignat*}

\par\noindent
{\bf{Step 5.}}\enspace
The proof for (i) $\Rightarrow$ (ii)
 in Theorem \ref{thm:1.1}.  

The proof is carried out 
together with the classification
of a larger set
 of the irreducible symmetric pairs
 satisfying (QP).  
We divide irreducible symmetric pairs
 $({\mathfrak {g}}, {\mathfrak {h}})$
 into the following three cases.  
Some of the results
 for concrete examples 
 in Section \ref{sec:classical}
 are used in Sections \ref{sec:nonKe} and \ref{sec:Ke}.  
\begin{alignat*}{2}
\text{Case 5a. \enspace (Section \ref{sec:rank1})}
\qquad
&\operatorname{rank}_{\mathbb{R}}H=1.  
&&
\\
\text{Case 5b. \enspace (Section \ref{sec:nonKe})}
\qquad
&\operatorname{rank}_{\mathbb{R}}H \ge 2, 
\qquad
&&\text{$({\mathfrak {g}}, {\mathfrak {g}}^{\sigma \theta})$
does not belong to $K_{\varepsilon}$-family.}
\\
\text{Case 5c. \enspace (Section \ref{sec:Ke})}
\qquad
&\operatorname{rank}_{\mathbb{R}}H \ge 2, 
\qquad
&&\text{$({\mathfrak {g}}, {\mathfrak {g}}^{\sigma \theta})$
belongs to $K_{\varepsilon}$-family.}
\end{alignat*}
\vskip 1pc
As a byproduct
 of the proof of Theorem \ref{thm:1.1}, 
 we obtain a complete list
 of the irreducible symmetric pairs
 $({\mathfrak {g}}, {\mathfrak {h}})$
 satisfying (QP):
\begin{thm}
\label{thm:QpPp}
Irreducible symmetric pairs
 satisfying {\rm{(QP)}} but not satisfying {\rm{(PP)}}
 are listed as follows:
\begin{alignat*}{3}
\operatorname{I_{\mathbb{R}}}&:
\enspace
&&({\mathfrak{o}} (p+1,q), {\mathfrak{o}} (p)+{\mathfrak{o}} (1,q))
&&(p,q \ge 2), 
\\
\operatorname{I_{\mathbb{C}}}&:
\enspace
&&({\mathfrak{su}} (p+1,q), {\mathfrak{s}}({\mathfrak {u}}(p)+{\mathfrak{u}} (1,q)))
\qquad
&&
(p,q \ge 2), 
\hphantom{MMMMMMMMMMMMM}
\\
\operatorname{I_{\mathbb{H}}}&:\enspace
&&({\mathfrak{sp}} (p+1,q), {\mathfrak{sp}} (p)+{\mathfrak{sp}} (1,q))
&&(p,q \ge 2)
\\
{\operatorname{II}}&:
\enspace
&&({\mathfrak{o}} (n+1, {\mathbb{C}}), {\mathfrak{o}} (n,1))
&&(n \ge 4)
\\
{\operatorname{III}}&:
\enspace
&&({\mathfrak{o}}^{\ast} (2n+2), {\mathfrak{u}} (n,1))
&&(n \ge 4).  
\end{alignat*}
\end{thm}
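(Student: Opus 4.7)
The plan is to exploit the linearization criteria of Step 3 (Theorems \ref{thm:pp} and \ref{thm:qp}) to convert both (PP) and (QP) into open-orbit questions on suitable nilradicals. Since (PP) implies (QP), and since Theorem \ref{thm:1.1} already classifies (PP), the task splits into two parts: (a) verify (QP) for each of the five listed families, and (b) prove that no other irreducible symmetric pair lies in the gap between (QP) and (PP).

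The first step is to observe that each listed family has $\operatorname{rank}_{\mathbb{R}}H = 1$: in $\mathrm{I}_{\mathbb{R}}, \mathrm{I}_{\mathbb{C}}, \mathrm{I}_{\mathbb{H}}$ only the noncompact summands $\mathfrak{o}(1,q)$, $\mathfrak{u}(1,q)$, $\mathfrak{sp}(1,q)$ contribute a split Cartan direction, and in II, III the subgroups $\mathfrak{o}(n,1)$ and $\mathfrak{u}(n,1)$ are themselves of split rank one. Thus all five fall into Case 5a of the overall strategy. To establish (QP) I apply the criterion of Theorem \ref{thm:qp} to the $\sigma$-stable parabolic $Q = L N$ containing a minimal parabolic of $H$: it suffices to exhibit an open orbit of $(M_H \cap M_G) A_H$ on $\mathfrak{n}^{-\sigma}$. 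For $\mathrm{I}_{\mathbb{R}}, \mathrm{I}_{\mathbb{C}}, \mathrm{I}_{\mathbb{H}}$, the computation is a "signature-swap" variant of the quiver-theoretic analysis of Subsection \ref{subsec:upq} used for (F4), (F5), (H4): the swap from $\mathfrak{o}(p,q)$ to $\mathfrak{o}(p) + \mathfrak{o}(1,q)$ rearranges the data of a quiver representation but preserves the dimension count that produces a generic point. For II the nilradical $\mathfrak{n}^{-\sigma}$ is a small-dimensional module for a Levi of $\mathfrak{o}(n,1)$ realized explicitly in complex $(n{+}1)\times(n{+}1)$ matrices, and a direct check of a single generic element gives the open orbit; III is analogous, with $\mathfrak{u}(n,1)$ acting on a block coming from the $\mathfrak{o}^{*}(2n+2)$ structure.

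Failure of (PP) for the listed pairs is then immediate from the main classification: none of $\mathrm{I}_{\mathbb{R}}, \mathrm{I}_{\mathbb{C}}, \mathrm{I}_{\mathbb{H}}$ (with $p,q\ge 2$) nor II, III (with $n\ge 4$) appears in the list (C)--(H) of Theorem \ref{thm:1.1}. The lower bounds on $p,q,n$ are exactly what is needed to avoid low-dimensional coincidences — e.g.\ $(\mathfrak{o}(2,q),\mathfrak{o}(1,q))$ would fall under (F5), and small $n$ cases of II collapse via $\mathfrak{o}(4,\mathbb{C}) \simeq \mathfrak{sl}(2,\mathbb{C})\oplus\mathfrak{sl}(2,\mathbb{C})$ into the group case (G). Alternatively, one may verify failure directly by running the stricter criterion of Theorem \ref{thm:pp}, in which the non-$\sigma$-stable minimal parabolic of $G$ enters and the corresponding orbit on $\mathfrak{n}^{-\sigma}_G$ is no longer open by a comparison of dimensions.

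The main obstacle is the completeness claim (b). Here one follows the trichotomy of Step 5. In Cases 5b and 5c ($\operatorname{rank}_{\mathbb{R}}H \ge 2$), the obstructions used in Sections \ref{sec:nonKe} and \ref{sec:Ke} to rule a pair out of (PP) in fact rule it out of (QP) as well: the $\sigma$-stable parabolic $Q$ is large enough that the open-orbit condition on $\mathfrak{n}^{-\sigma}$ is already violated by the same invariant-theoretic dimension count. Consequently, in the higher-rank range, (QP) and (PP) coincide for irreducible symmetric pairs, and no new examples arise. The remaining work is confined to Case 5a. Since the rank-one real forms of simple Lie algebras (and their symmetric subalgebras of rank one) form a short explicit list, one goes through it pair by pair, checks the open-orbit condition for both $Q$ and $P_G$ via the criteria of Theorems \ref{thm:qp} and \ref{thm:pp}, and sees that the (QP) pairs are precisely (E1)--(E4) together with $\mathrm{I}_{\mathbb{R}}, \mathrm{I}_{\mathbb{C}}, \mathrm{I}_{\mathbb{H}}, \mathrm{II}, \mathrm{III}$, with the former five satisfying (PP) and the latter five not. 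This case-by-case verification is where the real work lies, and it proceeds in parallel with the analysis of rank-one pairs in Section \ref{sec:rank1}.
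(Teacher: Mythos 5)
Your proposal follows essentially the same strategy as the paper: linearize (PP) and (QP) via Theorems \ref{thm:pp} and \ref{thm:qp}, verify (QP) for the five families by noting they all have $\operatorname{rank}_{\mathbb{R}}H=1$ and hence fall under the rank-one enumeration of Proposition \ref{prop:rankH} (supplemented by Propositions \ref{prop:upq} and \ref{prop:OUpq}), observe that they are absent from the (PP) list of Theorem \ref{thm:1.1}, and obtain completeness from the Step~5 trichotomy, since in the higher-rank cases (Sections \ref{sec:nonKe} and \ref{sec:Ke}) the obstructions to (PP) are already obstructions to (QP).

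Two small imprecisions, neither fatal to the argument: first, in your ``alternative'' check of failure of (PP), you write of an orbit on $\mathfrak{n}^{-\sigma}_G$, but $\mathfrak{n}_G$ is generally not $\sigma$-stable and this space plays no role in the linearization; the criteria of Theorems \ref{thm:pp} and \ref{thm:qp} act on the \emph{same} space $\mathfrak{n}^{-\sigma}$, and what changes between (PP) and (QP) is the acting group, $(M_H\cap M_G)A_H$ versus $M_HA_H$. Second, the phrase ``rank-one real forms of simple Lie algebras (and their symmetric subalgebras of rank one)'' misdescribes Case~5a, which enumerates irreducible symmetric pairs with $\operatorname{rank}_{\mathbb{R}}H=1$ where $G$ itself may have large real rank (for instance $G=O(p+1,q)$ in $\operatorname{I}_{\mathbb{R}}$ with $p,q\ge 2$, or $G=O(n+1,\mathbb{C})$ in II with $n\ge 4$).
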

\begin{proof}[Outline of proof]
We shall see in  Proposition \ref{prop:rankH}
 that $({\mathfrak {g}}, {\mathfrak {h}})$
 satisfies (QP)
 if $({\mathfrak {g}}, {\mathfrak {h}})$
 is one of ${\rm{I}}_{\mathbb{R}}$,
 ${\rm{I}}_{\mathbb{C}}$,
 ${\rm{I}}_{\mathbb{H}}$,
 ${\rm{II}}$ or ${\rm{III}}$.  
Parts of this assertion
 also follow from Proposition \ref{prop:upq}
 when $({\mathfrak {g}}, {\mathfrak {h}})$
 is ${\rm{I}}_{\mathbb{R}}$, 
 ${\rm{I}}_{\mathbb{C}}$,
 or ${\rm{I}}_{\mathbb{H}}$, 
 and from Proposition \ref{prop:OUpq}
 when $({\mathfrak {g}}, {\mathfrak {h}})$
 is ${\rm{III}}$.  
The exhaustion
 of this list is a crucial part of Step 5.  
\end{proof}
By the classification in Theorem \ref{thm:QpPp}, 
we obtain
\begin{cor}
\label{cor:QP}
For irreducible symmetric pairs
 $({\mathfrak{g}}, {\mathfrak {h}})$
 with $\operatorname{rank}_{\mathbb{R}} {\mathfrak{h}} \ge 2$, 
 {\rm{(PP)}} $\Leftrightarrow$ {\rm{(QP)}}.  
\end{cor}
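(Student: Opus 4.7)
The plan is to read the corollary directly off the exhaustion list provided by Theorem~\ref{thm:QpPp}. The implication (PP) $\Rightarrow$ (QP) is essentially built into the definition of (QP): since a $\sigma$-stable parabolic subgroup $Q$ used for (QP) contains (a conjugate of) a minimal parabolic $P_G$, an open $P_G$-orbit on $G/P_H$ immediately yields an open $Q$-orbit. Consequently, only the reverse direction (QP) $\Rightarrow$ (PP) requires the rank hypothesis.

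For (QP) $\Rightarrow$ (PP), I would argue contrapositively. Suppose $(\mathfrak{g},\mathfrak{h})$ is an irreducible symmetric pair satisfying (QP) but failing (PP). Theorem~\ref{thm:QpPp} then places the pair in one of the five families $\operatorname{I}_{\mathbb{R}}$, $\operatorname{I}_{\mathbb{C}}$, $\operatorname{I}_{\mathbb{H}}$, $\operatorname{II}$, or $\operatorname{III}$. In each family, a direct inspection of $\mathfrak{h}$ shows $\operatorname{rank}_{\mathbb{R}} \mathfrak{h} = 1$: for $\operatorname{I}_{\mathbb{R}}$, $\mathfrak{h}=\mathfrak{o}(p)+\mathfrak{o}(1,q)$ has real rank $0+1=1$; for $\operatorname{I}_{\mathbb{C}}$, $\mathfrak{h}=\mathfrak{s}(\mathfrak{u}(p)+\mathfrak{u}(1,q))$ has real rank $1$ since only the $\mathfrak{u}(1,q)$ summand is non-compact; for $\operatorname{I}_{\mathbb{H}}$, $\mathfrak{h}=\mathfrak{sp}(p)+\mathfrak{sp}(1,q)$ has real rank $0+1=1$; for $\operatorname{II}$, $\mathfrak{h}=\mathfrak{o}(n,1)$ has real rank $1$; and for $\operatorname{III}$, $\mathfrak{h}=\mathfrak{u}(n,1)$ has real rank $1$. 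In every case $\operatorname{rank}_{\mathbb{R}} \mathfrak{h} = 1 < 2$, contradicting the hypothesis. Hence, under $\operatorname{rank}_{\mathbb{R}} \mathfrak{h} \ge 2$, condition (QP) forces (PP).

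There is no substantive technical obstacle; all the serious content is absorbed into Theorem~\ref{thm:QpPp}. The corollary is simply the structural observation that the five exceptional families on which (QP) and (PP) diverge are precisely those in which the non-compact part of $\mathfrak{h}$ is a rank-one real form, so that imposing $\operatorname{rank}_{\mathbb{R}} \mathfrak{h} \ge 2$ excludes every such exception a priori.
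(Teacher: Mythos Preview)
Your argument is correct and matches the paper's approach exactly: the paper derives the corollary directly from the classification in Theorem~\ref{thm:QpPp}, and you have simply spelled out the verification that each of the five exceptional families $\operatorname{I}_{\mathbb{R}}$, $\operatorname{I}_{\mathbb{C}}$, $\operatorname{I}_{\mathbb{H}}$, $\operatorname{II}$, $\operatorname{III}$ has $\operatorname{rank}_{\mathbb{R}}\mathfrak{h}=1$. Your phrasing of the easy implication (PP) $\Rightarrow$ (QP) is slightly garbled (the cleanest version is that $\overline{P_G}\subset\overline{Q}$ gives a surjection $G/\overline{P_G}\to G/\overline{Q}$, so an open $P_H$-orbit upstairs descends; cf.\ Lemma~\ref{lem:BPQ}), but the content is right.
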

\section{Linearization of the open-orbit conditions 
 {\rm{(PP)}} and {\rm{(QP)}}}
\label{sec:method}

The goal of this section
is to give a criterion for {\rm{(PP)}}
 by linearization.  
The main result is Theorem \ref{thm:pp}.  
The proof for the implication (ii) $\Rightarrow$ (i)
 in Theorem \ref{thm:1.2} is carried
 out by this criterion
 in later sections.  
In order to optimise the proof 
 for the exhaustion of the list (C)--(H), 
 we introduce another geometric condition (QP), 
 which is slightly weaker than (PP).  
Then the condition {\rm{(QP)}} becomes a stepping-stone
 in the proof of the implication (i) $\Rightarrow$ (ii)
 by removing most of the symmetric pairs
 $({\mathfrak {g}}, {\mathfrak {h}})$
 that do not satisfy {\rm{(PP)}}.   
The condition {\rm{(QP)}} is also linearized 
 in Theorem \ref{thm:qp}.  
We shall further analyze the condition {\rm{(QP)}}
 in Propositions \ref{prop:cdual} and \ref{prop:QPrank}.  

\subsection{Parabolic subgroup $Q$
 associated to $(G,H)$}
\label{subsec:Q}

Let $G$ be a real reductive linear Lie group.  
Suppose that $\sigma$ is an involutive automorphism
 of $G$.  
The set $G^{\sigma}:=\{g \in G: \sigma g =g\}$
 of fixed points
 by $\sigma$
 is a closed subgroup of $G$.  
We say $(G,H)$ is a {\it{reductive symmetric pair}}
 if $H$ is an open subgroup of $G^{\sigma}$
 for some $\sigma$.

We take a Cartan involution $\theta$ of $G$
 commuting with $\sigma$, 
and set $K:=G^{\theta}$, 
 a maximal compact subgroup of $G$. 
The Lie algebras will be denoted by 
lower German letters
 such as ${\mathfrak {g}}$,
 ${\mathfrak {h}}$, 
 ${\mathfrak {k}}$, 
 $\cdots$, 
 and we shall use the same letters
 $\sigma$ and $\theta$
 for the induced automorphisms of the Lie algebra ${\mathfrak {g}}$.  
If $\tau$ is an involutive endomorphism
 of a real vector space $V$, 
 then $\tau$ is diagonalizable with eigenvalues $\pm 1$.  
We write the eigenspace
 decomposition as
\[
V=V^{\tau} +V^{-\tau}
\]
 where 
 $V^{\pm \tau}:=\{X \in V:\tau X=\pm X\}$.  
With the above notation, 
 ${\mathfrak {h}}={\mathfrak {g}}^{\sigma}$, 
 ${\mathfrak {k}}={\mathfrak {g}}^{\theta}$, 
 and ${\mathfrak {g}}={\mathfrak {g}}^{\theta} + {\mathfrak {g}}^{-\theta}$
 is a Cartan decomposition of 
the Lie algebra ${\mathfrak {g}}$.

We fix a maximal abelian subspace 
${\mathfrak {a}}_H$
 in ${\mathfrak {h}}^{-\theta}$, 
 and extend it to a maximal abelian subspace 
 ${\mathfrak {a}}_G$ 
 in ${\mathfrak {g}}^{-\theta}$.  
The split rank of $H$
 will be denoted by 
\[
\operatorname{rank}_{\mathbb{R}}H
:=
\dim_{\mathbb{R}}{\mathfrak {a}}_H.
\]
For $\alpha \in {\mathfrak {a}}_G^{\ast}$, 
 we write 
\[
  {\mathfrak {g}}({\mathfrak {a}}_G; \alpha)
:=\{X \in {\mathfrak {g}}
   :
   [H,X]=\alpha(H)X
   \quad
   \text{ for }\,\, H \in {\mathfrak {a}}_G
  \}, 
\]
 and denote by 
 $\Sigma({\mathfrak {g}}, {\mathfrak {a}}_G)$
 the set of nonzero $\alpha$
 such that ${\mathfrak {g}}({\mathfrak {a}}_G; \alpha)\ne \{0\}$.  
Similar notation 
is applied to ${\mathfrak {a}}_H$.  
Then the set of nonzero weights
 $\Sigma({\mathfrak {g}}, {\mathfrak {a}}_H)$
 satisfies
 the axiom of root systems
 (\cite[Theorem 2.1]{OS})
 as well as $\Sigma({\mathfrak {g}}, {\mathfrak {a}}_G)$.  
We choose {\it{compatible}} positive systems
 $\Sigma^+({\mathfrak {g}}, {\mathfrak {a}}_G)$
 and $\Sigma^+({\mathfrak {g}}, {\mathfrak {a}}_H)$
 in the sense
 that 
\[
  \alpha|_{\mathfrak {a}_H}
 \in \Sigma^+({\mathfrak {g}}, {\mathfrak {a}}_H) \cup \{0\}
\text{ for any }
 \alpha \in \Sigma^+({\mathfrak {g}}, {\mathfrak {a}}_G).  
\]
We write ${\mathfrak {g}}({\mathfrak {a}}_G; \alpha)$
 and ${\mathfrak {g}}({\mathfrak {a}}_H; \lambda)$
 for the root space
 of $\alpha \in \Sigma({\mathfrak {g}}, {\mathfrak {a}}_G)$
 and $\lambda \in \Sigma({\mathfrak {g}}, {\mathfrak {a}}_H)$, 
 respectively.  
We set
\begin{align*}
{\mathfrak {n}}:=&
\bigoplus 
_{\alpha|_{{\mathfrak {a}_H}} \in \Sigma^+({\mathfrak {g}}, {\mathfrak {a}}_H)}
{\mathfrak {g}}({\mathfrak {a}}_G; \alpha)
=
\bigoplus 
_{\lambda \in \Sigma^+({\mathfrak {g}}, {\mathfrak {a}}_H)}
{\mathfrak {g}}({\mathfrak {a}}_H; \lambda), 
\\
{\mathfrak {n}}_G:=&
\bigoplus 
_{\alpha \in \Sigma^+({\mathfrak {g}}, {\mathfrak {a}}_G)}
{\mathfrak {g}}({\mathfrak {a}}_G; \alpha).  
\end{align*}
Clearly 
 ${\mathfrak {n}} \subset {\mathfrak {n}}_G$.  
We remark
 that ${\mathfrak {n}}_G$ is not necessarily $\sigma$-stable,
 but ${\mathfrak {n}}$ is $\sigma$-stable.  
So we have a direct sum decomposition:
\[
   {\mathfrak {n}}={\mathfrak {n}}^{\sigma}+{\mathfrak {n}}^{-\sigma}.  
\]
We write $\Delta({\mathfrak {n}}^{\pm\sigma})$
 for the set of ${\mathfrak {a}}_H$-weights
 in ${\mathfrak {n}}^{\pm\sigma}$.  
Then we have
\[
  \Sigma^+({\mathfrak {g}}, {\mathfrak {a}}_H)
  =
  \Delta({\mathfrak {n}}^{\sigma}) \cup \Delta({\mathfrak {n}}^{-\sigma}), 
\]
 which is not disjoint in general.  
Let $P_G$ be the minimal parabolic subgroup
 of $G$
 that normalizes ${\mathfrak {n}}_G$, 
 $\overline{P_G}$ the opposite parabolic, 
 and $Q$ and $\overline Q$ the parabolic subgroups
 of $G$ 
 corresponding to $\Sigma^+({\mathfrak {g}}, {\mathfrak {a}}_H)$
 and $-\Sigma^+({\mathfrak {g}}, {\mathfrak {a}}_H)$, 
 respectively.  
Then ${\mathfrak {p}}_H:={\mathfrak {q}} \cap {\mathfrak {h}}$
 is a minimal parabolic subalgebra of ${\mathfrak {h}}$.  
We set 
\begin{align*}
M_G:=& Z_{K}({\mathfrak {a}}_G), 
\\
M_H:=& Z_{H \cap K}({\mathfrak {a}}_H), 
\\
A_H:=&\exp({\mathfrak {a}}_H), 
\\
L:=& Z_G({\mathfrak {a}}_H), 
L_H:=Z_H({\mathfrak {a}}_H)=M_H A_H.  
\intertext{Then we have}
\\
Q=&LN=L \exp ({\mathfrak {n}}), 
\quad
P_H=L_H N ^{\sigma}=M_HA_HN^{\sigma}.  
\end{align*}
We note 
 $P_G \subset Q \supset P_H$
 and $Q \cap \overline{P_G} = L \cap \overline{P_G}$.  

In Introduction,
 we considered the following two properties:
\begin{align*}
&\text{({\rm{PP}})\quad
$P_H$ has an open orbit on the real flag variety $G/P_G$, 
}
\\
&
\text{({\rm{BB}})\quad
$B_H$ has an open orbit 
 on the complex flag variety $G_{\mathbb{C}}/B_G$.  
}
\intertext{
We note that the conditions (PP) and (BB)
 are independent
 of coverings or connectedness
 of the groups,
 and depend only on the pair
 of the Lie algebras $({\mathfrak {g}}, {\mathfrak {h}})$.  
\vskip 0.3pc
In addition to the properties
 {\rm{(PP)}} and {\rm{(BB)}},
 we consider
}
&
\text{
{\rm{(QP)}}
\quad
$P_H$ has an open orbit
 on the real generalized flag variety $G/\overline Q$.  
}
\end{align*}
Among the three properties,
 we have:
\begin{lemma}
\label{lem:BPQ}
Let $({\mathfrak {g}}, {\mathfrak {h}})$ be a symmetric pair.  
Then we have
\begin{enumerate}
\item[{\rm{1)}}]
{\rm{(BB)}} $\Rightarrow$ {\rm{(PP)}} $\Rightarrow$ {\rm{(QP)}}.  
\item[{\rm{2)}}]
If $\operatorname{rank}_{\mathbb{R}}H
=\operatorname{rank}_{\mathbb{R}}G$, 
then {\rm{(PP)}} $\Leftrightarrow$ {\rm{(QP)}}.   
\end{enumerate}
\end{lemma}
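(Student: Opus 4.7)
The first implication of 1), namely (BB) $\Rightarrow$ (PP), is already recorded just after Definition \ref{def:BB} as a consequence of \cite[Lemmas 4.2 and 5.3]{xtoshitoshima}, so I would simply cite it. What actually needs proof is (PP) $\Rightarrow$ (QP) in 1) and its converse under the rank hypothesis in 2).

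My plan for (PP) $\Rightarrow$ (QP) is to produce a $G$-equivariant surjection $G/\overline{P_G} \twoheadrightarrow G/\overline{Q}$ and push an open $P_H$-orbit forward through it. For the inclusion $\overline{P_G} \subset \overline{Q}$, I would unwind the definitions: $\overline{\mathfrak{p}_G} = \mathfrak{m}_G + \mathfrak{a}_G + \overline{\mathfrak{n}_G}$ while $\overline{\mathfrak{q}} = \mathfrak{l} + \overline{\mathfrak{n}}$ with $\mathfrak{l} = Z_{\mathfrak{g}}(\mathfrak{a}_H) \supset \mathfrak{m}_G + \mathfrak{a}_G$; compatibility of the positive systems forces each $\alpha \in \Sigma^-(\mathfrak{g}, \mathfrak{a}_G)$ to restrict to $-\Sigma^+(\mathfrak{g}, \mathfrak{a}_H) \cup \{0\}$ on $\mathfrak{a}_H$, which embeds $\overline{\mathfrak{n}_G}$ into $\mathfrak{l} + \overline{\mathfrak{n}}$. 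To replace $P_G$ by $\overline{P_G}$ on the (PP) side, I would invoke the standard fact that any two minimal parabolic subgroups of the real reductive group $G$ are $G$-conjugate, so $G/P_G \cong G/\overline{P_G}$ as $G$-spaces. The composition $G/P_G \cong G/\overline{P_G} \twoheadrightarrow G/\overline{Q}$ is then open and $P_H$-equivariant, and an open $P_H$-orbit on $G/P_G$ maps to an open $P_H$-orbit on $G/\overline{Q}$, which is (QP).

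For part 2), the hypothesis $\operatorname{rank}_{\mathbb{R}} H = \operatorname{rank}_{\mathbb{R}} G$ together with the containment $\mathfrak{a}_H \subset \mathfrak{a}_G$ forces $\mathfrak{a}_H = \mathfrak{a}_G$; then $\Sigma(\mathfrak{g}, \mathfrak{a}_H) = \Sigma(\mathfrak{g}, \mathfrak{a}_G)$, and consequently $\mathfrak{l} = \mathfrak{m}_G + \mathfrak{a}_G$ and $\mathfrak{n} = \mathfrak{n}_G$. Hence $Q = P_G$ and $\overline{Q} = \overline{P_G}$, and via $G/\overline{P_G} \cong G/P_G$ the condition (QP) becomes (PP) verbatim. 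No step here presents a genuine obstacle; the only delicate point is the root-system bookkeeping between $\Sigma(\mathfrak{g}, \mathfrak{a}_G)$ and $\Sigma(\mathfrak{g}, \mathfrak{a}_H)$ used to verify $\overline{P_G} \subset \overline{Q}$, and note that the argument does not even require $(\mathfrak{g},\mathfrak{h})$ to be symmetric.
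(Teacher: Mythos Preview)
Your proposal is correct and follows essentially the same approach as the paper: for 1) you cite the same external reference for (BB) $\Rightarrow$ (PP) and then use the conjugacy of minimal parabolics together with the inclusion $\overline{P_G}\subset\overline{Q}$ to deduce (PP) $\Rightarrow$ (QP), and for 2) you observe that $\mathfrak{a}_H=\mathfrak{a}_G$ forces $Q=P_G$. The only difference is that you spell out the root-system bookkeeping for $\overline{\mathfrak{p}_G}\subset\overline{\mathfrak{q}}$ explicitly, whereas the paper simply asserts the inclusion.
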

\begin{proof}
1)\enspace
The implication {\rm{(BB)}} $\Rightarrow$ {\rm{(PP)}} 
 follows from \cite[Lemmas 4.2 and 5.3]{xtoshitoshima}.  
The implication {\rm{(PP)}} $\Rightarrow$ {\rm{(QP)}} is obvious 
 because $P_G$ is conjugate to $\overline P_G$
 and $\overline P_G \subset \overline Q$.  
\par\noindent
2)\enspace
If ${\mathfrak {a}}_H={\mathfrak {a}}_G$, 
then $P_G$ coincides with $Q$.  
Thus {\rm{(PP)}} is equivalent to {\rm{(QP)}}.  
\end{proof}

\begin{remark}
{\rm{
1)\enspace
We defined (PP) and (BB) 
 without assuming
 that $({\mathfrak {g}}, {\mathfrak {h}})$
 is a symmetric pair,
 however,
 we can define (QP)
 only for symmetric pairs
 $({\mathfrak {g}}, {\mathfrak {h}})$.  
\par\noindent
2)\enspace
The equivalence (PP) $\Leftrightarrow$ (QP)
 holds also 
 for any irreducible symmetric pair
 $({\mathfrak {g}}, {\mathfrak {h}})$
 with ${\operatorname{rank}}_{\mathbb{R}} {\mathfrak {h}} \ge 2$
 (see Corollary \ref{cor:QP}).  
}}
\end{remark}

\subsection{Criterion for (PP) and (QP)}
\label{subsec:PPQP}
We are ready to state a necessary and sufficient condition
 for the property {\rm{(PP)}}, 
 and that for {\rm{(QP)}}
 in terms of the adjoint action
 of $Z_H({\mathfrak {a}}_H)=M_H A_H$
 on ${\mathfrak {n}}^{-\sigma}$.  

\begin{thm}
\label{thm:pp}
The following two conditions are equivalent:
\begin{enumerate}
\item[{\rm{(i)}}]
$({\mathfrak {g}}, {\mathfrak {h}})$
 satisfies {\rm{(PP)}}.  
\item[{\rm{(ii)}}]
$(M_H \cap M_G)A_H$ has an open orbit on ${\mathfrak {n}}^{-\sigma}$
 via the adjoint action.  
\end{enumerate}
\end{thm}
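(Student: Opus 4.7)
My plan is to follow the strategy used for the (easier) linearization of {\rm{(QP)}} that is to appear as Theorem \ref{thm:qp}, which should express {\rm{(QP)}} as an open orbit of $L_H=M_HA_H$ on $\mathfrak{n}^{-\sigma}$, while dealing with the extra difficulty that $P_G$---unlike the $\sigma$-stable parabolic $\overline{Q}$---is not preserved by $\sigma$.

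The first step is Bruhat reduction. Since $\overline{N}_G\cdot eP_G\cong\overline{\mathfrak{n}}_G$ is open and dense in $G/P_G$, the condition {\rm{(PP)}} is equivalent to the existence of $Y\in\overline{\mathfrak{n}}_G$ with
\[
\mathfrak{p}_H+\operatorname{Ad}(\exp Y)\mathfrak{p}_G=\mathfrak{g}.
\]
Decomposing $\overline{\mathfrak{n}}_G=(\overline{\mathfrak{n}}_G\cap\mathfrak{l})\oplus\overline{\mathfrak{n}}^{\sigma}\oplus\overline{\mathfrak{n}}^{-\sigma}$ via the $\mathfrak{a}_H$-grading and $\sigma$-splitting, the natural candidate for $Y$ is $Y=\theta X_0$ with $X_0\in\mathfrak{n}^{-\sigma}$. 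I would next show that the open-orbit representative can always be chosen in this normal form, exploiting: (a) conjugation by $L_H$, which preserves both the $\mathfrak{a}_H$-weight grading and the $\sigma$-eigenspaces; (b) left multiplication by $N^{\sigma}\subset P_H$, which shifts the $\mathfrak{n}^{\sigma}$-components of $Y$ via BCH; and (c) the Bruhat decomposition inside the Levi $L=Z_G(\mathfrak{a}_H)$, using the symmetric-pair structure $(L,L_H)$ in which $\mathfrak{a}_H$ lies centrally, to absorb the $\overline{\mathfrak{n}}_G\cap\mathfrak{l}$ contribution into the $P_G$-factor.

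With $Y=\theta X_0$, $\operatorname{ad}(Y)$ is nilpotent, so $\operatorname{Ad}(\exp Y)$ is a finite polynomial. Decomposing the displayed equation under the $\mathfrak{a}_H$-grading, the positive-weight component of the left-hand side is automatically all of $\mathfrak{n}$, coming from $\mathfrak{n}^{\sigma}\subset\mathfrak{p}_H$ and the leading term of $\operatorname{Ad}(\exp Y)\mathfrak{n}\supset\mathfrak{n}$. The genuine obstruction sits in the zero-weight piece $\mathfrak{l}$: one needs $\mathfrak{l}_H+\operatorname{Ad}(\exp Y)\mathfrak{p}_G^L=\mathfrak{l}$, with $\mathfrak{p}_G^L:=\mathfrak{l}\cap\mathfrak{p}_G$ a minimal parabolic of $\mathfrak{l}$. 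The coupling of these summands goes through $\operatorname{ad}(\theta X_0)$, which ferries $\mathfrak{p}_G^L\subset\mathfrak{l}$ into $\overline{\mathfrak{n}}$ and then back into $\mathfrak{l}$ by further brackets. After this bookkeeping the condition collapses to
\[
\operatorname{ad}\bigl((\mathfrak{m}_H\cap\mathfrak{m}_G)+\mathfrak{a}_H\bigr)(X_0)=\mathfrak{n}^{-\sigma},
\]
which is exactly (ii).

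The main obstacle will be the normalization in step (c): with $P_G$ not $\sigma$-stable, reducing an arbitrary $Y\in\overline{\mathfrak{n}}_G$ to the form $\theta X_0$ modulo $P_H$ requires a careful use of the internal Bruhat structure of the Levi $L$ combined with the centrality of $\mathfrak{a}_H$ in $\mathfrak{l}$. The appearance of the smaller group $(M_H\cap M_G)A_H$ in (ii), in place of the $L_H=M_HA_H$ that features in the (QP) criterion, reflects the fact that after this Levi-internal reduction, only the subgroup of $M_H$ which also centralizes the full split torus $\mathfrak{a}_G$ acts effectively on $\mathfrak{n}^{-\sigma}$.
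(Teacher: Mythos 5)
Your overall plan---reduce to a big cell, normalize a representative down to a $\theta$-twist of $\mathfrak{n}^{-\sigma}$, then read off (ii) from the tangent-space condition---is in the same spirit as the paper's proof, but the specific reduction you set up does not work as stated, and the two places where it breaks are precisely what the paper engineers around.

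\textbf{The cell you use is not $P_H$-friendly.} You work in $G/P_G$ with the Bruhat cell $\overline{N}_G\cdot eP_G$. This cell is preserved by $\overline{P}_G$, but neither $P_H$ nor the $\sigma$-stable parabolics $Q$, $\overline{Q}$ have the right relationship to it: $Q\cdot eP_G = Q/P_G$ is far from open, and $\overline Q$ contains $\overline{P}_G$ but \emph{not} $P_H$ (since $N^\sigma\not\subset\overline{Q}$). The paper instead passes to $G/\overline{P}_G$ and takes the $Q$-orbit $Q\cdot e\overline{P}_G=Q/(L\cap\overline{P}_G)$ as the open dense set; this is strictly larger than a Bruhat cell and, crucially, is stable under $P_H$ because $P_H\subset Q$. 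Without such a $P_H$-stable open set, the entire ``normalization modulo $P_H$'' strategy has no arena to run in.

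\textbf{Steps (a) and (b) fail in your coordinates.} In (a) you conjugate by $L_H=M_HA_H$, but $L_H$ does not stabilize the cell $\overline{\mathfrak{n}}_G$: only the subgroup $L_H\cap P_G=(M_H\cap M_G)A_H$ does, since $M_H\not\subset M_G=Z_K(\mathfrak{a}_G)$ in general. Using the full $L_H$ therefore moves you out of the cell. In (b), after removing the $\mathfrak{l}$-piece, the component of $Y\in\overline{\mathfrak{n}}$ that you need $P_H$ to absorb lies in $\overline{\mathfrak{n}}^{\sigma}$, whose group is $\overline{N}^{\sigma}=\overline{N}_H$. But $\overline{N}_H\not\subset P_H=M_HA_HN^{\sigma}$. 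So the ``shift by $N^{\sigma}$'' does not remove this component. Your step (c) is closer: the decomposition you want is not a Bruhat decomposition of $L$ but the Iwasawa-type identity $\mathfrak{l}=\mathfrak{m}_H+(\mathfrak{l}\cap\overline{\mathfrak{p}}_G)$ (Lemma~\ref{lem:LIwasawa}~(3)), which comes from the maximality of $\mathfrak{a}_H$ in $\mathfrak{h}^{-\theta}$, not from a Bruhat cell structure.

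\textbf{What the paper actually does.} After passing to $G/\overline{P}_G$, the theorem reduces to understanding $P_H\backslash Q/(L\cap\overline{P}_G)$. Lemma~\ref{lem:Qdouble} shows the map $\mathfrak{n}^{-\sigma}/(M_H\cap M_G)A_H \to P_H\backslash Q/(L\cap\overline{P}_G)$ is a bijection. Surjectivity uses the factorization $Q=NL=M_HN^{\sigma}\exp(\mathfrak{n}^{-\sigma})(L\cap\overline{P}_G)$, built from Lemma~\ref{lem:LIwasawa} ($L=M_H(L\cap\overline{P}_G)$, up to $A_H$) and Lemma~\ref{lem:Nsigma} ($N=N^{\sigma}N^{-\sigma}$); here the pieces $M_HN^{\sigma}$ and $L\cap\overline{P}_G$ are absorbed on the left and right respectively. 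Injectivity is a short explicit computation forcing the connecting element into $M_HA_H\cap(L\cap\overline{P}_G)=(M_H\cap M_G)A_H$. Note that the injectivity half is genuinely needed, and your proposal does not address it: establishing an open orbit for the smaller group on $\mathfrak{n}^{-\sigma}$ from an open $P_H$-orbit requires knowing that distinct $(M_H\cap M_G)A_H$-orbits do not collapse inside a single double coset. If you wish to keep your approach, switching to $G/\overline{P}_G$ (with cell $\mathfrak{n}_G$ and normal form $X_0\in\mathfrak{n}^{-\sigma}$, not $\theta X_0$) and replacing the informal normalization by the double-coset bijection would essentially reconstruct Lemma~\ref{lem:Qdouble}.
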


\begin{thm}
\label{thm:qp}
Let $(G,H)$ be a reductive symmetric pair.  
Then the following two conditions are equivalent:
\begin{enumerate}
\item[{\rm{(i)}}]
$({\mathfrak {g}}, {\mathfrak {h}})$
 satisfies {\rm{(QP)}}.  
\item[{\rm{(ii)}}]
$Z_H({\mathfrak {a}}_H)=M_H A_H$ has an open orbit
 on ${\mathfrak {n}}^{-\sigma}$.  
\end{enumerate}
\end{thm}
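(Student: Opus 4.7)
The plan is to transport condition (QP) through the $\sigma$-invariant structure of the Bruhat decomposition of $G/\overline Q$ until it becomes the linear open-orbit statement in (ii). The crucial structural input, afforded by $\sigma$-stability of $\overline Q$, is the splitting ${\mathfrak{n}} = {\mathfrak{n}}^\sigma \oplus {\mathfrak{n}}^{-\sigma}$ in which ${\mathfrak{n}}^\sigma$ is a subalgebra and ${\mathfrak{n}}^{-\sigma}$ is an $\operatorname{ad}({\mathfrak{n}}^\sigma)$-stable subspace, together with the resulting global diffeomorphism
\[
N^\sigma \times \exp({\mathfrak{n}}^{-\sigma}) \xrightarrow{\ \sim\ } N, \qquad (m, e) \mapsto m\, e,
\]
valid since $N$ is simply connected nilpotent.

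Combining this with the open embedding ${\mathfrak{n}} \xrightarrow{\exp} N \hookrightarrow G/\overline Q$ shows that every point of the open Bruhat cell $N\overline Q/\overline Q$ lies in the $N^\sigma$-orbit, hence in the $P_H$-orbit (since $N^\sigma \subset P_H$), of some $\exp(X_0)\overline Q$ with $X_0 \in {\mathfrak{n}}^{-\sigma}$. Consequently (QP) is equivalent to the existence of $X_0 \in {\mathfrak{n}}^{-\sigma}$ for which $P_H \cdot \exp(X_0)\overline Q$ is open in $G/\overline Q$.

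To analyze this orbit I would use the factorization $P_H = L_H N^\sigma$. Since $L_H \subset L \subset \overline Q$, and $L_H$ normalizes $N^\sigma$ while commuting with $\sigma$ (so it preserves the splitting of ${\mathfrak{n}}$), a direct calculation yields, for $l \in L_H$ and $n \in N^\sigma$,
\[
l\, n\, \exp(X_0)\, \overline Q \;=\; \operatorname{Ad}(l)(n)\cdot \exp\bigl(\operatorname{Ad}(l)X_0\bigr)\, \overline Q,
\]
hence $P_H \cdot \exp(X_0)\overline Q \;=\; N^\sigma \cdot \exp(L_H \cdot X_0)\, \overline Q$. Transporting back through the diffeomorphism of the first paragraph, this subset is open in $G/\overline Q$ if and only if $L_H \cdot X_0$ is open in ${\mathfrak{n}}^{-\sigma}$, which establishes the desired equivalence (i) $\Leftrightarrow$ (ii).

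The one genuine technical point is the globality of $N^\sigma \times \exp({\mathfrak{n}}^{-\sigma}) \cong N$: the differential at the identity is the splitting of ${\mathfrak{n}}$, so the map is everywhere a local diffeomorphism, and globality follows by induction on the nilpotency class of ${\mathfrak{n}}$ (or, more concretely, by writing the product in exponential coordinates as a unipotent polynomial map via Baker--Campbell--Hausdorff). Everything else is bookkeeping with $P_H = L_H N^\sigma$ and the $\sigma$-equivariance of $\operatorname{Ad}(L_H)$ on ${\mathfrak{n}}$.
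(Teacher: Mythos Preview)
Your argument is correct and follows essentially the same route as the paper: reduce to the open Bruhat cell $N\overline Q/\overline Q\simeq Q/L$, use the product decomposition $N=N^{\sigma}\exp(\mathfrak{n}^{-\sigma})$ (the paper's Lemma~\ref{lem:Nsigma}), and observe that the $P_H=L_HN^{\sigma}$--orbit through $\exp(X_0)$ corresponds to $N^{\sigma}\times(L_H\cdot X_0)$. The paper packages the last step as the double-coset bijection $\mathfrak{n}^{-\sigma}/M_HA_H\simeq P_H\backslash Q/L$ of Lemma~\ref{lem:Qdouble}, whereas you compute the orbit directly via the diffeomorphism; the content is the same.
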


For the proof of Theorems \ref{thm:pp} and \ref{thm:qp}, 
 we need a basic structural result
 on the centralizer of ${\mathfrak {a}}_H$
 and ${\mathfrak {a}}_G$, 
 respectively.  
\begin{lemma}
\label{lem:LIwasawa}
{\rm{1)}}\enspace
$
Z_{{\mathfrak{h}} \cap {\mathfrak{k}}}({\mathfrak{a}}_H)
\cap
Z_{{\mathfrak{k}}}({\mathfrak{a}}_G)
=
Z_{{\mathfrak{h}}\cap{\mathfrak{k}}}({\mathfrak{a}}_G).  
$
\begin{enumerate}
\item[{\rm{2)}}]\enspace
$
Z_{{\mathfrak{h}} \cap {\mathfrak{k}}}({\mathfrak{a}}_H)
+
Z_{{\mathfrak{k}}}({\mathfrak{a}}_G)
=
Z_{{\mathfrak{k}}}({\mathfrak{a}}_H).  
$
\item[{\rm{3)}}]\enspace
$
Z_{{\mathfrak{g}}}({\mathfrak{a}}_H)
=
Z_{{\mathfrak{h}}\cap {\mathfrak{k}}}({\mathfrak{a}}_H)
+
(Z_{{\mathfrak{g}}}({\mathfrak{a}}_H) \cap 
\overline{\mathfrak{p}}_G)
$.  
\end{enumerate}
\end{lemma}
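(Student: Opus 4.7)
The three assertions are of increasing depth; I would tackle them in order, using (1) to clear the ground, doing the real work in (2), and then deriving (3) as a clean corollary of (2) combined with Iwasawa decomposition applied to the Levi $\mathfrak{l}_G := Z_{\mathfrak{g}}(\mathfrak{a}_H)$.

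For (1), the inclusion $\supset$ is immediate. For $\subset$, any element $X$ in the intersection lies in $\mathfrak{h}\cap\mathfrak{k}$ and centralizes $\mathfrak{a}_G \supset \mathfrak{a}_H$; so $X\in Z_{\mathfrak{h}\cap\mathfrak{k}}(\mathfrak{a}_G)$. The only subtlety is recording that since $\sigma\theta=\theta\sigma$ one may arrange $\mathfrak{a}_G$ to be $\sigma$-stable with $\mathfrak{a}_G^{\sigma}=\mathfrak{a}_H$ and $\mathfrak{a}_G=\mathfrak{a}_H\oplus\mathfrak{a}_G^{-\sigma}$; I would state this as a preliminary remark, since it is used throughout.

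The main work is (2). Given $X\in Z_{\mathfrak{k}}(\mathfrak{a}_H)$, decompose $X=X^{\sigma}+X^{-\sigma}$. Since $\mathfrak{a}_H\subset\mathfrak{g}^{\sigma}$, applying $\sigma$ to $[X,Y]=0$ for $Y\in\mathfrak{a}_H$ shows both pieces centralize $\mathfrak{a}_H$. Clearly $X^{\sigma}\in Z_{\mathfrak{h}\cap\mathfrak{k}}(\mathfrak{a}_H)$, so the task is to show $X^{-\sigma}\in Z_{\mathfrak{k}}(\mathfrak{a}_G)$, equivalently that $\mathrm{ad}(X^{-\sigma})$ kills $\mathfrak{a}_G^{-\sigma}$. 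A type count using the four $(\sigma,\theta)$-eigenspaces yields
\[
[\,X^{-\sigma},\,\mathfrak{a}_G^{-\sigma}\,]\subset [\mathfrak{k}\cap\mathfrak{g}^{-\sigma},\,\mathfrak{g}^{-\theta}\cap\mathfrak{g}^{-\sigma}]\subset\mathfrak{g}^{-\theta}\cap\mathfrak{g}^{\sigma}=\mathfrak{h}^{-\theta},
\]
and using that $X^{-\sigma}$ and $\mathfrak{a}_G^{-\sigma}$ both commute with $\mathfrak{a}_H$, the bracket also lies in the centralizer of $\mathfrak{a}_H$ inside $\mathfrak{h}^{-\theta}$, which by maximality of $\mathfrak{a}_H$ equals $\mathfrak{a}_H$. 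Now invoke the positivity step: for any $A\in\mathfrak{a}_G^{-\sigma}$ and $H\in\mathfrak{a}_H$, skew-symmetry of $\mathrm{ad}(X^{-\sigma})$ with respect to the Killing form (since $X^{-\sigma}\in\mathfrak{k}$) gives $B([X^{-\sigma},A],H)=-B(A,[X^{-\sigma},H])=0$; but $[X^{-\sigma},A]\in\mathfrak{a}_H$ and $B$ is positive definite on $\mathfrak{a}_H\subset\mathfrak{g}^{-\theta}$, forcing $[X^{-\sigma},A]=0$. This positivity/maximality combination is where I expect the main friction, and is the step one needs to write carefully.

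For (3), consider $\mathfrak{l}_G=Z_{\mathfrak{g}}(\mathfrak{a}_H)$, a $\theta$-stable reductive subalgebra with $\mathfrak{a}_G$ maximal abelian in $\mathfrak{l}_G^{-\theta}$. Choosing the positive system compatibly, the minimal parabolic of $\mathfrak{l}_G$ opposite to the one containing $\mathfrak{n}_G\cap\mathfrak{l}_G$ is exactly $\mathfrak{l}_G\cap\overline{\mathfrak{p}}_G$, and the Iwasawa decomposition of $\mathfrak{l}_G$ gives $\mathfrak{l}_G=(\mathfrak{l}_G\cap\mathfrak{k})+(\mathfrak{l}_G\cap\overline{\mathfrak{p}}_G)=Z_{\mathfrak{k}}(\mathfrak{a}_H)+(Z_{\mathfrak{g}}(\mathfrak{a}_H)\cap\overline{\mathfrak{p}}_G)$. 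Now substitute the identity from (2), namely $Z_{\mathfrak{k}}(\mathfrak{a}_H)=Z_{\mathfrak{h}\cap\mathfrak{k}}(\mathfrak{a}_H)+\mathfrak{m}_G$, and absorb the term $\mathfrak{m}_G\subset\overline{\mathfrak{p}}_G\cap\mathfrak{l}_G$ into the second summand to obtain exactly the claimed decomposition. So once (2) is in place, (3) is essentially a one-line consequence of Iwasawa for $\mathfrak{l}_G$.
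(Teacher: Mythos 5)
Your argument is correct, and part (2) takes a genuinely different route from the paper; parts (1) and (3) coincide with the paper's proof (trivial inclusion and Iwasawa for $Z_{\mathfrak g}(\mathfrak a_H)$ plus absorbing $Z_{\mathfrak k}(\mathfrak a_G)$ into $\overline{\mathfrak p}_G$, respectively). For (2), the paper works with the $\mathfrak a_G$-restricted root decomposition of $Z_{\mathfrak g}(\mathfrak a_H)$: for $\alpha\neq 0$ with $\alpha|_{\mathfrak a_H}=0$, the involution $\sigma\theta$ stabilizes $\mathfrak g(\mathfrak a_G;\alpha)$, and the $(-\sigma\theta)$-eigenspace is shown to vanish because a hypothetical $X$ with $\sigma\theta X=-X$ would make $X+\sigma X$ a nonzero element of $\mathfrak h^{-\theta}$ that centralizes $\mathfrak a_H$ yet lies in $\mathfrak g(\mathfrak a_G;\alpha)\oplus\mathfrak g(\mathfrak a_G;-\alpha)$, contradicting maximality; intersecting with $\mathfrak k$ then gives (2). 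You instead split $X\in Z_{\mathfrak k}(\mathfrak a_H)$ into $\sigma$-eigenparts, show $[X^{-\sigma},\mathfrak a_G^{-\sigma}]\subset Z_{\mathfrak h^{-\theta}}(\mathfrak a_H)=\mathfrak a_H$ by the $(\sigma,\theta)$-type computation and maximality, and then kill it with invariance of the (suitably chosen $\theta$- and $\sigma$-invariant) form, $B([X^{-\sigma},A],H)=-B(A,[X^{-\sigma},H])=0$ and $B$ definite on $\mathfrak a_H$. Both arguments pivot on the same maximality of $\mathfrak a_H$ inside $\mathfrak h^{-\theta}$, i.e.\ $Z_{\mathfrak h^{-\theta}}(\mathfrak a_H)=\mathfrak a_H$. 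The paper's version yields the stronger structural fact that every root space $\mathfrak g(\mathfrak a_G;\alpha)$ with $\alpha|_{\mathfrak a_H}=0$ is $\sigma\theta$-fixed, which is reusable; yours is shorter and more elementary, trading that structural statement for a direct bilinear-form computation. One thing worth making explicit: the $\sigma$-stability of $\mathfrak a_G$ (hence $\mathfrak a_G=\mathfrak a_H\oplus\mathfrak a_G^{-\sigma}$) is not merely a convenient choice but is automatic from $Z_{\mathfrak h^{-\theta}}(\mathfrak a_H)=\mathfrak a_H$, since for $A\in\mathfrak a_G$ the $\sigma$-fixed component lies in $Z_{\mathfrak h^{-\theta}}(\mathfrak a_H)=\mathfrak a_H\subset\mathfrak a_G$.
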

\begin{proof}
1)\enspace  
Clear from ${\mathfrak {a}}_H \subset {\mathfrak {a}}_G$.  
\par\noindent
2)\enspace
If $\alpha|_{{\mathfrak {a}}_H}=0$
 then $\sigma \theta \alpha=\alpha$, 
 and therefore the involution $\sigma \theta$ stabilizes 
 ${\mathfrak{g}}({\mathfrak{a}}_G;\alpha)$
 with $\alpha|_{{\mathfrak{a}}_H}=0$.  
Thus we have a direct sum decomposition 
\[
   {\mathfrak{g}}({\mathfrak{a}}_{G};\alpha)
   =
   {\mathfrak{g}}^{\sigma\theta}({\mathfrak{a}}_G;\alpha)
   +
   {\mathfrak{g}}^{-\sigma\theta}({\mathfrak{a}}_G;\alpha).  
\]
We claim that
$
   {\mathfrak{g}}^{-\sigma\theta}({\mathfrak{a}}_G;\alpha)
   =\{0\}
$
for any $\alpha \in \Sigma({\mathfrak{g}}, {\mathfrak{a}}_G)$
 with $\alpha|_{{\mathfrak {a}}_H}=0$.  
In fact, 
 suppose that a nonzero element
 $X \in {\mathfrak{g}}({\mathfrak{a}}_G;\alpha)$
 satisfies 
 $\sigma \theta X=-X$.  
Then $X+ \sigma X \ne 0$
 because $\sigma X \in {\mathfrak{g}}({\mathfrak{a}}_G; \sigma \alpha)
 ={\mathfrak {g}}({\mathfrak {a}}_G;-\alpha)$
 and ${\mathfrak{g}}({\mathfrak{a}}_G;\alpha)
 \cap {\mathfrak{g}}({\mathfrak{a}}_G;-\alpha)=\{0\}$
 if $\alpha \ne 0$.  
On the other hand,
\[
  X + \sigma X = X - \theta X 
  \in {\mathfrak{h}}^{-\theta}.  
\]
Since $[{\mathfrak{a}}_H, X + \sigma X]=\{0\}$,
 it contradicts the maximality
 of ${\mathfrak{a}}_H$
 as an abelian subspace 
 in ${\mathfrak{h}}^{-\theta}$.  
Thus we have shown the claim.

Therefore we have the following direct sum decomposition
\begin{equation*}
Z_{\mathfrak{g}}({\mathfrak{a}}_H)
=\bigoplus_{\alpha|_{{\mathfrak{a}}_H}=0}
  {\mathfrak{g}}({\mathfrak{a}}_G;\alpha)
={\mathfrak{g}}({\mathfrak{a}}_G;0)
  \oplus
  \bigoplus_{\substack{\alpha|_{{\mathfrak{a}}_H}=0 \\ \alpha \ne0}}
  {\mathfrak{g}}^{\sigma\theta}({\mathfrak{a}}_G;\alpha).  
\end{equation*}
Taking the intersection 
 with ${\mathfrak {k}}$, 
we get the identity
\begin{align*}
Z_{\mathfrak{k}}({\mathfrak{a}}_H)
=&
  ({\mathfrak{g}}({\mathfrak{a}}_G;0) \cap {\mathfrak{k}})
\oplus
  \bigoplus
_{\substack{\alpha|_{{\mathfrak{a}}_H}=0,\\\alpha \ne 0}}
  ({\mathfrak{g}}^{\sigma\theta}({\mathfrak{a}}_G;\alpha) \cap {\mathfrak{k}})  
\\
=&Z_{\mathfrak{k}}({\mathfrak{a}}_G)
  +
  Z_{\mathfrak {h} \cap \mathfrak{k}}({\mathfrak{a}}_H).  
\end{align*}
\par\noindent
3)\enspace
By the Iwasawa decomposition
 of the reductive subalgebra $Z_{\mathfrak{g}}({\mathfrak{a}}_H)$,
 we have
\[
Z_{\mathfrak{g}}({\mathfrak{a}}_H)
=
Z_{\mathfrak{k}}({\mathfrak{a}}_H)
+
(Z_{\mathfrak{g}}({\mathfrak{a}}_H)
 \cap 
\overline {\mathfrak{p}}_G).  
\]
Combining this with the second statement,
 we have
\[
Z_{\mathfrak{g}}({\mathfrak{a}}_H)
=
Z_{\mathfrak {h} \cap \mathfrak{k}}({\mathfrak{a}}_H)
+
Z_{\mathfrak{k}}({\mathfrak{a}}_G)
+
(Z_{\mathfrak{g}}({\mathfrak{a}}_H)
 \cap 
\overline {\mathfrak{p}}_G)
=
Z_{\mathfrak {h} \cap \mathfrak{k}}({\mathfrak{a}}_H)
+
(Z_{\mathfrak{g}}({\mathfrak{a}}_H)
 \cap 
\overline {\mathfrak{p}}_G)
\]
 because 
 $Z_{\mathfrak{k}}({\mathfrak{a}}_G)
 \subset 
 Z_{\mathfrak{g}}({\mathfrak{a}}_H)
 \cap 
\overline {\mathfrak{p}}_G$.  
\end{proof}
The following lemma is known
 (\cite{Benoist}), 
 but we give a proof 
 for the sake of completeness.  
\begin{lemma}
\label{lem:Nsigma}
Suppose $N$ is a simply connected nilpotent Lie group
 with an involutive automorphism $\sigma$.  
\begin{enumerate}
\item[{\rm{1)}}]
The exponential map $\exp :{\mathfrak {n}} \to N$
induces bijections 
${\mathfrak {n}}^{\sigma} \overset \sim \to N^{\sigma}$
 and ${\mathfrak {n}}^{-\sigma} \overset \sim \to N^{-\sigma}$.  
\item[{\rm{2)}}]
The following map is also bijective:
\begin{equation}
\label{eqn:Nsigma}
{\mathfrak {n}}^{\sigma} + {\mathfrak {n}}^{-\sigma}
 \to N, 
\quad
(X, Y) \mapsto \exp X \exp Y.  
\end{equation}
\end{enumerate}
\end{lemma}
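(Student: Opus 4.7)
The plan is to treat (1) as an immediate consequence of the fact that $\exp:\mathfrak{n}\to N$ is a $\sigma$-equivariant diffeomorphism, and then bootstrap to (2) using the ``square-root'' device $g\mapsto g^{-1}\sigma(g)$.

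For (1), I would recall that when $N$ is simply connected and nilpotent, $\exp$ is a global diffeomorphism, and the relation $\sigma\circ\exp=\exp\circ\sigma$ holds because $\sigma$ is a Lie group automorphism. Thus for $X\in\mathfrak{n}$, the equation $\sigma(\exp X)=\exp X$ is equivalent to $\exp(\sigma X)=\exp X$, which by injectivity of $\exp$ forces $\sigma X=X$; this proves $\exp(\mathfrak{n}^\sigma)=N^\sigma$. Using the identity $(\exp X)^{-1}=\exp(-X)$, the same argument shows $\exp(\mathfrak{n}^{-\sigma})=N^{-\sigma}$.

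For (2), I would introduce $h(g):=g^{-1}\sigma(g)$ and note by direct calculation that $\sigma(h(g))=\sigma(g)^{-1}g=h(g)^{-1}$, so $h$ lands in $N^{-\sigma}$ for every $g\in N$. By (1), there is a unique $V\in\mathfrak{n}^{-\sigma}$ with $h(g)=\exp V$; set $Y:=-V/2\in\mathfrak{n}^{-\sigma}$. Since $[Y,Y]=0$, the one-parameter subgroup identity gives $\exp(-Y)\exp(-Y)=\exp(-2Y)=\exp V=g^{-1}\sigma(g)$, which rearranges to $\sigma(g\exp(-Y))=g\exp(-Y)$. Hence $g\exp(-Y)\in N^{\sigma}$, and (1) produces a unique $X\in\mathfrak{n}^{\sigma}$ with $g\exp(-Y)=\exp X$, giving the factorisation $g=\exp X\exp Y$. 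Both $Y$ and $X$ are uniquely determined at each step, which yields injectivity simultaneously with existence.

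The one subtle point is that $N^{-\sigma}$ is not a subgroup of $N$, so it cannot be attacked on its own terms; part (1) sidesteps this by giving an honest bijective parametrisation of $N^{-\sigma}$ by the vector space $\mathfrak{n}^{-\sigma}$, and this parametrisation is exactly what makes the halving step $Y:=-V/2$ legitimate. Everything else is routine computation in the nilpotent group, so I do not expect any substantive obstacle.
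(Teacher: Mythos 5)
Your proof is correct and follows essentially the same route as the paper: part (1) is done via $\sigma$-equivariance and injectivity of $\exp$, and part (2) uses the same square-root device $g\mapsto g^{-1}\sigma(g)$ with the halving $Y=-\tfrac12\log(g^{-1}\sigma(g))$ to reduce to a fixed point of $\sigma$.
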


\begin{proof}
{\rm{1)}}\enspace
Since $N$ is simply connected and nilpotent, 
 the exponential map is bijective.  
We write $\log : N \to {\mathfrak {n}}$
 for its inverse.  
Then,
 for the first statement, 
 it is sufficient to prove
 the surjectivity 
 of the restriction ${\mathfrak {n}}^{\pm \sigma}
\to N^{\pm\sigma}$.  
Take an arbitrary $y \in N$
 such that $\sigma(y)=y^{\pm 1}$.  
Then $Y:=\log y$ satisfies
 $\exp (\sigma Y)=\exp(\pm Y)$, 
whence $\sigma Y = \pm Y$.  
Thus $\exp : {\mathfrak {n}}^{\sigma} \to N^{\sigma}$
 and ${\mathfrak {n}}^{-\sigma} \to N^{-\sigma}$
 are both surjective.  
\par\noindent
{\rm{2)}}\enspace
Clearly the map \eqref{eqn:Nsigma} is injective. 
To see \eqref{eqn:Nsigma} is surjective, 
 we take $z \in N$.  
Since $z^{-1} \sigma(z) \in N^{-\sigma}$,
we have $Y:=-\frac 1 2 \log(z^{-1} \sigma(z)) \in {\mathfrak {n}}
^{-\sigma}$.  
We set 
$x:=z \exp (-Y)$.  
Then 
$
   x \sigma(x)^{-1} = z \exp (-2 Y) \sigma(z)^{-1}=e.  
$
Thus $X:=\log (x) \in {\mathfrak {n}}^{\sigma}$
 and $z =\exp X \exp Y$.  
Hence we have shown 
 that the map \eqref{eqn:Nsigma} is surjective, 
 too.  
\end{proof}

\begin{lemma}
\label{lem:Qdouble}
We let $Z_H({\mathfrak {a}}_H)=M_H A_H$ act linearly
 on ${\mathfrak {n}}^{-\sigma}$.  
Then the natural inclusion 
 ${\mathfrak {n}}^{-\sigma} \overset {\exp}\to N^{-\sigma} 
\hookrightarrow Q$
 induces the following bijections:
\begin{align}
\label{eqn:Qdouble}
{\mathfrak {n}}^{-\sigma}/(M_H \cap M_G)A_H
&\overset \sim \to P_H\backslash Q / (L \cap \overline{P_G}),  
\\
\label{eqn:QLdouble}
{\mathfrak {n}}^{-\sigma}/M_H A_H
&\overset \sim \to P_H\backslash Q / L.  
\end{align}
\end{lemma}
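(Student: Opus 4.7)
My plan is to handle both bijections by the same method: combine the Langlands factorization $Q = NL$ (in which the $N$- and $L$-parts are unique) with the $\sigma$-stable product decomposition $N = \exp(\mathfrak{n}^{\sigma}) \cdot \exp(\mathfrak{n}^{-\sigma})$ from Lemma \ref{lem:Nsigma}(2). Then every $q \in Q$ can be written as $q = n_1 \exp(Y) \ell$ with $n_1 \in N^{\sigma}$, $Y \in \mathfrak{n}^{-\sigma}$, and $\ell \in L$; since $N^{\sigma} \subset P_H$, the factor $n_1$ is harmlessly absorbed on the left, reducing the problem to understanding when $\exp(Y)\ell$ and $\exp(Y')$ lie in the same $(P_H, R)$-double coset, where $R = L$ for \eqref{eqn:QLdouble} and $R = L \cap \overline{P_G}$ for \eqref{eqn:Qdouble}.

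For the surjectivity of \eqref{eqn:Qdouble} I would exponentiate the Lie algebra identity of Lemma \ref{lem:LIwasawa}(3) (using $Z_{\mathfrak{h}\cap\mathfrak{k}}(\mathfrak{a}_H) = \mathfrak{m}_H$) to the factorization $L = M_H \cdot (L \cap \overline{P_G})$. Decomposing $\ell = m \bar\ell$ accordingly and conjugating $\exp(Y)$ by $m \in L_H$—which preserves $\mathfrak{n}^{-\sigma}$ because $m$ commutes with $\sigma$—rewrites $\exp(Y)\ell$ as $m \cdot \exp(Y'') \cdot \bar\ell$ with $m \in P_H$ and $\bar\ell \in L \cap \overline{P_G}$, so every class is of the form $[\exp(Y'')]$. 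For \eqref{eqn:QLdouble} no decomposition of $\ell$ is needed since $\ell \in L = R$ already.

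For injectivity, suppose $\exp(Y') = p \exp(Y) r$ with $p = m n_s \in P_H$ (where $m \in L_H$, $n_s \in N^{\sigma}$) and $r \in R$. Using that $L_H$ normalizes $N^{\sigma}$ and that $\operatorname{Ad}(L_H)$ preserves $\mathfrak{n}^{-\sigma}$, I rewrite the right-hand side in the form $n_s' \exp(\operatorname{Ad}(m) Y) \cdot (mr)$ with $n_s' \in N^{\sigma}$ and $\operatorname{Ad}(m)Y \in \mathfrak{n}^{-\sigma}$. The uniqueness of the $NL$-decomposition of $Q$ forces $mr = e$, and then the uniqueness in Lemma \ref{lem:Nsigma}(2) forces $n_s' = e$ and $Y' = \operatorname{Ad}(m) Y$. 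Hence $r = m^{-1} \in L_H \cap R$. For $R = L$ this is just $m \in M_H A_H$ and recovers \eqref{eqn:QLdouble}. For $R = L \cap \overline{P_G}$ one needs the identity
\[
M_H A_H \cap \overline{P_G} = (M_H \cap M_G) A_H,
\]
which follows from $A_H \subset A_G \subset \overline{P_G}$ together with $K \cap \overline{P_G} = M_G$, and this gives \eqref{eqn:Qdouble}.

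The main technical point is promoting the Lie-algebra identity of Lemma \ref{lem:LIwasawa}(3) to the group-level factorization $L = M_H \cdot (L \cap \overline{P_G})$; this requires a genuine Iwasawa-type argument on the reductive group $L$, since $L$ is noncompact and $M_H \cap M_G$ is generically nontrivial. Once this is granted, both bijections follow directly from the uniqueness statements in the $NL$- and $N^{\sigma} N^{-\sigma}$-decompositions.
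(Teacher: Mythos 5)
Your proof is correct and takes essentially the same route as the paper's: both surjectivity arguments rest on the factorization $Q = M_H N^{\sigma}\exp(\mathfrak{n}^{-\sigma})(L \cap \overline{P_G})$ obtained from Lemmas \ref{lem:LIwasawa} and \ref{lem:Nsigma}, and both injectivity arguments combine uniqueness of the $NL$- and $N^{\sigma}N^{-\sigma}$-decompositions with the identity $M_HA_H \cap (L \cap \overline{P_G}) = (M_H\cap M_G)A_H$. The only difference is cosmetic: the paper isolates $l_H^{-1}l^{-1}\in L\cap N=\{e\}$ directly, whereas you conjugate through $m$ first, and you flag explicitly the group-level lift $L = M_H\,(L\cap\overline{P_G})$ of Lemma \ref{lem:LIwasawa}(3) that the paper uses tacitly.
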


\begin{proof}
It follows from Lemmas \ref{lem:LIwasawa} and \ref{lem:Nsigma}
 that 
\[Q=NL=NM_H(L \cap \overline{P_G})
=M_H N^{\sigma} \exp ({\mathfrak {n}}^{-\sigma})
(L \cap \overline{P_G}).  
\]
Thus the map \eqref{eqn:Qdouble} is surjective, 
 and so is \eqref{eqn:QLdouble}.  
\par\noindent
1)\enspace
Suppose two elements $X_1$, $X_2 \in {\mathfrak {n}}^{-\sigma}$
 have the same image
 in \eqref{eqn:Qdouble}.  
This means
 that there exist $l_H \in L_H =M_HA_H$, 
 $n_H \in N^{\sigma}$, 
 and $l \in L \cap \overline{P_G}$
 such that $x_i = \exp (X_i)$
 ($i=1,2$)
 satisfy $x_1=l_Hn_Hx_2 l$.  
Then we have
\[
  L \ni l_H^{-1}l^{-1}
     =(l_H^{-1} x_1^{-1} l_H)n_H x_2
     \in N^{-\sigma}N^{\sigma}N^{-\sigma}=N, 
\]
 and therefore $l=l_H^{-1}$, 
 $n_H =e$, 
 and $l_H^{-1}x_1 l_H=x_2$.  
Hence $\operatorname{Ad} (l_H)X_2=X_1$.  
Since $l_H=l^{-1}$ belongs
 to 
\[
M_H A_H \cap (L \cap \overline P_G)
=
(M_H \cap M_G)A_H, 
\]
the map \eqref{eqn:Qdouble} is injective.  
\par\noindent 
2)\enspace
The proof parallels to that for \eqref{eqn:Qdouble}.  
The only difference is that $l \in L$ 
 instead of the previous condition $l \in L \cap \overline P_G$, 
and thus $l_H=l^{-1}$ belongs to 
 $M_H A_H \cap L = M_H A_H$.  
Hence $X_1$ and $X_2$ give the same equivalence class 
under the action of $M_HA_H$.  
\end{proof}

We are ready 
 to complete the proof
 of Theorems \ref{thm:pp} and \ref{thm:qp}.  
\begin{proof}
[Proof of Theorem \ref{thm:pp}]
Since any minimal parabolic subgroup
 is conjugate to each other
 by inner automorphisms,
 {\rm{(PP)}} is equivalent to the existence
 of an open $P_H$-orbit in $G/\overline P_G$.  
By the Bruhat decomposition,
 the $Q$-orbit through the origin $o=e \overline P_G$
 in $G/\overline P_G$ is open dense
 because $P_G \subset Q$.  
This open orbit is given by  
 $Q/(Q \cap \overline P_G) = Q/(L \cap \overline P_G)$
 as a homogeneous space of $Q$.  
Since $Q$ contains $P_H$, 
 the condition {\rm{(PP)}} is equivalent 
 to the existence 
 of an open $P_H$-orbit in $Q/(L \cap \overline P_G)$.  
By Lemma \ref{lem:Qdouble}, 
this amounts to the existence
 of an open $(M_H \cap M_G)A_H$ orbit
 in ${\mathfrak {n}}^{-\sigma}$.  
\end{proof}

\begin{proof}
[Proof of Theorem \ref{thm:qp}]
The proof is similar to that for Theorem \ref{thm:pp}.  
In fact, 
 since the $Q$-orbit through the origin $o=e \overline Q$
 in $G/\overline Q$  
 is open dense 
 and given by 
 $Q/(Q \cap \overline Q) = Q/L$, 
the condition {\rm{(QP)}} is equivalent
 to the existence
 of an open $P_H$-orbit in $Q/L$, 
which in turn is equivalent to the existence
 of an open $M_H A_H$-orbit
 in ${\mathfrak {n}}^{-\sigma}$
 by Lemma \ref{lem:Qdouble}.  
\end{proof}

\subsection{$c$-dual of symmetric pairs and (QP)}
\label{subsec:cdual}
For a symmetric pair $({\mathfrak {g}}, {\mathfrak {h}})$
 defined
 by an involutive automorphism $\sigma$
 of ${\mathfrak {g}}$, 
we write 
\[
   {\mathfrak {g}}={\mathfrak {g}}^{\sigma} + {\mathfrak {g}}^{-\sigma}
\]
for the eigenspace decomposition
 of $\sigma$
 with eigenvalues $+1$ and $-1$
 as before.  
Then ${\mathfrak {h}}={\mathfrak {g}}^{\sigma}$.  
We set 
\[
{\mathfrak {g}}^c:={\mathfrak {g}}^{\sigma}+\sqrt{-1}{\mathfrak {g}}^{-\sigma}.  \]
Then the vector space ${\mathfrak {g}}^c$
 carries a natural Lie algebra structure,
 and the pair $({\mathfrak {g}}^c, {\mathfrak {h}})$
 forms a symmetric pair
 by the restriction of the complex linear extension 
 of $\sigma$
 to ${\mathfrak {g}}_{\mathbb{C}} = {\mathfrak {g}} \otimes _{\mathbb{R}} 
 {\mathbb{C}}$.  
The pair $({\mathfrak {g}}^c, {\mathfrak {h}})$
 is called the 
 {\it{$c$-dual}}
 of the symmetric pair 
 $({\mathfrak {g}}, {\mathfrak {h}})$.  
We note
 that ${\mathfrak {g}}$ is reductive 
 if and only if ${\mathfrak {g}}^c$ is reductive.  
\begin{exmp}
\label{ex:cdual}
{\rm{
1)\enspace
The $c$-dual of the \lq\lq{group case}\rq\rq\
 $({\mathfrak {g}}\oplus {\mathfrak {g}}, \operatorname{diag}{\mathfrak {g}})$
 is isomorphic to the pair 
 $({\mathfrak {g}}_{\mathbb{C}}, {\mathfrak {g}})$
 where the involution of ${\mathfrak {g}}_{\mathbb{C}}$
 is given by the complex conjugation 
with respect to the real form ${\mathfrak {g}}$.    
\par\noindent
2)\enspace
The complex symmetric pair
 $({\mathfrak {g}}_{\mathbb{C}}, {\mathfrak {h}}_{\mathbb{C}})$
 is self $c$-dual.  
}}
\end{exmp}

\begin{prop}
\label{prop:cdual}
A reductive symmetric pair
 $({\mathfrak {g}}, {\mathfrak {h}})$ satisfies 
{\rm{{\rm{(QP)}}}}
 if and only if the $c$-dual
 $({\mathfrak {g}}^c, {\mathfrak {h}})$ satisfies {\rm{(QP)}}.  
\end{prop}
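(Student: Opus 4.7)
The plan is to apply the linearization criterion of Theorem \ref{thm:qp} to both sides and then exhibit an explicit $Z_{\mathfrak{h}}({\mathfrak{a}}_H)$-equivariant linear isomorphism between the two nilpotent spaces that appear. Concretely, (QP) holds for $({\mathfrak{g}},{\mathfrak{h}})$ iff $M_HA_H$ has an open orbit on ${\mathfrak{n}}^{-\sigma}$, and an analogous statement holds for $({\mathfrak{g}}^c,{\mathfrak{h}})$. So it suffices to produce a compatible set of structural data for ${\mathfrak{g}}^c$ and identify $({\mathfrak{n}}^c)^{-\sigma}$ with ${\mathfrak{n}}^{-\sigma}$ equivariantly.

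First I would fix the structural data on the $c$-dual. Since $\theta$ commutes with $\sigma$, the ${\mathbb{C}}$-linear extension of $\theta$ to ${\mathfrak{g}}_{\mathbb{C}}$ preserves ${\mathfrak{g}}^c={\mathfrak{g}}^{\sigma}+\sqrt{-1}\,{\mathfrak{g}}^{-\sigma}$, and a standard Killing-form computation (using that the Killing form of ${\mathfrak{g}}^c$ agrees with that of ${\mathfrak{g}}$ on ${\mathfrak{g}}^{\sigma}$ and differs by a sign on $\sqrt{-1}\,{\mathfrak{g}}^{-\sigma}$) shows that $\theta^c:=\sigma\theta$ is a Cartan involution of ${\mathfrak{g}}^c$ that still commutes with $\sigma$. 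Since ${\mathfrak{a}}_H\subset{\mathfrak{h}}^{-\theta}$ and $\sigma$ acts as the identity on ${\mathfrak{h}}$, we have ${\mathfrak{a}}_H\subset({\mathfrak{g}}^c)^{-\theta^c}$, so one may choose ${\mathfrak{a}}_H^c={\mathfrak{a}}_H$ as the maximal abelian subspace of ${\mathfrak{h}}^{-\theta^c}$. Moreover $Z_{\mathfrak{h}}({\mathfrak{a}}_H)$ is literally the same subalgebra on the two sides, so ${\mathfrak{m}}_H^c+{\mathfrak{a}}_H^c={\mathfrak{m}}_H+{\mathfrak{a}}_H$.

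Next I would compare the root data with respect to ${\mathfrak{a}}_H$. Viewing everything inside ${\mathfrak{g}}_{\mathbb{C}}$, the ${\mathfrak{a}}_H$-weight spaces satisfy
\[
   {\mathfrak{g}}_{\mathbb{C}}({\mathfrak{a}}_H;\lambda)
   = {\mathfrak{g}}({\mathfrak{a}}_H;\lambda)^{\sigma}
   \oplus {\mathfrak{g}}({\mathfrak{a}}_H;\lambda)^{-\sigma}
   = {\mathfrak{g}}^c({\mathfrak{a}}_H;\lambda)^{\sigma}
   \oplus \tfrac{1}{\sqrt{-1}}\,{\mathfrak{g}}^c({\mathfrak{a}}_H;\lambda)^{-\sigma},
\]
so $\Sigma({\mathfrak{g}}^c,{\mathfrak{a}}_H)=\Sigma({\mathfrak{g}},{\mathfrak{a}}_H)$ and one may choose the same positive system. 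With that choice, ${\mathfrak{n}}^c=\bigoplus_{\lambda>0}{\mathfrak{g}}^c({\mathfrak{a}}_H;\lambda)$ decomposes as
\[
   ({\mathfrak{n}}^c)^{\sigma}={\mathfrak{n}}^{\sigma},
   \qquad
   ({\mathfrak{n}}^c)^{-\sigma}=\sqrt{-1}\,{\mathfrak{n}}^{-\sigma}.
\]

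Finally I would note that multiplication by $\sqrt{-1}$ is a ${\mathbb{R}}$-linear isomorphism
${\mathfrak{n}}^{-\sigma}\xrightarrow{\sim}({\mathfrak{n}}^c)^{-\sigma}$, and for any $X\in Z_{\mathfrak{h}}({\mathfrak{a}}_H)$ and $Y\in{\mathfrak{n}}^{-\sigma}$ one has $[X,\sqrt{-1}Y]=\sqrt{-1}[X,Y]$, so this map intertwines the adjoint action of $M_HA_H$ with that of $M_H^cA_H^c$. Consequently $M_HA_H$ has an open orbit on ${\mathfrak{n}}^{-\sigma}$ if and only if $M_H^cA_H^c$ has an open orbit on $({\mathfrak{n}}^c)^{-\sigma}$, and Theorem \ref{thm:qp} yields the equivalence of (QP) for the two pairs. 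The only non-routine point is the verification that $\sigma\theta$ is genuinely a Cartan involution of ${\mathfrak{g}}^c$; everything else is formal bookkeeping about weight-space decompositions.
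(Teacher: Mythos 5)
Your proposal is correct and follows essentially the same approach as the paper: apply the linearization criterion of Theorem~\ref{thm:qp} to both $({\mathfrak{g}},{\mathfrak{h}})$ and $({\mathfrak{g}}^c,{\mathfrak{h}})$, then observe that multiplication by $\sqrt{-1}$ gives a $Z_H({\mathfrak{a}}_H)$-equivariant isomorphism ${\mathfrak{n}}^{-\sigma}\xrightarrow{\sim}({\mathfrak{n}}^c)^{-\sigma}$. You spell out the background bookkeeping the paper leaves implicit (that $\sigma\theta$ is a Cartan involution of ${\mathfrak{g}}^c$, that ${\mathfrak{a}}_H$ can be reused, and that the restricted root systems and positive systems coincide), which is a reasonable and correct elaboration rather than a genuinely different route.
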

\begin{proof}
By the criterion in Theorem \ref{thm:qp}, 
 the reductive symmetric pair
 $({\mathfrak {g}}, {\mathfrak {h}})$
 (respectively,
 the $c$-dual $({\mathfrak {g}}^c, {\mathfrak {h}})$)
 satisfies {\rm{(QP)}}
 if and only if 
 the group $Z_H({\mathfrak {a}}_H)$
 has an open orbit in ${\mathfrak {n}}^{-\sigma}$
 (respectively, 
 in $\sqrt{-1}{\mathfrak {n}}^{-\sigma}$).  
Since ${\mathfrak {n}}^{-\sigma}$
 and $\sqrt{-1}{\mathfrak {n}}^{-\sigma}$
 are isomorphic
 to each other
 as modules of the group $Z_H({\mathfrak {a}}_H)$, 
 we get the proposition.  
\end{proof}
\begin{remark}
\label{rem:cdual}
{\rm{
An analogous statement to Proposition \ref{prop:cdual}
 does not hold for {\rm{(PP)}}
 in general.  
}}
\end{remark}

\subsection{Further properties for (QP)}
\label{subsec:QPmore}
In order to screen the symmetric pairs
 that do not satisfy (QP), 
 it is convenient to find
 a necessary condition
 for (QP) 
 in terms of the restricted root system.

Here is the one 
 that we frequently use in later sections:
\begin{prop}
\label{prop:QPrank}
If $({\mathfrak {g}}, {\mathfrak {h}})$
 satisfies {\rm{(QP)}}, 
 then elements of $\Delta({\mathfrak {n}}^{-\sigma})$
 are linearly independent.  
In particular, 
 we have
\begin{equation}
\label{eqn:rn}
\operatorname{rank}_{\mathbb{R}}H 
\ge
\# \Delta({\mathfrak {n}}^{-\sigma}), 
\end{equation}
where $\# \Delta({\mathfrak {n}}^{-\sigma})$ denotes 
the cardinality 
 of the weights of ${\mathfrak {a}}_H$
 in ${\mathfrak {n}}^{-\sigma}$ 
 without counting the multiplicities.  
\end{prop}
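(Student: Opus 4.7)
The plan is to invoke the linearization in Theorem \ref{thm:qp}, which recasts {\rm{(QP)}} as the existence of an open $M_H A_H$-orbit on $\mathfrak{n}^{-\sigma}$, and then deduce the linear independence of $\Delta(\mathfrak{n}^{-\sigma})$ by exhibiting a non-constant $M_H A_H$-invariant rational function whenever a linear relation among the weights exists. The final inequality $\operatorname{rank}_{\mathbb{R}} H \ge \#\Delta(\mathfrak{n}^{-\sigma})$ is then immediate, since a linearly independent set of vectors in $\mathfrak{a}_H^*$ has at most $\dim \mathfrak{a}_H^* = \operatorname{rank}_{\mathbb{R}} H$ elements.

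First I would decompose $\mathfrak{n}^{-\sigma} = \bigoplus_{\lambda \in \Delta(\mathfrak{n}^{-\sigma})} V_\lambda$ into $\mathfrak{a}_H$-weight spaces. Since $M_H = Z_{H \cap K}(\mathfrak{a}_H)$ centralizes $\mathfrak{a}_H$, each $V_\lambda$ is preserved by $M_H$. Fixing a $K$-invariant Euclidean norm on $\mathfrak{g}$, the restriction to $V_\lambda$ yields an $M_H$-invariant degree-two homogeneous polynomial $N_\lambda(v) := \|v\|^2$. Because $\operatorname{Ad}(\exp H)$ acts on $V_\lambda$ as multiplication by the scalar $e^{\lambda(H)}$ for $H \in \mathfrak{a}_H$, one obtains $N_\lambda(\operatorname{Ad}(\exp H) v) = e^{2\lambda(H)} N_\lambda(v)$.

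Suppose now, toward a contradiction, that the weights in $\Delta(\mathfrak{n}^{-\sigma})$ are linearly dependent over $\mathbb{R}$; after clearing denominators, choose a nontrivial integer relation $\sum_{\lambda} c_\lambda \lambda = 0$. The rational function
\begin{equation*}
f(X) := \prod_{\lambda \in \Delta(\mathfrak{n}^{-\sigma})} N_\lambda(X_\lambda)^{c_\lambda}
\end{equation*}
is then defined on the Zariski-open set $U = \{X : X_\lambda \neq 0 \text{ for all } \lambda\}$, is manifestly $M_H$-invariant, and is $A_H$-invariant since $\sum_\lambda 2 c_\lambda \lambda(H) = 0$ for every $H \in \mathfrak{a}_H$. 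As not all $c_\lambda$ vanish, $f$ is non-constant on $U$. If an open $M_H A_H$-orbit $\mathcal{O}$ existed, then $\mathcal{O} \cap U$ would be a non-empty open subset of $U$ on which $f$ is constant; by real-analyticity of $f$ on the connected set $U$, $f$ would then be globally constant on $U$, a contradiction. Hence by Theorem \ref{thm:qp}, {\rm{(QP)}} fails, proving the contrapositive.

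The construction of $f$ is the only delicate step, and it hinges on three compatibilities: the $M_H$-invariance of the chosen norm (assured by $M_H \subseteq K$), the degree-two homogeneity (which makes the $A_H$-weight of $N_\lambda$ exactly $2\lambda$, so that any real relation among the $\lambda$ transfers), and the simultaneous non-vanishing of all $X_\lambda$ on the open set $U$ (to keep $f$ well-defined despite negative exponents). I do not anticipate any further obstacle; an alternative tangent-space argument showing that the projection of $[\mathfrak{a}_H, X]$ modulo $[\mathfrak{m}_H, X]$ must surject onto the quotient $\mathfrak{n}^{-\sigma} / \bigoplus_\lambda [\mathfrak{m}_H, X_\lambda]$ would also yield linear independence, but the invariant-function route seems more transparent.
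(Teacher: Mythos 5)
Your proposal follows essentially the same route as the paper: the key mechanism in both is that the compact group $M_H$ preserves a Euclidean norm, so the $M_H A_H$-action on $\mathfrak{n}^{-\sigma}$ descends to the scaling action of $A_H$ on the tuple of squared norms $(N_\lambda(X_\lambda))_\lambda$, and a linear relation among the weights then produces an obstruction to an open orbit. The paper packages this as a quotient by $M_H$ onto the product of half-lines $\prod_{\lambda} (\mathbb{R}_\lambda)_{\geq 0}$ and then invokes the elementary Lemma~\ref{lem:li} about $A_H$-orbits in $\bigoplus_{\lambda}\mathbb{R}_\lambda$, whereas you build the invariant rational function $f=\prod N_\lambda^{c_\lambda}$ explicitly — so your version is effectively an inlined proof of Lemma~\ref{lem:li} and therefore slightly more self-contained. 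One small point to tighten: you assert $U$ is connected, but if some weight space $V_\lambda$ is one-dimensional then $V_\lambda\setminus\{0\}$ has two components and so does $U$. This is harmless — simply run the analytic-continuation argument within whichever connected component of $U$ meets the open orbit; $f$ is still non-constant there, since scaling a single $X_{\lambda_0}$ with $c_{\lambda_0}\neq 0$ by positive reals stays inside that component and changes $f$.
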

The converse statement
 of Proposition \ref{prop:QPrank}
 is not true;
 however,
 we shall see 
 that the condition \eqref{eqn:rn} is a fairly good criterion
 for (QP).  
For example
 if $(G,H)=(SO^{\ast}(2p+2q),SO^{\ast}(2p) \times SO^{\ast}(2q))$
 then the condition \eqref{eqn:rn} is equivalent to (QP)
 except for $(p,q)=(2,2)$, 
 see Proposition \ref{prop:sostar}.

For $\lambda \in {\mathfrak {a}}_H^{\ast}
 ={\operatorname{Hom}}_{\mathbb{R}}({\mathfrak {a}}_H, {\mathbb{R}})$, 
let $\chi_{\lambda}$ be the one-dimensional real representation
 of the abelian group $A_H$
 given by
\[
\chi_\lambda(\exp Y)
=\exp \langle \lambda, Y \rangle
\quad
\text{ for }
Y \in {\mathfrak {a}}_H, 
\]
and write ${\mathbb{R}}_\lambda$
 $(\simeq {\mathbb{R}})$
 for the representation space
 of ${\chi}_{\lambda}$. 
\vskip 1pc
To prove the proposition,
 we need the following elementary lemma:
\begin{lemma}
\label{lem:li}
Let $F$ be a finite subset
 of ${\mathfrak {a}}_H^{\ast}$.  
If $A_H$ has an open orbit
 in the vector space $\bigoplus_{\lambda \in F}{\mathbb{R}}_\lambda$, 
 then $F$ consists of linearly independent elements.  
In particular, 
 $\# F \le \dim {\mathfrak {a}}_H$.  
\end{lemma}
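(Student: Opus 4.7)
The plan is to exploit the fact that $A_H$ is a connected abelian Lie group acting linearly on $V := \bigoplus_{\lambda \in F} \mathbb{R}_{\lambda}$, so an open orbit forces the derived action of $\mathfrak{a}_H$ to be surjective onto the tangent space at some (and hence every) point of that orbit. Linear independence of $F$ will drop out by comparing this surjectivity with the explicit form of the weight action.

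First I would pick a point $v = \sum_{\lambda \in F} v_{\lambda} e_{\lambda}$ lying in the assumed open $A_H$-orbit $\mathcal{O}$, where $e_{\lambda}$ denotes a generator of $\mathbb{R}_{\lambda}$. Since $\exp(Y) \cdot v = \sum_{\lambda} e^{\langle \lambda, Y \rangle} v_{\lambda} e_{\lambda}$, every $w \in \mathcal{O}$ has its $\lambda$-coordinate equal to $e^{\langle \lambda, Y \rangle} v_{\lambda}$ for some $Y$; in particular, if some $v_{\lambda}$ vanished, the whole orbit would lie in the coordinate hyperplane $\{w_{\lambda} = 0\}$, contradicting openness. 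Hence $v_{\lambda} \ne 0$ for every $\lambda \in F$.

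Next I would differentiate the orbit map $\mu_v : A_H \to V$, $a \mapsto a \cdot v$, at the identity. A direct computation gives
\[
(d\mu_v)_e : \mathfrak{a}_H \longrightarrow V,
\qquad
Y \longmapsto \sum_{\lambda \in F} \langle \lambda, Y \rangle\, v_{\lambda}\, e_{\lambda}.
\]
Because $\mathcal{O}$ is open in $V$ and $A_H$ acts smoothly, $\mu_v$ is a submersion at $e$, so $(d\mu_v)_e$ is surjective. Using that all $v_{\lambda}$ are nonzero, this surjectivity is equivalent to the surjectivity of the linear map
\[
\mathfrak{a}_H \longrightarrow \mathbb{R}^F,
\qquad
Y \longmapsto \bigl(\langle \lambda, Y\rangle\bigr)_{\lambda \in F}.
\]
The image of this map is the annihilator of the common kernel $\bigcap_{\lambda \in F} \ker \lambda$, whose dimension as a subspace of $\mathbb{R}^F$ equals $\dim_{\mathbb{R}} \operatorname{span}_{\mathbb{R}} F$. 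Surjectivity therefore forces $\dim \operatorname{span}_{\mathbb{R}} F = \#F$, i.e., the elements of $F$ are linearly independent in $\mathfrak{a}_H^{\ast}$. The bound $\#F \le \dim \mathfrak{a}_H$ is then immediate.

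There is really no serious obstacle here: the only thing to be a little careful about is ruling out the possibility that $v$ lies on some coordinate hyperplane, which I handle at the outset by the openness argument. Everything else is a routine linear-algebra computation of the differential of the orbit map of an abelian group acting by characters.
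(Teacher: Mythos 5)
The paper states this lemma without proof (it is explicitly labeled as elementary and is immediately followed by the proof of Proposition~\ref{prop:QPrank}), so there is no author's argument to compare against. Your proof is correct and is the natural one: observe that a point in the open orbit has all coordinates nonzero, differentiate the orbit map at the identity, and use surjectivity of the differential. The one imprecision is the phrase describing the image of $Y \mapsto (\langle\lambda,Y\rangle)_{\lambda\in F}$ as ``the annihilator of the common kernel $\bigcap_{\lambda\in F}\ker\lambda$ \ldots\ as a subspace of $\mathbb{R}^F$'': that annihilator lives in $\mathfrak{a}_H^{\ast}$, not in $\mathbb{R}^F$. What is actually needed (and true) is only that the image has dimension $\dim_{\mathbb{R}}\operatorname{span}_{\mathbb{R}} F$, which follows at once from rank--nullity since the kernel of the map is exactly $\bigcap_{\lambda\in F}\ker\lambda$, whose codimension in $\mathfrak{a}_H$ is $\dim\operatorname{span}_{\mathbb{R}}F$. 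With that rephrasing the argument is airtight, and surjectivity onto $\mathbb{R}^F$ forces $\dim\operatorname{span}_{\mathbb{R}}F=\#F$, i.e.\ linear independence, giving $\#F\le\dim\mathfrak{a}_H$.
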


We return to the proof of Proposition \ref{prop:QPrank}:
\begin{proof}
[Proof of Proposition \ref{prop:QPrank}]
Since ${\mathfrak {a}}_H$ is a maximal abelian subspace 
 in ${\mathfrak {h}}^{-\theta}$, 
 we have 
 $Z_H({\mathfrak {a}}_H)
 = M_H A_H$
 with $M_H=Z_{H \cap K}({\mathfrak {a}}_H)$
 compact.  
We equip ${\mathfrak {n}}^{-\sigma}$
with an $M_H$-inner product
 such that the decomposition 
\[{\mathfrak {n}}^{-\sigma}
\simeq
\bigoplus_{\lambda \in \Delta({\mathfrak {n}}^{-\sigma})}
{\mathfrak{g}}^{-\sigma}({\mathfrak{a}}_H;\lambda)
\]
 is orthogonal to each other.

Let $O_{\lambda}$ be the orthogonal group 
 of the subspace
 ${\mathfrak {g}}^{-\sigma}({\mathfrak {a}}_H;\lambda)$.  
Then the quotient space
 of ${\mathfrak {g}}^{-\sigma}({\mathfrak {a}}_H;\lambda)$
 by $O_{\lambda}$
 is given by the \lq\lq{half line}\rq\rq:
\[{\mathfrak {g}}^{-\sigma}({\mathfrak {a}}_H;\lambda)/O_{\lambda}
  \simeq ({\mathbb{R}}_{\lambda})_{\ge 0}.  
\]

Since the compact group $M_H$ preserves 
 the inner product on ${\mathfrak {n}}^{-\sigma}$, 
 we have a natural surjective map
 between the quotient spaces 
 of ${\mathfrak {n}}^{-\sigma}$:
\[
{\mathfrak {n}}^{-\sigma}/M_H
\to 
{\mathfrak {n}}^{-\sigma}
/
(\prod_{\lambda \in \Delta({\mathfrak {n}}^{-\sigma})} O_{\lambda})
\simeq 
\prod_{\lambda \in \Delta({\mathfrak {n}}^{-\sigma})}
({\mathfrak {g}}^{-\sigma}({\mathfrak {a}}_H;\lambda)/O_{\lambda})
\simeq 
\prod_{\lambda \in \Delta({\mathfrak {n}}^{-\sigma})}
(
{\mathbb{R}}_\lambda
)_{\ge 0}.  
\]
Then if $Z_{H \cap K}({\mathfrak {a}}_H)A_H$ has an open orbit 
 in ${\mathfrak {n}}^{-\sigma}$
 via the adjoint representation, 
 then $A_H$ has an open orbit in the quotient
 of ${\mathfrak {n}}^{-\sigma}$
 by $M_H$.  
Therefore $A_H$ has an open orbit
 in 
\[
\prod_{\lambda \in \Delta({\mathfrak {n}}^{-\sigma})}
 ({\mathbb{R}}_{\lambda})_{\ge 0}
 \subset 
\bigoplus_{\lambda \in \Delta({\mathfrak {n}}^{-\sigma})}
 {\mathbb{R}}_{\lambda},
\]
 too.  
Applying Lemma \ref{lem:li}, 
 we conclude
 that the elements of $\Delta({\mathfrak {n}}^{-\sigma})$
 are linearly independent
 and therefore, 
$\# \Delta({\mathfrak {n}}^{-\sigma}
)
\ge \dim {\mathfrak {a}}_H
=\operatorname{rank}_{\mathbb{R}}H$.  
Hence the proposition is proved.  
\end{proof}
Next we analyze the inequality \eqref{eqn:rn}
 in Proposition \ref{prop:QPrank}.  
For this, 
 we denote by $W_H$ the Weyl group 
 of the restricted root system 
 $\Sigma({\mathfrak {h}}, {\mathfrak {a}}_H)$. 
Then $W_H$ acts 
 on the finite set $\Delta({\mathfrak {n}}^{-\sigma})
\cup (-\Delta({\mathfrak {n}}^{-\sigma}))$, 
 and consequently, 
 we have an obvious inequality
\[
 2 \# \Delta({\mathfrak {n}}^{-\sigma})
 \ge \# (W_H \cdot \lambda)
\]
for any $\lambda \in \Delta({\mathfrak {n}}^{-\sigma})$.  
Hence
 the inequality \eqref{eqn:rn}
 implies
 that for any $\lambda \in \Delta({\mathfrak {n}}^{-\sigma})$, 
 we have 
\begin{equation}
\label{eqn:aW}
2 \dim {\mathfrak {a}}_H
\ge \# (W_H \cdot \lambda).  
\end{equation}
The inequality \eqref{eqn:aW} gives strong constraints
 on both the root system 
 $\Delta({\mathfrak{h}},{\mathfrak{a}}_H)$
 and $\Delta({\mathfrak{n}}^{-\sigma})$.  
Let us examine \eqref{eqn:aW}
 in an abstract setting 
(corresponding to the case
 where ${\mathfrak {h}}$ is simple)
as follows: 
\begin{lemma}
\label{lem:2.6}
Let $\Delta$ be an irreducible root system 
on a vector space $E$, 
and $W$ the Weyl group of $\Delta$.  
If there exists $\lambda \in E \setminus \{0\}$
 such that 
\begin{equation}
\label{eqn:EW}
2 \dim E \ge \# (W \cdot \lambda), 
\end{equation}
then $\Delta$ is a classical root system.  
For an (irreducible) classical root system $\Delta$, 
 we take a standard basis 
 and the set $\Pi$ of simple roots as follows:
\begin{align*}
\text{Case 1}:&
\Delta =A_n, 
\quad\hphantom{m}
\Pi =\{\alpha_i= e_i-e_{i+1}: 1 \le i \le n\}
\text{ in }
{\mathbb{R}}^n/{\mathbb{R}}(e_1 + \cdots + e_{n+1}), 
\\
\text{Case 2}:&
\Delta =
\begin{cases}
B_n, 
&\Pi=\{\alpha_i =e_i - e_{i+1}: 1 \le i \le n-1\} 
           \cup \{\alpha_n=e_n\}, 
\\
C_n, 
&\Pi=\{\alpha_i = e_i- e_{i+1}: 1 \le i \le n-1\}
 \cup \{\alpha_n=2e_n\}, 
\\
D_n,  
&\Pi=\{\alpha_i = e_i-e_{i+1}: 1 \le i \le n-1 \}
 \cup \{\alpha_n=e_{n-1}+e_n\}.  
\end{cases}
\end{align*}
Here we assume 
 $n \ge 1$ for $\Delta=A_n$, 
 $n \ge 2$ for $\Delta=B_n$,
 $n \ge 3$ for $\Delta=C_n$, 
 and $n \ge 4$ for $\Delta=D_n$.

Then $\lambda$ satisfying \eqref{eqn:EW}
 must be of the following form:
\begin{alignat*}{3}
& \lambda \in {\mathbb{R}}e_i /({\mathbb{R}}(e_1+\cdots +e_{n+1}))
\quad
&&\text{for some $i$ \,$(1 \le i \le n+1)$}
\quad
&&\text{in Case 1}, 
\\
& \lambda \in {\mathbb{R}}e_i
\quad
&&\text{for some $i$ \,\,$(1 \le i \le n)$}
&&\text{in Case 2}.  
\end{alignat*}

\end{lemma}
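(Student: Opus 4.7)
The plan is to prove the lemma by combining an orbit-size lower bound that rules out exceptional root systems with a direct combinatorial analysis of $W$-orbits for each classical type.

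For the exceptional types $G_2, F_4, E_6, E_7, E_8$ I would invoke the well-known minimum sizes of nonzero $W$-orbits in $E$, namely $6, 24, 27, 56, 240$ respectively. These are attained at minuscule (or quasi-minuscule, when no minuscule weight exists) fundamental weights, or equivalently at the short/long roots for $G_2, F_4, E_8$. Each minimum is strictly larger than $2\dim E = 4, 8, 12, 14, 16$ respectively, so \eqref{eqn:EW} fails for every nonzero $\lambda$. Hence \eqref{eqn:EW} forces $\Delta$ to be classical.

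For the classical types I would parametrize $\lambda$ in the standard basis and count orbits directly. In type $A_n$, $W = S_{n+1}$ permutes the coordinates of a representative $\lambda = (c_1, \dots, c_{n+1})$, and the orbit has cardinality $(n+1)!/(m_1!\cdots m_r!)$ where $(m_1, \dots, m_r)$ is the partition of $n+1$ given by the multiplicities of the distinct $c_j$'s; imposing this to be at most $2n$ forces (for $n$ not too small) the partition to be $(1, n)$, i.e.\ $\lambda \in \mathbb{R}e_i$ modulo the diagonal. In types $B_n$ and $C_n$, with $W \cong \{\pm 1\}^n \rtimes S_n$ of order $2^n n!$, the orbit of $(c_1, \dots, c_n)$ has cardinality $2^k n!/\prod m_j!$ where $k$ is the number of nonzero coordinates and the $m_j$'s are the multiplicities of the distinct values of $|c_i|$ among the nonzero ones; the bound $\le 2n$ forces $k = 1$, i.e.\ $\lambda \in \mathbb{R}e_i$. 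For $D_n$, the Weyl group is the index-two subgroup of even signed permutations, so orbit sizes differ by at most a factor of $2$ from the $B_n$ count, and the same conclusion follows.

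The main obstacle is the bookkeeping for small ranks, where borderline equalities may occur: for example in $A_3$ one has $\#(W \cdot (e_1 + e_2)) = 6 = 2\dim E$, even though $e_1 + e_2$ is not a multiple of any $e_i$ modulo $\mathbb{R}(e_1 + \cdots + e_4)$. Such boundary cases must be either flagged explicitly or absorbed into the downstream classification where the specific pair $({\mathfrak g}, {\mathfrak h})$ provides further constraints that rule them out on geometric grounds. Apart from this low-rank care, the argument reduces to standard orbit-counting for the symmetric and hyperoctahedral groups, so the essential content is careful enumeration rather than any deeper structural idea.
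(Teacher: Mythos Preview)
Your approach is the same idea as the paper's---reduce to computing minimum $W$-orbit sizes---but via a different route: the paper argues uniformly that $\#(W\cdot\lambda)$ is minimized only on the rays $\mathbb{R}_+\omega_i$ and then computes $c(\Delta)=\#W/\max_i \#W(\mathfrak{l}_i)$ using the Weyl groups of the Levi factors of maximal parabolics, whereas you count orbits directly in coordinates via multinomial and hyperoctahedral formulas. Both produce the same minimum-orbit table and the same exclusion of the exceptional types; the paper's method is cleaner for the exceptional cases, yours is more explicit for the classical ones. Your flagging of the $A_3$ boundary case ($\#(W\cdot\omega_2)=\binom{4}{2}=6=2n$) is correct, and in fact the paper's own strict inequality $\binom{n+1}{i}>2n$ for $2\le i\le n-1$ fails at exactly this point as well.

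There is, however, a genuine gap in your $D_n$ step. That $D_n$-orbits and $B_n$-orbits differ by at most a factor of $2$ is true, but ``the same conclusion follows'' does not: in $D_4$ the half-spin weights $\omega_3=\tfrac{1}{2}(1,1,1,-1)$ and $\omega_4=\tfrac{1}{2}(1,1,1,1)$ have $W$-orbit of size $8=2n$ (by triality they are on equal footing with $\omega_1=e_1$), yet neither lies in any $\mathbb{R}e_i$. This is a second boundary case of exactly the flavor you identified for $A_3$, and your factor-of-$2$ shortcut misses it; you should treat $D_4$ explicitly alongside $A_3$. (The paper's proof glosses over this too, asserting that $\#W(\mathfrak{l}_i)$ attains its maximum only at $i=1$ for $D_n$, $n\ge 4$; for $D_4$ the maximum is attained at $i=1,3,4$.)
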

\begin{proof}
For a root system $\Delta$, 
 we consider the minimum cardinality
 of $W$-orbits 
 defined by 
\[
c(\Delta):=\inf_{\lambda \in E \setminus \{0\}}
\# (W \cdot \lambda).  
\]
Let us compute $c(\Delta)$.  
For this,
 we fix a positive system $\Delta^+$, 
 and write $\Pi=\{\alpha_1, \cdots, \alpha_n\}$
 for the set of simple roots, 
 and $\{\omega_1, \cdots, \omega_n\}$
 for the set of fundamental weights.  
In order to compute the cardinality
 of the orbit $W \cdot \lambda$, 
 we may assume $\lambda \in \overline{C_+}\setminus \{0\}$
 without loss of generality, 
 where $\overline{C_+}$ is the dominant chamber defined by 
\[
\overline{C_+}
:=\{\sum_{i=1}^{n} a_i \omega_i:
a_1, \cdots, a_n \ge 0\}.  
\]

We define a partial order on $\overline{C_+}$ by 
\[
 \lambda \succ \mu 
\quad
\text{if }
\lambda-\mu \in \overline{C_+}.  
\]
We denote by $W_{\lambda}$
 the isotropy subgroup
 of $W$ at $\lambda \in E$.  
Then $\# (W \cdot \lambda)=\# W/\# W_{\lambda}$.  
If $\lambda, \mu \in \overline{C_+}$
 satisfies $\lambda \succ \mu$, 
then there is an inclusion relation 
 $W_{\lambda} \subset W_{\mu}$,  
 and therefore
$\# W \cdot \lambda \ge \# W \cdot \mu$.  
Thus $\# W \cdot \lambda$ attains 
 its minimum 
 only if $\lambda$ lies in the most singular part of 
 the Weyl chamber, 
namely, 
 only if $\lambda \in {\mathbb{R}}_+ \omega_i$
 ($1 \le i \le n$).    
In this case,
 $W_{\lambda}$ coincides with the Weyl group 
 $W({\mathfrak {l}}_i)$
 of the Levi part ${\mathfrak {l}}_i$
 of the maximal parabolic subgroup 
 defined by the simple root $\alpha_i$.  
Thus we have
\[
  c(\Delta)=\frac{\# W}{\max_{1 \le i \le n} \# W({\mathfrak {l}}_i)}.  
\]
This formula yields the explicit value
 of $c(\Delta)$ 
 as in the table below,
 and also tells precisely when 
$\# W \cdot \lambda$ attains its minimum.  
\begin{table}[H]
\centering
\begin{tabular}{cccccccccc}
$\Delta$ \quad
&\, $A_n$ \,
&\, $B_n$ \,
&\, $C_n$ \,
&\, $D_n$ \,
&\, ${\mathfrak{e}}_6$ \,
&\, ${\mathfrak{e}}_7$ \,
&\, ${\mathfrak{e}}_8$ \,
&\, ${\mathfrak{f}}_4$ \,
&\, ${\mathfrak{g}}_2$ \,
\\
$c(\Delta)$ \quad
&\, $n+1$ \,\,
&\,\, $2n$ \,\,
&\,\, $2n$ \,\,
&\,\, $2n$ \,\,
&\,\, $27$ \,\,
&\,\, $56$ \,\,
&\,\, $240$ \,\,
&\,\, $24$ \,\,
&\,\, $6$ \,\,
\end{tabular}
\caption{$c(\Delta)$
 for simple root systems $\Delta$}
\label{tab:cDelta}
\end{table}

For $\Delta=A_n$, 
we label simple roots
 as indicated.  
Then by a simple computation,
 we see that 
 $\#W({\mathfrak {l}}_i)$ attains its maximum $n!$ 
 at $i=1$ and $i=n$, 
 and $\# W/\# W({\mathfrak {l}}_i) > 2n$
 for $2 \le i \le n-1$.  
Thus the inequality \eqref{eqn:EW} holds
 for $\lambda \in \overline{C_+} \setminus \{0\}$ 
 if and only if $\lambda \in {\mathbb{R}}_+ \omega_1$
 or ${\mathbb{R}}_+ \omega_n$, 
 namely,
 $\lambda \in {\mathbb{R}}_+ e_1$
 or $\lambda \in {\mathbb{R}}_- e_{n+1}
 \mod {\mathbb{R}}(e_1 + \cdots + e_{n+1})$.  

For $\Delta =B_n$
 ($n \ge 2$), 
$C_n$
 ($n \ge 3$), 
or 
$D_n$
 ($n \ge 4$), 
 $\#W({\mathfrak {l}}_i)$ attains its maximum only
 at $i=1$, 
 and the inequality \eqref{eqn:EW} is actually the equality
 when $\lambda \in {\mathbb{R}} \omega_1$.  

For exceptional root systems $\Delta$, 
it is immediate from Table \ref{tab:cDelta}
 that \eqref{eqn:EW} does not hold.  
Hence Lemma \ref{lem:2.6} is proved.  
\end{proof}

We end this section 
 with an easy-to-check necessary condition 
 for (QP)
 when $\operatorname{rank}_{\mathbb{R}} G=\operatorname{rank}_{\mathbb{R}}H$.  
Proposition \ref{prop:QPineq} below 
 will be used in Section \ref{sec:Ke}
 when we deal with exceptional Lie algebras.  
For a real reductive Lie group $G$
 with a Cartan involution $\theta$, 
 we take a maximal abelian subspace ${\mathfrak {a}}_G$
 in ${\mathfrak {g}}^{-\theta}$
 and fix a positive system $\Sigma^+({\mathfrak {g}}, {\mathfrak {a}}_G)$
 as before.  
We set 
\begin{align}
m(G):= \max_{\alpha \in \Sigma ({\mathfrak {g}}, {\mathfrak {a}}_G)}
       \dim_{\mathbb{R}}{\mathfrak {g}}({\mathfrak {a}}_G;\alpha), 
\label{eqn:mG}
\\
n(G):= \sum_{\alpha \in \Sigma^+ ({\mathfrak {g}}, {\mathfrak {a}}_G)}
       \dim_{\mathbb{R}}{\mathfrak {g}}({\mathfrak {a}}_G;\alpha).  
\label{eqn:nG}
\end{align}
We note that $n(G)$ is equal
 to the dimension of the real flag variety $G/P_G$.  

\begin{prop}
\label{prop:QPineq}
Assume
 $\operatorname{rank}_{\mathbb{R}}G=\operatorname{rank}_{\mathbb{R}}H$.  
If the symmetric pair $(G,H)$ satisfies {\rm{(QP)}}, 
then
\begin{equation}
\label{eqn:QPineq}
n(G) -n(H) \le m(G) \operatorname{rank}_{\mathbb{R}}H.  
\end{equation}
\end{prop}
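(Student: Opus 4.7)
The plan is to convert both sides of the inequality \eqref{eqn:QPineq} into dimensional data about ${\mathfrak n}^{-\sigma}$, and then invoke Proposition \ref{prop:QPrank} to control the number of ${\mathfrak a}_H$-weights appearing there. The criterion from Theorem \ref{thm:qp} itself (open $M_HA_H$-orbit) is not used directly; what is used is its consequence that $\# \Delta({\mathfrak n}^{-\sigma}) \le \operatorname{rank}_{\mathbb{R}} H$.

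First I would exploit the rank hypothesis to identify the two maximal abelian subspaces ${\mathfrak a}_H = {\mathfrak a}_G$. With this identification the $\sigma$-stable nilradical ${\mathfrak n}$ introduced in Subsection \ref{subsec:Q} coincides with ${\mathfrak n}_G$, so $\dim_{\mathbb{R}} {\mathfrak n} = n(G)$. The $\sigma$-fixed part ${\mathfrak n}^{\sigma} = {\mathfrak n} \cap {\mathfrak h}$ is the nilradical of the minimal parabolic ${\mathfrak p}_H = {\mathfrak m}_H + {\mathfrak a}_H + {\mathfrak n}^{\sigma}$ of ${\mathfrak h}$. Decomposing ${\mathfrak n}^{\sigma}$ into ${\mathfrak a}_H$-weight spaces, each summand equals ${\mathfrak h}({\mathfrak a}_H;\lambda) = {\mathfrak h} \cap {\mathfrak g}({\mathfrak a}_G;\lambda)$, and the set $\Delta({\mathfrak n}^{\sigma})$ of such weights is precisely $\Sigma^{+}({\mathfrak h}, {\mathfrak a}_H)$. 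Hence $\dim_{\mathbb{R}} {\mathfrak n}^{\sigma} = n(H)$, and the direct sum ${\mathfrak n} = {\mathfrak n}^{\sigma} \oplus {\mathfrak n}^{-\sigma}$ gives
\[
\dim_{\mathbb{R}} {\mathfrak n}^{-\sigma} = n(G) - n(H).
\]

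Second I would decompose ${\mathfrak n}^{-\sigma}$ into ${\mathfrak a}_H$-weight spaces
\[
{\mathfrak n}^{-\sigma} = \bigoplus_{\lambda \in \Delta({\mathfrak n}^{-\sigma})} {\mathfrak g}^{-\sigma}({\mathfrak a}_H;\lambda).
\]
Since ${\mathfrak a}_H = {\mathfrak a}_G$, each summand is contained in the full restricted root space ${\mathfrak g}({\mathfrak a}_G;\lambda)$ of ${\mathfrak g}$, whose dimension is at most $m(G)$ by \eqref{eqn:mG}. Therefore $\dim_{\mathbb{R}} {\mathfrak n}^{-\sigma} \le m(G)\cdot \#\Delta({\mathfrak n}^{-\sigma})$.

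Finally I would invoke Proposition \ref{prop:QPrank}: property {\rm{(QP)}} forces the elements of $\Delta({\mathfrak n}^{-\sigma})$ to be linearly independent in ${\mathfrak a}_H^{\ast}$, so that $\#\Delta({\mathfrak n}^{-\sigma}) \le \dim_{\mathbb{R}} {\mathfrak a}_H = \operatorname{rank}_{\mathbb{R}} H$. Combining the three bounds yields
\[
n(G) - n(H) = \dim_{\mathbb{R}} {\mathfrak n}^{-\sigma} \le m(G)\cdot \#\Delta({\mathfrak n}^{-\sigma}) \le m(G) \operatorname{rank}_{\mathbb{R}} H,
\]
which is \eqref{eqn:QPineq}. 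There is no serious obstacle here; the argument is essentially a bookkeeping, and its two pieces of leverage are the identification ${\mathfrak a}_H = {\mathfrak a}_G$ (which turns restricted root spaces of ${\mathfrak h}$ into the ${\mathfrak h}$-part of restricted root spaces of ${\mathfrak g}$) and the linear-independence output of Proposition \ref{prop:QPrank}, both of which are already available.
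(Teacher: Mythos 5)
Your argument is correct and follows essentially the same route as the paper: identify ${\mathfrak a}_H={\mathfrak a}_G$, compute $\dim{\mathfrak n}^{-\sigma}=n(G)-n(H)$, bound each weight space by $m(G)$, and apply Proposition~\ref{prop:QPrank} to bound $\#\Delta({\mathfrak n}^{-\sigma})$ by $\operatorname{rank}_{\mathbb{R}}H$. Your write-up is if anything cleaner than the paper's (which phrases the final step as a contrapositive and has a slightly garbled sentence), but the underlying reasoning is identical.
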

\begin{proof}
Since ${\mathfrak{a}}_H={\mathfrak{a}}_G$
 by the real rank assumption,
 we have
\[m(G) \# \Delta({\mathfrak{n}}^{-\sigma})
 \ge \dim {\mathfrak{n}}^{-\sigma}
 =n(G)-n(H).
\]  
Hence the inequality \eqref{eqn:QPineq}
implies $\operatorname{rank}_{\mathbb{R}}
 H \ge \# \Delta ({\mathfrak{n}}^{-\sigma})$.  
Thus Proposition \ref{prop:QPineq}
 follows from Proposition \ref{prop:QPrank}.  
\end{proof}

\section{Strong Gelfand pairs 
 and their real forms}
\label{sec:cpx}
This section focuses on (BB),  
which is much stronger than (PP)
 for real reductive pair
 $({\mathfrak {g}}, {\mathfrak {h}})$
 in general 
unless both ${\mathfrak {g}}$ and ${\mathfrak {h}}$
 are quasi-split Lie algebras.  
We begin with the \lq\lq{complex case}\rq\rq.  
In this case the condition (BB) is also referred
 to as a {\it{strong Gelfand pair}}.  
\begin{prop}
\label{prop:cpx-1}
Suppose that 
 $({\mathfrak {g}}, {\mathfrak {h}})$
 is a symmetric pair
 such that ${\mathfrak {g}}$ is a complex simple Lie algebra
 and ${\mathfrak {h}}$ is a complex subalgebra.  
Then the following three conditions are equivalent:
\begin{enumerate}
\item[{\rm{(i)}}]
The pair $({\mathfrak {g}}, {\mathfrak {h}})$
satisfies {\rm{(PP)}}.  
\item[{\rm{(ii)}}]
The pair $({\mathfrak {g}}, {\mathfrak {h}})$
satisfies {\rm{(BB)}}.  
\item[{\rm{(iii)}}]
$({\mathfrak {g}}, {\mathfrak {h}})$ is isomorphic 
 to $({\mathfrak {sl}}(n+1, {\mathbb{C}}),
 {\mathfrak {gl}}(n, {\mathbb{C}}))$
 or $({\mathfrak {so}}(n+1, {\mathbb{C}}),
 {\mathfrak {so}}(n, {\mathbb{C}}))$
 up to outer automorphisms.  
\end{enumerate}
\end{prop}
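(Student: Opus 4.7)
The plan is to establish the chain (iii) $\Rightarrow$ (ii) $\Rightarrow$ (i) $\Rightarrow$ (iii); the middle implication (ii) $\Rightarrow$ (i) is already recorded in Lemma \ref{lem:BPQ}, so the real content lies in the remaining two directions.

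The crucial observation for (i) $\Rightarrow$ (ii) is that when both $\mathfrak{g}$ and $\mathfrak{h}$ are complex Lie algebras, the minimal parabolic subgroups of $G$ and $H$ in the real sense coincide with Borel subgroups in the complex sense. Indeed, viewing $G$ as a real group and fixing a compact real form with Lie algebra $\mathfrak{k}$, one may choose $\mathfrak{a}_G = i\mathfrak{t}$ where $\mathfrak{t}$ is a Cartan subalgebra of $\mathfrak{k}$; the centralizer $M_G = \exp \mathfrak{t}$ is then a compact maximal torus, and $M_G A_G$ has real dimension equal to the complex dimension of a complex maximal torus, hence is a complex Cartan subgroup. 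Thus $P_G = M_G A_G N_G$ coincides with a Borel subgroup of $G$ as a complex group, and similarly $P_H = B_H$. The real open-orbit condition (PP) therefore coincides verbatim with the complex condition (BB).

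For (iii) $\Rightarrow$ (ii), I would apply the linearization criterion of Theorem \ref{thm:pp}. For each of the two listed families one identifies $\mathfrak{n}^{-\sigma}$ explicitly as an $\mathfrak{h}$-module: essentially the sum of the standard and dual representations of $\mathfrak{gl}(n,\mathbb{C})$ on $\mathbb{C}^n$ in case (F1), and the standard representation of $\mathfrak{so}(n,\mathbb{C})$ on $\mathbb{C}^n$ in case (F2). A straightforward calculation then produces a vector in $\mathfrak{n}^{-\sigma}$ whose stabilizer in $(M_H \cap M_G) A_H$ has codimension $\dim \mathfrak{n}^{-\sigma}$, yielding the required open orbit.

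The main obstacle is the converse (i) $\Rightarrow$ (iii): eliminating every other complex symmetric pair. The natural tool is Proposition \ref{prop:QPrank}, which provides the numerical necessary condition $\operatorname{rank}_{\mathbb{R}} H \ge \# \Delta(\mathfrak{n}^{-\sigma})$ and, more strongly, linear independence of these restricted weights in $\mathfrak{a}_H^{*}$. Running through Berger's list of complex simple symmetric pairs --- the Grassmannian-type families $(\mathfrak{sl}(p+q,\mathbb{C}), \mathfrak{s}(\mathfrak{gl}(p,\mathbb{C})+\mathfrak{gl}(q,\mathbb{C})))$ and their $\mathfrak{so}$, $\mathfrak{sp}$ analogues, together with the pairs involving exceptional $\mathfrak{g}$ --- one computes $\Delta(\mathfrak{n}^{-\sigma})$ and verifies that the inequality or the linear independence fails unless $\{p,q\} = \{n,1\}$ in type $A$ or in type $B$/$D$; the exceptional and symplectic cases are ruled out by Lemma \ref{lem:2.6} together with the numerical bound. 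This recovers Kr{\"a}mer's classification of strong Gelfand pairs through the structural criterion established in this paper.
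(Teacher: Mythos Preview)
Your argument for (i) $\Leftrightarrow$ (ii) matches the paper's: both rest on the observation that for a complex reductive group the real minimal parabolic is already a Borel, so the conditions (PP) and (BB) collapse into one another. (Strictly speaking, since (BB) is phrased in terms of $G_{\mathbb{C}}$ and for complex $\mathfrak{g}$ one has $\mathfrak{g}_{\mathbb{C}} \simeq \mathfrak{g} \oplus \overline{\mathfrak{g}}$, the condition (BB) factors into two copies of the same Borel open-orbit condition on $G/B_G$; this unpacking is implicit in both the paper and your sketch.)

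Where you diverge from the paper is in the equivalence (ii) $\Leftrightarrow$ (iii). The paper simply cites Kr{\"a}mer's classification of strong Gelfand pairs for this step, whereas you propose to recover it from the internal machinery (Theorem \ref{thm:pp}, Proposition \ref{prop:QPrank}, Lemma \ref{lem:2.6}). This is a legitimate alternative, and indeed the paper remarks immediately after the proof that the argument is covered independently by Propositions \ref{prop:glgl}, \ref{prop:somn}, \ref{prop:sp}, \ref{prop:rankH}, and \ref{prop:nonKe}. Your route has the advantage of being self-contained within the paper's framework; the paper's route is shorter. One small correction: for $(\mathfrak{sl}(n+1,\mathbb{C}),\mathfrak{gl}(n,\mathbb{C}))$ one has $\mathfrak{a}_H=\mathfrak{a}_G$ and $\mathfrak{n}^{-\sigma}\simeq M(n,1;\mathbb{C})\simeq\mathbb{C}^n$, a single copy of the standard representation rather than the sum with its dual (compare Proposition \ref{prop:glgl} with $q=1$). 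Also, be aware that the rank inequality of Proposition \ref{prop:QPrank} alone does not eliminate $(\mathfrak{sl}(4,\mathbb{C}),\mathfrak{s}(\mathfrak{gl}(2,\mathbb{C})+\mathfrak{gl}(2,\mathbb{C})))$ since $4=4$ there; you need the full linear-independence statement (the four weights $e_i-e_j$ satisfy an obvious relation), and for pairs with $\mathfrak{h}$ not simple Lemma \ref{lem:2.6} does not apply directly, so the case-check in Berger's list is slightly more laborious than your sketch suggests.
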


\begin{proof}
The equivalence (ii) $\Leftrightarrow$ (iii)
 was proved
 by Kr{\"a}mer \cite{Kr}. 
Since any minimal parabolic subgroup
 is a Borel subgroup
 for complex reductive Lie groups,
 the equivalence (i) $\Leftrightarrow$ (ii)
 is obvious. 
\end{proof}
Alternatively, 
 the proof of Proposition \ref{prop:cpx-1}
 is covered by special cases
 of our propositions
 in later sections:
\begin{alignat*}{2}
&\text{Proposition \ref{prop:somn}}
\qquad&& ({\mathfrak {o}}(m+n,{\mathbb{C}}), {\mathfrak {o}}(m,{\mathbb{C}})+{\mathfrak {o}}(n,{\mathbb{C}})).  
\\
&\text{Proposition \ref{prop:rankH}}
&& \operatorname{rank}_{\mathbb{R}} H=1. 
\\
&\text{Proposition \ref{prop:nonKe}}
&& \operatorname{rank}_{\mathbb{R}} H \ge 2
\text{ or }
({\mathfrak {g}}, {\mathfrak {h}})
 \not \simeq 
({\mathfrak {o}}(m+n,{\mathbb{C}}), {\mathfrak {o}}(m,{\mathbb{C}})+{\mathfrak {o}}(n,{\mathbb{C}})).  
\end{alignat*}
See also Propositions \ref{prop:glgl}
 and \ref{prop:sp}
 for an alternative and direct proof
 for the pairs
 $({\mathfrak {sl}}(m+n,{\mathbb{C}}),
 {\mathfrak {s}}({\mathfrak {gl}}(m,{\mathbb{C}})+{\mathfrak {gl}}(n,{\mathbb{C}})))$
 and $({\mathfrak {sp}}(m+n,{\mathbb{C}}), {\mathfrak {sp}}(m,{\mathbb{C}})+{\mathfrak {sp}}(n,{\mathbb{C}}))$, 
respectively.  
\begin{remark}
\label{rem:cpx}
{\rm{
In \cite{Cooper, Kr}, 
 the pair $({\mathfrak {so}}(8,{\mathbb{C}}), {\mathfrak {spin}}(7,{\mathbb{C}}))$
 also appears in the classification.  
However,
 it is isomorphic to the pair
 $({\mathfrak {so}}(8,{\mathbb{C}}),
 {\mathfrak {so}}(7,{\mathbb{C}}))$
 by an outer automorphism
 of ${\mathfrak {so}}(8,{\mathbb{C}})$.  
The automorphism arises from the triality of $D_4$
 (see also Lemma \ref{lem:SS}).  
}}
\end{remark}

Since the condition (BB) is determined by the complexification
 of the pair $({\mathfrak {g}}, {\mathfrak {h}})$, 
 we have:
\begin{prop}
\label{prop:cpx}
Let $({\mathfrak {g}}, {\mathfrak {h}})$ 
 be an irreducible symmetric pair.  
Then the following two conditions 
 are equivalent:
\begin{enumerate}
\item[{\rm{(i)}}]
$({\mathfrak {g}}, {\mathfrak {h}})$ satisfies {\rm{(BB)}}.  
\item[{\rm{(ii)}}]
$({\mathfrak {g}}, {\mathfrak {h}})$
 is isomorphic to {\rm{(F1)}}-- {\rm{(F5)}}.  
\end{enumerate}
\end{prop}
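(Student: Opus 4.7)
The plan is to reduce the statement to Proposition~\ref{prop:cpx-1} (Kr\"amer's classification of complex strong Gelfand pairs) via complexification. The starting observation is that the Borel subgroups $B_G$ and $B_H$ live inside the complex groups $G_{\mathbb{C}}$ and $H_{\mathbb{C}}$, so the condition (BB) depends only on the complexified pair $({\mathfrak{g}}_{\mathbb{C}}, {\mathfrak{h}}_{\mathbb{C}})$. In particular, two real forms of the same complex symmetric pair satisfy (BB) simultaneously.

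For the implication (i) $\Rightarrow$ (ii), I would split into two cases according to whether ${\mathfrak{g}}_{\mathbb{C}}$ is simple as a complex Lie algebra. In the first case, $({\mathfrak{g}}_{\mathbb{C}}, {\mathfrak{h}}_{\mathbb{C}})$ is a complex symmetric pair with simple total algebra, and Proposition~\ref{prop:cpx-1} identifies it, up to outer automorphisms, as either $({\mathfrak{sl}}(n+1,{\mathbb{C}}), {\mathfrak{gl}}(n,{\mathbb{C}}))$ or $({\mathfrak{o}}(n+1,{\mathbb{C}}), {\mathfrak{o}}(n,{\mathbb{C}}))$. I would then enumerate the real forms of these two complex symmetric pairs: the first gives exactly (F3) and (F4), while the second gives (F5). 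In the second case, ${\mathfrak{g}}$ is the realification of a complex simple Lie algebra ${\mathfrak{g}}_0$, and the standard splitting ${\mathfrak{g}} \otimes_{\mathbb{R}} {\mathbb{C}} \cong {\mathfrak{g}}_0 \oplus \overline{{\mathfrak{g}}_0} \cong {\mathfrak{g}}_0 \oplus {\mathfrak{g}}_0$ makes ${\mathfrak{h}}_{\mathbb{C}}$ a direct sum of two copies of the complex subalgebra ${\mathfrak{h}}_0 \subset {\mathfrak{g}}_0$. In this decomposition the flag varieties $G_{\mathbb{C}}/B_G$ and $G_{\mathbb{C}}/B_H$ become products, and the open $B_H$-orbit condition factors as the product of two copies of the same complex-Gelfand condition for $({\mathfrak{g}}_0, {\mathfrak{h}}_0)$. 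Applying Proposition~\ref{prop:cpx-1} to $({\mathfrak{g}}_0, {\mathfrak{h}}_0)$ then forces $({\mathfrak{g}}, {\mathfrak{h}})$ to be (F1) or (F2).

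For the converse (ii) $\Rightarrow$ (i), the complexifications of (F3), (F4), (F5) are exactly the complex strong Gelfand pairs in Proposition~\ref{prop:cpx-1}, so (BB) holds; the pairs (F1), (F2) satisfy (BB) directly by Proposition~\ref{prop:cpx-1}, applied to each simple factor in the factorization described above.

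I expect the main obstacle to be the bookkeeping of real forms and outer automorphisms in the forward direction---one must list every real form of $({\mathfrak{sl}}(n+1,{\mathbb{C}}), {\mathfrak{gl}}(n,{\mathbb{C}}))$ and $({\mathfrak{o}}(n+1,{\mathbb{C}}), {\mathfrak{o}}(n,{\mathbb{C}}))$ as symmetric pairs and check that after identifying outer automorphisms the list collapses to (F3), (F4), (F5). In particular one has to handle the small-rank coincidences (for instance the $D_4$ triality mentioned in Remark~\ref{rem:cpx}) to avoid spurious entries. Beyond this classification bookkeeping, the argument is a direct reduction via complexification to Kr\"amer's theorem.
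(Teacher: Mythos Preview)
Your approach---reducing to the complexification and invoking Kr\"amer's classification (Proposition~\ref{prop:cpx-1})---is exactly the paper's. The paper states only the one-line observation that (BB) depends solely on $({\mathfrak g}_{\mathbb C},{\mathfrak h}_{\mathbb C})$ and leaves the rest implicit, so your write-up is in fact more detailed than theirs.

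There is, however, a genuine gap in your ``second case'' (where ${\mathfrak g}_{\mathbb C}$ is not simple). You assert that then ${\mathfrak g}$ is the realification of a complex simple ${\mathfrak g}_0$ and that ${\mathfrak h}_{\mathbb C}\cong {\mathfrak h}_0\oplus{\mathfrak h}_0$ for a complex subalgebra ${\mathfrak h}_0\subset{\mathfrak g}_0$. Neither claim is automatic. First, an irreducible symmetric pair with ${\mathfrak g}_{\mathbb C}$ non-simple can also be the \emph{group case} $({\mathfrak g}'+{\mathfrak g}',\operatorname{diag}{\mathfrak g}')$, which you never address. Second, even when ${\mathfrak g}={\mathfrak g}_0$ is complex simple, the involution $\sigma$ may be ${\mathbb C}$-\emph{antilinear}, in which case ${\mathfrak h}$ is a real form of ${\mathfrak g}_0$ and ${\mathfrak h}_{\mathbb C}$ sits \emph{diagonally} in ${\mathfrak g}_0\oplus{\mathfrak g}_0$, not as a product ${\mathfrak h}_0\oplus{\mathfrak h}_0$. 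In both of these omitted situations the complexified pair is of diagonal type $({\mathfrak l}\oplus{\mathfrak l},\operatorname{diag}{\mathfrak l})$, to which Proposition~\ref{prop:cpx-1} does not apply. You need the separate (easy) fact that a Borel of ${\mathfrak l}$ has an open orbit on the product of two copies of the full flag variety only when ${\mathfrak l}\cong{\mathfrak{sl}}(2,{\mathbb C})$, and then the observation (cf.\ the Remark following Proposition~\ref{prop:cpx}) that the resulting small pairs are absorbed into (F2) and (F5) via low-rank isomorphisms. Once this missing sub-case is supplied, your argument is complete.
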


\begin{remark}
{\rm{
The \lq\lq{group case}\rq\rq\
 $({\mathfrak {g}}'+{\mathfrak {g}}', \operatorname{diag}{\mathfrak {g}}')$
 satisfying {\rm{(BB)}} is 
 either
 ${\mathfrak {g}}' \simeq {\mathfrak {sl}}
  (2,{\mathbb{R}})$
 or 
 ${\mathfrak {g}}' \simeq {\mathfrak {sl}}
  (2,{\mathbb{C}})$
 when ${\mathfrak {g}}'$ is simple.  
They are included
 as special cases of {\rm{(F2)}} and {\rm{(F5)}}:
\begin{align*}
({\mathfrak{sl}}(2,{\mathbb{R}})+{\mathfrak{sl}}(2,{\mathbb{R}}), 
\operatorname{diag}{\mathfrak{sl}}(2,{\mathbb{R}}))
\approx&({\mathfrak{o}}(2,2), {\mathfrak{o}}(2,1))
\\
\approx&
({\mathfrak{o}}(2,1)+{\mathfrak{o}}(2,1), 
\operatorname{diag}{\mathfrak{o}}(2,1)).
\end{align*}
\begin{align*}
({\mathfrak{sl}}(2,{\mathbb{C}})+{\mathfrak{sl}}(2,{\mathbb{C}}), 
\operatorname{diag}{\mathfrak{sl}}(2,{\mathbb{C}}))
\simeq
&({\mathfrak{so}}(4,{\mathbb{C}}), {\mathfrak{so}}(3,{\mathbb{C}}))
\\
\simeq
&
({\mathfrak{o}}(3,1)+{\mathfrak{o}}(3,1), 
\operatorname{diag}{\mathfrak{o}}(3,1)).
\end{align*}
}}
\end{remark}
\section{Some classical and exceptional cases}
\label{sec:classical}

In this section,
 we deal with some classical symmetric pairs
 $(G,H)$ in matrix forms 
 and one exceptional symmetric pair.  
Classical symmetric spaces have parameters
 such as $i$, $j$, $k$ and $l$
 in $(G,H)=(O(i+j,k+l),O(i,k) \times O(j,l))$.  
We determine
 for which parameters they satisfy {\rm{(PP)}} or {\rm{(QP)}}
 by using the criteria, 
Theorems \ref{thm:pp} and \ref{thm:qp}.  
The cases we treat in Section \ref{sec:classical}
 cover (C)--(H)
 in Theorem \ref{thm:1.1}
 except for (E4)
 (see Step 4 in Section \ref{sec:strategy}).  
In particular,
 the implication (ii) $\Rightarrow$ (i)
 in Theorem \ref{thm:1.2} is proved in this section
 except for (E4).  
The case (E4) will be treated in Section \ref{sec:rank1}
 together with other symmetric pairs
 $({\mathfrak {g}}, {\mathfrak {h}})$
 with $\operatorname{rank}_{\mathbb{R}}{\mathfrak {h}}=1$.  

\subsection
{$(G,H)=(U(i+j, k+l;{\mathbb{F}}),
U(i, k;{\mathbb{F}}) \times U(j,l;{\mathbb{F}}))$}
\label{subsec:upq}
The open orbit properties (BB), (PP), and (QP)
 do not change
 if we replace $(G,H)$ 
 by their coverings, 
 connected components,
 or the quotients $(G/Z, H/H \cap Z)$
 by a central subgroup $Z$ of $G$.  
Thus, 
we shall treat the disconnected group 
 $O(p,q)$ rather than the connected group
 $SO_0(p,q)$, 
 and the reductive group $U(p,q)$ rather
 than the semisimple group $SU(p,q)$.  

The goal of this subsection
 is to prove the following:
\begin{prop}
\label{prop:upq}
Let $(G,H):=(U(i+j, k+l;{\mathbb{F}}),
U(i, k;{\mathbb{F}}) \times U(j,l;{\mathbb{F}}))$ 
with ${\mathbb{F}}={\mathbb{R}}$, 
${\mathbb{C}}$ or the quarternionic number field ${\mathbb{H}}$.  
Suppose that 
$l \le \min\{i,j,k\}$.  
\par\noindent
{\rm{1)}}\enspace
The pair $(G,H)$ satisfies
 {\rm{(QP)}}
 if and only if
\[
  l =0 \quad\text{ and }\,\, \min(i,j,k)=1.   
\]
\par\noindent
{\rm{2)}}\enspace
The pair $(G,H)$ satisfies
 {\rm{(PP)}}
 if and only if
\[
  l =0 \quad\text{ and }\,\, \min(j,k)=1.   
\]
\end{prop}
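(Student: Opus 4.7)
The strategy is to invoke the linearization criteria of Theorems~\ref{thm:pp} and~\ref{thm:qp}: (QP) (resp.\ (PP)) is equivalent to the existence of an open $M_H A_H$-orbit (resp.\ $(M_H\cap M_G)A_H$-orbit) on $\mathfrak{n}^{-\sigma}$. I would realize $\mathfrak{g}=\mathfrak{u}(i+j,k+l;\mathbb{F})$ on $V=V_+^1\oplus V_-^1\oplus V_+^2\oplus V_-^2$ of $\mathbb{F}$-dimensions $i,k,j,l$ with the obvious Hermitian form, so that $\sigma$ acts as $+1$ on $V^1:=V_+^1\oplus V_-^1$ and as $-1$ on $V^2:=V_+^2\oplus V_-^2$, cutting out $\mathfrak{h}=\mathfrak{u}(i,k;\mathbb{F})\oplus\mathfrak{u}(j,l;\mathbb{F})$. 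I would then choose $\mathfrak{a}_H$ as the standard split Cartan of $\mathfrak{h}$ (of real dimension $\min(i,k)+l$) inside a split Cartan $\mathfrak{a}_G$ of $\mathfrak{g}$ (of real dimension $k+l$), compatibly with positive systems.

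From the root-space decomposition one reads off that $\mathfrak{n}^{-\sigma}$ is a direct sum of $\mathbb{F}$-matrix blocks indexed by pairings between weight vectors of $V^1$ and of $V^2$, together with an auxiliary block built from the $|i-k|$ ``extra'' weight vectors of $V^1$. The group $M_H A_H$ acts on each block via the natural $\mathrm{GL}_\mathbb{F}\times\mathrm{GL}_\mathbb{F}$ (or unitary-group) action, so the open-orbit question becomes a classical prehomogeneous-vector-space / quiver problem, which I would settle using the invariant theory recalled in Subsection~\ref{subsec:upq}.

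For necessity, a dimension count shows that as soon as $l\ge 1$, the $V^2$-block of $\mathfrak{n}^{-\sigma}$ carries too many free parameters for $M_H A_H$ to act prehomogeneously; and when $l=0$ but $\min(i,j,k)\ge 2$, the remaining block again has no open $M_H A_H$-orbit. The finer condition $\min(j,k)=1$ for (PP), as opposed to $\min(i,j,k)=1$ for (QP), arises because shrinking $M_H A_H$ down to $(M_H\cap M_G)A_H$ loses exactly those symmetries that used the $i$-coordinates: one then needs $j$ or $k$ (and not $i$) to be $1$ for the smaller group still to act transitively on the relevant block. For sufficiency, under $l=0$ together with the appropriate smallness hypothesis, $\mathfrak{n}^{-\sigma}$ reduces to a short list of small concrete modules in which a generic element can be exhibited explicitly and its stabilizer computed, establishing the open orbit. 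The main obstacle will be the bookkeeping: separating the sub-cases $i\ge k$ and $i<k$ (which is precisely where $\operatorname{rank}_{\mathbb{R}}H$ drops below $\operatorname{rank}_{\mathbb{R}}G$, genuinely separating (PP) from (QP)) and handling the three fields $\mathbb{F}\in\{\mathbb{R},\mathbb{C},\mathbb{H}\}$ uniformly is the technical heart of the argument.
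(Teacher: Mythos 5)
Your overall framework is the right one and matches the paper: invoke Theorems~\ref{thm:pp} and~\ref{thm:qp} to reduce (PP) and (QP) to open-orbit questions for $(M_H\cap M_G)A_H$ and $M_HA_H$ acting on $\mathfrak{n}^{-\sigma}$, identify $\mathfrak{n}^{-\sigma}$ as a sum of $\mathbb{F}$-matrix blocks with a natural left/right multiplication action, and treat it as an invariant-theory problem. You also correctly locate where (PP) and (QP) diverge (the comparison of $\operatorname{rank}_{\mathbb{R}}H$ with $\operatorname{rank}_{\mathbb{R}}G$, i.e.\ the $i\ge k$ versus $i<k$ dichotomy with $l=0$). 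But two of your necessity steps, as stated, have genuine gaps.

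First, your claim that ``a dimension count shows that as soon as $l\ge 1$\dots $M_HA_H$ cannot act prehomogeneously'' is false as a dimension count. Already for $i=j=k=l=1$ and $\mathbb{F}=\mathbb{R}$ one has $\dim M_HA_H=2=\dim_{\mathbb{R}}\mathfrak{n}^{-\sigma}$, so the crude comparison of dimensions says nothing; the obstruction is that the action factors through a lower-dimensional quotient. The paper handles this by writing $\mathfrak{n}^{-\sigma}\simeq M(i,l;\mathbb{F})\oplus M(j,\min(i,k);\mathbb{F})$ with the explicit $M_HA_H$-action and then exhibiting a \emph{non-constant invariant polynomial}: the absolute value squared of the $(1,1)$-entry of $XY'$ (with $Y'$ the appropriate $l\times\min(i,k)$ submatrix of $Y$) is fixed by the action because the first components of the $a$- and $b$-tori cancel against each other, so no open orbit can exist when $l>0$. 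That cancellation across the two blocks is precisely what a per-block ``free parameter'' count misses. Second, your assertion that for $l=0$ and $\min(i,j,k)\ge 2$ ``the remaining block again has no open orbit'' is what needs to be \emph{proved}, not asserted. The paper supplies this as Lemma~\ref{lem:FGauss}: the group $U(1,\mathbb{F})^{p'}\times U(p-p',\mathbb{F})\times(\mathbb{F}^\times)^q$ has an open orbit on $M(p,q;\mathbb{F})$ if and only if $p=1$, or $q=1$ and $p'=0$; this is proved via Gauss decomposition of $U(p,\mathbb{F})\backslash M(p,q;\mathbb{F})$. Applying this lemma with $(p,q,p')=(j,\min(i,k),0)$ gives both directions of (QP) at once, and the three (PP) cases $k=1$, $j=1$, $i=1$ each reduce to a different specialization of the same lemma (the $i=1$ case being the one where $p'>0$ and the open orbit genuinely disappears for $j,k>1$). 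So: right strategy, but you need an explicit invariant to kill $l>0$, and you need a concrete open-orbit criterion of Gauss-decomposition type to settle the $l=0$ cases.
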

In particular,
 Proposition \ref{prop:upq} proves 
 the implication (ii) $\Rightarrow$ (i)
 in Theorem \ref{thm:1.1}
 for (E1), (E2), (E3), 
 (F4), (F5), and (H4).

In order to prove Proposition \ref{prop:upq}, 
we begin with the following:
\begin{lemma}
\label{lem:upql}
If $(G, H)$ satisfies {\rm{(QP)}}, 
 then $l=0$.  
\end{lemma}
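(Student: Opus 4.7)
The plan is to apply the linearization criterion of Theorem~\ref{thm:qp}, which equates (QP) with the existence of an open $M_H A_H$-orbit in $\mathfrak{n}^{-\sigma}$, together with the necessary condition from Proposition~\ref{prop:QPrank} that the $\mathfrak{a}_H$-weights in $\Delta(\mathfrak{n}^{-\sigma})$ be linearly independent in $\mathfrak{a}_H^{\ast}$. Assuming $l\ge 1$, I would derive a contradiction.

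After exchanging positive and negative signature in each factor of $H$ (an outer automorphism) if necessary, I may assume $\min(i,k)=k$ and $\min(j,l)=l$. Choose the standard split Cartans in each factor of $H$, introducing coordinates $\epsilon_1,\ldots,\epsilon_k$ on $\mathfrak{a}_{H_1}$ and $\eta_1,\ldots,\eta_l$ on $\mathfrak{a}_{H_2}$, so that $\dim\mathfrak{a}_H=k+l$. Since $\sigma$ is conjugation by a block-diagonal matrix, $\mathfrak{g}^{-\sigma}$ is precisely the off-block part of $\mathfrak{u}(i+j,k+l;\mathbb{F})$, and a straightforward matrix computation shows that for a compatible positive system the set $\Delta(\mathfrak{n}^{-\sigma})$ contains every weight of the form $\epsilon_a+\eta_b$ and $\epsilon_a-\eta_b$ with $1\le a\le k$, $1\le b\le l$.

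The linear-dependence argument is now immediate whenever $k\ge 2$ or $l\ge 2$. For instance, when $k\ge 2$ the four elements $\epsilon_1\pm\eta_1,\ \epsilon_2\pm\eta_1$ of $\Delta(\mathfrak{n}^{-\sigma})$ satisfy
\[
(\epsilon_1+\eta_1)-(\epsilon_1-\eta_1)=(\epsilon_2+\eta_1)-(\epsilon_2-\eta_1),
\]
which contradicts Proposition~\ref{prop:QPrank}; the case $l\ge 2$ is handled symmetrically via $\epsilon_1\pm\eta_1$ and $\epsilon_1\pm\eta_2$. The remaining edge case is $k=l=1$, where only the two weights $\epsilon_1\pm\eta_1$ appear from the off-block part, and they are genuinely linearly independent, so Proposition~\ref{prop:QPrank} no longer suffices. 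Here I would revert to a direct dimension count via Theorem~\ref{thm:qp}: using that each short restricted root $\epsilon_1\pm\eta_1$ has multiplicity $[\mathbb{F}:\mathbb{R}]$ in $\mathfrak{g}^{-\sigma}$, together with mid-root contributions $\pm\epsilon_1,\pm\eta_1$ arising when the signatures $(i,1)$ or $(j,1)$ are unequal, I would compute $\dim\mathfrak{n}^{-\sigma}$ and $\dim M_H A_H=2+\dim M_H$ explicitly and verify $\dim\mathfrak{n}^{-\sigma}>\dim M_H A_H$ in each remaining subcase, ruling out an open orbit.

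The main obstacle is precisely this edge case $k=l=1$, where the weight-independence argument of Proposition~\ref{prop:QPrank} is tight rather than violated, forcing a delicate, field-by-field orbit-dimension analysis.
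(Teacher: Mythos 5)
Your approach is genuinely different from the paper's.  The paper's proof is an explicit orbit argument: it writes down the $M_HA_H$-module $\mathfrak{n}^{-\sigma}\simeq M(i,l;\mathbb{F})\oplus M(j,\min(i,k);\mathbb{F})$ in matrix form, observes that the $(1,1)$-entry $z$ of the product $XY'$ transforms only by $z\mapsto a_1za_1^{-1}$, and therefore exhibits a nonconstant $M_HA_H$-invariant function $\psi(X,Y)=|z|^2$ whenever $l>0$, directly killing the open orbit.  You instead invoke the necessary condition of Proposition~\ref{prop:QPrank} (linear independence of $\Delta(\mathfrak{n}^{-\sigma})$).  Your treatment of the case $k\ge 2$ or $l\ge 2$ via the linear relation among $\epsilon_1\pm\eta_1$, $\epsilon_2\pm\eta_1$ is correct and is a clean alternative route for that range of parameters.

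However, the edge case $k=l=1$ is a genuine gap, not merely an unfilled detail.  First, you write that you ``would compute \dots\ and verify $\dim\mathfrak{n}^{-\sigma}>\dim M_HA_H$,'' but this inequality fails in the most degenerate subcase $i=j=k=l=1$: there one computes $\dim_{\mathbb{R}}\mathfrak{n}^{-\sigma}=d(il+jk)=2d$ and $\dim_{\mathbb{R}}M_HA_H=\dim(\mathbb{F}^{\times})^2=2d$, so the dimensions coincide and a count cannot rule out an open orbit.  Since in that subcase the two weights $\epsilon_1\pm\eta_1$ are linearly independent and their multiplicities equal the dimension of the acting group, neither of the tools you propose (Proposition~\ref{prop:QPrank} or a dimension count) can close the argument; one is forced back to a finer analysis of the orbit structure, which is exactly what the paper's invariant $\psi$ supplies.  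Second, you miss that when $k=l=1$ but $i>1$ or $j>1$, the extra weights $\eta_1$ (multiplicity $d(i-k)$) and $\epsilon_1$ (multiplicity $d(j-l)$) enter $\Delta(\mathfrak{n}^{-\sigma})$, giving $\#\Delta(\mathfrak{n}^{-\sigma})\ge 3>2=\operatorname{rank}_{\mathbb{R}}H$; so Proposition~\ref{prop:QPrank} actually does apply there and the ``delicate, field-by-field orbit-dimension analysis'' you anticipate is unnecessary for those parameters.  In short: the linear-dependence argument covers more ground than you realize, but for the residual extremal case $i=j=k=l=1$ your plan does not go through as stated, and some argument in the spirit of the paper's explicit invariant function is required.
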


\begin{proof}
By Theorem \ref{thm:qp}, 
 (QP) is equivalent
 to the existence
 of an open orbit of $M_H A_H$
 on ${\mathfrak {n}}^{-\sigma}$.  
Then the idea of the proof
 of the lemma 
 is to find 
a non-trivial $M_H A_H$-invariant function 
 on ${\mathfrak {n}}^{-\sigma}$
 for $l >0$.  
By a simple matrix computation,
 we have natural isomorphisms
 of groups and vector spaces:
\begin{align*}
M_H A_H
\simeq&
({\mathbb{F}}^{\times})^{\min(i,k)}
 \times U(|i-k|,{\mathbb{F}}) \times({\mathbb{F}}^{\times})^l 
\times U(j-l,{\mathbb{F}}), 
\\
{\mathfrak {n}}^{-\sigma}
\simeq &
M(i,l;{\mathbb{F}}) \oplus M(j,\min(i,k);{\mathbb{F}}).  
\end{align*}
Via these identifications,
 the adjoint action of an element
 $(a,A, b, B) \in M_H A_H$
 on the vector space ${\mathfrak {n}}^{-\sigma}$
 is given as 
\begin{equation}
\label{eqn:MAI}
(X,Y)
\mapsto
\begin{cases}
(
\begin{pmatrix}
a & \\
  & A
\end{pmatrix}
Xb^{-1}, 
\begin{pmatrix}
b & \\
  & B
\end{pmatrix}
Y a^{-1}
)
&\text{for }
i \ge k,
\\
(a X b^{-1}, \begin{pmatrix}
b & \\
  & B
\end{pmatrix}
Y a^{-1})
&\text{for }
i \le k.  
\end{cases}
\end{equation}
In the above formula,
 we have identified 
 $b \in ({\mathbb{F}}^{\times})^l$
 with a diagonal matrix
 in $GL(l,{\mathbb{F}})$, 
 and likewise for $a \in ({\mathbb{F}}^{\times})^{\min(i,k)}$.  

According to the partition 
 $j=l+(j-l)$, 
 we write $Y \in M(j,\min(i,k);{\mathbb{F}})$
 as a block matrix $Y=\begin{pmatrix} Y' \\ \ast \end{pmatrix}$
 with $Y' \in M(l,\min(i,k);{\mathbb{F}})$.

Consider the matrix
 $XY' \in M(i,\min(i,k);{\mathbb{F}})$.  
In view of the formula \eqref{eqn:MAI}
 of the $M_H A_H$-action 
 on ${\mathfrak {n}}^{-\sigma}$, 
 the $(1,1)$-component of $XY'$, 
 to be denoted by $z$, 
 is transformed as 
\[
  z \mapsto a_1 z a_1^{-1}, 
\]
where $a_1 \in {\mathbb{F}}^{\times}$
 is the first component of $a$.  
This formula means that the real algebraic function 
\[
     \psi:{\mathfrak {n}}^{-\sigma} \to {\mathbb{R}}, 
     \quad
     (X,Y) \mapsto |z|^2 
\] 
 is $M_H A_H$-invariant.  
Clearly, 
 $\psi$ is non-zero if $l>0$.  
Hence $M_H A_H$ cannot have an open orbit
 in ${\mathfrak {n}}^{-\sigma}$
 if $l >0$.  
\end{proof}

To complete the proof
 of Proposition \ref{prop:upq}
 we need the following elementary lemma:
\begin{lemma}
\label{lem:FGauss}
Suppose $0 \le p' \le p$
 and $p,q \ge 1$.  
We let $S:= U(1, {\mathbb{F}})^{p'}
 \times U(p-p',{\mathbb{F}}) \times ({\mathbb{F}}^{\times})^q$
 act on $M(p,q;{\mathbb{F}})$ by 
\[
   X \mapsto \begin{pmatrix} a & \\ & A\end{pmatrix} X b^{-1}
\quad
\text{for }
\,\,
(a,A,b) \in L
\text{ and }
X \in M(p,q;{\mathbb{F}}).  
\]
Then the group $S$ has an open orbit 
 in $M(p,q;{\mathbb{F}})$
 if and only if 
 $p=1$ or ($q=1$ and $p'=0$).  
\end{lemma}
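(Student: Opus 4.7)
The plan is to handle the two implications of the lemma separately: the \emph{if} direction by explicit orbit construction, and the \emph{only if} direction by producing a non-constant rational $S$-invariant in every remaining case.

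For the \emph{if} direction, I would construct an open orbit directly. When $p=1$, the subgroup $(\mathbb{F}^{\times})^q \subset S$ alone acts transitively on the set of row vectors in $\mathbb{F}^q$ with no zero entries, sending any such vector to $(1,\ldots,1)$; this gives an open orbit. When $q=1$ and $p'=0$, so that $S = U(p,\mathbb{F}) \times \mathbb{F}^{\times}$, the unitary factor is transitive on each sphere in $\mathbb{F}^p$ and $\mathbb{F}^{\times}$ dilates the norm freely, so $\mathbb{F}^p \setminus \{0\}$ forms a single open orbit.

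For the \emph{only if} direction, suppose $p \geq 2$ and either $p' \geq 1$ or $q \geq 2$. I would exhibit, in each of three sub-cases, two polynomial semi-invariants $P,Q$ on $M(p,q;\mathbb{F})$ of equal character whose ratio $P/Q$ is a non-constant $S$-invariant rational function. \textbf{(a)} If $1 \leq p' \leq p-1$, take $P(X) = |x_{11}|^2$ and $Q(X) = \|X_{2,1}\|^2$, where $X_{2,1}$ is the first column of the lower block $X_2 \in M(p-p',q;\mathbb{F})$; both transform under $(a,A,b) \in S$ by the common character $|b_1|^{-2}$, using $|a_1|=1$ and the unitarity of $A$. \textbf{(b)} If $p' = p$, take $P(X) = |x_{11}|^2$ and $Q(X) = |x_{21}|^2$ (which exist since $p \geq 2$), with the same common character. \textbf{(c)} If $p' = 0$ and $q \geq 2$, the Gram matrix $G := X^{\ast}X$ transforms under $S = U(p,\mathbb{F}) \times (\mathbb{F}^{\times})^q$ by $G \mapsto (b^{-1})^{\ast} G b^{-1}$, so $P(X) := |G_{12}|^2$ and $Q(X) := G_{11}G_{22}$ share the common character $|b_1|^{-2}|b_2|^{-2}$. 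In each sub-case, $P/Q$ is non-constant because one may select $X$ with $P = 0 \neq Q$ (e.g.\ $x_{11}=0$ in (a), (b); orthogonal first two columns in (c)) and other $X$ with $P,Q$ of comparable nonzero size.

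To conclude that a non-constant rational $S$-invariant precludes an open orbit: if $\mathcal{O} \subset M(p,q;\mathbb{F})$ were an open $S$-orbit, then on the Zariski-open set $\{Q \neq 0\}$ the invariant $P/Q$ is $S$-invariant and hence constant on $\mathcal{O} \cap \{Q \neq 0\}$, say equal to $c$; the polynomial $P - cQ$ then vanishes on this intersection, which contains a Euclidean-open subset of the irreducible real-affine variety $M(p,q;\mathbb{F})$ and is therefore Zariski dense, forcing $P \equiv cQ$ globally and contradicting the non-constancy of $P/Q$. I expect the main obstacle to be the character verification in case (c), since the Gram-matrix invariant is slightly less transparent than the row-versus-row invariants in (a) and (b); the remaining computations are routine.
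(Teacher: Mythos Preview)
Your proof is correct, but it takes a genuinely different route from the paper's argument. The paper first treats the case $p'=0$ by passing to the quotient $U(p,\mathbb{F})\backslash M(p,q;\mathbb{F})$: the right action of $(\mathbb{F}^{\times})^q$ on this quotient has an open orbit if and only if $p=1$ or $q=1$, which the authors justify by a one-line appeal to the Gauss decomposition of that quotient. The remaining case $p'>0$, $q=1$ is then dismissed as ``obvious'' (the left factor sits inside the compact group $U(p,\mathbb{F})$ and preserves the norm, while a single $\mathbb{F}^{\times}$ cannot account for the whole sphere direction once $p'\ge1$).

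Your approach, by contrast, is entirely explicit: you construct open orbits by hand in the two favorable cases, and in every other case you write down a concrete non-constant rational $S$-invariant ($|x_{11}|^2/\|X_{2,1}\|^2$, $|x_{11}|^2/|x_{21}|^2$, or the Gram-matrix ratio $|G_{12}|^2/(G_{11}G_{22})$). This is more elementary and fully self-contained---no structure theory of the quotient is invoked---at the cost of a small case analysis and the character computations you flag in case (c). Those computations are fine even over $\mathbb{H}$: the diagonal entries $G_{ii}=\sum_k|X_{ki}|^2$ are real, so $\overline{b_i^{-1}}G_{ii}b_i^{-1}=|b_i|^{-2}G_{ii}$, and $|G_{12}|^2$ picks up $|b_1|^{-2}|b_2|^{-2}$ by multiplicativity of the quaternionic norm. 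The paper's argument is shorter and more conceptual; yours has the advantage of exhibiting the obstruction to an open orbit explicitly rather than through a quotient.
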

\begin{proof}
First we observe
 that $({\mathbb{F}}^{\times})^q$ acts 
 on the quotient space
 $U(p,{\mathbb{F}}) \backslash M(p,q;{\mathbb{F}})$
{}from the right.  
This action has an open orbit 
 if and only if 
 $p=1$ or $q=1$
 by the Gauss decomposition 
 for $U(p,{\mathbb{F}}) \backslash M(p,q;{\mathbb{F}})$.  
Hence the equivalence assertion of Lemma \ref{lem:FGauss}
 is proved for $p'=0$.  
What remains to prove
 is that there is no open orbit
 if $p'>0$ and $q=1$, 
 but this is obvious.  
\end{proof}

\begin{proof}
[Proof of Proposition \ref{prop:upq}]
1)\enspace
By Lemma \ref{lem:upql}, 
we may and do assume $l=0$.  
Then the group 
\[
  M_H A_H \simeq ({\mathbb{F}}^{\times})^{\min(i,k)}
                 \times U(|i-k|,{\mathbb{F}}) \times U(j,{\mathbb{F}})
  \ni (a,A,B)
\]
acts on ${\mathfrak{n}}^{-\sigma} \simeq M(j,\min(i,k);{\mathbb{F}})$
by $Y \mapsto BYa^{-1}$.  
We observe 
 that the second factor
 $U(|i-k|, {\mathbb{F}})$
 of $M_H A_H$ acts trivially
 on ${\mathfrak{n}}^{-\sigma}$.  
Then by the criterion in Theorem \ref{thm:qp}, 
 the first statement follows as a special case
 of Lemma \ref{lem:FGauss}
 with $p=j$ and $q=\min(i,k)$.  
\par\noindent
2)\enspace
We need to prove
 that (PP) holds
 if $l=0$ and $\min(j,k)=1$, 
 and fails if $l=0$, $=1$
 and $\min(j,k)>1$.  
We shall use Theorem \ref{thm:pp}
 and Lemma \ref{lem:FGauss}.  
\par\noindent
{\bf{Case 2-1.}}\enspace
$k=1$.  
Then 
$
   \operatorname{rank}_{{\mathbb{R}}}H
   =
   \operatorname{rank}_{{\mathbb{R}}}G$
 $(=1)$, 
 and therefore (QP) is equivalent to (PP)
 by Lemma \ref{lem:BPQ} (2).  
Hence (PP) holds.  
\par\noindent
{\bf{Case 2-2.}}\enspace
$j=1$.  
We apply Lemma \ref{lem:FGauss}
 with $p=1$ and $q=\min(i,k)$
 to conclude 
 that the action of $(M_H \cap M_G) A_H$
 has an open orbit in 
$
   {\mathfrak {n}}^{-\sigma} \simeq {\mathbb{F}}^{\min(i,k)}
$.  
\par\noindent
{\bf{Case 2-3.}}\enspace
$i=1$.  
We apply Lemma \ref{lem:FGauss}
 with $p=j$, $p'=\min(j,k-1)$
 and $q=1$
 to conclude that the action of $(M_H \cap M_G) A_H$
 does not have an open orbit if $j,k >1$.  
\par
Hence the proof of Proposition \ref{prop:upq}
 is completed.  
\end{proof}

\subsection{$(G,H)=(GL(p+q,{\mathbb{F}}),
 GL(p,{\mathbb{F}}) \times GL(q,{\mathbb{F}}))$}
\label{subsec:GLGL}
Next we treat
 the symmetric pair
 $(G,H)=(GL(p+q,{\mathbb{F}}),
 GL(p,{\mathbb{F}}) \times GL(q,{\mathbb{F}}))$
 for ${\mathbb{F}}={\mathbb{R}}$, 
 ${\mathbb{C}}$ and ${\mathbb{H}}$.  
Surprisingly,
 the property (PP) behaves
 uniformly
 for all ${\mathbb{F}}={\mathbb{R}}$, 
 ${\mathbb{C}}$ and ${\mathbb{H}}$ for this pair.  
In contrast, 
 that the property (BB)
 behaves differently 
 for ${\mathbb{F}}={\mathbb{H}}$
 (see Remark \ref{rem:GLGL} below).  
\begin{prop}
\label{prop:glgl}
Let $p,q \ge 1$ and 
\[
   (G,H)=(GL(p+q,{\mathbb{F}}), GL(p,{\mathbb{F}}) \times GL(q,{\mathbb{F}})), 
\quad
{\mathbb{F}}={\mathbb{R}}, \,{\mathbb{C}}\text{ or }{\mathbb{H}}.  
\]
Then the following three conditions are equivalent:
\begin{enumerate}
\item[{\rm{(i)}}]
The pair $(G,H)$ satisfies {\rm{(QP)}}.  
\item[{\rm{(ii)}}]
The pair $(G,H)$ satisfies {\rm{(PP)}}.  
\item[{\rm{(iii)}}]
$\min(p,q)=1$.  
\end{enumerate}
\end{prop}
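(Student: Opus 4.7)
The plan is to apply the criteria of Theorems \ref{thm:pp} and \ref{thm:qp}, which convert the question into a linear-algebra problem about an open orbit of $M_H A_H$ on $\mathfrak{n}^{-\sigma}$. The symmetric structure is given by the involution $\sigma(g) = I_{p,q} g I_{p,q}^{-1}$, which fixes every diagonal matrix. Consequently I would choose $\mathfrak{a}_H = \mathfrak{a}_G$ to be the real diagonal matrices in $\mathfrak{gl}(p+q,\mathbb{F})$, giving $\operatorname{rank}_{\mathbb{R}} G = \operatorname{rank}_{\mathbb{R}} H = p+q$. Lemma \ref{lem:BPQ}\,(2) then yields the equivalence (i) $\Leftrightarrow$ (ii) immediately, and both properties reduce to the existence of an open $M_H A_H$-orbit on $\mathfrak{n}^{-\sigma}$ (note that $M_H \subset M_G$ here, so $(M_H\cap M_G)A_H = M_H A_H$).

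Next I would identify the relevant action explicitly. The minimal parabolic is the standard upper-triangular one; $\mathfrak{n}^{-\sigma}$ is the strictly upper-triangular part supported in block-off-diagonal positions $\{(i,j) : 1\le i\le p<j\le p+q\}$, so $\mathfrak{n}^{-\sigma} \simeq M(p,q;\mathbb{F})$, and $M_H A_H = Z_H(\mathfrak{a}_H)$ identifies with $(\mathbb{F}^{\times})^p \times (\mathbb{F}^{\times})^q$ acting by $(a,b)\cdot X = \operatorname{diag}(a)\, X\, \operatorname{diag}(b)^{-1}$. This description is uniform in $\mathbb{F} = \mathbb{R}, \mathbb{C}, \mathbb{H}$.

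For (iii) $\Rightarrow$ (ii), assuming $p=1$ (the case $q=1$ is symmetric), the orbit of the all-ones row vector in $\mathbb{F}^q$ already sweeps out $(\mathbb{F}^{\times})^q$, which is open dense in $\mathbb{F}^q$, so an open orbit exists. For the converse direction, my plan is to exhibit a nonconstant $M_H A_H$-invariant rational function on $\mathfrak{n}^{-\sigma}$ whenever $\min(p,q) \ge 2$. The candidate is
\[
\Phi(X) := \frac{N(x_{11})\, N(x_{22})}{N(x_{12})\, N(x_{21})},
\]
defined on the open dense subset where the four indicated entries are invertible; here $N \colon \mathbb{F} \to \mathbb{R}$ is the reduced norm (absolute value for $\mathbb{F}=\mathbb{R}$, squared modulus for $\mathbb{F}=\mathbb{C}$, $q\bar q$ for $\mathbb{F}=\mathbb{H}$). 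Invariance under $X \mapsto \operatorname{diag}(a) X \operatorname{diag}(b)^{-1}$ is a direct check because $N$ is multiplicative and each $N(a_i)$ appears with matching $\pm 1$ exponents (similarly for $N(b_j)$). Since $\Phi$ is manifestly nonconstant, no open orbit can exist.

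The main potential obstacle is the noncommutativity of $\mathbb{H}$, which a priori would prevent writing a useful polynomial invariant in the entries $x_{ij}$. The observation that removes this obstacle is to work with the reduced norm $N$ rather than with the entries themselves: because $N$ takes scalar real values and is multiplicative, the quaternionic case collapses into a scalar calculation and the three fields $\mathbb{R}, \mathbb{C}, \mathbb{H}$ are handled by one and the same formula. A dimension count, $\dim_{\mathbb{R}} M_HA_H = (p+q)d$ vs.\ $\dim_{\mathbb{R}} \mathfrak{n}^{-\sigma} = pqd$ with $d = \dim_{\mathbb{R}}\mathbb{F}$, already forces $(p-1)(q-1)\le 1$; the invariant $\Phi$ handles the borderline case $p=q=2$ uniformly.
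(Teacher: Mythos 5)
Your proposal is correct and follows essentially the same route as the paper: you invoke Theorem~\ref{thm:qp} together with Lemma~\ref{lem:BPQ}\,(2) (using $\operatorname{rank}_{\mathbb{R}}G=\operatorname{rank}_{\mathbb{R}}H$), identify $\mathfrak{n}^{-\sigma}\simeq M(p,q;\mathbb{F})$ with the two-sided diagonal action of $(\mathbb{F}^{\times})^{p}\times(\mathbb{F}^{\times})^{q}$, check the $\min(p,q)=1$ case directly, and for $\min(p,q)\ge 2$ produce the nonconstant invariant $N(x_{11})N(x_{22})/\bigl(N(x_{12})N(x_{21})\bigr)$, which is exactly the paper's $|X_{11}X_{22}|^2/|X_{12}X_{21}|^2$ once one uses multiplicativity of the norm. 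The only differences (the explicit mention of the reduced norm to handle noncommutativity of $\mathbb{H}$, and the closing dimension count) are cosmetic.
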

\begin{remark}
\label{rem:GLGL}
{\rm{
For $\min(p,q)=1$, 
 $(G,H)$ satisfies (BB)
 if and only if ${\mathbb{F}}={\mathbb{R}}$ or ${\mathbb{C}}$.  
In fact,
 for ${\mathbb{F}}={\mathbb{H}}$, 
 the complexified Lie algebra
\[
({\mathfrak {g}}_{\mathbb{C}}, 
   {\mathfrak {h}}_{\mathbb{C}})
 \simeq
  ({\mathfrak {gl}}(2(p+q),{\mathbb{C}}), 
   {\mathfrak {gl}}(2p,{\mathbb{C}}) 
  + {\mathfrak {gl}}(2q,{\mathbb{C}}))
\]
 cannot be a strong Gelfand pair
 (see Proposition \ref{prop:cpx-1}).  
}}
\end{remark}
\begin{proof}
[Proof of Proposition \ref{prop:glgl}]
Via the isomorphisms
\[
M_H A_H \simeq ({\mathbb{F}}^{\times})^{p+q}, 
\quad
{\mathfrak {n}}^{-\sigma} \simeq M(p,q;{\mathbb{F}}),
\]
the adjoint action
of $M_H A_H$ on ${\mathfrak {n}}^{-\sigma}$
 is given as the action
of $({\mathbb{F}}^{\times})^{p} \times ({\mathbb{F}}^{\times})^{q}$
 on $M(p,q;{\mathbb{F}})$
 by the left and right multiplication.

If $p,q \ge 2$, 
then 
\[
M(p,q;{\mathbb{F}}) \to {\mathbb{R}}, 
\quad
X \mapsto |X_{11}X_{22}|^2/|X_{12}X_{21}|^2
\]
is well-defined
 on an open dense subset
 of $M(p,q;{\mathbb{F}})$
 and is invariant 
 by the action of 
$
   ({\mathbb{F}}^{\times})^{p} \times ({\mathbb{F}}^{\times})^{q}, 
$
 and thus there is no open orbit.

Conversely,
 if $q=1$, 
 then clearly $({\mathbb{F}}^{\times})^{p}$
 has an open orbit in ${\mathbb{F}}^{p}$, 
 and so does $M_H A_H$ in ${\mathfrak {n}}^{-\sigma}$.  
Thus the equivalence (i) $\Leftrightarrow$ (iii)
 follows from Theorem \ref{thm:qp}.

Since $\operatorname{rank}_{\mathbb{R}}H
=\operatorname{rank}_{\mathbb{R}}G$, 
 the equivalence (i) $\Leftrightarrow$ (ii) holds.  
Hence Proposition is proved.  
\end{proof}

\subsection{$(G,H)=(O(m+n,{\mathbb{C}}),O(m,{\mathbb{C}}) \times O(n,{\mathbb{C}}))$}
\label{subsec:OmnC}
The main goal of this section
 is to prove the following proposition.  
The equivalence (i) $\Leftrightarrow$ (ii) $\Leftrightarrow$ (iii)
 is a special case of Proposition \ref{prop:cpx-1}.  
We shall use Proposition \ref{prop:somn}
 in Proposition \ref{prop:nonKe}.  
\begin{prop}
\label{prop:somn}
Let 
\[
(G,H)=(O(m+n,{\mathbb{C}}), O(m,{\mathbb{C}}) \times O(n,{\mathbb{C}}))
\text{ with }
m, n \ge 1.  
\]
Then the following four conditions
 are equivalent:
\par\noindent
{\rm{(i)}}\enspace
The pair $(G,H)$ satisfies {\rm{(QP)}}.  
\par\noindent
{\rm{(ii)}}\enspace
The pair $(G,H)$ satisfies {\rm{(PP)}}.  
\par\noindent
{\rm{(iii)}}\enspace
$m=1$, $n=1$ or $(m,n)=(2,2)$.  
\par\noindent
{\rm{(iv)}}\enspace
$\operatorname{rank}_{\mathbb{R}}
H \ge \# \Delta({\mathfrak {n}}^{-\sigma})$.  
\end{prop}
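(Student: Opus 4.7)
The plan is to prove the cycle of implications (iii)~$\Rightarrow$~(ii)~$\Rightarrow$~(i)~$\Rightarrow$~(iv)~$\Rightarrow$~(iii); the middle two implications are immediate from Lemma~\ref{lem:BPQ}(1) and Proposition~\ref{prop:QPrank} respectively.

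For (iii)~$\Rightarrow$~(ii), I would dispatch the three subcases. When $m = 1$ (or symmetrically $n = 1$), $\mathfrak{so}(1, \mathbb{C}) = 0$, so the pair reduces at the Lie algebra level to $(\mathfrak{so}(n+1, \mathbb{C}), \mathfrak{so}(n, \mathbb{C}))$, which is the complex strong Gelfand pair (F2) for $n \ge 2$ and is trivial for $n \le 1$; in either case (BB) holds by Proposition~\ref{prop:cpx-1}, and hence (PP) by Lemma~\ref{lem:BPQ}(1). For $(m, n) = (2, 2)$, the subalgebra $\mathfrak{h} = \mathfrak{so}(2, \mathbb{C}) + \mathfrak{so}(2, \mathbb{C})$ is a Cartan subalgebra of $\mathfrak{g} = \mathfrak{so}(4, \mathbb{C})$, so the Borel subgroup $B_H$ of $H_{\mathbb{C}}$ equals $H_{\mathbb{C}}$ itself, a Cartan of $G_{\mathbb{C}}$; this Cartan acts on the flag variety $G_{\mathbb{C}}/B_G$ with open orbit (the big Bruhat cell), verifying (BB) and hence (PP).

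The main work lies in (iv)~$\Rightarrow$~(iii). Set $p := \lfloor m/2 \rfloor$ and $q := \lfloor n/2 \rfloor$, so $\operatorname{rank}_{\mathbb{R}} H = p + q$. Choosing $\mathfrak{a}_H$ to be the standard block-diagonal split Cartan of $\mathfrak{h}^{-\theta}$ and extending to $\mathfrak{a}_G \subset \mathfrak{g}^{-\theta}$ (which requires one extra Cartan direction that pairs the unpaired coordinates of the two blocks, precisely when both $m$ and $n$ are odd), the restricted root system $\Sigma(\mathfrak{g}, \mathfrak{a}_H)$ is of type $D_{p+q}$ in the even-even case and of type $B_{p+q}$ in the other three parity cases. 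I would then determine $\Delta(\mathfrak{n}^{-\sigma})$ root by root: long roots $\pm e_i \pm e_j$ with both indices in a single block lie inside $\mathfrak{h}$ and do not contribute, while long mixed roots (one index per block) lie entirely in $\mathfrak{g}^{-\sigma}$; short restricted roots $\pm e_i$ must be analyzed in each parity case. The crucial observation is that in the odd-odd case, a short restricted root $e_i$ arises as the common restriction of the two $\mathfrak{a}_G$-roots $e_i \pm e_{p+q+1}$, which are swapped by $\sigma$ (since the extra Cartan direction lies in $\mathfrak{g}^{-\sigma}$), so $e_i$ belongs to both $\Delta(\mathfrak{n}^{\sigma})$ and $\Delta(\mathfrak{n}^{-\sigma})$. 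The resulting uniform count is
\[
\#\Delta(\mathfrak{n}^{-\sigma}) = 2pq + \varepsilon_m q + \varepsilon_n p,
\]
where $\varepsilon_m = 1$ if $m$ is odd and $\varepsilon_m = 0$ if $m$ is even (similarly for $\varepsilon_n$). Comparing with $p + q$, (iv) reduces in the even-even case to $(2p - 1)(2q - 1) \le 1$, forcing $(m, n) = (2, 2)$; in the mixed-parity cases to $p(1 - 2q) \ge 0$ or $q(1 - 2p) \ge 0$, forcing $m = 1$ or $n = 1$; and in the odd-odd case to $pq \le 0$, again forcing $m = 1$ or $n = 1$. The main obstacle is carrying out the case-by-case $\sigma$-eigenspace decomposition of each restricted root space, especially for the short roots in the odd-odd case; once the setup is in place, the remaining analysis is routine bookkeeping.
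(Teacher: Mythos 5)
Your proposal is correct and takes essentially the same approach as the paper: the heart of both arguments is the same parity-case tabulation of $\operatorname{rank}_{\mathbb{R}}H = p+q$ against $\#\Delta(\mathfrak{n}^{-\sigma}) = 2pq + \varepsilon_m q + \varepsilon_n p$, and your three cases recover exactly the table in the text. The main difference is organizational: you run the cycle (iii)$\Rightarrow$(ii)$\Rightarrow$(i)$\Rightarrow$(iv)$\Rightarrow$(iii), whereas the paper establishes (i)$\Leftrightarrow$(ii)$\Leftrightarrow$(iii) by invoking the Kr\"amer classification (Proposition~\ref{prop:cpx-1}) and then proves (iii)$\Leftrightarrow$(iv) via the table. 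Your route has the minor advantage of bypassing the hard (\lq\lq only if\rq\rq) direction of Kr\"amer's theorem, and you supply two details the paper leaves implicit: the actual derivation of the $\#\Delta(\mathfrak{n}^{-\sigma})$ count, especially the observation that in the odd-odd case each short restricted root $e_i$ lies simultaneously in $\Delta(\mathfrak{n}^{\sigma})$ and $\Delta(\mathfrak{n}^{-\sigma})$ because $\sigma$ negates the extra Cartan direction; and the $(m,n)=(2,2)$ edge case, which falls outside the hypotheses of Proposition~\ref{prop:cpx-1} (where $\mathfrak{g}$ is assumed simple, while $\mathfrak{so}(4,\mathbb{C})$ is not), and which your Cartan-subalgebra argument handles cleanly.
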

\begin{proof}
The equivalence (iii) $\Leftrightarrow$ (iv):
{}From the table below,
 we see that $({\mathfrak {g}}, {\mathfrak {h}})$
 satisfies (iv)
 if and only if
 $m=1$, $n=1$, or $(m,n)=(1,2)$.  
\begin{alignat*}{4}
&\quad m
&&\quad n 
&& \operatorname{rank}_{\mathbb{R}} H
\qquad
&& \# \Delta({\mathfrak {n}}^{-\sigma})
\\
&  2p+1
\qquad
&& 2q+1
\qquad
&&\, p+q
&& 2pq + p+q
\\
&  2p+1
&& 2 q
&&\, p+q
&& 2pq + q
\\
&  2p
&& 2q
&&\, p+q
&& 2pq
\end{alignat*}
\end{proof}
\subsection{$(G,H)=(O^{\ast}(2p+2q), O^{\ast}(2p) \times O^{\ast}(2q))$}
The goal of this subsection
 is to prove the following:
\begin{prop}
\label{prop:sostar}
Let \[
(G,H)=(O^{\ast}(2p+2q), O^{\ast}(2p) \times O^{\ast}(2q))
\text{ with }
p,q \ge 1.  
\]
The following three conditions
 are equivalent:
\par\noindent
{\rm{(i)}}\enspace
The pair $(G,H)$ satisfies {\rm{(QP)}}.  
\par\noindent
{\rm{(ii)}}\enspace
The pair $(G,H)$ satisfies {\rm{(PP)}}.  
\par\noindent
{\rm{(iii)}}\enspace
$\min(p,q)=1$.  
\end{prop}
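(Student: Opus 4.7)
The plan is to invoke the linearization criteria of Theorems \ref{thm:pp} and \ref{thm:qp}, reducing (PP) and (QP) to the existence of an open orbit of $(M_H \cap M_G)A_H$, respectively $M_H A_H$, on $\mathfrak{n}^{-\sigma}$. First I would fix a quaternionic matrix model for $G = O^{\ast}(2(p+q))$, with $H = O^{\ast}(2p) \times O^{\ast}(2q)$ sitting block-diagonally and the involution $\sigma = \operatorname{Ad}(\operatorname{diag}(I_p, -I_q))$. After choosing a commuting Cartan involution $\theta$, I would compute the maximal split abelian subspace $\mathfrak{a}_H \subset \mathfrak{h}^{-\theta}$ (of dimension $\lfloor p/2 \rfloor + \lfloor q/2 \rfloor$) and describe $\mathfrak{n}^{-\sigma}$ as an $M_H A_H$-module: it consists of an off-diagonal block of quaternionic size roughly $p\times q$, with an extra short-root summand when one of $p,q$ is odd, and the torus action is left-multiplication by the $O^{\ast}(2p)$-torus factor and right-multiplication by the $O^{\ast}(2q)$-torus factor.

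For the direction (iii) $\Rightarrow$ (ii), assume $q=1$. Then $O^{\ast}(2)$ is compact, and $M_H A_H$ acts on $\mathfrak{n}^{-\sigma}$ essentially through the Levi of a maximal parabolic of $O^{\ast}(2p)$ acting on a quaternionic column of length $p$. A Gauss-type decomposition, in the spirit of Lemma \ref{lem:FGauss}, produces an open orbit. Conversely, if $\min(p,q)\ge 2$, I would exhibit a non-constant $M_H A_H$-invariant rational function on $\mathfrak{n}^{-\sigma}$: writing $X \in \mathfrak{n}^{-\sigma}$ with quaternionic entries $X_{ij}$ relative to the chosen basis of $\mathfrak{a}_H$, the norm ratio $\psi(X) = |X_{11}X_{22}|^2 \, |X_{12}X_{21}|^{-2}$ of the principal $2\times 2$ entries is invariant under the diagonal torus on each side, and also under the compact factor $M_H$ since only moduli appear, by the same mechanism as in the proofs of Propositions \ref{prop:glgl} and \ref{prop:upq}. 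This rules out an open orbit and so (QP), whence (PP), fails.

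The equivalence (i) $\Leftrightarrow$ (ii) then follows from Lemma \ref{lem:BPQ}(2) whenever $\operatorname{rank}_{\mathbb{R}} H = \operatorname{rank}_{\mathbb{R}} G$ (which holds unless both $p$ and $q$ are odd), from Corollary \ref{cor:QP} when $\operatorname{rank}_{\mathbb{R}} \mathfrak{h} \ge 2$, and from a direct inspection of the remaining low-rank corner $(p,q)=(1,1)$ where $H$ is compact. The main obstacle I anticipate is the type-$BC$ and quaternionic bookkeeping when $p$ or $q$ is odd: one must verify that the extra short-root summand in $\mathfrak{n}^{-\sigma}$ neither obstructs the open orbit construction in the $\min(p,q)=1$ case nor rescues one when $\min(p,q)\ge 2$. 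Once the $M_H A_H$-module structure is pinned down, the analysis should mirror that of Propositions \ref{prop:glgl} and \ref{prop:upq}.
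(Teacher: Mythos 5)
Your plan replaces the paper's central device with a different one, and the replacement has a genuine gap. The paper proves $(i)\Rightarrow(iii)$ not by constructing an invariant rational function but by the weight-counting criterion of Proposition~\ref{prop:QPrank} together with Lemma~\ref{lem:sostar}: one computes $\operatorname{rank}_{\mathbb{R}}H=[p/2]+[q/2]$ and $\#\Delta(\mathfrak n^{-\sigma})=[pq/2]$, and the necessary condition $\operatorname{rank}_{\mathbb{R}}H\ge\#\Delta(\mathfrak n^{-\sigma})$ forces $\min(p,q)=1$ \emph{or} $(p,q)=(2,2)$. The paper then disposes of $(2,2)$ separately via the isomorphism $(\mathfrak o^{\ast}(8),\mathfrak o^{\ast}(4)+\mathfrak o^{\ast}(4))\simeq(\mathfrak o(2,6),\mathfrak o(2,2)+\mathfrak o(4))$, landing in Proposition~\ref{prop:upq}. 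Your invariant-function argument with $\psi(X)=|X_{11}X_{22}|^2|X_{12}X_{21}|^{-2}$ is borrowed from Proposition~\ref{prop:glgl}, but there $\mathfrak n^{-\sigma}\simeq M(p,q;\mathbb F)$ and $M_HA_H\simeq(\mathbb F^\times)^{p+q}$ acts by literal left/right diagonal multiplication, so the ratio of entry moduli is obviously invariant. Here $\mathfrak n^{-\sigma}$ does not carry that $GL$-type bimodule structure: the $\mathfrak a_H$-weights come in pairs $e_i\pm f_j$ rather than a single grid $e_i-f_j$, $M_H$ contains non-abelian $Sp(1)$ factors, and the weight count $\#\Delta(\mathfrak n^{-\sigma})=[pq/2]$ does not match a $p\times q$ matrix picture. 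In particular for $(p,q)=(2,2)$ there are only \emph{two} $\mathfrak a_H$-weights in $\mathfrak n^{-\sigma}$, so the four "corner entries" $X_{11},X_{12},X_{21},X_{22}$ on which your $\psi$ is built are not weight-separated coordinates, and no such function presents itself; this is precisely the case that the paper's counting cannot exclude and handles by the isomorphism instead. Without either the weight-counting step or a correctly adapted invariant function, your argument does not rule out (QP) for $(p,q)=(2,2)$, and the claimed $M_H$-invariance of $\psi$ is unestablished for $\min(p,q)\ge 2$ generally.

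Two smaller points. Your paragraph deriving $(i)\Leftrightarrow(ii)$ is both unnecessary and partly circular: once you have $(iii)\Rightarrow(ii)$, $\neg(iii)\Rightarrow\neg(i)$, and the automatic $(ii)\Rightarrow(i)$ from Lemma~\ref{lem:BPQ}(1), the three conditions are already equivalent. Invoking Corollary~\ref{cor:QP} would in any case be circular, since that corollary is a byproduct of Theorem~\ref{thm:QpPp}, whose proof depends on the very proposition you are proving. Finally, the case $(p,q)=(1,1)$ is not an obstacle for either approach since $\mathfrak n^{-\sigma}=\{0\}$ there, but be careful with the claim "$H$ is compact hence done": compactness of $H$ alone does not give an open $H$-orbit on $G/P_G$; what saves you is that $\mathfrak n^{-\sigma}$ is trivial. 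The $q=1$ open-orbit construction you sketch via Lemma~\ref{lem:FGauss} is essentially the paper's argument and is fine once you actually pin down the $(\mathbb H^\times)^{[p/2]}$-module structure of $\mathfrak n^{-\sigma}\simeq\mathbb H^{[p/2]}$, as the paper does.
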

In particular,
 Proposition \ref{prop:sostar} shows the implication 
 (ii) $\Rightarrow$ (i) in Theorem \ref{thm:1.1}
 for (H3).  

In order to give a proof
 of the implication (i) $\Rightarrow$ (iii) 
 in Proposition \ref{prop:sostar},  
 we use Proposition \ref{prop:QPrank}.  
For this,
 we need:

\begin{lemma}
\label{lem:sostar}
$\operatorname{rank}_{\mathbb{R}}
H \ge \# \Delta({\mathfrak {n}}^{-\sigma})$
if and only if 
$\min(p,q)=1$
 or $(p,q)=(2,2)$.  
\end{lemma}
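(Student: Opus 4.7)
The plan is to compute both $\operatorname{rank}_{\mathbb{R}}H$ and $\#\Delta(\mathfrak{n}^{-\sigma})$ explicitly using the standard Satake data of $\mathfrak{so}^{*}(2n)$: it has real rank $\lfloor n/2\rfloor$, with restricted root system of type $C_{n/2}$ (no short roots $\pm e_i$) when $n$ is even, and of type $BC_{(n-1)/2}$ (with short roots $\pm e_i$ of multiplicity $4$) when $n$ is odd. Writing $a := \lfloor p/2\rfloor$ and $b := \lfloor q/2\rfloor$, so that $\operatorname{rank}_{\mathbb{R}}H = a+b$, I would embed the block-diagonal Cartan $\mathfrak{a}_{H}=\mathfrak{a}_{H_1}\oplus\mathfrak{a}_{H_2}$ into $\mathfrak{a}_G$ and split into three cases according to the parities of $(p,q)$.

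If $p$ and $q$ are both even, then $\mathfrak{a}_H=\mathfrak{a}_G$, $\mathfrak{g}$ is of type $C_{a+b}$, and $\mathfrak{h}$ is of type $C_a\times C_b$; the only positive roots of $\mathfrak{g}$ not lying in $\mathfrak{h}$ are the $2ab$ cross roots $e_i\pm e_j$ with $i\le a<j$, so $\#\Delta(\mathfrak{n}^{-\sigma})=2ab$, and the inequality $a+b\ge 2ab$ forces $(a,b)=(1,1)$, i.e., $(p,q)=(2,2)$. If exactly one of $p,q$ is odd, say $p=2a$, $q=2b+1$ (the mirror case is symmetric), then $\mathfrak{a}_H=\mathfrak{a}_G$ still, but $\mathfrak{g}$ is of type $BC_{a+b}$ and $\mathfrak{h}$ of type $C_a\times BC_b$; beyond the $2ab$ cross roots, the short roots $\pm e_i$ ($i\le a$) of $\mathfrak{g}$ are roots of neither block of $\mathfrak{h}$ (since $C_a$ has no short roots and the other block uses indices $\ge a+1$), contributing $a$ more weights. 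Hence $\#\Delta(\mathfrak{n}^{-\sigma})=2ab+a$, and the inequality reduces to $b(1-2a)\ge 0$, forcing $b=0$, i.e., $q=1$.

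The remaining case is $p$ and $q$ both odd, where $\mathfrak{a}_H\subsetneq\mathfrak{a}_G$ by one dimension. Choosing coordinates $e_1,\dots,e_{a+b+1}$ on $\mathfrak{a}_G$ with $\mathfrak{a}_H=\operatorname{span}\{e_1,\dots,e_a,e_{a+2},\dots,e_{a+b+1}\}$, $\mathfrak{g}$ has type $C_{a+b+1}$ and $\mathfrak{h}$ is of type $BC_a\times BC_b$. The cross roots $e_i\pm e_j$ ($i\le a$, $j\ge a+2$) contribute $2ab$ weights entirely in $\mathfrak{m}$. Moreover, each of the four $\mathfrak{a}_G$-roots $\pm e_i\pm e_{a+1}$ restricts to $\pm e_i$ on $\mathfrak{a}_H$, and since $\sigma(e_{a+1})=-e_{a+1}$, the involution $\sigma$ swaps $\mathfrak{g}(\mathfrak{a}_G;e_i+e_{a+1})$ with $\mathfrak{g}(\mathfrak{a}_G;e_i-e_{a+1})$; a dimension count against the $4$-dimensional $BC_a$ short root space $\mathfrak{h}_1(\mathfrak{a}_{H_1};e_i)$ inside this $8$-dimensional sum shows the $\sigma=-1$ part is nonzero. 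Hence each short restricted weight $e_i$ ($i\le a$) lies in $\Delta(\mathfrak{n}^{-\sigma})$; the same holds for $e_j$ ($j\ge a+2$), adding $a+b$ further weights. Thus $\#\Delta(\mathfrak{n}^{-\sigma})=2ab+a+b$, and the inequality becomes $ab\le 0$, forcing $\min(p,q)=1$.

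Collecting the three cases gives exactly $\min(p,q)=1$ or $(p,q)=(2,2)$, as claimed. The main obstacle is the odd-odd case: because $\mathfrak{a}_H$ is a proper subspace of $\mathfrak{a}_G$, several $\mathfrak{a}_G$-root spaces collapse onto the same $\mathfrak{a}_H$-weight space, some lying in $\mathfrak{h}$ and some in $\mathfrak{m}$, and one must track the $\sigma$-parity carefully to guarantee that the $\mathfrak{m}$-contribution to each short restricted weight is genuinely nonzero.
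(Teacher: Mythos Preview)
Your argument is correct and, in fact, is a detailed derivation of precisely the two formulae the paper states without proof:
\[
\operatorname{rank}_{\mathbb{R}}H=\Big\lfloor\frac p2\Big\rfloor+\Big\lfloor\frac q2\Big\rfloor,
\qquad
\#\Delta(\mathfrak{n}^{-\sigma})=\Big\lfloor\frac{pq}{2}\Big\rfloor.
\]
Your three parity cases yield exactly $2ab$, $2ab+a$, and $2ab+a+b$ for $\#\Delta(\mathfrak{n}^{-\sigma})$, which match $\lfloor pq/2\rfloor$ in each case, and the resulting inequalities reduce to the stated dichotomy. The only place where you go beyond the paper is the odd--odd case, where $\mathfrak{a}_H\subsetneq\mathfrak{a}_G$ and you must check that the short restricted weights $e_i$ actually occur in $\mathfrak{n}^{-\sigma}$; your dimension count ($8$-dimensional $\mathfrak{a}_H$-weight space versus the $4$-dimensional $\mathfrak{h}$-root space) is the cleanest way to see this, and the auxiliary remark that $\sigma$ swaps $\mathfrak{g}(\mathfrak{a}_G;e_i\pm e_{a+1})$ is correct once $\mathfrak{a}_G$ is chosen $\sigma$-stable. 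So your approach is essentially the same as the paper's, just with the root-multiplicity verification written out rather than asserted.
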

\begin{proof}
Lemma \ref{lem:sostar} is an immediate consequence
 of the formulae:
\[
\operatorname{rank}_{\mathbb{R}}{\mathfrak {a}}_H
=[\frac p 2]+[\frac q 2]
\quad
\text{ and } 
\#\Delta({\mathfrak {n}}^{-\sigma})=[\frac {pq} 2].  
\]
\end{proof}

\begin{proof}[Proof of Proposition \ref{prop:sostar}]
The implication (ii) $\Rightarrow$ (i) holds
 in general by Lemma \ref{lem:BPQ} (1).  
By Lemma \ref{lem:sostar} and Proposition \ref{prop:QPrank}, 
 (QP) holds
 only if $\min(p,q)=1$
 or $(p,q)=(2,2)$.  
What remains to prove is:
\begin{enumerate}
\item[$\bullet$]
(QP) fails if $(p,q)=(2,2)$. 
\item[$\bullet$]
(PP) holds if $q=1$. 
\end{enumerate}
In view of the isomorphism 
 of symmetric pairs:
\[
({\mathfrak {o}}^{\ast}(8), 
{\mathfrak {o}}^{\ast}(4)+{\mathfrak {o}}^{\ast}(4))
\simeq
({\mathfrak {o}}(2,6), {\mathfrak {o}}(2,2)+{\mathfrak {o}}(4)), 
\]
the first claim is regarded as a special case
 of Proposition \ref{prop:upq}, 
 which we have already proved.

To see the second claim,
suppose $q=1$.  
Then we have the following natural isomorphisms
 of groups and vector spaces,
 respectively:
\begin{align*}
M_H A_H
\simeq&
\begin{cases}
({\mathbb{H}}^{\times})^{\frac{p}{2}} \times {\mathbb{T}}
\quad
&\text{($p$:even)}, 
\\
({\mathbb{H}}^{\times})^{\frac{p-1}{2}} \times {\mathbb{T}}^2
\quad
&\text{($p$:odd)}, 
\end{cases}
\\
(M_H \cap M_G)A_H
\simeq&
({\mathbb{H}}^{\times})^{[\frac{p}{2}]} \times {\mathbb{T}}, 
\\
 {\mathfrak {n}}^{-\sigma}
\simeq & {\mathbb{H}}^{[\frac p 2]}.   
\end{align*}
Via these isomorphisms,
 the adjoint action
of the first factor
 of $(M_H \cap M_G)A_H$ on ${\mathfrak {n}}^{-\sigma}$
 is given by the natural action 
 of $({\mathbb{H}}^{\times})^{[\frac{p}{2}]}$
 on ${\mathbb{H}}^{[\frac{p}{2}]}$, 
which has obviously an open dense orbit.  
By Theorem \ref{thm:pp}, 
 we conclude 
 that (PP) holds if $q=1$.  

Hence the proof of Proposition \ref{prop:sostar}
 is completed.  
\end{proof}

\subsection{$(G,H)$ is of type $(C_n, A_n)$}
\label{subsec:CA}
In this subsection,
 we treat the reductive symmetric pairs
 $({\mathfrak {g}}, {\mathfrak {h}})$
 that have the following three properties:

\begin{align}
&\text
{
The root system $\Sigma({\mathfrak {g}}, {\mathfrak {a}}_H)$
 is of type $C_n$, 
}
\label{eqn:CA1}
\\
&\text{
The root system 
 $\Sigma({\mathfrak {h}}, {\mathfrak {a}}_H)$
 is of type $A_n$, 
}
\label{eqn:CA2} 
\\
&\text{
Either $m^+(\lambda)=0$
 of $m^-(\lambda)=0$
  for each $\lambda \in \Sigma({\mathfrak {g}}, {\mathfrak {a}}_H)$.}
\label{eqn:CA3}
\end{align}
Here we define
\[
  m^{\pm}(\lambda)= \dim_{\mathbb{R}} 
  {\mathfrak {g}}^{\pm\sigma}({\mathfrak {a}}_H;\lambda).  
\]
The main results of this subsection are 
Propositions \ref{prop:ugl} and \ref{prop:ug2}.  

\begin{prop}
\label{prop:ugl}
Let $(G,H)$ be one of the following symmetric pairs:
\begin{align*}
&(U(n,n;{\mathbb{F}}),GL(n,{\mathbb{F}}))
\quad
{\mathbb{F}}={\mathbb{C}}\text{ or }{\mathbb{H}}, 
\\
&(Sp(n,{\mathbb{R}}),GL(n,{\mathbb{R}})), 
\\
&(O^{\ast}(4n),GL(n,{\mathbb{H}})).  
\end{align*}
Then the following three conditions are equivalent:
\begin{enumerate}
\item[{\rm{(i)}}]
The pair $(G,H)$ satisfies {\rm{(QP)}}.  
\item[{\rm{(ii)}}]
The pair $(G,H)$ satisfies {\rm{(PP)}}.  
\item[{\rm{(iii)}}]
$n=1$.  
\end{enumerate}
\end{prop}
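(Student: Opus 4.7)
The plan is to combine two observations: (PP) and (QP) coincide in the present equal-rank setting, and the $(C_n,A_{n-1})$ restricted root structure pins down $\#\Delta(\mathfrak{n}^{-\sigma})$ sharply enough that the inequality of Proposition \ref{prop:QPrank} decides matters for $n\ge 2$.

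First I would check in each of the four families that $\mathfrak{a}_H$ is already a maximal split torus of $\mathfrak{g}$, so that $\operatorname{rank}_{\mathbb{R}}G=\operatorname{rank}_{\mathbb{R}}H=n$. Lemma \ref{lem:BPQ}(2) then gives the equivalence (i)$\Leftrightarrow$(ii), and it suffices to analyze (QP).

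Next I would invoke the hypotheses \eqref{eqn:CA1}--\eqref{eqn:CA3}. In the standard coordinates on $\mathfrak{a}_H$, the $C_n$ positive roots are $\{e_i-e_j\}_{i<j}\cup\{e_i+e_j\}_{i<j}\cup\{2e_i\}$ and the restricted root system of $\mathfrak{h}$ has positive part $\{e_i-e_j\}_{i<j}$. The vanishing assumption \eqref{eqn:CA3} forces the decomposition $\Sigma^+(\mathfrak{g},\mathfrak{a}_H)=\Delta(\mathfrak{n}^{\sigma})\cup\Delta(\mathfrak{n}^{-\sigma})$ to be disjoint, with $\Delta(\mathfrak{n}^{\sigma})=\Sigma^+(\mathfrak{h},\mathfrak{a}_H)$. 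Hence
\[
\Delta(\mathfrak{n}^{-\sigma})=\{e_i+e_j:1\le i\le j\le n\},\qquad \#\Delta(\mathfrak{n}^{-\sigma})=\tbinom{n+1}{2}.
\]
For $n\ge 2$ we have $n<\binom{n+1}{2}$, so Proposition \ref{prop:QPrank} rules out (QP), proving (i)$\Rightarrow$(iii) and (ii)$\Rightarrow$(iii).

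For the converse assume $n=1$. Then $\mathfrak{n}^{-\sigma}$ is a single $\mathfrak{a}_H$-weight space (weight $2e_1$), and $A_H\cong\mathbb{R}_{>0}$ acts on it by nontrivial positive dilation. In the split case $(Sp(1,\mathbb{R}),GL(1,\mathbb{R}))$ the space $\mathfrak{n}^{-\sigma}$ is one-dimensional over $\mathbb{R}$ and $A_H$ alone produces an open orbit. In each of the three other cases $\mathfrak{n}^{-\sigma}$ carries a natural one-dimensional $\mathbb{F}$-module structure ($\mathbb{F}=\mathbb{C}$ or $\mathbb{H}$), on which the compact factor $M_H\cap M_G$ acts as the unit-norm elements of $\mathbb{F}$ and is therefore transitive on $\mathbb{F}$-spheres; combined with the $A_H$-dilation this yields an open $(M_H\cap M_G)A_H$-orbit. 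Theorem \ref{thm:pp} then gives (PP), which implies (QP).

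The main obstacle is the bookkeeping at $n=1$: one must pin down $M_H\cap M_G$ and its natural action on $\mathfrak{n}^{-\sigma}$ in the three non-split cases, and verify that the compact part is transitive on spheres in the relevant division algebra. Everything else is a formal assembly of Lemma \ref{lem:BPQ}, Proposition \ref{prop:QPrank}, and the linearization criteria of Theorems \ref{thm:pp}--\ref{thm:qp}.
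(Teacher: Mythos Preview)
Your argument for (i)$\Leftrightarrow$(ii) and for (i)$\Rightarrow$(iii) is the same as the paper's: equal real rank gives (PP)$\Leftrightarrow$(QP) via Lemma~\ref{lem:BPQ}(2), and the root count $\#\Delta(\mathfrak{n}^{-\sigma})=\binom{n+1}{2}$ together with Proposition~\ref{prop:QPrank} forces $n=1$.

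For (iii)$\Rightarrow$(ii) you take a different route from the paper. The paper does not verify the open orbit directly; instead it invokes the low-dimensional isomorphisms
\[
(\mathfrak{u}(1,1),\mathfrak{gl}(1,\mathbb{C}))\simeq(\mathfrak{o}(2,1),\mathfrak{o}(1,1))+(\mathbb{R},\mathbb{R}),\quad
(\mathfrak{sp}(1,1),\mathfrak{gl}(1,\mathbb{H}))\simeq(\mathfrak{o}(1,4),\mathfrak{o}(1,1)+\mathfrak{o}(3)),
\]
\[
(\mathfrak{sp}(1,\mathbb{R}),\mathfrak{gl}(1,\mathbb{R}))\simeq(\mathfrak{o}(2,1),\mathfrak{o}(1,1)),\quad
(\mathfrak{o}^*(4),\mathfrak{gl}(1,\mathbb{H}))\simeq(\mathfrak{o}(3),\mathfrak{o}(3))\oplus(\mathfrak{o}(1,2),\mathfrak{o}(1,1)),
\]
and then cites Proposition~\ref{prop:upq}. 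Your direct approach is in principle fine, but your description of $\mathfrak{n}^{-\sigma}$ at $n=1$ is wrong. From Table~\ref{tab:4.1} the long-root multiplicity $m^-(2e_1)$ equals $d-1$ for $(\mathfrak{u}(1,1;\mathbb{F}),\mathfrak{gl}(1,\mathbb{F}))$ and equals $1$ for both $(\mathfrak{sp}(1,\mathbb{R}),\mathfrak{gl}(1,\mathbb{R}))$ and $(\mathfrak{o}^*(4),\mathfrak{gl}(1,\mathbb{H}))$. Thus $\mathfrak{n}^{-\sigma}$ is \emph{one-dimensional over $\mathbb{R}$} in three of the four cases (including $U(1,1)$ and $O^*(4)$), and is $\mathbb{R}^3$ --- not $\mathbb{H}$ --- for $(Sp(1,1),GL(1,\mathbb{H}))$. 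Your claim that $\mathfrak{n}^{-\sigma}$ is one-dimensional over $\mathbb{F}$ with $M_H\cap M_G$ acting as the unit sphere of $\mathbb{F}$ is therefore incorrect.

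The fix is easy: in the three one-dimensional cases $A_H$ alone already has an open orbit; in the $Sp(1,1)$ case $\mathfrak{n}^{-\sigma}\simeq\operatorname{Im}\mathbb{H}\simeq\mathbb{R}^3$ and $M_H=M_H\cap M_G\simeq Sp(1)$ acts by quaternionic conjugation, i.e.\ through $SO(3)$, which is transitive on spheres. So your strategy works once the dimensions are corrected, but as written the $n=1$ step contains a genuine error.
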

To begin with, 
we observe the following:
\begin{lemma}
\label{lem:CA}
The three families
 of symmetric pairs 
 $({\mathfrak {g}}, {\mathfrak {h}})$ 
 in Proposition \ref{prop:ugl}
 satisfy \eqref{eqn:CA1}, 
  \eqref{eqn:CA2}, and  \eqref{eqn:CA3}.  
\end{lemma}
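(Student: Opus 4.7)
The plan is to verify the three properties \eqref{eqn:CA1}--\eqref{eqn:CA3} by a uniform direct computation in matrix models. All three families share a common structural feature: $\mathfrak{g}$ is a simple Lie algebra of Hermitian tube type with real rank $n$ (and hence restricted root system of type $C_n$ by the Cartan classification), while $\mathfrak{h}\cong\mathfrak{gl}(n,\mathbb{F})$ is realised as the Levi factor of the Siegel parabolic, i.e.\ as the fixed points of a Cayley-type involution $\sigma$ whose $(-1)$-eigenspace is the ``symmetric square'' Jordan component.

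Concretely, I would write $\mathfrak{g}$ as block matrices $X=\begin{pmatrix}A & B\\ C & -A^{\ast}\end{pmatrix}$, where $\ast$ is the appropriate conjugate-transpose, with $B,C$ constrained to be symmetric, Hermitian, or skew-Hermitian over $\mathbb{F}$ depending on the case (including the quaternionic skew-Hermitian variant realising $\mathfrak{o}^{\ast}(4n)$). Then $\mathfrak{h}=\{B=C=0\}$ and $\sigma$ acts as $+1$ on the $A$-block and $-1$ on the $B,C$-blocks. Take
\[\mathfrak{a}_H=\{\operatorname{diag}(h_1,\ldots,h_n,-h_1,\ldots,-h_n):h_j\in\mathbb{R}\}\subset\mathfrak{h}^{-\theta}.\]
Since $\operatorname{rank}_{\mathbb{R}}G=\operatorname{rank}_{\mathbb{R}}H=n$, a short commutator check against general elements of $\mathfrak{g}^{-\theta}$ shows $\mathfrak{a}_H$ is maximal abelian in $\mathfrak{g}^{-\theta}$, so $\mathfrak{a}_H=\mathfrak{a}_G$.

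Reading the weights of the matrix units off $H=\operatorname{diag}(h_1,\ldots,h_n,-h_1,\ldots,-h_n)$ immediately gives the decomposition: the off-diagonal entries of the $A$-block contribute roots $\pm(e_i-e_j)$ ($i\neq j$), while the $B$- and $C$-blocks contribute $\pm(e_i+e_j)$ for $i\leq j$ (including the long roots $\pm 2e_i$). Together these exhaust the $C_n$ system, establishing \eqref{eqn:CA1}. The roots $\pm(e_i-e_j)$ have their root spaces inside $\mathfrak{h}=\mathfrak{g}^{\sigma}$ and form the asserted A-type subsystem, which is \eqref{eqn:CA2}. Finally, \eqref{eqn:CA3} is automatic from the construction: every root space associated with $\pm(e_i-e_j)$ lies in $\mathfrak{g}^{\sigma}$ (so $m^{-}(\lambda)=0$), whereas every root space associated with $\pm(e_i+e_j)$ or $\pm 2e_i$ lies in $\mathfrak{g}^{-\sigma}$ (so $m^{+}(\lambda)=0$).

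There is no real obstacle here; the lemma is a case-by-case tabulation. The only mildly delicate point is placing $\mathfrak{o}^{\ast}(4n)$ into the same template: one has to recognise that in its realisation as quaternionic skew-Hermitian block matrices, the $B$- and $C$-blocks still split off precisely the long and medium root spaces of $C_n$ and still sit entirely inside $\mathfrak{g}^{-\sigma}$. Once this is noted, the weight picture on $\mathfrak{a}_H$ and the sign pattern dictating which of $m^{+}$ or $m^{-}$ vanishes are identical across $\mathbb{F}=\mathbb{R},\mathbb{C},\mathbb{H}$ and the twisted case, which is exactly what makes the conclusion uniform.
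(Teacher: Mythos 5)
Your proof is substantively correct and follows the same essential strategy as the paper, which simply records the root multiplicities $m^{\pm}(\lambda)$ for the three families in a table (Table 4.1) and reads off the conclusion. Your block-matrix analysis amounts to deriving that table: the $A$-block carries the $\pm(e_i-e_j)$ root spaces inside $\mathfrak{g}^{\sigma}$, while the $B$- and $C$-blocks (subject to the appropriate symmetry constraint) carry $\pm(e_i+e_j)$ and $\pm 2e_l$ inside $\mathfrak{g}^{-\sigma}$. Since the proposition's hypothesis restricts to $\mathbb{F}\ne\mathbb{R}$ in the $\mathfrak{u}(n,n;\mathbb{F})$ family, the diagonal of $B$ is nonzero in all three cases, so the $2e_l$ roots genuinely appear and the system is $C_n$ rather than $D_n$. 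So the route is the same as the paper's, just with the table entries derived rather than cited.

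One genuine factual error in the framing, though: it is not true that all three families have $\mathfrak{g}$ of Hermitian tube type. The family $(U(n,n;\mathbb{H}),GL(n,\mathbb{H}))=(Sp(n,n),GL(n,\mathbb{H}))$ does not fit, since $Sp(n,n)$ is not Hermitian (its maximal compact $Sp(n)\times Sp(n)$ has no one-dimensional center). Consequently the inference ``Hermitian tube type of real rank $n$, hence restricted root system $C_n$'' is not a valid justification of \eqref{eqn:CA1} in that case, and the language of ``Siegel parabolic'' and ``Jordan algebra component'' is also Hermitian-specific. None of this breaks the actual argument---what you really use, and what is correct in all three cases, is that $\mathfrak{h}+\mathbb{R}$ is the Levi of a maximal parabolic with abelian nilradical and that $\sigma$ is conjugation by the grading element, so that the block decomposition is $\sigma$-homogeneous. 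But the Hermitian tube type claim should be deleted or replaced by this weaker (and correct) structural observation; as written, it is a false premise that a reader could reasonably object to, and the proof does not depend on it once the block computation is carried out.
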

\begin{proof}
We take the standard basis
 $\{e_1, \cdots, e_n\}$
 of ${\mathfrak {a}}_H^{\ast}$
 such that 
 $\Sigma({\mathfrak {h}}, {\mathfrak {a}}_H)
 =\{\pm(e_i-e_j):
    1\le i < j \le n\}$.  
We set $d=\dim_{\mathbb{R}} {\mathbb{F}}$.  
Then the pair of multiplicities
$\begin{pmatrix} m^+(\lambda)\\ m^-(\lambda)\end{pmatrix}$
is given as follows:
\begin{table}[H]
\centering
\begin{tabular}{cccccc}
& ${\mathfrak {g}}$
& ${\mathfrak {h}}$
& $e_i-e_j$
& $e_i+e_j$
& $\quad 2 e_l$
\\
\\
& ${\mathfrak {u}}(n,n;{\mathbb{F}})$\qquad
& ${\mathfrak {gl}}(n,{\mathbb{F}})$\qquad
& $\begin{pmatrix} d \\ 0\end{pmatrix}$\qquad
& $\begin{pmatrix} 0 \\ d\end{pmatrix}$\qquad
& $\begin{pmatrix} 0 \\ d-1 \end{pmatrix}$\qquad
\\
\\
& ${\mathfrak {sp}}(n,{\mathbb{R}})$
& ${\mathfrak {gl}}(n,{\mathbb{R}})$
& $\begin{pmatrix} 1 \\ 0\end{pmatrix}$
& $\begin{pmatrix} 0 \\ 1\end{pmatrix}$
& $\quad\begin{pmatrix} 0 \\ 1\end{pmatrix}$
\\
\\
& ${\mathfrak {so}}^{\ast}(4n)$
& ${\mathfrak {gl}}(n,{\mathbb{H}})$
& $\begin{pmatrix} 4 \\ 0\end{pmatrix}$
& $\begin{pmatrix} 0 \\ 4\end{pmatrix}$
& $\quad\begin{pmatrix} 0 \\ 1\end{pmatrix}$
\end{tabular}
\caption{$(m^+(\lambda), m^-(\lambda))$
 for symmetric pairs
 of type $(C_n,A_n)$}
\label{tab:4.1}
\end{table}
The lemma is clear from Table \ref{tab:4.1}.  
\end{proof}

\begin{proof}
[Proof of Proposition \ref{prop:ugl}]
By \eqref{eqn:CA1}, \eqref{eqn:CA2}
 and \eqref{eqn:CA3}, 
$\#\Delta({\mathfrak {n}}^{-\sigma})$ is equal
 to half the difference of the cardinalities of roots
 in $C_n$ and $A_n$, 
 namely,
\[
  \#\Delta({\mathfrak {n}}^{-\sigma})
  =
  \frac 1 2 (2n^2-(n^2-n))
  =\frac 1 2 n(n+1).  
\]
Therefore, 
 the inequality 
 $\operatorname{rank}_{\mathbb{R}} H \ge \#\Delta({\mathfrak {n}}^{-\sigma})$
 amounts to $n \ge \frac 1 2 n (n+1)$, 
 namely, 
 $n=1$.  
By Proposition \ref{prop:QPrank}, 
 we have shown the implication (i) $\Rightarrow$ (iii).

The equivalence (i) $\Leftrightarrow$ (ii)
 follows from Lemma \ref{lem:BPQ} (2)
 because $\operatorname{rank}_{\mathbb{R}} H
=\operatorname{rank}_{\mathbb{R}} G$.

Finally, 
 the implication (iii) $\Rightarrow$ (i)
 is included
 in a special 
 (and easy)
 case of other families, 
which we have already shown to 
 satisfy {\rm{(PP)}}.  
In fact,
\begin{align*}
({\mathfrak {u}}(1,1), {\mathfrak {gl}}(1,{\mathbb{C}}))
 \simeq\,& 
({\mathfrak {o}}(2,1), {\mathfrak {o}}(1,1))
+({\mathbb{R}}, {\mathbb{R}}), 
\\
({\mathfrak {sp}}(1,1), {\mathfrak {gl}}(1,{\mathbb{H}}))
 \simeq\,& ({\mathfrak {o}}(1,4), {\mathfrak {o}}(1,1)+{\mathfrak {o}}(3)), 
\\
({\mathfrak {sp}}(1,{\mathbb{R}}), {\mathfrak {gl}}(1,{\mathbb{R}}))
 \simeq\,& ({\mathfrak {o}}(2,1), {\mathfrak {o}}(1,1)), 
\\
({\mathfrak {o}}^{\ast}(4), {\mathfrak {gl}}(1,{\mathbb{H}}))
 \simeq\,& ({\mathfrak {o}}(3),{\mathfrak {o}}(3))
\oplus 
({\mathfrak {o}}(1,2),{\mathfrak {o}}(1,1)).  
\end{align*}
We know
 that the symmetric pairs
 in the right-hand side satisfy (PP) as special cases
 of Proposition \ref{prop:upq}.  
Thus we have proved Proposition \ref{prop:ugl}. 
\end{proof}
We end this subsection 
 with the symmetric pair
 $(U(n,n;{\mathbb{F}}), GL(n,{\mathbb{F}}))$ 
 for ${\mathbb{F}}={\mathbb{R}}$, 
which was excluded from Proposition \ref{prop:ugl}
 as a \lq\lq{degenerate case}\rq\rq.  
\begin{prop}
\label{prop:ug2}
Let $(G,H)=(O(n,n), GL(n,{\mathbb{R}}))$
 $(n \ge 2)$.  
Then {\rm{(QP)}}$\Leftrightarrow$
{\rm{(PP)}}$\Leftrightarrow$ $n=2$ or $3$.  
\end{prop}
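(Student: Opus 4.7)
The plan is to apply the criterion of Theorem \ref{thm:qp} (linearization of (QP)), after first reducing (PP) to (QP). Because $\operatorname{rank}_{\mathbb{R}}H=\operatorname{rank}_{\mathbb{R}}G=n$, we have $\mathfrak{a}_H=\mathfrak{a}_G$, so Lemma \ref{lem:BPQ} (2) gives (PP) $\Leftrightarrow$ (QP), and it suffices to decide for which $n$ the group $M_HA_H$ has an open orbit on $\mathfrak{n}^{-\sigma}$.

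First I would determine the restricted root data. In a standard basis $\{e_1,\ldots,e_n\}$ of $\mathfrak{a}_H^\ast$, the root system $\Sigma(\mathfrak{g},\mathfrak{a}_H)$ is of type $D_n$ and $\Sigma(\mathfrak{h},\mathfrak{a}_H)$ is of type $A_{n-1}$, with all root spaces one-dimensional. This is the specialization of Table \ref{tab:4.1} to $d=1$: the long roots $\pm 2e_\ell$ disappear (since $m^{\pm}(2e_\ell)=0$), so comparison of $D_n$ with $A_{n-1}$ yields
\[
\Delta(\mathfrak{n}^{-\sigma})=\{\,e_i+e_j:1\le i<j\le n\,\},
\qquad
\dim\mathfrak{n}^{-\sigma}=\binom{n}{2},
\]
with each weight of multiplicity one.

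Next, I would rule out $n\ge 4$ via Proposition \ref{prop:QPrank}: (QP) forces the $\binom{n}{2}$ weights $\{e_i+e_j\}$ to be linearly independent in the $n$-dimensional space $\mathfrak{a}_H^\ast$, which fails as soon as $\binom{n}{2}>n$, i.e.\ for $n\ge 4$. For $n=2,3$ linear independence does hold (for $n=3$ the $3\times 3$ coefficient matrix has determinant $2$), so this argument alone does not conclude, and one must verify (QP) directly.

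For the positive direction I would compute $M_H=Z_{O(n)}(\mathfrak{a}_H)=\{\operatorname{diag}(\epsilon_1,\ldots,\epsilon_n):\epsilon_i=\pm 1\}$, with $M_HA_H$ acting on the weight vector $v_{e_i+e_j}\in\mathfrak{n}^{-\sigma}$ by the character $(\epsilon,a)\mapsto\epsilon_i\epsilon_j\,\chi_{e_i+e_j}(a)$. For $n=2$, $\mathfrak{n}^{-\sigma}\simeq\mathbb{R}$ and $M_HA_H$ acts transitively on $\mathbb{R}\setminus\{0\}$. For $n=3$, in the basis $(v_{12},v_{13},v_{23})\simeq\mathbb{R}^3$ the linear independence of the three weights ensures that $A_H$ acts with open orbit on the positive octant, while the signs $(\epsilon_1\epsilon_2,\epsilon_1\epsilon_3,\epsilon_2\epsilon_3)$ sweep out the index-two subgroup of $\{\pm 1\}^3$ cut out by the relation (product $=+1$); hence the $M_HA_H$-orbit of $(1,1,1)$ is a union of four open octants, and in particular open in $\mathfrak{n}^{-\sigma}$. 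This completes the proof. The only subtle point is the verification for $n=3$, namely checking that the finite group $M_H$ of order $2^n$ is large enough, together with $A_H$, to reach an open set; alternatively, one can invoke the exceptional isomorphism $(\mathfrak{o}(3,3),\mathfrak{gl}(3,\mathbb{R}))\simeq(\mathfrak{sl}(4,\mathbb{R}),\mathfrak{s}(\mathfrak{gl}(3,\mathbb{R})+\mathfrak{gl}(1,\mathbb{R})))$ and deduce (PP) from Proposition \ref{prop:glgl} for the (F3) family with $n=3$.
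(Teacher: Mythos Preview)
Your proof is correct and follows essentially the same path as the paper: both use the equal-rank reduction (PP) $\Leftrightarrow$ (QP), both rule out $n\ge 4$ via Proposition~\ref{prop:QPrank} by counting $\#\Delta(\mathfrak{n}^{-\sigma})=\binom{n}{2}$ against $\operatorname{rank}_{\mathbb{R}}H=n$. The only difference is in the positive verification for $n=2,3$: the paper appeals to the exceptional isomorphisms
\[
(\mathfrak{o}(2,2),\mathfrak{gl}(2,\mathbb{R}))\simeq(\mathfrak{o}(1,2),\mathfrak{o}(1,2))\oplus(\mathfrak{o}(1,2),\mathfrak{o}(1,1)),
\qquad
(\mathfrak{o}(3,3),\mathfrak{gl}(3,\mathbb{R}))\simeq(\mathfrak{sl}(4,\mathbb{R}),\mathfrak{gl}(3,\mathbb{R}))
\]
and quotes Propositions~\ref{prop:upq} and~\ref{prop:cpx}, whereas you verify the open $M_HA_H$-orbit on $\mathfrak{n}^{-\sigma}$ directly (and mention the $n=3$ isomorphism as an alternative). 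Your direct computation for $n=3$ is sound: the three weights $e_i+e_j$ are linearly independent so $A_H$ already acts with open orbit on the positive octant, and the $M_H$-signs fill out an open union of octants. Either method works; yours is more self-contained, while the paper's avoids tracking the $M_H$-action explicitly.
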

\begin{proof}
The root multiplicities are given 
 in the first row of Table \ref{tab:4.1}.  
We observe
 that the long roots $\pm 2 e_l$ do not appear
 because $d=1$
 for ${\mathbb{F}}={\mathbb{R}}$.  
As a result,
 we have $\#\Delta({\mathfrak {n}}^{-\sigma})
=\frac 1 2 n(n-1)$,
 and the inequality $\operatorname{rank}_{\mathbb{R}}H \ge 
\#\Delta({\mathfrak {n}}^{-\sigma})$
 amounts to $n\ge \frac 1 2 n(n-1)$, 
namely,
 $n=2$ or $3$.  
Thus we have proved 
 the implications
 (PP) $\Leftrightarrow$(QP) $\Rightarrow$
 $n=2$ or $3$
 by Proposition \ref{prop:QPrank}.  

Conversely,
 for $n=2$, $3$, 
 we observe the following isomorphisms:
\begin{align*}
({\mathfrak {o}}(2,2), {\mathfrak {gl}}(2,{\mathbb{R}}))
\simeq\, &
({\mathfrak {o}}(1,2),{\mathfrak {o}}(1,2)) \oplus 
({\mathfrak {o}}(1,2),{\mathfrak {o}}(1,1)),
\\
({\mathfrak {o}}(3,3), {\mathfrak {gl}}(3,{\mathbb{R}}))
\simeq\, &
({\mathfrak {sl}}(4,{\mathbb{R}}),{\mathfrak {gl}}(3,{\mathbb{R}})).  
\end{align*}
They satisfy (PP) 
 as special cases of Propositions \ref{prop:upq}
 and \ref{prop:cpx}, 
 respectively.  
\end{proof}

\subsection{$(G,H)=(Sp(p+q,{\mathbb{F}}),
 Sp(p,{\mathbb{F}})\times Sp(q,{\mathbb{F}}))$, 
 ${\mathbb{F}}={\mathbb{R}}$ or ${\mathbb{C}}$}
\label{subsec:Sp}
\begin{prop}
\label{prop:sp}
Let $p, q \ge 1$
 and 
\[
   (G,H)=(Sp(p+q,{\mathbb{F}}),
 Sp(p,{\mathbb{F}}) \times Sp(q,{\mathbb{F}})), 
\qquad
 {\mathbb{F}}={\mathbb{R}}\,\,\text{ or }\,\, {\mathbb{C}}. 
\] 
Then {\rm{(QP)}} $\Leftrightarrow$
 {\rm{(PP)}} $\Leftrightarrow$ $(p,q)=(1,1)$.  
\end{prop}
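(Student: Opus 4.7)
The plan is to combine the rank-equality shortcut of Lemma~\ref{lem:BPQ}(2) with the linear-independence obstruction of Proposition~\ref{prop:QPrank}, and then to settle the single surviving case $(p,q)=(1,1)$ via accidental isomorphisms to families already treated.

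First I would note that for both $\mathbb{F}=\mathbb{R}$ and $\mathbb{F}=\mathbb{C}$,
\[
\operatorname{rank}_{\mathbb{R}}G=\operatorname{rank}_{\mathbb{R}}H=p+q,
\]
because the symplectic groups appearing here are either split ($\mathbb{F}=\mathbb{R}$) or complex (hence of maximal possible $\mathbb{R}$-rank equal to the complex rank), and the diagonally embedded Cartan of $H$ is already maximally split in $G$. Lemma~\ref{lem:BPQ}(2) then yields $\mathrm{(PP)}\Leftrightarrow\mathrm{(QP)}$, so it suffices to analyze (QP).

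Next I would take $\mathfrak{a}_H=\mathfrak{a}_G$ with standard basis $e_1,\ldots,e_{p+q}$, so that $\Sigma(\mathfrak{g},\mathfrak{a}_H)$ is of type $C_{p+q}$, and fix $\sigma$ so that the two blocks are $I_1=\{1,\ldots,p\}$ and $I_2=\{p+1,\ldots,p+q\}$. Then $\mathfrak{h}$ contributes exactly the roots $\pm e_i\pm e_j$ and $\pm 2e_k$ with indices in a single block, while $\mathfrak{g}^{-\sigma}$ is the sum of root spaces for $e_i\pm e_j$ with $i\in I_1$, $j\in I_2$. Taking the positive system with $e_1>\cdots>e_{p+q}>0$,
\[
\Delta(\mathfrak{n}^{-\sigma})=\{\,e_i-e_j,\ e_i+e_j : i\in I_1,\ j\in I_2\,\},
\]
so $\#\Delta(\mathfrak{n}^{-\sigma})=2pq$, uniformly in $\mathbb{F}$ (passing from $\mathbb{R}$ to $\mathbb{C}$ only doubles the root multiplicities and leaves the set of distinct restricted weights unchanged). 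By Proposition~\ref{prop:QPrank}, (QP) forces $p+q\ge 2pq$, whose only solution with $p,q\ge 1$ is $(p,q)=(1,1)$.

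For the converse direction I would invoke the low-rank isomorphisms
\[
(\mathfrak{sp}(2,\mathbb{R}),\mathfrak{sp}(1,\mathbb{R})+\mathfrak{sp}(1,\mathbb{R}))\simeq(\mathfrak{o}(3,2),\mathfrak{o}(2,2)),
\]
\[
(\mathfrak{sp}(2,\mathbb{C}),\mathfrak{sp}(1,\mathbb{C})+\mathfrak{sp}(1,\mathbb{C}))\simeq(\mathfrak{o}(5,\mathbb{C}),\mathfrak{o}(4,\mathbb{C})),
\]
which identify the $(p,q)=(1,1)$ pairs with instances of (F5) and (F2) respectively, already handled by Proposition~\ref{prop:upq} (taking $(i,j,k,l)=(2,1,2,0)$ over $\mathbb{F}=\mathbb{R}$) and Proposition~\ref{prop:somn} (taking $(m,n)=(4,1)$); both satisfy (PP). I do not anticipate a serious obstacle: the argument is essentially root-theoretic counting, and the only mild care needed is to keep the book-keeping uniform across $\mathbb{F}=\mathbb{R}$ and $\mathbb{F}=\mathbb{C}$, which is automatic once one observes that only multiplicities, not the weight set, depend on the field.
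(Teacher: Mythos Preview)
Your proposal is correct and follows essentially the same approach as the paper's own proof: use Lemma~\ref{lem:BPQ}(2) to reduce to (QP), count $\#\Delta(\mathfrak{n}^{-\sigma})=2pq$ and apply Proposition~\ref{prop:QPrank} to force $(p,q)=(1,1)$, then verify that case via the accidental isomorphisms to $(\mathfrak{o}(3,2),\mathfrak{o}(2,2))$ and $(\mathfrak{o}(5,\mathbb{C}),\mathfrak{o}(4,\mathbb{C}))$ handled in Propositions~\ref{prop:upq} and~\ref{prop:somn}. Your write-up is in fact slightly more explicit than the paper's about the rank equality and the specific parameter choices, but the structure is identical.
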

\begin{proof}
Take the standard basis
$\{f_1, \cdots, f_{p+q}\}$
 of ${\mathfrak {a}}_H^{\ast}={\mathfrak {a}}_G^{\ast}$
 such that
\[
\Delta({\mathfrak {n}}^{-\sigma})
=
\{f_i \pm f_j: 1 \le i \le p, p+1 \le j \le p+q\}.  
\]
Then the inequality 
 $\operatorname{rank}_{\mathbb{R}}H \ge \# \Delta({\mathfrak {n}}^{-\sigma})$
 amounts to 
 $p+q \ge 2 pq$, 
 which holds only if $(p,q)=(1,1)$.  
Therefore,  
 if $(G,H)$ satisfies (QP), 
then $(p,q)=(1,1)$ by Theorem \ref{thm:qp}
 and Proposition \ref{prop:QPrank}.  
Conversely, 
 if $(p,q)=(1,1)$, 
then 
\begin{align*}
&({\mathfrak {sp}}(2,{\mathbb{R}}), 
{\mathfrak {sp}}(1,{\mathbb{R}})+{\mathfrak {sp}}(1,{\mathbb{R}}))
\simeq
 ({\mathfrak {o}}(3,2), {\mathfrak {o}}(2,2)),
\\
& ({\mathfrak {sp}}(2,{\mathbb{C}}),  
  {\mathfrak {sp}}(1,{\mathbb{C}})+{\mathfrak {sp}}(1,{\mathbb{C}}))
\simeq
 ({\mathfrak {o}}(5,{\mathbb{C}}), {\mathfrak {o}}(4,{\mathbb{C}})),
\end{align*}
which satisfy (PP)
 as we have seen in Propositions \ref{prop:upq}
 and \ref{prop:somn}, 
 respectively.  
Hence Proposition \ref{prop:sp} is proved.  
\end{proof}

\subsection{$(G,H)=(O^{\ast}(2p+2q), U(p,q))$}
\label{subsec:OU}
As a final example of classical symmetric pairs,
 we consider 
 $({\mathfrak {g}}, {\mathfrak {h}})
 =({\mathfrak {o}}^{\ast}(2p+2q), {\mathfrak {u}}(p,q))$
 which is the $c$-dual of the symmetric pair
 $({\mathfrak {o}}(2p+2q), {\mathfrak {u}}(p,q))$.  
\begin{prop}
\label{prop:OUpq}
Let 
\[
(G,H)=(O^{\ast}(2p+2q), U(p,q))
\quad
\text{with }
p \ge q \ge 1.  
\]
\begin{enumerate}
\item[{\rm{1)}}]
The pair $(G,H)$ satisfies {\rm{(QP)}}
 if and only if $q=1$.  
\item[{\rm{2)}}]
The pair $(G,H)$ satisfies {\rm{(PP)}}
 if and only if $(p,q)=(3,1), (2,1)$
 or $(1,1)$.  
\end{enumerate}
\end{prop}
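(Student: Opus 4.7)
The plan is to apply the linearization criteria from Theorems \ref{thm:qp} and \ref{thm:pp}, reducing everything to open-orbit questions for finite-dimensional linear actions. First I would set up the standard data: a maximal abelian subspace $\mathfrak{a}_H \subset \mathfrak{h}^{-\theta}$ of dimension $q = \operatorname{rank}_{\mathbb{R}} U(p,q)$, extended to $\mathfrak{a}_G \subset \mathfrak{g}^{-\theta}$ of dimension $\lfloor (p+q)/2 \rfloor$. Using the standard matrix model of the embedding $U(p,q) \hookrightarrow O^{*}(2p+2q)$, I would describe $M_H A_H$, the subgroup $(M_H \cap M_G) A_H$, and the weight decomposition of $\mathfrak{n}^{-\sigma}$ as an $\mathfrak{a}_H$-module.

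For part (1), the implication $(\mathrm{QP}) \Rightarrow q=1$ follows from Proposition \ref{prop:QPrank}: for $q \ge 2$, the off-diagonal blocks of $\mathfrak{n}^{-\sigma}$ produce strictly more than $q = \dim \mathfrak{a}_H$ distinct $\mathfrak{a}_H$-weights, violating the inequality \eqref{eqn:rn}. For $q = 1$, the converse may be established either by a direct verification via Theorem \ref{thm:qp} (since $\dim \mathfrak{a}_H = 1$ and $\mathfrak{n}^{-\sigma}$ carries a single $\mathfrak{a}_H$-weight on which $M_H A_H$ acts with an open orbit), or indirectly by invoking the $c$-duality $(\mathfrak{o}^{*}(2p+2), \mathfrak{u}(p,1))^c \simeq (\mathfrak{o}(2p,2), \mathfrak{u}(p,1))$ from Proposition \ref{prop:cdual}, putting us into family (H1).

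For part (2), the implication $(\mathrm{PP}) \Rightarrow q=1$ is immediate from Lemma \ref{lem:BPQ}(1) combined with part (1). When $q = 1$, the three small cases are handled by exceptional isomorphisms reducing to pairs already covered by earlier results:
\begin{align*}
(\mathfrak{o}^{*}(4), \mathfrak{u}(1,1)) &\simeq (\mathfrak{su}(2), \mathfrak{u}(1)) \oplus (\mathfrak{sl}(2,\mathbb{R}), \mathfrak{sl}(2,\mathbb{R})) \quad \text{(cases (D) and (A))}, \\
(\mathfrak{o}^{*}(6), \mathfrak{u}(2,1)) &\simeq (\mathfrak{su}(3,1), \mathfrak{u}(2,1)) \quad \text{(case (F4), via Proposition \ref{prop:upq})}, \\
(\mathfrak{o}^{*}(8), \mathfrak{u}(3,1)) &\simeq (\mathfrak{o}(6,2), \mathfrak{u}(3,1)) \quad \text{(case (H1), via triality of } D_4\text{)}.
\end{align*}
For $q = 1, p \ge 4$, I would apply Theorem \ref{thm:pp} and exhibit a non-constant real algebraic invariant of $(M_H \cap M_G) A_H$ on $\mathfrak{n}^{-\sigma}$, constructed from a bilinear pairing of fiber blocks that is fixed by $M_H \cap M_G = Z_{H\cap K}(\mathfrak{a}_G)$ (by Lemma \ref{lem:LIwasawa}(1)) but moved by the strictly larger centralizer $M_H = Z_{H\cap K}(\mathfrak{a}_H)$.

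The main obstacle is precisely this last step: the gap between $M_H A_H$ and $(M_H \cap M_G) A_H$ is a compact quotient, and one must identify a concrete functional on $\mathfrak{n}^{-\sigma}$ that is invariant under the smaller group while $M_H A_H$ still acts transitively on an open set. The construction is analogous in spirit to the invariant $|z|^2$ appearing in the proof of Lemma \ref{lem:upql}: the refinement $\mathfrak{a}_H \subsetneq \mathfrak{a}_G$ produces an extra orbit coordinate whose norm squared becomes a nontrivial $(M_H \cap M_G) A_H$-invariant function precisely when $p \ge 4$. Once this invariant is written down explicitly, Theorem \ref{thm:pp} immediately rules out an open orbit and completes the argument.
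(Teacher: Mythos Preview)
Your overall strategy matches the paper's, but there is a genuine gap in part~(1). You claim that for every $q\ge 2$ Proposition~\ref{prop:QPrank} already yields $\#\Delta(\mathfrak{n}^{-\sigma})>q$, hence rules out (QP). This fails at $(p,q)=(2,2)$: when $p=q$ the short weights $\pm e_l$ do not appear in $\mathfrak{n}^{-\sigma}$ (their multiplicity $m^-(\pm e_l)=2(p-q)$ vanishes), so $\Delta(\mathfrak{n}^{-\sigma})=\{e_1+e_2,\,e_1-e_2\}$ consists of exactly two linearly independent weights, and the inequality~\eqref{eqn:rn} holds with equality. Proposition~\ref{prop:QPrank} is therefore powerless at $(2,2)$, and you need a separate argument. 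The paper handles this case by the exceptional isomorphism
\[
(\mathfrak{o}^{\ast}(8),\mathfrak{u}(2,2))\;\simeq\;(\mathfrak{o}(6,2),\mathfrak{o}(4,2)+\mathfrak{o}(2)),
\]
which places the pair inside the family of Proposition~\ref{prop:upq} with $(i,j,k,l)=(4,2,0,2)$ (after reordering), where (QP) has already been shown to fail.

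For part~(2) with $q=1$, your plan (exceptional isomorphisms for $p\le 3$, then an explicit $(M_H\cap M_G)A_H$-invariant for $p\ge 4$) is workable but more laborious than what the paper does: it simply forward-references Proposition~\ref{prop:rankH}, where the case $\operatorname{rank}_{\mathbb{R}}H=1$ is settled uniformly by reading off from Table~\ref{tab:5.2} that for type~III one has $(\mathfrak{m}_H\cap\mathfrak{m}_G)+\mathfrak{a}_H\simeq\mathfrak{sp}(1)^{[m/2]}+\mathbb{C}$ acting on $\mathfrak{n}^{-\sigma}\simeq\mathbb{C}^{m}$ (here $m=p-1$), which has an open orbit iff $m\le 2$. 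Your invariant-construction sketch is in the right spirit but remains to be carried out; since the paper's route is both shorter and already written, you may prefer to invoke it directly.
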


\begin{proof}
We take the standard basis
 $\{e_1, \cdots, e_q\}$
 of ${\mathfrak {a}}_H^{\ast}$
such that
\[
\Sigma({\mathfrak {g}}, {\mathfrak {a}}_H)
 \subset \{\pm e_i \pm e_j: 1 \le i < j \le q\}
  \cup 
  \{\pm e_l, \pm 2e_l: 1 \le l \le q\}.  
\]
The inclusion is actually the equality
 if and only if $p>q$.  
Further,
 the root multiplicities 
 $m^{\pm}(\lambda)$
 are given according to the parity of $p+q$ as follows:

{\bf{Case 1.}}\enspace
$p \equiv q \mod 2$.  

\begin{equation*}
\begin{array}{ccccc}
&\lambda
&\,\,\pm e_i \pm e_j\,\,
&\pm e_l
&\pm 2 e_l
\\
&m^{+}(\lambda)
& 2 
&\,\,2(p-q)\,\,
& 1
\\
&m^{-}(\lambda)
&2
&\,\,2(p-q)\,\,
&0
\end{array}
\end{equation*}

{\bf{Case 2.}}\enspace
$p \equiv q+1 \mod 2$.  
\begin{equation*}
\begin{array}{ccccc}
&\lambda
&\,\,\pm e_i \pm e_j\,\,
&\pm e_l
&\pm 2 e_l
\\
&m^{+}(\lambda)
& 2 
&\,\,2(p-q+1)\,\,
& 1
\\
&m^{-}(\lambda)
&2
&2(p-q+1)
&0
\end{array}
\end{equation*}
Thus we can take a positive system 
such that
\[
\Delta({\mathfrak {n}}^{-\sigma})
=\{\pm e_i \pm e_j: 1 \le i < j \le q\}
\,\,
  (\cup 
  \{\pm e_l: 1 \le l \le q\}
  \,\,\text{ for }\,\, p>q).  
\]
Hence the inequality 
 $\operatorname{rank}_{\mathbb{R}}H
 \ge \# \Delta({\mathfrak {n}}^{-\sigma})$
 implies
\[
  q \ge
  \begin{cases}
  q(q-1)
  \qquad
  &\text{for }\,\, p=q,
\\
  q(q-1)+q
  \qquad
  &\text{for }\,\, p>q.  
  \end{cases}
\]
By Proposition \ref{prop:QPrank}, 
 if $(G,H)$ satisfies (QP)
 then $(p,q)=(2,2)$
 or $q=1$.  

Conversely,
 suppose that $(p,q)=(2,2)$.  
In view of the isomorphism
\[
 ({\mathfrak{o}}^{\ast}(8),{\mathfrak{u}}(2,2))
 \simeq 
 ({\mathfrak{o}}(6,2),{\mathfrak{o}}(4,2)+{\mathfrak{o}}(2)), 
\]
we see that $(G,H)$ does not satisfy (QP)
 by Proposition \ref{prop:upq} (1).

Suppose now that $q=1$.  
Then $\operatorname{rank}_{\mathbb{R}}H =1$, 
 and we shall show in Proposition \ref{prop:rankH}
 that $(G,H)$ satisfies (QP)
 for any $p$ and (PP) for $p \le 3$
 (see III in Table \ref{tab:5.2}).  
This completes the proof of Proposition \ref{prop:OUpq}.  
\end{proof}

\subsection
{$({\mathfrak {g}}, {\mathfrak {h}})=({\mathfrak {e}}_{6(-26)}, {\mathfrak {so}}(9,1)+{\mathbb{R}})$}
\label{subsec:e6}

The exceptional real Lie algebra
$
{\mathfrak {g}}:=
{\mathfrak {e}}_{6(-26)}
$
 is a simple Lie algebra
 with the following property:
\[
{\mathfrak {k}} \simeq {\mathfrak {f}}_{4(-52)}
\,\,
\text{and }
\,\,
\operatorname{rank}_{\mathbb{R}}{\mathfrak {g}}=2.  
\]
The goal of this subsection
 is to prove the following:
\begin{prop}
\label{prop:e6}
Let $(G,H)$ be a symmetric pair
 with Lie algebras
\[
({\mathfrak {g}}, {\mathfrak {h}})
=
({\mathfrak {e}}_{6(-26)}, {\mathfrak {so}}(9,1)+{\mathbb{R}}).  
\]
Then $(G,H)$ satisfies {\rm{(PP)}} and {\rm{(QP)}}.  
\end{prop}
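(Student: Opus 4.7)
The plan is to reduce (PP) to a transitivity statement coming from triality. Since $\operatorname{rank}_{\mathbb{R}} G = 2 = \operatorname{rank}_{\mathbb{R}} H$ (the real rank of $\mathfrak{so}(9,1)$ is one and the abelian summand $\mathbb{R}$ contributes the second), Lemma~\ref{lem:BPQ}~(2) reduces the task to verifying the weaker condition (QP), and by Theorem~\ref{thm:qp} this amounts to showing that $M_H A_H$ has an open orbit on $\mathfrak{n}^{-\sigma}$.

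The first step is to identify $\mathfrak{n}^{-\sigma}$ as an $M_H A_H$-module. Since $\mathfrak{a}_H = \mathfrak{a}_G$, the involution $\sigma$ acts trivially on $\mathfrak{a}_G$ and hence preserves each restricted root space $\mathfrak{g}(\mathfrak{a}_G;\alpha)$. The restricted root system $\Sigma(\mathfrak{g},\mathfrak{a}_G)$ of $\mathfrak{e}_{6(-26)}$ is of type $A_2$ with every root of multiplicity $8$, giving $\dim\mathfrak{n} = 24$. The subalgebra $\mathfrak{so}(9,1)\subset\mathfrak{h}$ contributes a single positive restricted root $\beta$ of multiplicity $8$, while the central summand $\mathbb{R}$ contributes nothing to $\mathfrak{n}^\sigma$. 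Exploiting the complexified grading $\mathfrak{e}_6(\mathbb{C}) = \mathfrak{so}(10,\mathbb{C}) + \mathbb{C} + V_+ + V_-$, in which the central $\mathbb{C}\subset\mathfrak{h}_{\mathbb{C}}$ acts on the two half-spin modules $V_\pm$ of $\mathfrak{so}(10,\mathbb{C})$ by opposite characters, a short weight computation forces $\beta$ to be the highest restricted root $\alpha_1+\alpha_2$. Consequently
\[
\mathfrak{n}^\sigma = \mathfrak{g}(\mathfrak{a}_G;\alpha_1+\alpha_2), \qquad \mathfrak{n}^{-\sigma} = \mathfrak{g}(\mathfrak{a}_G;\alpha_1)\oplus\mathfrak{g}(\mathfrak{a}_G;\alpha_2),
\]
of dimensions $8$ and $16$ respectively. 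A dimension count next gives $\dim\mathfrak{m}_G = \dim\mathfrak{k} - \dim\mathfrak{n} = 52-24 = 28$ and $\dim\mathfrak{m}_H = \dim(\mathfrak{h}\cap\mathfrak{k}) - \dim\mathfrak{n}^\sigma = 36-8 = 28$; together with the inclusion $\mathfrak{m}_H\subset\mathfrak{m}_G$ forced by $\mathfrak{a}_H = \mathfrak{a}_G$, this yields $\mathfrak{m}_H = \mathfrak{m}_G \simeq \mathfrak{so}(8)$. The three $8$-dimensional root spaces $\mathfrak{g}(\mathfrak{a}_G;\alpha_i)$ of $\mathfrak{e}_{6(-26)}$ then carry the three inequivalent $8$-dimensional representations of $Spin(8) = M_H$ related by triality, and the two summands of $\mathfrak{n}^{-\sigma}$ are the two half-spin modules.

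The main step is the triality transitivity statement: $Spin(8)$ acts transitively on the product of unit spheres $S^7_+\times S^7_-\subset V_+\oplus V_-$. Indeed, $Spin(8)$ acts transitively on $S^7_+$ with isotropy a copy $Spin(7)^+$ of $Spin(7)$; by triality $Spin(7)^+$ acts on $V_-$ as the $8$-dimensional spin representation of $Spin(7)$ and hence transitively on $S^7_-$, with isotropy $Spin(7)^+\cap Spin(7)^- = G_2$; the dimension count $28-14 = 14 = \dim(S^7\times S^7)$ confirms transitivity. Combined with the independent scaling of the two summands by the two linearly independent characters $\chi_{\alpha_1},\chi_{\alpha_2}$ of $A_H$, this produces an open dense $M_H A_H$-orbit on $(V_+\setminus\{0\})\times(V_-\setminus\{0\})\subset\mathfrak{n}^{-\sigma}$, proving (QP) and therefore (PP). The main obstacle is the intermediate bookkeeping --- pinpointing $\beta$ as the highest root, verifying $M_H = M_G$, and identifying the two summands of $\mathfrak{n}^{-\sigma}$ with the two half-spin modules rather than, say, two copies of the same one --- after which the triality statement $Spin(8)/G_2 \cong S^7\times S^7$ makes the open-orbit conclusion immediate.
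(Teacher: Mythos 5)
Your proof is correct and follows essentially the same route as the paper: reduce (PP) to (QP) via the real-rank equality and Lemma \ref{lem:BPQ}, identify $\mathfrak{n}^{-\sigma}$ as the sum of two distinct $8$-dimensional representations of $M_H\simeq Spin(8)$ sitting in two linearly independent $\mathfrak{a}_H$-weight spaces, and invoke the triality fact that $Spin(8)$ acts transitively on $S^7\times S^7$ (the paper's Lemma \ref{lem:SS}, proved by the same $Spin(8)\supset Spin(7)\supset G_2$ chain you use). The extra bookkeeping you supply (locating $\beta$ as a single long root and checking $\mathfrak{m}_H=\mathfrak{m}_G$ by dimension count) is consistent with, and slightly more explicit than, the paper's presentation.
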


We begin with the Lie algebra
 ${\mathfrak {g}}={\mathfrak {e}}_{6(-26)}$.  
Then the Lie algebra 
${\mathfrak {m}}_G=Z_{{\mathfrak {k}}}({\mathfrak {a}}_G)
\simeq {\mathfrak {so}}(8)$
 acts on ${\mathfrak {n}}\simeq {\mathbb{R}}^{24}$
 via the adjoint action
 as the direct sum of the following three non-isomorphic 
 8-dimensional irreducible representations:
\begin{alignat*}{2}
&
&&\text{highest weight}
\\
&\text{Natural representation $i$}
&&
\lambda_1=(1,0,0,0), 
\\
&\text{Half spin representation $\operatorname{spin}^+$}
\qquad
&&
\lambda_2=\frac 1 2 (1,1,1,1), 
\\
&\text{Half spin representation $\operatorname{spin}^-$}
&&
\lambda_3=\frac 1 2 (1,1,1,-1).   
\end{alignat*}
Here the highest weights
 are expressed
 by means of the standard basis of $D_4$
 as in the proof of Lemma \ref{lem:2.6}.  

These representations are the differentials 
 of the representations
 of $Spin(8)$, 
to be denoted by the same letters
 $i$, $\operatorname{spin}^+$, 
and $\operatorname{spin}^-$, 
 respectively,
 which in turn induce three actions
 on the 7-dimensional sphere
 $S^7 \simeq ({\mathbb{R}}^8 - \{0\}) /{\mathbb{R}}_{>0}$.  
We need the following:
\begin{lemma}
\label{lem:SS}
Let $Spin(8)$ act diagonally
 on the direct product manifold
 $S^7 \times S^7$
 via any choice
 of two distinct 8-dimensional representations
 among $i$, $\operatorname{spin}^+$, 
and $\operatorname{spin}^-$.  
Then the action is transitive.  
\end{lemma}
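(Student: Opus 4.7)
The plan is to exploit triality together with a standard branching rule for $Spin(8) \downarrow Spin(7)$. Since the outer automorphism group of $D_4$ is $S_3$ and acts transitively on the set $\{i, \operatorname{spin}^+, \operatorname{spin}^-\}$ of three 8-dimensional irreducible representations, every pair of distinct representations can be sent to any other such pair by an outer automorphism of $Spin(8)$. Therefore it suffices to prove transitivity of the diagonal $Spin(8)$-action on $S^7 \times S^7$ for the specific pair $(i, \operatorname{spin}^+)$.

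Next I would recall that each of the three representations realizes $Spin(8)$ acting transitively on $S^7$, giving three conjugate-but-distinct copies of $Spin(7)$ as point stabilizers (the three \emph{triality-related} embeddings). Fix a unit vector $v \in \mathbb{R}^8$ in the first factor and let $H_v \subset Spin(8)$ be its stabilizer under $i$; this $H_v$ is the standard copy of $Spin(7)$. The diagonal $Spin(8)$-action on $S^7 \times S^7$ is transitive if and only if $H_v$ acts transitively on the second copy $S^7$ via $\operatorname{spin}^+$. So the remaining task is to identify the restriction $\operatorname{spin}^+\big|_{H_v}$.

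The key structural input is the branching rule: the restriction of either half-spin representation of $Spin(8)$ to a standardly embedded $Spin(7)$ is the (irreducible) 8-dimensional spin representation of $Spin(7)$. Since $Spin(7)$ acts transitively on $S^7$ through its spin representation, $H_v$ acts transitively on the second sphere, completing the proof. This branching statement is the main content, but it is a well-known consequence of the triality automorphism: the triality automorphism of $Spin(8)$ permutes the three types of $Spin(7)$-subgroups cyclically, so the restriction of $\operatorname{spin}^+$ to the \emph{natural} $Spin(7)$ must coincide, up to an automorphism of $Spin(7)$, with the restriction of the \emph{natural} representation to the \emph{spin} $Spin(7)$, namely the spin representation.

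As a sanity check via dimensions, $\dim Spin(8) = 28$ and $\dim(S^7 \times S^7) = 14$, so the expected stabilizer has dimension $14$, matching $\dim G_2$; indeed one recovers the familiar fact that the intersection of two triality-related $Spin(7)$-subgroups of $Spin(8)$ is $G_2$. The main (essentially only) obstacle is the branching rule above; once that is in hand the transitivity is immediate, and the reduction by triality takes care of the other two pairs of representations.
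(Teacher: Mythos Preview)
Your proof is correct and follows essentially the same route as the paper: reduce by triality to the pair $(i,\operatorname{spin}^+)$, use transitivity of $Spin(8)$ on the first sphere with stabilizer the standard $Spin(7)$, and then invoke the fact that the restriction of $\operatorname{spin}^+$ to this $Spin(7)$ is the $8$-dimensional spin representation, giving the transitive action $Spin(7)/G_2\simeq S^7$ on the second sphere. The paper's argument is identical in structure and even records the same $G_2$ stabilizer.
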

\begin{proof}
The automorphism of the Dynkin diagram $D_4$
 gives rise to the triality in
$Spin(8)$.
We denote by $\sigma$ the outer automorphism of $Spin(8)$
 of order three
corresponding to the outer automorphism of $D_4$ as described in the
figure below.
\begin{center}
\setlength{\unitlength}{0.00043333in}
\begingroup\makeatletter\ifx\SetFigFont\undefined%
\gdef\SetFigFont#1#2#3#4#5{%
  \reset@font\fontsize{#1}{#2pt}%
  \fontfamily{#3}\fontseries{#4}\fontshape{#5}%
  \selectfont}%
\fi\endgroup%
{\renewcommand{\dashlinestretch}{30}
\begin{picture}(5594,4885)(0,-10)
\put(3195.889,2494.083){\arc{4733.668}{3.4627}{4.7699}}
\put(3220.341,2483.281){\arc{4730.882}{5.5570}{6.8647}}
\put(3189.289,2473.087){\arc{4731.234}{1.3679}{2.6755}}
\put(800,2491){\ellipse{300}{300}}
\put(4420,441){\ellipse{300}{300}}
\put(4420,4541){\ellipse{300}{300}}
\put(3200,2491){\ellipse{300}{300}}
\path(950,2491)(3050,2491)
\path(4325,541)(3275,2360)
\path(3302,2594)(4352,4413)
\path(3538,12)(3659,155)
\path(5176,4017)(4992,4051)
\path(5024,3867)(4993,4051)
\path(948,3232)(914,3416)
\path(950,3236)(1104,3343)
\path(3664,157)(3489,222)
\put(4700,175){\makebox(0,0)[lb]{\smash{{{\SetFigFont{12}{14.4}{\familydefault}{\mddefault}{\updefault}$e_3+e_4$}}}}}
\put(4700,4666){\makebox(0,0)[lb]{\smash{{{\SetFigFont{12}{14.4}{\familydefault}{\mddefault}{\updefault}$e_3-e_4$}}}}}
\put(2200,2051){\makebox(0,0)[lb]{\smash{{{\SetFigFont{12}{14.4}{\familydefault}{\mddefault}{\updefault}$e_2-e_3$}}}}}
\put(0,2051){\makebox(0,0)[lb]{\smash{{{\SetFigFont{12}{14.4}{\familydefault}{\mddefault}{\updefault}$e_1-e_2$}}}}}
\end{picture}
}
\end{center}

Then $\sigma$ induces the permutation
 of the set $\{\lambda_1, \lambda_2, \lambda_3\}$, 
 by $\lambda_1 \mapsto
 \lambda_2 \mapsto \lambda_3 \mapsto \lambda_1$, 
 and thus the representations
 $i$, $\operatorname{spin}^+$, 
and $\operatorname{spin}^-$ 
 are mutually equivalent
 by the outer automorphism group of 
 $Spin(8)$ 
 ({\it{triliality}} of $D_4$).  
Hence, 
 without loss of generality,
 we may and do assume 
 that $Spin(8)$ acts on $S^7 \times S^7$
 via $i \oplus \operatorname{spin}^+$.  

First, 
 we consider the action 
 of $Spin(8)$ on the first factor $S^7$
 via the natural representation
 $Spin(8) \overset i \to SO(8)$, 
 which is a transitive action
 and gives rise to a natural diffeomorphism
 $Spin(8)/Spin(7) \simeq S^7$.  

Second,
 we consider the action of the isotropy subgroup $Spin(7)$
 on the second factor $S^7$ via the following composition:
\[
  Spin(7) \hookrightarrow Spin(8) \overset{\operatorname{spin}^+} \to 
 SO(8).  
\]
This action is again transitive,
 and giving a natural diffeomorphism
 $Spin(7)/G_2 \simeq S^7$.  
Thus we have shown that $Spin(8)$ acts 
 transitively 
 on $S^7 \times S^7$ 
 via $i \oplus {\operatorname{spin}^+}$.  
\end{proof}
For the Lie algebra 
 ${\mathfrak {h}}={\mathfrak {o}}(9,1) +{\mathbb{R}}$, 
 the adjoint action 
 of the Lie algebra ${\mathfrak {m}}_H$
 on 
${\mathfrak {n}}^{\sigma} 
={\mathfrak {n}} \cap {\mathfrak {h}}$
 is isomorphic to the natural representation
 of ${\mathfrak {so}}(8)$ on ${\mathbb{R}}^8$.  

We are ready to complete the proof of Proposition \ref{prop:e6}.  
\begin{proof}
[Proof of Proposition \ref{prop:e6}]
Since $\operatorname{rank}_{\mathbb{R}}H=\operatorname{rank}_{\mathbb{R}}G$
 ($=2$), 
 (PP) is equivalent to (QP)
 by Lemma \ref{lem:BPQ}.

The identity component $(M_H)_0$ 
of $M_H$
 is isomorphic to $Spin(8)$, 
 and the adjoint action of $(M_H)_0$
 on ${\mathfrak {n}}^{-\sigma}
 \simeq {\mathfrak {n}}/{\mathfrak {n}}^{\sigma}$
 is isomorphic to the spin representation 
 ${\operatorname{spin}^+}\oplus {\operatorname{spin}^-}$
 of $Spin(8)$ on ${\mathbb{R}}^{16}={\mathbb{R}}^8 \oplus {\mathbb{R}}^8$.  
Thus it induces a transitive action 
 of $Spin(8)$ on $S^7 \times S^7$
 by Lemma \ref{lem:SS}.  
On the other hand,
 since there are two distinct weights of ${\mathfrak {a}}_H$
 on ${\mathfrak {n}}^{-\sigma}$,
 we conclude 
 that the adjoint action of $M_H A_H$ has an open dense orbit 
 in ${\mathfrak {n}}^{-\sigma} \simeq {\mathbb{R}}^{16}$.  
By Theorem \ref{thm:qp}, 
 $(G,H)$ satisfies (QP).  
\end{proof}

\section{Symmetric pair $(G,H)$ with ${\operatorname{rank}}_{\mathbb{R}} H=1$}
\label{sec:rank1}

Since a minimal parabolic subgroup
 of a compact Lie group $K$ is $K$ itself,
 the following proposition is obvious
 by the Iwasawa decomposition
 $G=K A_G N= K P_G$:
\begin{prop}
\label{prop:GK}
Any Riemannian symmetric pair $(G,K)$ satisfies 
 {\rm{(PP)}} and {\rm{(QP)}}.  
\end{prop}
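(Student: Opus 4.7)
The plan is to observe that this proposition is essentially a restatement of the Iwasawa decomposition, so no substantive work is needed beyond unpacking definitions. Since $H=K$ is compact, its split rank vanishes: $\mathfrak{a}_H=\{0\}$, and therefore a minimal parabolic subgroup $P_H$ of $H$ coincides with $K$ itself. To verify (PP), I must show that $K$ has an open orbit on the real flag variety $G/P_G$. But the Iwasawa decomposition $G=KA_GN=KP_G$ asserts precisely that $K$ already acts transitively on $G/P_G$, so its orbit is the full space and is in particular open.

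Once (PP) is in hand, (QP) is automatic from Lemma \ref{lem:BPQ}(1), which records the implication (PP)$\Rightarrow$(QP). If one prefers a direct verification, one can instead argue that $G/\overline{Q}$ is a homogeneous quotient of $G/\overline{P_G}$ (because $\overline{P_G}\subset \overline{Q}$), so transitivity of the $K$-action downstairs follows from the $K$-transitivity on $G/\overline{P_G}$ obtained via the same Iwasawa argument applied to the opposite parabolic.

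As a consistency check, one can also verify both conditions through the linearization criteria of Theorems \ref{thm:pp} and \ref{thm:qp}. With $\mathfrak{a}_H=\{0\}$, the set of positive $\mathfrak{a}_H$-weights $\Sigma^+(\mathfrak{g},\mathfrak{a}_H)$ is empty, so $\mathfrak{n}=\{0\}$ and a fortiori $\mathfrak{n}^{-\sigma}=\{0\}$. The adjoint actions of $(M_H\cap M_G)A_H$ and of $M_HA_H$ on the zero space trivially have open (full) orbit, yielding (PP) and (QP) respectively. There is no genuine obstacle in this proof; the only thing to be careful about is to record correctly that the relevant compatible choices of $\mathfrak{a}_G$, $\mathfrak{a}_H$ and of positive systems force $\mathfrak{n}=\{0\}$ under the assumption $H=K$.
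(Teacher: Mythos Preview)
Your proof is correct and follows essentially the same approach as the paper: the paper simply notes that a minimal parabolic of the compact group $K$ is $K$ itself and then invokes the Iwasawa decomposition $G=KA_GN=KP_G$ to conclude. Your additional remarks (deducing (QP) from Lemma~\ref{lem:BPQ} and the consistency check via the linearization criteria with $\mathfrak{n}^{-\sigma}=\{0\}$) are correct elaborations but are not needed for the argument.
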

Among reductive symmetric pairs
 $(G,H)$, 
 the Riemannian symmetric pair
 is characterized by the condition 
 ${\operatorname{rank}}_{\mathbb{R}} H=0$.  
In this section, 
 as the \lq\lq{next case}\rq\rq\ 
 of Proposition \ref{prop:GK}, 
 we highlight 
  the case where ${\operatorname{rank}}_{\mathbb{R}}H=1$
 and we give a classification 
 of $({\mathfrak {g}}, {\mathfrak {h}})$
 satisfying (PP), 
 see (E1)--(E4), 
 (G2), or (H1), 
 in Theorem \ref{thm:1.1}.

In Table \ref{tab:5.1}, 
 we give a list of all irreducible symmetric pairs
 $({\mathfrak {g}}, {\mathfrak{h}})$
 with $\operatorname{rank}_{{\mathbb{R}}}{\mathfrak {h}}=1$.  
Since $({\mathfrak {g}}, {\mathfrak{h}})$ and 
 its $c$-dual $({\mathfrak {g}}^c, {\mathfrak{h}})$
 have the same root multiplicities
 $m^{\pm}(\lambda)= \dim {\mathfrak {g}}^{\pm \sigma}({\mathfrak {a}}_H;\lambda)$, 
 we write them in the same row.  
Some of the symmetric pairs
 are labelled as 
 ${\rm{I}}_{{\mathbb{R}}}$, ${\rm{I}}_{{\mathbb{R}}}^c$, 
 $\cdots$, 
for which we give more detailed data in Table \ref{tab:5.2}.  
We are now ready to state
 the main result of this section.  
This completes the proof
 of Theorems \ref{thm:1.1} and \ref{thm:QpPp}
 for the classification of $({\mathfrak {g}}, {\mathfrak{h}})$
 with (PP) and (QP), 
 respectively,
 under the assumption 
 that ${\operatorname{rank}}_{\mathbb{R}} H=1$.  

\begin{prop}
\label{prop:rankH}
Suppose $(G,H)$ is an irreducible symmetric pair
 with $\operatorname{rank}_{{\mathbb{R}}}H=1$.  
\begin{enumerate}
\item[{\rm{1)}}]
The following two conditions {\rm{(i)}} and {\rm{(ii)}}
 on the pair $(G,H)$
 are equivalent:
\begin{enumerate}
\item[{\rm{(i)}}]
The pair $(G,H)$ satisfies {\rm{(QP)}}.  
\item[{\rm{(ii)}}]
The pair $({\mathfrak {g}}, {\mathfrak {h}})$ is one
 of ${\rm{I}}_{{\mathbb{F}}}$, ${\rm{I}}_{{\mathbb{F}}}^c$
 $({\mathbb{F}}={\mathbb{R}}$, ${\mathbb{C}}$, 
 ${\mathbb{H}}$, or ${\mathbb{O}})$, 
${\rm{II}}$, 
${\rm{II}}^c$, 
${\rm{III}}$ or ${\rm{III}}^c$.  
\end{enumerate}
\item[{\rm{2)}}]
The following two conditions {\rm{(iii)}} and {\rm{(iv)}}
 of the pair $(G,H)$
 are equivalent:
\begin{enumerate}
\item[{\rm{(iii)}}]
The pair $(G,H)$ satisfies {\rm{(PP)}}.  
\item[{\rm{(iv)}}]
The pair
 $({\mathfrak {g}}, {\mathfrak {h}})$ is one
 of ${\rm{I}}_{{\mathbb{R}}}^c$, ${\rm{I}}_{{\mathbb{C}}}^c$, 
 ${\rm{I}}_{{\mathbb{H}}}^c$, ${\rm{I}}_{{\mathbb{O}}}^c$,
 ${\rm{II}}^c$, ${\rm{III}}^c$, 
 ${\rm{I}}_{{\mathbb{F}}}$
 $({\mathbb{F}}={\mathbb{R}}, {\mathbb{C}}, {\mathbb{H}})$
 with $p=0$ or $q=1$, 
 ${\rm{II}}$ with $m=1, 2$, 
 or ${\rm{III}}$
 with $m=1,2$.  
\end{enumerate}
\end{enumerate}
\end{prop}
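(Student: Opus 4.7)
The plan is to combine the linearization criteria of Theorems \ref{thm:pp} and \ref{thm:qp} with the necessary condition from Proposition \ref{prop:QPrank}. Since $\dim_{\mathbb{R}} \mathfrak{a}_H = 1$, the restricted roots on $\mathfrak{a}_H$ lie in $\{\pm\alpha, \pm 2\alpha\}$ for some generator $\alpha \in \mathfrak{a}_H^{\ast}$, and $\Delta(\mathfrak{n}^{-\sigma}) \subset \{\alpha, 2\alpha\}$. Proposition \ref{prop:QPrank} forces the elements of $\Delta(\mathfrak{n}^{-\sigma})$ to be linearly independent in the one-dimensional space $\mathfrak{a}_H^{\ast}$, so {\rm{(QP)}} requires that $\mathfrak{n}^{-\sigma}$ be concentrated in a single $\mathfrak{a}_H$-weight space. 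The first step is to apply this screening row by row to Table \ref{tab:5.1} using the tabulated multiplicities $m^{\pm}(\lambda)$; a direct inspection will isolate precisely the pairs listed in (ii) as the only candidates, thereby proving (i)$\Rightarrow$(ii).

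For the converse in part 1, once $\mathfrak{n}^{-\sigma}$ is a single weight space the $A_H$-action reduces to positive scalar multiplication, and Theorem \ref{thm:qp} reduces {\rm{(QP)}} to the transitivity of the compact group $M_H$ on the unit sphere of $\mathfrak{n}^{-\sigma}$ for any $M_H$-invariant inner product. In each surviving family I would identify a classical compact subgroup of $M_H$ (one of $O(k)$, $U(k)$, $Sp(k)$, or $\mathrm{Spin}(7)$) acting on $\mathfrak{n}^{-\sigma}$ as a standard representation over $\mathbb{R}$, $\mathbb{C}$, $\mathbb{H}$, or the octonions, and invoke the classical transitivity of these actions on the sphere.

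For part 2, when $\operatorname{rank}_{\mathbb{R}} G = 1$ one has $\mathfrak{a}_H = \mathfrak{a}_G$ and $M_H \cap M_G = M_H$, so Lemma \ref{lem:BPQ}(2) instantly gives {\rm{(PP)}}$\Leftrightarrow${\rm{(QP)}}; this covers $\mathrm{I}_{\mathbb{F}}^{c}$, $\mathrm{II}^{c}$, $\mathrm{III}^{c}$ in one stroke. When $\operatorname{rank}_{\mathbb{R}} G > 1$, Theorem \ref{thm:pp} requires me to replace $M_H$ by the proper subgroup $M_H \cap M_G$, and the problem becomes to compute this centralizer in each family and to decide when it still acts transitively on the unit sphere of $\mathfrak{n}^{-\sigma}$. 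I expect these threshold computations to force exactly the degenerate parameter ranges $p = 0$ or $q = 1$ for $\mathrm{I}_{\mathbb{F}}$ ($\mathbb{F} = \mathbb{R}, \mathbb{C}, \mathbb{H}$) and $m \le 2$ for $\mathrm{II}$ and $\mathrm{III}$, yielding (iv). Several of these conclusions are already contained in Proposition \ref{prop:upq}, which will shortcut the $\mathrm{I}_{\mathbb{F}}$ analysis.

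The hard part will be the exceptional case $(\mathfrak{f}_{4(-20)}, \mathfrak{o}(8,1))$: there I would identify $M_H$ with a cover of $\mathrm{Spin}(7)$ acting on $\mathbb{R}^8$ through the spin representation and invoke the triality of $D_4$, in the spirit of Lemma \ref{lem:SS}, to obtain transitivity on $S^{7}$. The sharpness of the parameter thresholds in (iv) will, at bottom, rest on the classical Borel--Montgomery--Samelson classification of transitive compact group actions on spheres.
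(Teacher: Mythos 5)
Your overall plan is sound, and you have correctly identified all the right tools: Proposition~\ref{prop:QPrank} forces $\#\Delta(\mathfrak{n}^{-\sigma})\le 1$ when $\operatorname{rank}_{\mathbb{R}}H=1$, and screening Table~\ref{tab:5.1} by the column $(m^{-}(\lambda),m^{-}(2\lambda))$ exactly isolates the families in~(ii); this matches the paper's Step~1. For the converse in part~1 your route diverges from the paper's: you verify~(QP) directly by reducing to transitivity of the compact group $M_H$ on the unit sphere of $\mathfrak{n}^{-\sigma}$ (which is correct since an open orbit of a compact group in a connected sphere is the whole sphere), whereas the paper instead deduces~(QP) for $\mathrm{I}_{\mathbb{F}}$, $\mathrm{II}$, $\mathrm{III}$ from~(PP) for their $c$-duals via Proposition~\ref{prop:cdual}. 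Both work; the $c$-dual trick spares the case-by-case sphere check, while your approach is more self-contained and makes the geometry visible.

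There is, however, a concrete error in part~2. You claim that $\operatorname{rank}_{\mathbb{R}}G=1$ (hence $\mathfrak{a}_H=\mathfrak{a}_G$ and Lemma~\ref{lem:BPQ}(2) applies) ``covers $\mathrm{I}_{\mathbb{F}}^{c}$, $\mathrm{II}^{c}$, $\mathrm{III}^{c}$ in one stroke.'' That is true for $\mathrm{I}_{\mathbb{F}}^{c}$, where $\mathfrak{g}^c$ is $\mathfrak{so}(p+q+1,1)$, $\mathfrak{u}(p+q+1,1)$, $\mathfrak{sp}(p+q+1,1)$ or $\mathfrak{f}_{4(-20)}$, all of real rank one. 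But $\mathrm{II}^{c}$ is the group case $(\mathfrak{h}\oplus\mathfrak{h},\operatorname{diag}\mathfrak{h})$ with $\mathfrak{h}=\mathfrak{so}(m+1,1)$, so $\mathfrak{a}_G=\mathfrak{a}_H\oplus\mathfrak{a}_H$ has dimension~$2$; and $\mathrm{III}^{c}=(\mathfrak{so}(2m+2,2),\mathfrak{u}(m+1,1))$ has $\operatorname{rank}_{\mathbb{R}}G=2$ as well. In both cases $\operatorname{rank}_{\mathbb{R}}G>\operatorname{rank}_{\mathbb{R}}H$, so Lemma~\ref{lem:BPQ}(2) does not apply and you cannot shortcut the $M_H\cap M_G$ computation. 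The conclusion that $\mathrm{II}^{c}$ and $\mathrm{III}^{c}$ satisfy~(PP) is still correct, but it must be obtained by actually computing $(\mathfrak{m}_H\cap\mathfrak{m}_G)+\mathfrak{a}_H$ there --- for $\mathrm{II}^{c}$ one finds $\mathfrak{o}(m)+\mathbb{R}$ acting on $\mathbb{R}^m$, and for $\mathrm{III}^{c}$ one finds $\mathfrak{u}(m)+\mathbb{R}$ acting on $\mathbb{C}^m$ (note the latter is strictly smaller than $\mathfrak{m}_H+\mathfrak{a}_H=\mathfrak{u}(m)+\mathbb{C}$, so your assertion $M_H\cap M_G=M_H$ is also false for $\mathrm{III}^{c}$) --- and then checking that $U(m)$ is still transitive on $S^{2m-1}$. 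Once these two families are fed through your general ``compute $M_H\cap M_G$ and check transitivity'' step rather than the rank-one shortcut, the proof goes through.
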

\begin{proof}
We divide the proof
 into three steps.  
\par\noindent
{\bf{Step 1.}}\enspace
For the implication (i) $\Rightarrow$ (ii), 
 we apply Proposition \ref{prop:QPrank}.  
Since $\operatorname{rank}_{{\mathbb{R}}}H=1$, 
 if $(G,H)$ satisfies (QP), 
then $m^-(\lambda)+m^-(2\lambda) \le 1$
 by \eqref{eqn:rn}.  
In light of Table \ref{tab:5.1}, 
 we have shown
 that (i) implies (ii).  
\par\noindent
{\bf{Step 2.}}\enspace
In order to prove the equivalence (iii) $\Leftrightarrow$ (iv), 
 it is sufficient 
 to deal with symmetric pairs $(G,H)$ satisfying (QP)
 because (PP) implies (QP)
 (see Lemma \ref{lem:BPQ}).  
In particular, 
 we may assume 
 that $(G,H)$ satisfies (ii) by Step 1.  
For the pairs $({\mathfrak {g}}, {\mathfrak {h}})$
 satisfying (ii), 
we give a list of the vector spaces ${\mathfrak {n}}^{-\sigma}$
 on which the subalgebras
 $({\mathfrak {m}}_H \cap {\mathfrak {m}}_G)+{\mathfrak {a}}_H
 \subset {\mathfrak {m}}_H + {\mathfrak {a}}_H$
 act via the adjoint representation 
 in Table \ref{tab:5.2}.  
For ${\rm{I}}_{{\mathbb{F}}}$
 $({\mathbb{F}}={\mathbb{R}}$, ${\mathbb{C}}$, 
 or ${\mathbb{H}})$
 in this table, 
 the action of ${\mathfrak {u}}(|p-q|;{\mathbb{F}})$
 on ${\mathbb{F}}^q$
 is trivial 
if $p \ge q$ 
 and is the natural action
 on the first factor of the decomposition 
 ${\mathbb{F}}^q={\mathbb{F}}^{q-p} \oplus {\mathbb{F}}^p$
 if $q \ge p$.  
In view of this table,
 we see 
 that $(M_H \cap M_G)A_H$ has an open orbit
 in ${\mathfrak {n}}^{-\sigma}$
 if and only if (iv) holds.  
Hence the equivalence (iii) $\Leftrightarrow$ (iv)
 follows from Theorem \ref{thm:pp}.  
\par\noindent
{\bf{Step 3.}}\enspace
The converse implication (i) $\Leftarrow$ (ii) follows from Step 2 and Proposition \ref{prop:cdual}
 because ${\rm{I}}_{{\mathbb{F}}}^c$, 
 ${\rm{II}}^c$, and ${\rm{III}}^c$ are the $c$-duals
 of ${\rm{I}}_{{\mathbb{F}}}$, 
 ${\rm{II}}$, and ${\rm{III}}$, 
respectively.  
\end{proof}
\begin{table}[H]
\caption{Irreducible symmetric pairs
 $({\mathfrak{g}}, {\mathfrak {h}})$
 with $\operatorname{rank}_{\mathbb{R}}{\mathfrak {h}}=1$}
\label{tab:5.1}
\centering
\begin{tabular}{|c|c|c|c|}
\hline
&
$\begin{matrix}
{\mathfrak {g}}
\\
{\mathfrak {g}}^c
\end{matrix}$ 
&  ${\mathfrak {h}}$ 
&  $\begin{pmatrix} m^+(\lambda) & m^+(2\lambda) \\ m^-(\lambda) & m^-(2\lambda)\end{pmatrix}$
\\
\hline
$\begin{matrix}
{\rm{I}}_{\mathbb{R}}
\\ 
{\rm{I}}_{\mathbb{R}}^c \end{matrix}$ 
&$\begin{matrix} {\mathfrak {so}}(p+1,q+1) 
\\ 
{\mathfrak {so}}(p+q+1,1)\end{matrix}$  & ${\mathfrak {so}}(q) + {\mathfrak {so}}(p+1,1)$ & $\begin{pmatrix} p & 0 \\ q & 0 \end{pmatrix}$ \\
\hline
$\begin{matrix} 
{\rm{I}}_{\mathbb{C}} 
\\
{\rm{I}}_{\mathbb{C}}^c
\end{matrix}$  
&
$\begin{matrix} 
{\mathfrak {u}}(p+1,q+1) 
\\
{\mathfrak {u}}(p+q+1,1)\end{matrix}$  
& ${\mathfrak {u}}(q) + {\mathfrak {u}}(p+1,1)$ 
& $\begin{pmatrix} 2p & 1 \\ 2q & 0 \end{pmatrix}$
\\
\hline
$\begin{matrix}
{\rm{I}}_{\mathbb{H}}
\\
{\rm{I}}_{\mathbb{H}}^c
\end{matrix}$ 
&$\begin{matrix}
{\mathfrak {sp}}(p+1,q+1) 
\\
{\mathfrak {sp}}(p+q+1,1)\end{matrix}$  & ${\mathfrak {sp}}(q) + {\mathfrak {sp}}(p+1,1)$ 
& $\begin{pmatrix} 4p & 3 \\ 4q & 0 \end{pmatrix}$\\
\hline
${\rm{I}}_{\mathbb{O}}={\rm{I}}_{\mathbb{O}}^c$
& ${\mathfrak {f}}_{4(-20)}$ & ${\mathfrak {so}}(8,1)$ & $\begin{pmatrix} 0 & 7 \\ 8 & 0 \end{pmatrix}$\\
\hline
&$\begin{matrix} {\mathfrak {sl}}(m+2,{\mathbb{R}}) \\ {\mathfrak {sl}}(m+1,{\mathbb{R}})\end{matrix}$  & ${\mathfrak {so}}(m+1,1)$ 
& $\begin{pmatrix} m & 0 \\ m & 1 \end{pmatrix}$\\
\hline
&$\begin{matrix} {\mathfrak {sp}}(m+2,{\mathbb{R}}) \\ {\mathfrak {sp}}(m+1,{\mathbb{R}})\end{matrix}$
& ${\mathfrak {u}}(m+1,1)$ 
& $\begin{pmatrix} 2m & 1 \\ 2m & 2 \end{pmatrix}$\\
\hline
&$\begin{matrix} {\mathfrak {f}}_{4(4)} \\ {\mathfrak {f}}_{4(-20)}\end{matrix}$  & ${\mathfrak {sp}}(2,1)+{\mathfrak {su}}(2)$
& $\begin{pmatrix} 4 & 3 \\ 4 & 4 \end{pmatrix}$\\
\hline
$\begin{matrix} {\rm{II}} 
\\
{\rm{II}}^c \end{matrix}$
&$\begin{matrix} {\mathfrak {so}}(m+2,{\mathbb{C}}) 
\\
{\mathfrak {h}}+{\mathfrak{h}}\end{matrix}$
& ${\mathfrak {so}}(m+1,1)$
& $\begin{pmatrix} m & 0 \\ m & 0 \end{pmatrix}$\\
\hline
&$\begin{matrix} {\mathfrak {sl}}(m+2,{\mathbb{C}}) 
\\ {\mathfrak {h}}+{\mathfrak{h}}\end{matrix}$
  & ${\mathfrak {su}}(m+1,1)$
& $\begin{pmatrix} 2m & 1 \\ 2m & 1 \end{pmatrix}$\\
\hline
&$\begin{matrix} 
{\mathfrak {sp}}(m+2,{\mathbb{C}}) 
\\ 
{\mathfrak {h}}+{\mathfrak{h}}\end{matrix}$  & ${\mathfrak {sp}}(m+1,1)$ 
& $\begin{pmatrix} 4m & 3 \\ 4m & 3 \end{pmatrix}$\\
\hline
&$\begin{matrix} {\mathfrak {f}}_{4}({\mathbb{C}}) 
\\ {\mathfrak {h}}+{\mathfrak {h}}\end{matrix}$
  & ${\mathfrak {f}}_{4(-20)}$ 
& $\begin{pmatrix}8 & 7 \\ 8 & 7 \end{pmatrix}$\\
\hline
$\begin{matrix} 
{\rm{III}} 
\\ 
{\rm{III}}^c\end{matrix}$
&$\begin{matrix} {\mathfrak {so}}^{\ast}(2m+4) 
\\ {\mathfrak {so}}(2m+2,2)\end{matrix}$
  & ${\mathfrak {u}}(m+1,1)$
 & $\begin{pmatrix}2m & 1 \\ 2m & 0 \end{pmatrix}$\\
\hline
&
$\begin{matrix} {\mathfrak {su}}^{\ast}(2m+4) 
\\ {\mathfrak {su}}(2m+2,2)\end{matrix}$
  & ${\mathfrak {sp}}(m+1,1)$
 & $\begin{pmatrix}4m & 3 \\ 4m & 1 \end{pmatrix}$\\
\hline
&$\begin{matrix} {\mathfrak {e}}_{6(-26)} 
\\ {\mathfrak {e}}_{6(-14)}\end{matrix}$
  & ${\mathfrak {f}}_{4(-20)}$
 & $\begin{pmatrix}8 & 7 \\ 8 & 1 \end{pmatrix}$\\
\hline
&${\mathfrak {sl}}(3,{\mathbb{C}})$
  & ${\mathfrak {so}}(3,{\mathbb{C}})$
 & $\begin{pmatrix} 2 & 0 \\ 2 & 2 \end{pmatrix}$\\
\hline
&$\begin{matrix} {\mathfrak {su}}(3,3) 
\\ {\mathfrak {su}}^{\ast}(6) \end{matrix}$
  & ${\mathfrak {so}}^{\ast}(6)$
 & $\begin{pmatrix} 4 & 1 \\ 4 & 3 \end{pmatrix}$\\
\hline
&$\begin{matrix} {\mathfrak {e}}_{6(2)} 
\\ {\mathfrak {e}}_{6(-26)}\end{matrix}$
  & ${\mathfrak {sp}}(3,1)$
 & $\begin{pmatrix}8 & 3 \\ 8 & 5 \end{pmatrix}$\\
\hline
\end{tabular}\\
\end{table}

\begin{table}[H]
\caption{Irreducible symmetric pairs with (QP)
 and 
$\operatorname{rank}_{{\mathbb{R}}}{\mathfrak {h}}=1$}
\label{tab:5.2}
\centering
\begin{tabular}{|c|c|c|c|}
\hline
& (${\mathfrak {m}}_H \cap {\mathfrak {m}}_G)+{\mathfrak {a}}_H$  
&  ${\mathfrak {m}}_H + {\mathfrak {a}}_H$
& ${\mathfrak {n}}^{-\sigma}$
\\
\hline
  ${\rm{I}}_{\mathbb{R}}$
&   ${\mathfrak {o}}(|p-q|)+{\mathbb{R}}$ 
&         &       \\
\cline{1-1}\cline{2-2}
${\rm{I}}_{\mathbb{R}}^c$ &  ${\mathfrak {o}}(q) + {\mathfrak {o}}(p)+{\mathbb{R}}$ 
& \raisebox{1.5ex}[0cm][0cm]{${\mathfrak {o}}(q)+{\mathfrak {o}}(p)+{\mathbb{R}}$}
 &\raisebox{1.5ex}[0cm][0cm]{  ${\mathbb{R}}^q$}  \\
\hline
${\rm{I}}_{\mathbb{C}}$  
&   ${\mathfrak {u}}(|p-q|) +(\sqrt{-1}{\mathbb{R}})^{\min(p,q)}+{\mathbb{R}}$
  &         &       \\
\cline{1-1}\cline{2-2} 
 ${\rm{I}}_{\mathbb{C}}^c$ &  ${\mathfrak {u}}(q)+{\mathfrak {u}}(p)+{\mathbb{C}}$
 & \raisebox{1.5ex}[0cm][0cm]{${\mathfrak {u}}(q)+{\mathfrak {u}}(p) +{\mathbb{C}}$}  
 & \raisebox{1.5ex}[0cm][0cm]{  ${\mathbb{C}}^q$}  \\
\hline
${\rm{I}}_{\mathbb{H}}$
 &  ${\mathfrak {sp}}(|p-q|)+{\mathfrak {sp}}(1)^{\min(p,q)}+{\mathbb{R}}$
 &         &       \\
\cline{1-1}\cline{2-2}
 ${\rm{I}}_{\mathbb{H}}^c$& ${\mathfrak {sp}}(q)+{\mathfrak {sp}}(p)+
{\mathbb{H}}$
 & \raisebox{1.5ex}[0cm][0cm]{${\mathfrak {sp}}(q)+ {\mathfrak {sp}}(p)+{\mathbb{H}}$}
 & \raisebox{1.5ex}[0cm][0cm]{  ${\mathbb{H}}^q$}  \\
\hline
${\rm{I}}_{\mathbb{O}}={\rm{I}}_{\mathbb{O}}^c$
&  ${\mathfrak{spin}}(7) + {\mathbb{R}}$
&  ${\mathfrak{spin}}(7) + {\mathbb{R}}$
 &    ${\mathbb{R}}^8$ \\
\hline
 ${\rm{II}}$  &   ${\mathbb{T}}^{[\frac m 2]}+{\mathbb{R}}$
 &         &       \\
\cline{1-1}\cline{2-2}
  ${\rm{II}}^c$& ${\mathfrak {o}}(m)+{\mathbb{R}}$ 
 & \raisebox{1.5ex}[0cm][0cm]{  ${\mathfrak {o}}(m)+{\mathbb{R}}$} 
 &\raisebox{1.5ex}[0cm][0cm] {${\mathbb{R}}^m$ } \\
\hline
 ${\rm{III}}$ &  ${\mathfrak {sp}}(1)^{[\frac m 2]}+{\mathbb{C}}$
 &         &       \\
\cline{1-1}\cline{2-2}
 ${\rm{III}}^c$& ${\mathfrak {u}}(m)+{\mathbb{R}}$ & \raisebox{1.5ex}[0cm][0cm]{${\mathfrak {u}}(m)+{\mathbb{C}}$} 
&   \raisebox{1.5ex}[0cm][0cm]{ ${\mathbb{C}}^m$} \\
\hline
\end{tabular}\\
\end{table}

\section{Associated symmetric pairs of non-$K_{\varepsilon}$-family}
\label{sec:nonKe}
In this section and the next section,
 we complete the proof of the classification 
 of symmetric pairs $({\mathfrak {g}}, {\mathfrak {h}})$
 satisfying (PP) (or (QP))
 and ${\operatorname{rank}}_{\mathbb{R}}H \ge 2$.  
For this, 
 we make use of the $K_{\varepsilon}$-family 
 introduced in \cite{OS1}
 (See Definition \ref{def:Ke} below), 
 which is 
 a fairly large class
 of reductive symmetric pairs.

We recall
 that if a reductive symmetric pair
 $({\mathfrak {g}}, {\mathfrak {h}})$
 is defined
 by an involutive automorphism $\sigma$
 of ${\mathfrak {g}}$
 then we can define another involution $\sigma \theta$
 by taking a Cartan involution 
 $\theta$ 
 commuting with $\sigma$.  
The symmetric pair $({\mathfrak {g}}, {\mathfrak {g}}^{\sigma \theta})$
 is called the {\it{associated symmetric pair}}
 of $({\mathfrak {g}}, {\mathfrak {h}})
 \equiv ({\mathfrak {g}}, {\mathfrak {g}}^{\sigma})$.  
Our strategy 
 is based on the following ideas.  
\begin{enumerate}
\item[(1)]
Very few pairs 
 $({\mathfrak {g}}, {\mathfrak {h}})
 \equiv({\mathfrak {g}}, {\mathfrak {g}}^{\sigma})$
 satisfy (QP)
 if $({\mathfrak {g}}, {\mathfrak {g}}^{\sigma\theta})$
 does not belong to the 
 $K_{\varepsilon}$-family 
 (Proposition \ref{prop:nonKe}).  
\item[(2)]
$\operatorname{rank}_{\mathbb{R}}G =\operatorname{rank}_{\mathbb{R}}H$
  if $({\mathfrak {g}}, {\mathfrak {g}}^{\sigma\theta})$
 belongs to the $K_{\varepsilon}$-family.  
\end{enumerate}
In this section we treat the case
 where the associated symmetric pair $({\mathfrak {g}}, {\mathfrak {g}}^{\sigma\theta})$
 does not belong to the $K_{\varepsilon}$-family, 
 and in the next section
 we discuss
 the opposite case
 where $({\mathfrak {g}}, {\mathfrak {g}}^{\sigma\theta})$
 belongs to the $K_{\varepsilon}$-family.  
To be more precise,
 let us review the definition of $K_{\varepsilon}$-family.  
Suppose ${\mathfrak {a}}_G$ 
 is a maximal abelian subspace
 of ${\mathfrak {g}}^{-\theta}$
 as before.  
\begin{df}
\label{def:Ke}
{\rm{
A map $\varepsilon :\Sigma ({\mathfrak {g}}, {\mathfrak {a}}_G)
 \cup \{0\} \to \{\pm 1\}$
 is said to be a {\it{signature}}
 if
\[
   \varepsilon(\alpha+ \beta)
   =
   \varepsilon(\alpha)\varepsilon(\beta)
\quad
 \text{for any}
\,\,
 \alpha, \beta 
\,\,
\text{and }
\,\, \alpha+ \beta
\in \Sigma ({\mathfrak {g}}, {\mathfrak {a}}_G)
 \cup \{0\}.  
\]
We note that 
 $\varepsilon(0)=1$
 and $\varepsilon(\alpha)=\varepsilon(-\alpha)$
 for any $\alpha \in \Sigma ({\mathfrak {g}}, {\mathfrak {a}}_G)$.  
We define another involution
 $\theta_{\varepsilon}$
 by 
\[
  \theta_{\varepsilon} (X)
  :=
  \varepsilon(\alpha) \theta(X)
  \quad \text{for }\,\, X \in {\mathfrak {g}}({\mathfrak {a}}_G;\alpha), 
\]
 and set ${\mathfrak {k}}_{\varepsilon} 
:=\{X \in {\mathfrak {g}}:\theta_{\varepsilon}(X)=X\}
$.  
If $\varepsilon \equiv 1$
 then ${\mathfrak {k}}_{\varepsilon}={\mathfrak {k}}$.  
The reductive symmetric pairs
 $\{({\mathfrak {g}}, {\mathfrak {k}}_{\varepsilon}):
\varepsilon\text{ is a signature}\}$
 are called the {\it{$K_{\varepsilon}$-family}}.  
}}
\end{df}
Here is the main result of this section:
\begin{prop}
\label{prop:nonKe}
Let $(G,H)$ be an irreducible symmetric pair
 defined by an involution $\sigma$.  
Assume that the following two conditions are fulfilled:
\begin{align}
&\text{The associated pair
 $({\mathfrak {g}}, {\mathfrak {g}}^{\sigma\theta})$
 does not belong to the $K_{\varepsilon}$-family.  }
\label{eqn:nonKe(a)}
\\
&\operatorname{rank}_{\mathbb{R}}H >1.  
\label{eqn:nonKe(c)}
\end{align}
Then either $\operatorname{rank}_{\mathbb{R}}H
 < \# \Delta({\mathfrak {n}}^{-\sigma})$
 or 
\begin{equation}
\label{eqn:nonKe(b)}
\text{
 $({\mathfrak {g}}, {\mathfrak {h}})$
 is a symmetric pair 
treated in Propositions 
 \ref{prop:upq}, \ref{prop:somn} and \ref{prop:sostar}.  
}
\end{equation}
In particular,
 there is no irreducible symmetric pair
  $({\mathfrak {g}}, {\mathfrak {h}})$
 with \eqref{eqn:nonKe(a)}
 and \eqref{eqn:nonKe(c)}
 other than 
 those listed in Propositions 
 \ref{prop:upq}, \ref{prop:somn} and \ref{prop:sostar}.  
\end{prop}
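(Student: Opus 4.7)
The plan is a three-step reduction culminating in a case-by-case verification along Berger's classification of irreducible symmetric pairs.

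First I would show that assumption \eqref{eqn:nonKe(a)} forces the strict inequality $\operatorname{rank}_{\mathbb{R}} H < \operatorname{rank}_{\mathbb{R}} G$. Suppose for contradiction that $\mathfrak{a}_H = \mathfrak{a}_G$. Then $\sigma$ preserves $\mathfrak{a}_G$, and since $\sigma\theta$ restricts to the identity on $\mathfrak{a}_G$, it acts on each restricted root space $\mathfrak{g}(\mathfrak{a}_G;\alpha)$ by a scalar $\varepsilon(\alpha) \in \{\pm 1\}$. Bracketing two such root spaces shows $\varepsilon$ is multiplicative, i.e. a signature in the sense of Definition \ref{def:Ke}, whence $\mathfrak{g}^{\sigma\theta} = \mathfrak{k}_\varepsilon$ and $(\mathfrak{g}, \mathfrak{g}^{\sigma\theta})$ lies in the $K_\varepsilon$-family, contradicting \eqref{eqn:nonKe(a)}. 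Hence $\mathfrak{a}_H \subsetneq \mathfrak{a}_G$.

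Second, using the Berger classification I would enumerate all irreducible symmetric pairs satisfying $\operatorname{rank}_{\mathbb{R}} H \ge 2$ together with $\operatorname{rank}_{\mathbb{R}} H < \operatorname{rank}_{\mathbb{R}} G$. The bulk of this list consists of the three unitary families $(U(i+j,k+l;\mathbb{F}),\, U(i,k;\mathbb{F}) \times U(j,l;\mathbb{F}))$ for $\mathbb{F} = \mathbb{R},\mathbb{C},\mathbb{H}$, the complex orthogonal family $(\mathfrak{o}(m+n,\mathbb{C}),\, \mathfrak{o}(m,\mathbb{C})+\mathfrak{o}(n,\mathbb{C}))$, and the quaternionic orthogonal family $(\mathfrak{o}^{\ast}(2p+2q),\, \mathfrak{o}^{\ast}(2p)+\mathfrak{o}^{\ast}(2q))$, all of which are already subsumed by Propositions \ref{prop:upq}, \ref{prop:somn} and \ref{prop:sostar}. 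What remains is a short list of further classical candidates (for instance $(\mathfrak{sl}(2n,\mathbb{R}),\mathfrak{sp}(n,\mathbb{R}))$, $(\mathfrak{su}^{\ast}(2n),\mathfrak{sp}(p,q))$, $(\mathfrak{sp}(n,\mathbb{C}),\mathfrak{sp}(p,q))$, $(\mathfrak{sl}(n,\mathbb{C}),\mathfrak{su}(p,q))$) together with a handful of exceptional pairs admitting a non-split $\sigma\theta$.

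Third, for each remaining candidate I would exploit the Satake diagram of $(\mathfrak{g},\mathfrak{h})$ to describe $\Sigma(\mathfrak{g},\mathfrak{a}_H)$, decompose each restricted root space into $\sigma$-eigenspaces via Lemma \ref{lem:LIwasawa}, and read off $\Delta(\mathfrak{n}^{-\sigma})$. In every such case I expect the count $\#\Delta(\mathfrak{n}^{-\sigma})$ to exceed $\operatorname{rank}_{\mathbb{R}} H$, so that Proposition \ref{prop:QPrank} rules out (QP) and the first disjunct of the dichotomy holds. Combined with Step 2 this simultaneously yields the \emph{in particular} assertion, since every pair passing the weight-count test is already one of those treated in Propositions \ref{prop:upq}, \ref{prop:somn}, \ref{prop:sostar}.

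The main obstacle is the sheer bookkeeping in Step 3: each individual weight count is routine, but the enumeration must be genuinely exhaustive. A practical organizing principle is to sort candidates by the associated pair $(\mathfrak{g},\mathfrak{g}^{\sigma\theta})$ rather than by $(\mathfrak{g},\mathfrak{h})$, since the $K_\varepsilon$ hypothesis is naturally tested on the associated pair and distinct involutions $\sigma$ on the same $\mathfrak{g}$ often share a common $\sigma\theta$, cutting down the number of independent verifications. The exceptional cases are the most delicate, but each is a single computation using the tables of restricted roots of the corresponding real forms.
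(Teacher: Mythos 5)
Your Step 1 is incorrect, and the error is not cosmetic. You claim that $\mathfrak{a}_H = \mathfrak{a}_G$ forces $(\mathfrak{g},\mathfrak{g}^{\sigma\theta})$ into the $K_\varepsilon$-family, i.e.\ that equal real rank implies membership in that family. This converse fails. First, the mechanism you describe is wrong: on $\mathfrak{a}_G \subset \mathfrak{g}^{-\theta}\cap\mathfrak{h}$ we have $\sigma=+1$ and $\theta=-1$, so $\sigma\theta=-1$ (not the identity) there, which means $\sigma\theta$ maps $\mathfrak{g}(\mathfrak{a}_G;\alpha)$ to $\mathfrak{g}(\mathfrak{a}_G;-\alpha)$ rather than preserving it. What \emph{does} preserve each root space is $\sigma$ itself, but being an involution of the (possibly multidimensional) space $\mathfrak{g}(\mathfrak{a}_G;\alpha)$, it may have both $+1$ and $-1$ eigenvalues there and need not be a scalar $\varepsilon(\alpha)$. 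Concretely, the paper's own proof exhibits counterexamples: $(\mathfrak{g},\mathfrak{h})=(\mathfrak{su}(2,2),\mathfrak{sp}(2,\mathbb{R}))$ with $\mathfrak{g}^{\sigma\theta}=\mathfrak{so}^{\ast}(4)$, and $(\mathfrak{so}^{\ast}(8),\mathfrak{u}(2,2))$ with $\mathfrak{g}^{\sigma\theta}=\mathfrak{so}^{\ast}(4)+\mathfrak{so}^{\ast}(4)$. In both, $\operatorname{rank}_{\mathbb{R}}H=\operatorname{rank}_{\mathbb{R}}G=2$, yet the associated pair is \emph{not} in the $K_\varepsilon$-family. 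Your Step 2 enumeration, which filters by strict rank inequality, would silently drop these pairs, and Step 3's expectation that every surviving candidate fails the weight-count test would then be inaccurate (indeed these pairs satisfy (QP), being isomorphic to members of the families in Propositions \ref{prop:upq} and \ref{prop:somn}).

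For comparison, the paper avoids any rank argument here. It instead reads off from \cite[Table V]{OS} that under hypothesis \eqref{eqn:nonKe(a)} either $\mathfrak{h}$ is simple up to a compact torus or the pair already belongs to the families of Propositions \ref{prop:upq}, \ref{prop:somn}, \ref{prop:sostar}; then applies Lemma \ref{lem:2.6} to the irreducible root system $\Sigma(\mathfrak{h},\mathfrak{a}_H)$ to pin $\Delta(\mathfrak{n}^{-\sigma})$ down to a very constrained shape (contained in $\{\pm e_i\}$ or $\{\pm 2e_i\}$ with $m^-(\lambda)\le 1$ and $m^-(\lambda)m^-(2\lambda)=0$); and finally checks the multiplicity columns in \cite[Table V]{OS} directly, finding that the few surviving candidates are isomorphic to pairs already covered. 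If you want to keep your Berger-classification enumeration, you must drop Step 1 and include the equal-rank non-$K_\varepsilon$ pairs in the Step 2 list; alternatively, adopt the paper's filter via the irreducibility of $\Sigma(\mathfrak{h},\mathfrak{a}_H)$ and Lemma \ref{lem:2.6}, which is what makes the bookkeeping you worry about in Step 3 manageable.
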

\begin{proof}
Suppose $({\mathfrak {g}}, {\mathfrak {g}}^{\sigma\theta})$
 does not belong
 to the $K_{\varepsilon}$-family.  
Then by using the classification \cite[Table V]{OS}
and by computing the correspondence
$({\mathfrak {g}}, {\mathfrak {h}})
\equiv({\mathfrak {g}}, {\mathfrak {g}}^{\sigma})
\leftrightarrow
({\mathfrak {g}}, {\mathfrak {g}}^{\sigma\theta})$, 
 we observe
 that either $H$ is a simple Lie group
 up to a compact torus
 or \eqref{eqn:nonKe(b)} holds.

{}From now on,
 we assume
 that the irreducible symmetric pair
 $({\mathfrak {g}}, {\mathfrak {h}})$
 satisfies \eqref{eqn:nonKe(a)} and \eqref{eqn:nonKe(c)}
 but does not satisfy \eqref{eqn:nonKe(b)}.  
Then the restricted root system
 $\Sigma({\mathfrak {h}}, {\mathfrak {a}}_H)$
 is irreducible.  
Then, 
by Lemma \ref{lem:2.6},
 the condition $\operatorname{rank}_{\mathbb{R}}H
 \ge \# \Delta({\mathfrak {n}}^{-\sigma})$
 gives strong constraints
 on both the irreducible root system 
 $\Sigma({\mathfrak {h}}, {\mathfrak {a}}_H)$
 and the set $\Delta({\mathfrak {n}}^{-\sigma})$, 
 namely, 
 $\operatorname{rank}_{\mathbb{R}}H
 \ge \# \Delta({\mathfrak {n}}^{-\sigma})$
 implies that $\Sigma({\mathfrak {h}}, {\mathfrak {a}}_H)$
 is one of type $B_l$, $C_l$, $D_l$ or $BC_l$
 and that $\Delta({\mathfrak {n}}^{-\sigma})$
 is contained 
 in either $\{\pm e_i: 1 \le i \le l\}$
 or $\{\pm 2 e_i: 1 \le i \le l\}$.  
Furthermore, 
 $m^-(\lambda) \le 1$
 and $m^-(\lambda)m^-(2\lambda)=0$
 for all $\lambda$.  

In turn, 
 in view of the classification
 of irreducible symmetric pairs
 satisfying \eqref{eqn:nonKe(a)}
 and the formulae
 for the multiplicities
 $m^-(\lambda_i)$ and $m^-(2\lambda_i)$
 in \cite[Table V]{OS}, 
 we see 
 that this does not happen.  
To verify it,
 we remark that the role of $({\mathfrak {g}}, {\mathfrak {g}}^{\sigma})$
 and $({\mathfrak {g}}, {\mathfrak {g}}^{\sigma\theta})$
 in their table
 is opposite to our notation here,
 but the role 
 of the multiplicities $m^{\pm}(\lambda)$
 is the same.  
With this remark in mind,
 we obtain the following small list 
{}from \cite[Table V]{OS}
 by picking up those having the above constraints
 on $\Sigma({\mathfrak {h}}, {\mathfrak {a}}_H)$
 and $\Delta({\mathfrak {n}}^{-\sigma})$
 and by skipping those belonging
 to the families
 in Proposition \ref{prop:upq}, \ref{prop:somn} and \ref{prop:sostar}:  
\begin{alignat*}{3}
&{\mathfrak{g}}
&&{\mathfrak{g}}^{\sigma\theta}
&&{\mathfrak{g}}^{\sigma}={\mathfrak{h}}
\\
&{\mathfrak{sl}}(4,{\mathbb{R}})
\qquad
&&{\mathfrak{sl}}(2,{\mathbb{C}})+\sqrt{-1}{\mathbb{R}}
\qquad
&&{\mathfrak{sp}}(2,{\mathbb{R}})
\\
&{\mathfrak{su}}(2,2)
&&{\mathfrak{so}}^{\ast}(4)
&&{\mathfrak{sp}}(2,{\mathbb{R}})
\\
&{\mathfrak{so}}^{\ast}(8)
&&{\mathfrak{so}}^{\ast}(4)+{\mathfrak{so}}^{\ast}(4)
&&{\mathfrak{u}}(2,2)
\\
&{\mathfrak{so}}(4,4)
&&{\mathfrak{u}}(2,2)
&&{\mathfrak{u}}(2,2)
\\
&{\mathfrak{sl}}(4,{\mathbb{C}})
&&{\mathfrak{su}}^{\ast}(4)
&&{\mathfrak{sp}}(2,{\mathbb{C}})
\end{alignat*}
However, 
 these exceptional cases
 are actually included
 in the family of symmetric pairs
 in Propositions \ref{prop:upq}
 and \ref{prop:somn}
 via the following isomorphisms:
\begin{align*}
({\mathfrak {sl}}(4,{\mathbb{R}}), {\mathfrak {sp}}(2,{\mathbb{R}}))
\simeq 
&({\mathfrak {so}}(3,3), {\mathfrak {so}}(3,2)), 
\\
({\mathfrak {su}}(2,2), {\mathfrak {sp}}(2,{\mathbb{R}}))
\simeq
& ({\mathfrak {so}}(4,2), {\mathfrak {so}}(3,2)), 
\\
({\mathfrak {so}}^{\ast}(8), {\mathfrak {u}}(2,2))
\simeq
& ({\mathfrak {so}}(6,2), {\mathfrak {so}}(4,2)+{\mathfrak {so}}(2)), 
\\
({\mathfrak {so}}(4,4), {\mathfrak {u}}(2,2))
\simeq
& ({\mathfrak {so}}(4,4), {\mathfrak {so}}(4,2)+{\mathfrak {so}}(2)), 
\\
({\mathfrak {sl}}(4,{\mathbb{C}}), {\mathfrak {sp}}(2,{\mathbb{C}}))
\simeq
& ({\mathfrak {so}}(6,{\mathbb{C}}), {\mathfrak {so}}(5,{\mathbb{C}})).   
\end{align*}
Thus we have proved
 that $\operatorname{rank}_{\mathbb{R}}H < \#
\Delta ({\mathfrak {n}}^{-\sigma})$
 if \eqref{eqn:nonKe(a)}, and \eqref{eqn:nonKe(c)} 
 are satisfied
 and if \eqref{eqn:nonKe(b)} is not satisfied.  
\end{proof}

\section{Associated symmetric pairs of $K_{\varepsilon}$-family}
\label{sec:Ke}
In this section we consider irreducible symmetric pairs
$({\mathfrak {g}}, {\mathfrak {h}})
\equiv({\mathfrak {g}}, {\mathfrak {g}}^{\sigma})$
 such that the associated symmetric pair $({\mathfrak {g}}, {\mathfrak {g}}^{\sigma\theta})$
 belongs to the $K_{\varepsilon}$-family.  
In this case,
 $\operatorname{rank}_{\mathbb{R}}H=\operatorname{rank}_{\mathbb{R}} G$
 holds from the definition 
 of $K_{\varepsilon}$-family,
 and consequently,
 the condition {\rm{(QP)}} is equivalent to {\rm{(PP)}}
 by Lemma \ref{lem:BPQ}.  

Let ${\mathfrak{g}}_{\mathbb{C}}$
 be the complexification of ${\mathfrak{g}}$.  
For a simple Lie algebra ${\mathfrak{g}}$
 over ${\mathbb{R}}$, 
 ${\mathfrak{g}}_{\mathbb{C}}$ is a complex simple Lie algebra
 if and only if ${\mathfrak{g}}$ itself does not
 carry a complex Lie algebra structure.  
We divide the proof
 into the following three cases:
\par\noindent
{\bf{Case 1.}}\enspace
${\mathfrak{g}}_{\mathbb{C}}$ is not simple.  
\par\noindent
{\bf{Case 2.}}\enspace
${\mathfrak{g}}_{\mathbb{C}}$ is a simple
 classical Lie algebra.  
\par\noindent
{\bf{Case 3.}}\enspace
${\mathfrak{g}}_{\mathbb{C}}$ is a simple
 exceptional Lie algebra.  

In Case 1, 
 the pair $({\mathfrak{g}}, {\mathfrak{h}})$
 was treated 
in Proposition \ref{prop:cpx}.  
In fact, 
 ${\mathfrak{g}}$ is a complex simple Lie algebra.  
Further, 
 ${\mathfrak{g}}^{\sigma \theta}$
 is a real form of ${\mathfrak{g}}$
 as noted in \cite[Appendix]{OS1}, 
 and consequently 
 ${\mathfrak {h}}={\mathfrak {g}}^{\sigma}$
 is a complex Lie subalgebra.  
Hence $({\mathfrak{g}}, {\mathfrak{h}})$
 is a complex symmetric pair
 such that $\operatorname{rank}{\mathfrak{h}}
=\operatorname{rank}{\mathfrak{g}}$.  
\par\noindent
{\bf{Case 2.}}\enspace
Suppose that ${\mathfrak{g}}_{\mathbb{C}}$ is a classical simple Lie algebra.

By the classification of $K_{\varepsilon}$-family
 (see \cite[Table 1]{OS}), 
 the pair $({\mathfrak{g}}, {\mathfrak{h}})$ is
 one of the following pairs
 up to the center of ${\mathfrak{g}}$.  
\begin{align*}
 &({\mathfrak{gl}}(p+q, {\mathbb{F}}), 
   {\mathfrak{gl}}(p, {\mathbb{F}})+{\mathfrak{gl}}(q, {\mathbb{F}})),
 \quad
 {\mathbb{F}}={\mathbb{R}}, {\mathbb{H}}, 
\\
&({\mathfrak{sp}}(p+q, {\mathbb{R}}), 
{\mathfrak{sp}}(p, {\mathbb{R}})
+{\mathfrak{sp}}(q, {\mathbb{R}})), 
\\
&({\mathfrak{u}}(n,n;{\mathbb{F}}), 
  {\mathfrak{gl}}(n, {\mathbb{F}})), 
\quad
{\mathbb{F}}={\mathbb{R}}, {\mathbb{C}}, {\mathbb{H}}, 
\\
&({\mathfrak{sp}}(n, {\mathbb{R}}),
 {\mathfrak{gl}}(n, {\mathbb{R}})),
\\
& ({\mathfrak{so}}^{\ast}(4n), {\mathfrak{gl}}(n, {\mathbb{H}})),
\\
\intertext{or the following two families}
&
({\mathfrak{u}}(i+j, k+l;{\mathbb{F}}), 
 {\mathfrak{u}}(i, k;{\mathbb{F}})+{\mathfrak{u}}(j, l;{\mathbb{F}})),
\quad
{\mathbb{F}}={\mathbb{R}}, {\mathbb{C}}, {\mathbb{H}}, 
\\
&
({\mathfrak{o}}^{\ast}(2p+2q), {\mathfrak{o}}^{\ast}(2p)+
{\mathfrak{o}}^{\ast}(2q)).  
\end{align*}
In the last two cases,
 the condition
 that $({\mathfrak {g}}, {\mathfrak {g}}^{\sigma \theta})$
 belongs to the $K_{\varepsilon}$-family imposes
 certain constraints
 on the parameters
 ({\it{e.g.}} $pq$ is even in the last case).

The first five cases were treated
 in Propositions
 \ref{prop:glgl},
 \ref{prop:sp},  
 \ref{prop:ugl}, 
 and \ref{prop:ug2}.  
The last two cases are covered
 by Propositions \ref{prop:upq} and \ref{prop:sostar}
 without constraints 
 on parameters, 
 respectively.  
Thus there is no \lq\lq{new}\rq\rq\ symmetric pair
 $({\mathfrak {g}}, {\mathfrak {h}})$
 that satisfies (QP).  
\vskip 1pc
\par\noindent
{\bf{Case 3.}}\enspace
${\mathfrak{g}}_{\mathbb{C}}$ is an exceptional simple Lie algebra.

In this case,
 we prove the following:
\begin{prop}
\label{prop:except}
Let $({\mathfrak {g}},{\mathfrak {h}})\equiv
({\mathfrak {g}}, {\mathfrak {g}}^{\sigma})$
 be a symmetric pair
 such that its associated symmetric pair
 $({\mathfrak {g}}, {\mathfrak {g}}^{\sigma\theta})$
 belongs to the $K_{\varepsilon}$-family.  
If ${\mathfrak {g}}_{\mathbb{C}}$ is a simple exceptional Lie algebra,
 then the following three conditions are equivalent:
\begin{enumerate}
\item[{\rm{(i)}}]
$({\mathfrak {g}},{\mathfrak {h}})$ satisfies {\rm{(QP)}}.  
\item[{\rm{(ii)}}]
$({\mathfrak {g}},{\mathfrak {h}})$ satisfies {\rm{(PP)}}.  
\item[{\rm{(iii)}}]
$({\mathfrak {g}},{\mathfrak {h}})$
 is either 
$({\mathfrak {e}}_{6(-26)},{\mathfrak {so}}(9,1)+{\mathbb{R}})$
or $({\mathfrak {f}}_{4(-20)},{\mathfrak {so}}(8,1))$.  
\end{enumerate}
\end{prop}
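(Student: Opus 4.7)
The plan is to first reduce (i) $\Leftrightarrow$ (ii) and then handle (i) $\Leftrightarrow$ (iii). Since the associated symmetric pair $({\mathfrak{g}}, {\mathfrak{g}}^{\sigma\theta})$ belongs to the $K_{\varepsilon}$-family by hypothesis, the very definition of $K_{\varepsilon}$ gives $\operatorname{rank}_{\mathbb{R}} G = \operatorname{rank}_{\mathbb{R}} H$, so Lemma \ref{lem:BPQ}(2) yields (QP) $\Leftrightarrow$ (PP). The implication (iii) $\Rightarrow$ (i) is already available: for $({\mathfrak{e}}_{6(-26)}, {\mathfrak{so}}(9,1)+{\mathbb{R}})$ it is exactly Proposition \ref{prop:e6}, while $({\mathfrak{f}}_{4(-20)}, {\mathfrak{so}}(8,1))$ is the row ${\rm{I}}_{\mathbb{O}}={\rm{I}}_{\mathbb{O}}^c$ of Table \ref{tab:5.1} and is covered by Proposition \ref{prop:rankH} (with associated pair the Riemannian symmetric pair $({\mathfrak{f}}_{4(-20)}, {\mathfrak{so}}(9))$, which lies trivially in the $K_{\varepsilon}$-family with $\varepsilon \equiv 1$).

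The remaining task is the implication (i) $\Rightarrow$ (iii). The strategy is to apply the numerical criterion of Proposition \ref{prop:QPineq}, namely
\[
n(G)-n(H) \le m(G)\,\operatorname{rank}_{\mathbb{R}} H,
\]
which is at our disposal precisely because the $K_{\varepsilon}$ assumption equates the two real ranks. First I would enumerate every irreducible symmetric pair $({\mathfrak{g}}, {\mathfrak{h}})$ with ${\mathfrak{g}}_{\mathbb{C}}$ of exceptional type (i.e.\ ${\mathfrak{g}}_2$, ${\mathfrak{f}}_4$, ${\mathfrak{e}}_6$, ${\mathfrak{e}}_7$, ${\mathfrak{e}}_8$) whose associated pair lies in the $K_{\varepsilon}$-family; this list is obtained by reading off the $K_{\varepsilon}$-pairs from the Oshima--Sekiguchi tables and inverting the involution $\sigma \leftrightarrow \sigma\theta$. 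For each candidate I would then read off from the restricted root system data the integers $n(G)$, $n(H)$, $m(G)$ and $\operatorname{rank}_{\mathbb{R}} H$ and test the inequality.

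The principal obstacle is the combinatorial bookkeeping: each exceptional complex Lie algebra has several real forms, each carries several conjugacy classes of involutions, and only a subset of the resulting symmetric pairs has its associated pair in the $K_{\varepsilon}$-family. I expect that the inequality fails decisively for all candidates outside the two pairs listed in (iii), so that Proposition \ref{prop:QPineq} already excludes them. For any borderline candidate where the inequality is only just satisfied, an additional argument — either invoking the linear-independence refinement of Proposition \ref{prop:QPrank} for the weight set $\Delta({\mathfrak{n}}^{-\sigma})$, or analyzing the adjoint action of $Z_H({\mathfrak{a}}_H)=M_HA_H$ on ${\mathfrak{n}}^{-\sigma}$ directly and applying the criterion of Theorem \ref{thm:qp} — should close the case. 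Positive confirmation for the two surviving pairs is provided, as noted above, by Propositions \ref{prop:e6} and \ref{prop:rankH}.
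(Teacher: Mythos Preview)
Your proposal is correct and follows exactly the paper's approach: the paper uses Proposition~\ref{prop:QPineq} to reduce the list of candidates (via the table in Lemma~\ref{lem:Kex}) and then eliminates the one surviving borderline case $({\mathfrak{e}}_{6(-14)},{\mathfrak{so}}(8,2)+\sqrt{-1}{\mathbb{R}})$ by a direct application of Proposition~\ref{prop:QPrank} (Lemma~\ref{lem:eso82}), precisely the fallback you anticipated. Note that your expectation that the inequality fails for \emph{all} other candidates is slightly too optimistic---this one pair satisfies it with equality ($16=16$)---so you will indeed need your backup argument there.
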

\begin{proof}
The equivalence (i) $\Leftrightarrow$ (ii) holds
 because ${\operatorname{rank}}_{\mathbb{R}}G=
 {\operatorname{rank}}_{\mathbb{R}}H$.  
We have already proved the implication 
 (iii) $\Rightarrow$ (ii)
 in Propositions \ref{prop:e6}
 and \ref{prop:rankH}.  
The remaining implication 
 (i) $\Rightarrow$ (iii) is deduced from 
 the following two lemmas.  
\end{proof}
\begin{lemma}
\label{lem:eso82}
The symmetric pair
 $({\mathfrak {e}}_{6(-14)}, {\mathfrak {so}}(8,2)+\sqrt{-1}{\mathbb{R}})$
 does not satisfy {\rm{(QP)}}.  
\end{lemma}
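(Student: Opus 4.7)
The plan is to apply Proposition \ref{prop:QPrank} and derive a contradiction by showing that $\#\Delta({\mathfrak n}^{-\sigma}) > 2 = \operatorname{rank}_{\mathbb R} H$.

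Both ${\mathfrak g} = {\mathfrak e}_{6(-14)}$ and ${\mathfrak h} = {\mathfrak{so}}(8,2) + \sqrt{-1}{\mathbb R}$ have real rank $2$ (the center $\sqrt{-1}{\mathbb R}$ of ${\mathfrak h}$ is compact), so we may take ${\mathfrak a}_H = {\mathfrak a}_G$. From standard Satake data, $\Sigma({\mathfrak g}, {\mathfrak a}_G)$ is of type $BC_2$ with positive roots $\{e_1, e_2, e_1 \pm e_2, 2e_1, 2e_2\}$; the three root-length classes have multiplicities $1, 6, 8$ (in some order), with $m(\pm 2e_i) = 1$ in particular. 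The restricted root system $\Sigma({\mathfrak h}, {\mathfrak a}_H)$ is carried entirely by the simple summand ${\mathfrak{so}}(8,2)$ and is of type $B_2$; in particular the doubled roots $\pm 2e_i$ are \emph{not} roots of ${\mathfrak h}$. Combined with $m(\pm 2e_i) = 1$, this gives $m^-(2e_i) = 1 > 0$, so $\{2e_1, 2e_2\} \subset \Delta({\mathfrak n}^{-\sigma})$ at once.

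To force further elements of $\Delta({\mathfrak n}^{-\sigma})$ I use a dimension count. A direct computation gives $\dim {\mathfrak m}_G = 78 - 2 - 60 = 16$ and $\dim {\mathfrak m}_H = 46 - 2 - 28 = 16$; since ${\mathfrak m}_H \subset {\mathfrak m}_G$, equality is forced, and hence the zero-weight piece ${\mathfrak g}^{-\sigma}({\mathfrak a}_H; 0)$ vanishes. Therefore
$$\sum_{\alpha > 0} m^-(\alpha) \;=\; \tfrac{1}{2}\dim {\mathfrak g}^{-\sigma} \;=\; \tfrac{1}{2}(78 - 46) \;=\; 16.$$
The contribution of $\{2e_1, 2e_2\}$ to this sum is only $2$, leaving $14$ distributed among the four positive roots $\{e_1, e_2, e_1 \pm e_2\}$, each of multiplicity at most $8$ in ${\mathfrak g}$. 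Consequently at least two of these four roots must contribute, and $\#\Delta({\mathfrak n}^{-\sigma}) \ge 4 > 2$. By Proposition \ref{prop:QPrank}, $({\mathfrak g}, {\mathfrak h})$ does not satisfy {\rm{(QP)}}.

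The principal point requiring care is the identification ${\mathfrak m}_G = {\mathfrak m}_H$---this is not automatic even when the real ranks coincide, and is where the specific choice of subalgebra ${\mathfrak{so}}(8,2) + \sqrt{-1}{\mathbb R}$ enters---together with the routine but essential restricted-root multiplicity data for ${\mathfrak e}_{6(-14)}$ in type $BC_2$.
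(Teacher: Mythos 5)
Your proof is correct and follows the same overall strategy as the paper (bound $\#\Delta({\mathfrak n}^{-\sigma})$ from below and invoke Proposition \ref{prop:QPrank}), but the key computation is carried out by a genuinely different route. The paper tabulates the signed multiplicities $m^\pm(\lambda)$ for the symmetric pair directly---$m^-(e_i)=0$, $m^-(2e_i)=1$, $m^-(e_1\pm e_2)=7$---and reads off $\Delta({\mathfrak n}^{-\sigma})=\{2e_1,2e_2,e_1\pm e_2\}$ at a glance. You bypass the signed decomposition entirely, using only the unsigned restricted root data of ${\mathfrak g}$ and ${\mathfrak h}$ as individual real Lie algebras (Satake data for $E_{6(-14)}$ and for ${\mathfrak{so}}(8,2)$), the absence of $\pm 2e_i$ in $\Sigma({\mathfrak h},{\mathfrak a}_H)$, and the dimension identity $\dim {\mathfrak m}_G=\dim {\mathfrak m}_H=16$ (which kills the zero-weight part of ${\mathfrak g}^{-\sigma}$), giving $\sum_{\alpha>0}m^-(\alpha)=\tfrac12\dim{\mathfrak g}^{-\sigma}=16$; the crude bound $m^-(\alpha)\le m(\alpha)\le 8$ per root then forces $\#\Delta({\mathfrak n}^{-\sigma})\ge 4$. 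The paper's route is shorter once the $m^\pm$ table (from Oshima--Sekiguchi) is in hand; yours is more self-contained, substituting an elementary dimension count for that harder-to-source signed data. The two are consistent: the paper's $m^-$ values over the six positive roots sum to $0+0+1+1+7+7=16$, matching your count.
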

\begin{proof}
We take the standard basis $\{e_1, e_2\}$
 of ${\mathfrak {a}}_H^{\ast}={\mathfrak {a}}_G^{\ast}$
 in such a way 
 that $\Sigma^+({\mathfrak {h}}, {\mathfrak {a}}_H)
=\{e_1, e_2, e_1 \pm e_2\}$.  
Then the root multiplicities $m^{\pm}(\lambda)$ are given as follows:
\begin{alignat*}{4}
&\lambda
\quad
&&\pm e_i\,(i=1,2)
\quad
&&
\pm 2e_i\,(i=1,2)
\quad
&&
\pm e_1 \pm e_2
\\
&m^+(\lambda)\qquad
&&
\quad 6
&&
\quad 0
&&
\quad 1
\\
&m^-(\lambda)
&&
\quad 0
&&
\quad 1
&&
\quad 7
\end{alignat*}
Thus
 $\Delta({\mathfrak {n}}^{-\sigma})
=\{2e_1, 2e_2, e_1 \pm e_2\}$, 
 and $\# \Delta({\mathfrak {n}}^{-\sigma})=4
> \operatorname{rank}_{\mathbb{R}}H=2$.  
Now the lemma follows from Proposition \ref{prop:QPrank}.  
\end{proof}

For the remaining cases,
 we use Proposition \ref{prop:QPineq}
 as an easy-to-check sufficient condition for {\rm{(QP)}}.  
We obtain the following:
\begin{lemma}
\label{lem:Kex}
Let ${\mathfrak {g}}$ be an exceptional simple Lie algebra
 and $({\mathfrak {g}}, {\mathfrak {h}})$
 a symmetric pair
such that
 its associated symmetric pair
 belongs to the $K_{\varepsilon}$-family.  
Then the inequality \eqref{eqn:QPineq} holds
 if and only if the pair $({\mathfrak {g}}, {\mathfrak {h}})$
 is one of the following:
\[
({\mathfrak {e}}_{6(-14)}, {\mathfrak {so}}(8,2) +\sqrt{-1}{\mathbb{R}}), 
\quad
({\mathfrak {e}}_{6(-26)}, {\mathfrak {so}}(9,1) +{\mathbb{R}}), 
\quad
({\mathfrak {f}}_{4(-20)}, {\mathfrak {so}}(8,1)).  
\]
\end{lemma}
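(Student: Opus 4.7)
The plan is to reduce the lemma to a finite, mechanical check. Since the associated pair $({\mathfrak{g}}, {\mathfrak{g}}^{\sigma\theta})$ lies in the $K_{\varepsilon}$-family, Definition~\ref{def:Ke} forces ${\mathfrak{a}}_H = {\mathfrak{a}}_G$, hence $\operatorname{rank}_{\mathbb{R}}G = \operatorname{rank}_{\mathbb{R}}H$, and Proposition~\ref{prop:QPineq} applies. So the task is to test the explicit numerical inequality
\[
\dim_{\mathbb{R}}{\mathfrak{n}}^{-\sigma} = n(G) - n(H) \;\le\; m(G)\,\operatorname{rank}_{\mathbb{R}} H
\]
on a finite list of exceptional symmetric pairs.

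First, I would enumerate all irreducible symmetric pairs $({\mathfrak{g}},{\mathfrak{h}})$ with ${\mathfrak{g}}_{\mathbb{C}}$ of type $E_6$, $E_7$, $E_8$, $F_4$, or $G_2$ whose associated pair $({\mathfrak{g}},{\mathfrak{g}}^{\sigma\theta})$ lies in the $K_{\varepsilon}$-family; these can be extracted by intersecting the classification of exceptional real symmetric pairs with the $K_{\varepsilon}$-list tabulated in [OS1]. The resulting list is short (roughly a dozen entries). For each such pair, the invariants $n(G)$, $n(H)$, $m(G)$, $\operatorname{rank}_{\mathbb{R}}H$ are determined by the restricted root system of $({\mathfrak{g}}, {\mathfrak{a}}_G)$ together with the multiplicities $m^{\pm}(\lambda)=\dim {\mathfrak{g}}^{\pm\sigma}({\mathfrak{a}}_G;\lambda)$, both of which are recorded in [OS, Table~V]. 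In particular $\dim{\mathfrak{n}}^{-\sigma} = \sum_{\alpha\in\Sigma^{+}({\mathfrak{g}},{\mathfrak{a}}_G)} m^{-}(\alpha)$ and $n(G)-n(H) = \dim{\mathfrak{n}}^{-\sigma}$.

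Next, I would tabulate these four numbers in a single table and evaluate $n(G)-n(H)-m(G)\operatorname{rank}_{\mathbb{R}}H$ row by row. For the three pairs in the conclusion the inequality is verified directly; for instance, for $({\mathfrak{f}}_{4(-20)}, {\mathfrak{so}}(8,1))$ one has $\operatorname{rank}_{\mathbb{R}}H=1$, $m(G)=8$, and $\dim{\mathfrak{n}}^{-\sigma}=8$, so the inequality is saturated, and similarly for $({\mathfrak{e}}_{6(-26)}, {\mathfrak{so}}(9,1)+{\mathbb{R}})$ and $({\mathfrak{e}}_{6(-14)}, {\mathfrak{so}}(8,2)+\sqrt{-1}\,{\mathbb{R}})$ one reads off the requisite data from the rank-$2$ Satake diagrams. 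For every remaining exceptional $K_{\varepsilon}$-pair, the ratio $\dim{\mathfrak{n}}^{-\sigma}/\operatorname{rank}_{\mathbb{R}}H$ exceeds $m(G)$: this is driven by the fact that the ambient positive system $\Sigma^{+}({\mathfrak{g}},{\mathfrak{a}}_G)$ is large (of size given by $c(\Delta)$ in Table~\ref{tab:cDelta}), while the $+\sigma$ part that gets subtracted off is too small to bring the residual sum below $m(G)\operatorname{rank}_{\mathbb{R}}H$.

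The main obstacle is not conceptual but purely taxonomic. One must carefully extract the finite sublist of exceptional $K_{\varepsilon}$-pairs from the Oshima--Sekiguchi tables, taking into account that the roles of $\sigma$ and $\sigma\theta$ are interchanged relative to our conventions (as already flagged in the proof of Proposition~\ref{prop:nonKe}), and correctly read off $m^{-}(\lambda)$ for each restricted root of each pair. Once that table is laid out, the verification of the inequality is a line-by-line arithmetic check, and the three survivors are exactly those listed.
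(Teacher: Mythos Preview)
Your proposal is correct and follows essentially the same approach as the paper: enumerate the finite list of exceptional symmetric pairs with associated pair in the $K_{\varepsilon}$-family, tabulate $n(G)$, $n(H)$, $m(G)$, and $\operatorname{rank}_{\mathbb{R}}H$ for each, and check the inequality row by row (the paper does exactly this in Table~\ref{tab:7.1}). One small slip: the parenthetical remark that $|\Sigma^{+}({\mathfrak{g}},{\mathfrak{a}}_G)|$ is given by $c(\Delta)$ from Table~\ref{tab:cDelta} is incorrect---$c(\Delta)$ is the minimum nonzero Weyl-orbit size, not the number of positive roots---but this heuristic aside plays no role in the actual argument.
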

\begin{proof}
In Table \ref{tab:7.1}, 
 we list all the symmetric pairs
 $({\mathfrak {g}}, {\mathfrak {h}})
\equiv({\mathfrak {g}}, {\mathfrak {g}}^{\sigma})$
 such that ${\mathfrak {g}}_{\mathbb{C}}$
 is an exceptional simple Lie algebra
 and that $({\mathfrak {g}}, {\mathfrak {g}}^{\sigma\theta})$
 belongs to the $K_{\varepsilon}$-family.  
In this table, 
we also list the data $m(G)$
 (see \eqref{eqn:mG});
 $n(G)$, $n(H)$
 (see \eqref{eqn:nG}), 
 and 
 $\operatorname{rank}_{\mathbb{R}}G
(=
  \operatorname{rank}_{\mathbb{R}}H$).  
Now Lemma \ref{lem:Kex} follows from 
 the computation of the signature of $n(G)-n(H)-m(G) {\operatorname{rank}}_{\mathbb{R}}H$.  
\end{proof}

\begin{table}[H]
\caption{Exceptional symmetric pairs
 $({\mathfrak {g}}, {\mathfrak {h}})
\equiv({\mathfrak {g}}, {\mathfrak {g}}^{\sigma})$
 with $({\mathfrak {g}}, {\mathfrak {g}}^{\sigma\theta})$
in $K_{\varepsilon}$-family}
\label{tab:7.1}
\begin{tabular}{|c|c|c|c|c|c|c|}
\hline
$G$ 
& $\operatorname{rank}_{\mathbb{R}} G$ 
& $m(G)$ 
& $n(G)$ 
& $H$ 
& $n(H)$
& $m(G) \operatorname{rank}_{\mathbb{R}} G$ v.s. $n(G)-n(H)$ 
\\
\hline
&        
&      
&      
& ${\mathfrak {sl}}(6,{\mathbb{R}})+{\mathfrak {sl}}(2,{\mathbb{R}})$
& 16  
& $6<20$     
\\
\cline{5-5}\cline{6-6}\cline{7-7}
\raisebox{1.5ex}[0cm][0cm]{${\mathfrak{e}}_{6(6)}$} 
&\raisebox{1.5ex}[0cm][0cm]{6}   
&\raisebox{1.5ex}[0cm][0cm]{1}   
&\raisebox{1.5ex}[0cm][0cm]{36}  
&{${\mathfrak{so}}(5,5)+{\mathbb{R}}$}
&{20}  
&{$6<16$}               
\\
\hline
&        
&      
&      
& ${\mathfrak{so}}(6,4)+\sqrt{-1}{\mathbb {R}}$ 
&  20  
&    $8<16$       
\\
\cline{5-5}\cline{6-6}\cline{7-7}
\raisebox{1.5ex}[0cm][0cm]{${\mathfrak{e}}_{6(2)}$} 
&    \raisebox{1.5ex}[0cm][0cm]{4}   
&  \raisebox{1.5ex}[0cm][0cm]{2}   
&  \raisebox{1.5ex}[0cm][0cm]{36}  
& ${\mathfrak{su}}(3,3)+{\mathfrak {sl}}(2,{\mathbb{R}})$ 
&  16  
&    $8<20$                
\\
\hline
&        
&      
&      
& ${\mathfrak{su}}(5,1)+{\mathfrak {sl}}(2,{\mathbb{R}})$ 
&  10  
&    $16<20$                 
\\
\cline{5-5}\cline{6-6}\cline{7-7}
\raisebox{1.5ex}[0cm][0cm]{${\mathfrak{e}}_{6(-14)}$} 
&  \raisebox{1.5ex}[0cm][0cm]{2}   
&  \raisebox{1.5ex}[0cm][0cm]{8}   
&  \raisebox{1.5ex}[0cm][0cm]{30}      
& ${\mathfrak{so}}(8,2)+\sqrt{-1}{\mathbb {R}}$ 
&  14  
&    $16=16$                
\\
\hline
${\mathfrak{e}}_{6(-26)}$ 
&    2   
&  8   
&  24  
& ${\mathfrak{so}}(9,1)+{\mathbb{R}}$ 
&   8  
&     $16>8$             
\\
\hline
&        
&      
&      
& ${\mathfrak {sl}}(8,{\mathbb{R}})$ 
&  28 
&   $7< 35$              
\\
\cline{5-5}\cline{6-6}\cline{7-7}
${\mathfrak{e}}_{7(7)}$ 
&    7   
&  1   
&  63  
& ${\mathfrak{so}}(6,6)+{\mathfrak {sl}}(2,{\mathbb{R}})$ 
&  31  
&   $7< 32$                 
\\
\cline{5-5}\cline{6-6}\cline{7-7}  
&        
&      
&      
& ${\mathfrak{e}}_{6(6)}+{\mathbb{R}}$ 
&  36  
&    $7<27$               
\\
\hline
&        
&      
&      
& ${\mathfrak{so}}(8,4)+{\mathfrak {su}}(2)$ 
&  28  
&    $16<32$               
\\
\cline{5-5}\cline{6-6}\cline{7-7}
\raisebox{1.5ex}[0cm][0cm]{${\mathfrak{e}}_{7(-5)}$} 
&  \raisebox{1.5ex}[0cm][0cm]{4}   
&  \raisebox{1.5ex}[0cm][0cm]{4}   
&  \raisebox{1.5ex}[0cm][0cm]{60}        
& ${\mathfrak{so}}^{\ast}(12)+{\mathfrak {sl}}(2,{\mathbb{R}})$ 
&  28  
&    $16<32$                
\\
\hline
&        
&      
&      
& ${\mathfrak{e}}_{6(-26)}+{\mathbb{R}}$ 
&  24  
&    $24<27$     
\\
\cline{5-5}\cline{6-6}\cline{7-7}
\raisebox{1.5ex}[0cm][0cm]{${\mathfrak{e}}_{7(-25)}$} 
&    \raisebox{1.5ex}[0cm][0cm]{3}   
&  \raisebox{1.5ex}[0cm][0cm]{8}   
&  \raisebox{1.5ex}[0cm][0cm]{51}      
& ${\mathfrak{so}}(10,2)+{\mathfrak {sl}}(2,{\mathbb{R}})$ 
&  19  
&    $24<32$            
\\
\hline
&        
&      
&      
& ${\mathfrak{so}}(8,8)$ 
&  56  
&  $8<  64$               
\\
\cline{5-5}\cline{6-6}\cline{7-7}
\raisebox{1.5ex}[0cm][0cm]{${\mathfrak{e}}_{8(8)}$} 
&    \raisebox{1.5ex}[0cm][0cm]{8}   
&  \raisebox{1.5ex}[0cm][0cm]{1}   
& \raisebox{1.5ex}[0cm][0cm]{120}  
& ${\mathfrak{e}}_{7(7)}+{\mathfrak{sl}}(2,{\mathbb{R}})$ 
&  64  
&    $8<56$                
\\
\hline
&        
&      
&      
& ${\mathfrak{so}}(12,4)$ 
&  44  
&    $32<64$               
\\
\cline{5-5}\cline{6-6}\cline{7-7}
\raisebox{1.5ex}[0cm][0cm]{${\mathfrak{e}}_{8(-24)}$} 
&    \raisebox{1.5ex}[0cm][0cm]{4}   
&  \raisebox{1.5ex}[0cm][0cm]{8}   
& \raisebox{1.5ex}[0cm][0cm]{108}      
& ${\mathfrak{e}}_{7(-25)}+{\mathfrak{sl}}(2,{\mathbb{R}})$ 
&  52  
&    $32<56$               
\\
\hline
&        
&      
&      
& ${\mathfrak{so}}(5,4)$ 
&  16  
&    $4< 8$              
\\
\cline{5-5}\cline{6-6}\cline{7-7}
\raisebox{1.5ex}[0cm][0cm]{${\mathfrak{f}}_{4(4)}$} 
&    \raisebox{1.5ex}[0cm][0cm]{4}   
&  \raisebox{1.5ex}[0cm][0cm]{1}   
&  \raisebox{1.5ex}[0cm][0cm]{24}  
& ${\mathfrak{sp}}(3,{\mathbb{R}})+{\mathfrak{sl}}(2,{\mathbb{R}})$ 
&  10  
&   $4< 14$               
\\
\hline
${\mathfrak{f}}_{4(-20)}$ 
&    1   
&  8   
&  15  
& ${\mathfrak{so}}(8,1)$ 
&   7 
&     $8=8$              
\\
\hline
${\mathfrak{g}}_{2(2)}$ 
&    2   
&  1   
&   6  
& ${\mathfrak{sl}}(2,{\mathbb{R}})+{\mathfrak{sl}}(2,{\mathbb{R}})$ 
&   2  
&  $2<4$              
\\
\hline
\end{tabular}\\
\end{table}

\section{Applications to branching problems}
\label{sec:fm}
This section is devoted
 to applications
 of our classification results
 (Theorem \ref{thm:1.1} and Proposition \ref{prop:B})
 to branching problems
 of real reductive groups.  
Given an irreducible representation $\pi$ of $G$, 
 we wish to understand
 how the representation $\pi$ behaves
 as a representation of a subgroup $H$
 ({\it{branching problems}}).  
Basic quantities are the dimension 
 of continuous $H$-homomorphisms
\[
  m(\pi, \tau):=\dim \operatorname{Hom}_H(\pi|_H, \tau), 
\] 
 for irreducible representations $\tau$ of $H$.  
Concrete analysis 
 of the restriction $\pi|_H$
 could be reasonably developed
 under the condition
 that $m(\pi, \tau)< \infty$.  
However,
 the finiteness
 of the multiplicities 
 does not always hold
 even when $H$ is a maximal subgroup
 of $G$
 (see \cite{Kb2, xkeastwood60}
 for good behaviors
 and bad behaviors
 of the restriction $\pi|_H$).  
The initial motivation 
of our work 
 is to single out
 a nice framework 
 on the pair $(G,H)$ of reductive groups 
 for which we could expect
 that the branching laws
 $\pi|_H$ behave reasonably
 for {\it{arbitrary}} irreducible representations $\pi$.

\subsection{Admissible smooth representations}
\label{subsec:adm}
We begin with a quick review of some basic notion 
 of (infinite-dimensional) continuous representations
 of real reductive groups.

Suppose $G$ is a real reductive linear Lie group 
 (or its finite cover)
 and $K$ is a maximal compact subgroup.

Let $\pi$ be a continuous representation
 of $G$
 on a complete, locally convex vector space
 ${\mathcal{H}}$.  
The space ${\mathcal{H}}^{\infty}$ of 
 $C^{\infty}$-vectors of $(\pi,{\mathcal{H}})$
 is naturally endowed
 with Fr{\'e}chet topology,
 and we obtain a continuous representation 
 $\pi^{\infty}$ of $G$
 on ${\mathcal{H}}^{\infty}$.

Suppose 
 that $(\pi, {\mathcal{H}})$ is of finite length,
 in other words,
 suppose that there are only finitely many closed invariant subspaces
 in ${\mathcal{H}}$.  
We say $\pi$ is {\it{admissible}}
 (or $K$-{\it{admissible}})
 if 
\[
   \dim \operatorname{Hom}_K(\tau, \pi|_K)< \infty
\]
 for any irreducible finite-dimensional representation
 $\tau$ of $K$.  
For an admissible representation $(\pi, {\mathcal{H}})$
 such that ${\mathcal{H}}$ is a Banach space, 
 we say $(\pi^{\infty}, {\mathcal{H}}^{\infty})$
 is an {\it{admissible smooth representation}}.  
By the Casselman--Wallach globalization theory, 
there is a canonical equivalence of categories
 between the category of $({\mathfrak {g}}, K)$-modules
 of finite length 
 and the category of admissible smooth representations 
 of $G$.  
An admissible smooth representation is
 sometimes referred to as a smooth Fr{\'e}chet representation
 of moderate growth (\cite[Chapter 11]{WaI}).  
An irreducible admissible smooth representation
 of $G$ is said to be an {\it{irreducible smooth representation}}
 for simplicity 
 throughout this article.

\subsection{Finite multiplicity 
 property in branching laws}
\label{subsec:fm}

Suppose that $G$ is a real reductive linear Lie group
 and $H$ is a reductive subgroup
 defined algebraically 
 over ${\mathbb{R}}$.  
In what follows, 
the results remain true
 if we replace $(G,H)$ by their finite coverings
 or by their finite-index subgroups.  
Following the terminology in \cite{xtoshitoshima},
 we formulate a finite-multiplicity property
 on the pair $(G,H)$
 for the restriction of admissible representations:
\begin{enumerate}
\item[({\textbf{FM}})]
(Finite-multiplicity property)
\enspace
$\dim \operatorname{Hom}_H(\pi|_H, \tau)<\infty$, 
for any admissible smooth representation $\pi$ 
 of $G$
 and for any admissible smooth
representation $\tau$
 of $H$.     
\end{enumerate}

Here $\operatorname{Hom}_H(\, ,\,)$ denotes
 the space of continuous $H$-homomorphisms.  

As a direct consequence
 of Theorem \ref{thm:1.1} and Fact \ref{fact:1.4} , 
 we obtain a complete classification of the reductive symmetric pairs 
 $(G,H)$
 having the finite-multiplicity property {\rm{(FM)}}.  

\begin{thm}
\label{thm:fm}
Suppose $(G,H)$ is a reductive symmetric pair.  
Then the following two conditions
 are equivalent:
\begin{enumerate}
\item[{\rm{(i)}}]
$(G,H)$ satisfies the finite-multiplicity property {\rm{(FM)}}
 for the restriction
 of admissible smooth representations.  
\item[{\rm{(ii)}}]
The pair $({\mathfrak {g}}, {\mathfrak {h}})$ of Lie algebras
 is isomorphic to the direct sum
 of the pairs {\rm{(A)}}--{\rm{(H)}} in Theorem \ref{thm:1.1}
 up to outer automorphisms.  
\end{enumerate}
\end{thm}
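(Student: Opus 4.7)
The plan is to derive Theorem \ref{thm:fm} as an immediate corollary of two results already available in the excerpt: Fact \ref{fact:1.4}(1), which equates the representation-theoretic condition (FM) with the geometric condition (PP), and Theorem \ref{thm:1.1}, which explicitly classifies the reductive symmetric pairs $(G,H)$ satisfying (PP). Because a reductive symmetric pair automatically fulfills the standing hypotheses of Fact \ref{fact:1.4} (namely, $G$ is a real reductive Lie group and $H = G^\sigma$ is reductive and defined algebraically over $\mathbb{R}$), no extra verification is required before applying it.

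The argument would proceed in two steps. First, I would apply Fact \ref{fact:1.4}(1) to obtain the equivalence
\[
  \text{(FM) holds for $(G,H)$} \iff \text{$(G,H)$ satisfies (PP)}.
\]
Second, I would invoke Theorem \ref{thm:1.1}, which states that (PP) holds for a reductive symmetric pair $(G,H)$ if and only if $(\mathfrak{g},\mathfrak{h})$ is isomorphic, up to outer automorphisms, to the direct sum of the pairs (A)--(H). Chaining the two equivalences yields (i) $\Leftrightarrow$ (ii) and completes the proof.

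The only mild point worth flagging is the reduction between $(G,H)$ and its Lie algebra pair $(\mathfrak{g},\mathfrak{h})$: the property (PP) is invariant under passing to coverings, connected components, and quotients by central subgroups, so it depends only on $(\mathfrak{g},\mathfrak{h})$, and the same is true of (FM) (since admissibility and the Hom-space dimension are unchanged under such modifications, up to finite index). This invariance makes the matching of (i) with the Lie-algebraic condition (ii) legitimate.

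There is no genuine obstacle in this proof; the substantive content resides in the two cited results. Fact \ref{fact:1.4} is proved in \cite{xtoshitoshima} by relating the finiteness of $\dim\operatorname{Hom}_H(\pi|_H,\tau)$ to the real sphericity of $(G \times H)/\operatorname{diag} H$ through holonomic systems of differential equations on the real flag variety, while Theorem \ref{thm:1.1} is established in Sections \ref{sec:method}--\ref{sec:Ke} of this paper via the linearization criterion (Theorems \ref{thm:pp}, \ref{thm:qp}) together with a case-by-case analysis. Thus Theorem \ref{thm:fm} should be presented as a brief two-line synthesis rather than a new theorem requiring separate argument.
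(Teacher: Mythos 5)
Your proposal is correct and follows exactly the route the paper takes: the paper states Theorem \ref{thm:fm} as a direct consequence of Fact \ref{fact:1.4}(1) (equating (FM) with (PP)) combined with the classification in Theorem \ref{thm:1.1}. Your added remark on the Lie-algebra-level invariance of both (PP) and (FM) is consistent with what the paper observes earlier and does not change the argument.
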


\begin{remark}
\label{rem:fm}
{\rm{
Here are some features
 of the implication (ii) $\Rightarrow$ (i)
 in Theorem \ref{thm:fm}
 for the following special settings
 among (A)--(H):
\begin{enumerate}
\item[1)]
For the pairs (B) and (C), 
 the finite-multiplicity property (FM)
 is obvious 
 because $\pi$ is a finite-dimensional representation.  
\item[2)]
For the pairs (D)
 ({\it{i.e.}} $H=K$), 
 the finite-multiplicity property (FM)
 is trivial by the definition
 of admissible representations.  
(However,
 there are a number of equivalent conditions
 on admissibility,
 and the proof of Fact \ref{fact:1.4}
 given in \cite{xtoshitoshima}
 is not a tautology
 for $H=K$ but includes a microlocal proof
 of the classical fact 
that quasisimple irreducible representations
 are admissible,
 which was first proved by Harish-Chandra
 \cite{HC}.)
\item[3)]
For the pairs (F), 
 we have a uniform estimate of the multiplicities,
 as we shall see in Subsection \ref{subsec:BM}.  
\item[4)]
For the pairs (G), 
{\it{i.e.}}, 
 $(G,H)=(G' \times G', \operatorname{diag}G')$, 
 the finite-multiplicity property (FM)
 can be interpreted
 as the finiteness of linearly independent invariant trilinear forms, 
 see Subsection \ref{subsec:group}.
\end{enumerate}
}}
\end{remark}

\begin{remark}
\label{rem:fm2}
{\rm{
The property (FM) is a condition 
 on the pair $(G,H)$ 
 of groups 
 that assures the finiteness
 of the multiplicity $m(\pi, \tau)$
 for {\it{arbitrary}}
 $\pi$ and $\tau$.  
On the other hand,
 we may discuss 
 a condition 
 on the triple $(G,H, \pi)$
 for which $m(\pi, \tau)$ is finite
 for arbitrary $\tau$.  
This direction was 
 pursued in \cite{Kb2}
 under the additional assumption 
 of discrete decomposability 
 of branching laws
 (referred to as {\it{$H$-admissible restriction}}),
 and the classification theory 
 has been recently studied in 
 \cite{decoAq,xtoshiyoshima}, 
 particularly
 for \lq\lq{relatively small}\rq\rq\
infinite-dimensional representations
 $\pi$ of $G$
 ({\it{e.g.}},
 Zuckerman's derived functor modules,
 minimal representations, 
 {\it{etc.}}).  
}}
\end{remark}
\vskip 1pc
\subsection{Uniformly bounded multiplicities}
\label{subsec:BM}
In addition to the aforementioned finite-multiplicity property 
 {\bf{(FM)}}, 
 we consider the following two properties
 on a pair of reductive groups $(G,H)$:

\begin{enumerate}
\item[{{\bf{(BM)}}}]
({\it{Bounded-multiplicity restriction}})
\enspace
There exists a constant $C< \infty$
 such that 
\[
   \dim \operatorname{Hom}_H(\pi|_H, \tau)
\le C,
\]
for any irreducible smooth representation $\pi$ 
 of $G$
 and for any irreducible smooth representation $\tau$
 of $H$.
\item[{\bf{(MF)}}]
({\it{Multiplicity-free restriction}})\enspace
\[
   \dim \operatorname{Hom}_H  (\pi|_H, \tau) \le 1
\]
 for any irreducible smooth representation $\pi$ of $G$
 and for any irreducible smooth representation $\tau$ of $H$.  
\end{enumerate}
\vskip 1pc
Clearly, 
 we have
$
\text{(MF) $\Rightarrow$ (BM) $\Rightarrow$ (FM)}.  
$
Fact \ref{fact:1.4} is summarized by the following equivalences
 in the vertical direction:
\begin{alignat*}{6}
& \text{(MF)}
&& \quad\,\,\Rightarrow
&& \text{(BM)}
&& \Rightarrow
&& \text{(FM)}
&& \cdots \text{Representation Theory}
\\
&
&& {\text{\small{\cite[Theorem D]{xtoshitoshima}}}}
&& \,\,\Updownarrow
&& 
&& \,\,\Updownarrow {\text{\small{\cite[Theorem C]{xtoshitoshima}}}}
&& 
\\
&
&& 
&& \text{(BB)}
&& \Rightarrow
&& \text{(PP)}
&& \cdots \text{Geometry of flag varieties}
\\
\end{alignat*}

We note 
 that the properties (FM) and (BM)
 depend only on the Lie algebras
 $({\mathfrak {g}}, {\mathfrak {h}})$.  
Moreover,
 the bounded-multiplicity property (BM)
 depends only on the complexified Lie algebra
 $({\mathfrak {g}}_{\mathbb{C}}, {\mathfrak {h}}_{\mathbb{C}})$, 
 as was proved in \cite{xtoshitoshima}.  
On the other hand, 
 the multiplicity-free property (MF)
 is not determined
 by the pair of Lie algebras $({\mathfrak {g}}, {\mathfrak {h}})$,
 but depends on the groups $G$ and $H$.  
For example, 
 the best constant $C=2$
 if $(G,H)=(SL(2,{\mathbb{R}}), SO(1,1))$
 and $C=1$
 if $(G',H')=(O(2,1), O(1,1))$
 although the Lie algebras $({\mathfrak {g}}, {\mathfrak {h}})$
 and $({\mathfrak {g}}', {\mathfrak {h}}')$
 are isomorphic to each other.

As a corollary of Fact \ref{fact:1.4}
 and Proposition \ref{prop:cpx}, 
 we have a classification of symmetric pairs
 $({\mathfrak {g}}, {\mathfrak {h}})$
 satisfying the property (BM):
\begin{cor}
\label{cor:B}
Suppose $({\mathfrak {g}}, {\mathfrak {h}})$ is a 
 real reductive symmetric pair.  
Then the following three conditions
 are equivalent:
\begin{enumerate}
\item[{\rm{(i)}}]
For any real reductive Lie groups $G \supset H$
 with Lie algebras ${\mathfrak {g}} \supset {\mathfrak {h}}$, 
 respectively,
 the pair $(G,H)$ satisfies
 the bounded multiplicity property
 {\rm{(BM)}} for restriction.  

\item[{\rm{(ii)}}]
There exists a pair of 
 {\rm{(}}possibly disconnected{\rm{)}}
 real reductive Lie groups
 $G \supset H$
 such that 
 $(G,H)$ satisfies the multiplicity-free property
 {\rm{(MF)}} for restriction.  
\item[{\rm{(iii)}}]
The pair of the Lie algebras 
 $({\mathfrak{g}},{\mathfrak{h}})$
 is isomorphic 
 {\rm{(}}up to outer automorphisms{\rm{)}}
 to the direct sum of pairs {\rm{(A)}}, {\rm{(B)}}
 and {\rm{(F1)}} -- {\rm{(F5)}}.  
\end{enumerate}
\end{cor}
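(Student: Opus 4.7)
The plan is to prove the three-way equivalence by the chain (ii) $\Rightarrow$ (i) $\Rightarrow$ (iii) $\Rightarrow$ (ii); the first two arrows are essentially formal consequences of results already established in the paper, while the third requires input from external multiplicity-one theorems.

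First I would establish (i) $\Leftrightarrow$ (iii) by combining Fact \ref{fact:1.4}(2) with Proposition \ref{prop:B}. Fact \ref{fact:1.4}(2) identifies (BM) with (BB); by construction, (BB) depends only on the complexified pair $({\mathfrak{g}}_{\mathbb{C}}, {\mathfrak{h}}_{\mathbb{C}})$ and is therefore independent of which real reductive groups $G \supset H$ realize the Lie algebra pair. Hence (i) is just the assertion that $({\mathfrak{g}},{\mathfrak{h}})$ satisfies (BB), and Proposition \ref{prop:B} identifies this with the list in (iii). Next, (ii) $\Rightarrow$ (i) is automatic, since (MF) is the $C=1$ specialisation of (BM): if any single pair of groups realizing $({\mathfrak{g}},{\mathfrak{h}})$ satisfies (MF), then it satisfies (BM), forcing (BB) at the Lie algebra level, hence (BM) for \emph{every} pair realizing $({\mathfrak{g}},{\mathfrak{h}})$.

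The substantive implication is (iii) $\Rightarrow$ (ii): for each summand appearing in (iii), I must exhibit a concrete choice of real reductive groups $G \supset H$ for which $\dim \operatorname{Hom}_H(\pi|_H, \tau) \le 1$ holds for all irreducible smooth $\pi$ and $\tau$. The trivial summands (A) and (B) pose no difficulty. For (F1)--(F5), the natural candidates are the disconnected or reductive realizations
\[
(GL(n+1,{\mathbb{F}}),\, GL(n,{\mathbb{F}}))\text{ for }{\mathbb{F}}={\mathbb{R}},{\mathbb{C}},\qquad (U(p+1,q),\, U(p,q)),\qquad (O(p+1,q),\, O(p,q)),
\]
since, as the paper itself observes, the sharp constant $C$ depends genuinely on the groups and not merely on the Lie algebras — passing from $SL$ to $GL$, from $SU$ to $U$, or from $SO$ to $O$, can reduce the best constant from some $C>1$ down to $C=1$, as witnessed by the $SL(2,{\mathbb{R}})$ versus $O(2,1)$ example.

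The main obstacle will be establishing the $C=1$ bound for these concrete pairs, which lies well outside the geometric methods of this paper. I would cite the archimedean multiplicity-one theorems of Aizenbud--Gourevitch--Rallis--Schiffmann for the general linear cases (F1) and (F3), and of Sun--Zhu for the orthogonal and unitary cases (F2), (F4), and (F5). These results, which extend the classical strong-Gelfand property from finite-dimensional representations of compact groups to arbitrary admissible smooth representations of the indicated real reductive groups, furnish the required $C=1$ bounds and so complete the implication (iii) $\Rightarrow$ (ii).
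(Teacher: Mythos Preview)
Your proposal is correct and follows essentially the same route as the paper: (i) $\Leftrightarrow$ (iii) via Fact~\ref{fact:1.4}(2) combined with Proposition~\ref{prop:B}, (ii) $\Rightarrow$ (i) as trivial, and (iii) $\Rightarrow$ (ii) by citing the archimedean multiplicity-one theorems. Your exposition is somewhat more explicit than the paper's (in particular you spell out why the quantifier mismatch between (i) and (ii) is harmless, and you mention Aizenbud--Gourevitch--Rallis--Schiffmann for the $GL$ cases in addition to Sun--Zhu), but the strategy is the same.
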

The implication (ii) $\Rightarrow$ (i) is obvious
 as mentioned.  
The equivalence (i) $\Leftrightarrow$ (iii)
is given in \cite[Theorem D]{xtoshitoshima}.  
The implication (iii) $\Rightarrow$ (ii) was proved 
 in Sun--Zhu \cite{SZ}.  
(Thus there are two different proofs
 for the implication (iii) $\Rightarrow$ (i).)
As a more refined form of the implication 
 (iii) $\Rightarrow$ (ii), 
Gross and Prasad \cite{GP}
 formulated a conjecture
 about the restriction
 of an irreducible admissible tempered representation
 of an inner form $G$
 of the group $O(n)$
 over a local field 
 to a subgroup which is an inner form $O(n-1)$
 ({\it{cf.}} \enspace
 (F2) and (F4) for the Archimedean field).  

\begin{exmp}
\label{ex:sbon}
{\rm{
For the pair $(G,H)
 =(O(n+1,1), O(n,1))$, 
 the space $\operatorname{Hom}_H(\pi|_H, \tau)$
 of continuous $H$-homomorphisms
 was classified in \cite{xtsbon}
 for all spherical principal series representations
 $\pi$ and $\tau$
 of $G$ and $H$, 
 respectively.  
This corresponds to a special case
 of (F5) in Corollary \ref{cor:B}.  
The classification was
 based on the explicit orbit decomposition
\cite[Chapter 5]{xtsbon}
\[
   G \backslash (G \times G)/(P_G \times P_G)
 \simeq 
   P_G \backslash G / P_G, 
\]
and a meromorphic family of $H$-intertwining operators
 were constructed for each orbit.  
}}
\end{exmp}

\subsection{Invariant trilinear forms}
\label{subsec:group}
A special case
 of a symmetric pair
 is the group case
\[
   (G,H)=(G' \times G', {\operatorname{diag}} G'), 
\]
 for which the branching problem deals
 with the decomposition of the tensor product
 of two irreducible representations of the group $G'$.  

Furthermore,
 the pair $(G' \times G', \operatorname{diag}G')$
 satisfies {\rm{(PP)}}
 if and only if 
 the homogeneous space
 $(G' \times G' \times G')/\operatorname{diag} G'$
 is a real spherical variety
 in view of the following isomorphism:
\begin{equation*}
(P_{G'}\times P_{G'}\times P_{G'})\backslash(G' \times G' \times G')/
\operatorname{diag}G'
\simeq 
(P_{G'} \times P_{G'})\backslash(G' \times G')/ P_{G'}.   
\end{equation*}

By these observations,
 we can interpret Theorem \ref{thm:fm}
 and Corollary \ref{cor:B}
 in the following form
 (cf. \cite{xtoshi95}):

\begin{cor}
\label{cor:1.2-copy}
Suppose $G$ is a simple Lie group.  
Then the following three conditions on $G$ are equivalent:
\begin{enumerate}
\item[{\rm{(i)}}]
For any triple of admissible smooth representations $\pi_1$, $\pi_2$, 
 and $\pi_3$ of $G$, 
\[
\dim \operatorname{Hom}_G(\pi_1 \otimes \pi_2, 
\pi_3)< \infty.  
\]
\item[{\rm{(ii)}}]
For any triple of admissible smooth representations
 $\pi_1$, $\pi_2$ and $\pi_3$ of $G$, 
 the space of invariant trilinear forms
 is finite-dimensional:
\[
   \dim \operatorname{Hom}_G(\pi_1 \otimes \pi_2 \otimes \pi_3, {\mathbb{C}})<\infty.  
\]
\item[{\rm{(iii)}}]
Either $G$ is compact
 or ${\mathfrak {g}}$ is isomorphic to ${\mathfrak{o}}(n,1)$
 $(n \ge 2)$.  
\end{enumerate}
\end{cor}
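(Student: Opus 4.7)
The plan is to reduce this statement to Theorem \ref{thm:fm} applied to the reductive symmetric pair $(G\times G,\operatorname{diag} G)$, exactly as the introductory remark preceding the corollary suggests.

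First, I would verify the equivalence (i) $\Leftrightarrow$ (ii). By the tensor--hom adjunction
\[
\operatorname{Hom}_G(\pi_1\otimes\pi_2,\pi_3)\simeq \operatorname{Hom}_G(\pi_1\otimes\pi_2\otimes\pi_3^{\vee},\mathbb{C}),
\]
where $\pi_3^{\vee}$ denotes the smooth contragredient, together with the stability of the Casselman--Wallach category of admissible smooth representations under the smooth contragredient and the natural isomorphism $(\pi^{\vee})^{\vee}\simeq\pi$, the two finite-dimensionality assertions correspond to each other as $\pi_3$ ranges over all admissible smooth representations of $G$.

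Second, I would identify (i) with the finite-multiplicity property {\rm{(FM)}} for the pair $(G\times G,\operatorname{diag} G)$. Every irreducible admissible smooth representation of $G\times G$ is an outer tensor product $\pi_1\boxtimes\pi_2$ of irreducible admissible smooth representations of $G$, and its restriction to the diagonal subgroup is precisely the inner tensor product $\pi_1\otimes\pi_2$. A Jordan--H\"older argument in the Casselman--Wallach category then propagates the finite-multiplicity statement from irreducibles to arbitrary admissible smooth representations of finite length, so that (i) coincides with {\rm{(FM)}} for $(G\times G,\operatorname{diag} G)$.

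Third, I would invoke Theorem \ref{thm:fm}: the pair $(G\times G,\operatorname{diag} G)$ satisfies {\rm{(FM)}} if and only if $(\mathfrak{g}+\mathfrak{g},\operatorname{diag}\mathfrak{g})$ is, up to outer automorphisms, a direct sum of the pairs (A)--(H) of Theorem \ref{thm:1.1}. Since $\mathfrak{g}$ is assumed simple, $(\mathfrak{g}+\mathfrak{g},\operatorname{diag}\mathfrak{g})$ is itself an irreducible symmetric pair of group type, and the classification forces it to belong to (G1) or (G2); equivalently, $G$ is compact or $\mathfrak{g}\simeq\mathfrak{o}(n,1)$ with $n\ge 2$, which is (iii). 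The main obstacle is the representation-theoretic bridge in the second step: one must justify carefully that the abstract property {\rm{(FM)}} for admissible smooth representations of $G\times G$ is captured by the assertion on outer tensor products of irreducibles, and that the tensor--hom isomorphism in the first step is valid within the appropriate topological category of smooth representations of moderate growth. The geometric content has already been fully disposed of by Theorem \ref{thm:1.1}, so no further case analysis is required.
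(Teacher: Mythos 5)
Your proposal is correct and takes essentially the same route as the paper: reduce to the group case $(G\times G,\operatorname{diag}G)$, apply Theorem~\ref{thm:fm}, and observe that an irreducible symmetric pair of group type with $\mathfrak g$ simple must fall under (G1) or (G2). The paper states this transition only implicitly (``By these observations, we can interpret Theorem~\ref{thm:fm}...'', together with Remark~\ref{rem:fm}(4)), so your explicit treatment of the tensor--hom adjunction in the Casselman--Wallach category and the Jordan--H\"older reduction to outer tensor products of irreducibles is a faithful filling-in of the same argument rather than a new one.
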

\vskip 1pc
\begin{cor}
Suppose $G$ is a simple Lie group.  
Then the following three conditions on $G$
 are equivalent:
\begin{enumerate}
\item[{\rm{(i)}}]
There exists a constant $C< \infty$
such that 
\[
\dim \operatorname{Hom}_G(\pi_1 \otimes \pi_2, \pi_3) \le C,   
\]
 for any irreducible smooth representations
 $\pi_1$, $\pi_2$, and 
 $\pi_3$ of $G$.  
\item[{\rm{(ii)}}]
There exists a constant $C< \infty$
such that 
\[
\dim \operatorname{Hom}_G(\pi_1 \otimes \pi_2 \otimes \pi_3, {\mathbb{C}})
\le C,   
\]
 for any irreducible smooth representations
 $\pi_1$, $\pi_2$, and 
 $\pi_3$ of $G$.  
\item[{\rm{(iii)}}]
The Lie algebra ${\mathfrak {g}}$ is isomorphic
 to one of 
$
  {\mathfrak {su}}(2) \simeq {\mathfrak {o}}(3)
$, 
$
   {\mathfrak {su}}(1,1) \simeq {\mathfrak {sl}}(2,{\mathbb{R}})
   \simeq 
   {\mathfrak {o}}(2,1)
$
 or 
$
   {\mathfrak {sl}}(2,{\mathbb{C}})
   \simeq
   {\mathfrak {o}}(3,1)
$.  
\end{enumerate}
\end{cor}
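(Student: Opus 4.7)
The plan is to reduce the statement to a direct application of Fact \ref{fact:1.4}(2) (the bounded-multiplicity criterion) together with the classification in Proposition \ref{prop:B} (equivalently, Corollary \ref{cor:B}), specialized to the \emph{group case} of symmetric pairs.

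First I would prove (i) $\Leftrightarrow$ (ii) in a purely formal way. For an irreducible smooth representation $\pi_3$ the contragredient $\pi_3^{\vee}$ is again an irreducible smooth representation, and there is a canonical isomorphism
\[
\operatorname{Hom}_G(\pi_1 \otimes \pi_2, \pi_3)
\;\simeq\;
\operatorname{Hom}_G(\pi_1 \otimes \pi_2 \otimes \pi_3^{\vee}, {\mathbb{C}}).
\]
Since $\pi_3 \mapsto \pi_3^{\vee}$ is a bijection on the set of irreducible smooth representations of $G$, an upper bound on the left-hand side over all triples is equivalent to an upper bound on the right-hand side over all triples.

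Next I would translate (i) into the branching problem for the reductive symmetric pair $(\widetilde G, \widetilde H) := (G \times G, \operatorname{diag} G)$. Indeed, every irreducible smooth representation of $G \times G$ is, up to a natural completion, of the form $\pi_1 \boxtimes \pi_2$ with $\pi_i$ irreducible smooth representations of $G$, and
\[
\operatorname{Hom}_{\widetilde H}\bigl((\pi_1 \boxtimes \pi_2)|_{\operatorname{diag} G}, \pi_3\bigr)
\;=\;
\operatorname{Hom}_G(\pi_1 \otimes \pi_2, \pi_3).
\]
Thus condition (i) is exactly the property {\rm{(BM)}} for the pair $(\widetilde G, \widetilde H)$. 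By Fact \ref{fact:1.4}(2) (and the equivalence discussed in Subsection \ref{subsec:BM}), this property is equivalent to {\rm{(BB)}} for the symmetric pair $(\widetilde{\mathfrak{g}}, \widetilde{\mathfrak{h}}) = ({\mathfrak{g}} \oplus {\mathfrak{g}}, \operatorname{diag}{\mathfrak{g}})$.

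Finally, I would invoke Proposition \ref{prop:B} (or Corollary \ref{cor:B}) to enumerate the simple Lie algebras ${\mathfrak{g}}$ for which the group-case symmetric pair $({\mathfrak{g}} \oplus {\mathfrak{g}}, \operatorname{diag}{\mathfrak{g}})$ appears in the list (A), (B), (F1)--(F5) up to outer automorphisms. Since ${\mathfrak{g}}$ is simple, (A) and (B) are excluded, and the pair $({\mathfrak{g}} \oplus {\mathfrak{g}}, \operatorname{diag}{\mathfrak{g}})$ is itself irreducible as a symmetric pair. Using the isomorphisms already recorded in the remark following Proposition \ref{prop:B},
\[
(\mathfrak{sl}(2,{\mathbb{R}}) \oplus \mathfrak{sl}(2,{\mathbb{R}}), \operatorname{diag}\mathfrak{sl}(2,{\mathbb{R}})) \approx (\mathfrak{o}(2,2), \mathfrak{o}(2,1)),
\]
\[
(\mathfrak{sl}(2,{\mathbb{C}}) \oplus \mathfrak{sl}(2,{\mathbb{C}}), \operatorname{diag}\mathfrak{sl}(2,{\mathbb{C}})) \simeq (\mathfrak{so}(4,{\mathbb{C}}), \mathfrak{so}(3,{\mathbb{C}})),
\]
one sees that ${\mathfrak{g}} = \mathfrak{sl}(2,{\mathbb{R}})$ falls into (F5), ${\mathfrak{g}} = \mathfrak{sl}(2,{\mathbb{C}})$ falls into (F2), and likewise ${\mathfrak{g}} = \mathfrak{su}(2)$ falls into (F2) via the complexification. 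The main step is to verify that these are the \emph{only} possibilities: for any other simple ${\mathfrak{g}}$, the complexification $({\mathfrak{g}}_{\mathbb{C}} \oplus {\mathfrak{g}}_{\mathbb{C}}, \operatorname{diag}{\mathfrak{g}}_{\mathbb{C}})$ is not a strong Gelfand pair by Proposition \ref{prop:cpx-1}, since $\operatorname{diag}{\mathfrak{g}}_{\mathbb{C}}$ is neither $\mathfrak{gl}(n,{\mathbb{C}})$ inside $\mathfrak{sl}(n+1,{\mathbb{C}})$ nor $\mathfrak{so}(n,{\mathbb{C}})$ inside $\mathfrak{so}(n+1,{\mathbb{C}})$ unless ${\mathfrak{g}}_{\mathbb{C}} \simeq \mathfrak{sl}(2,{\mathbb{C}})$. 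This rank/dimension bookkeeping, rather than any deep geometric argument, is where the real work lies, but it is short and is essentially already tabulated in the quoted classifications. Combining the three steps yields (i) $\Leftrightarrow$ (ii) $\Leftrightarrow$ (iii).
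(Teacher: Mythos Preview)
Your proposal is correct and follows exactly the route the paper intends: the corollary is simply the group-case specialization of Corollary~\ref{cor:B} (equivalently Proposition~\ref{prop:B}) via Fact~\ref{fact:1.4}(2), and the paper offers no argument beyond that. One small clean-up: your appeal to Proposition~\ref{prop:cpx-1} for the exhaustion step is not quite licit, since the ambient algebra $\mathfrak{g}_{\mathbb{C}}\oplus\mathfrak{g}_{\mathbb{C}}$ is not simple; but this is redundant anyway, as Proposition~\ref{prop:B} already gives the complete list, and a direct inspection shows that the only group-case pairs $(\mathfrak{g}'\oplus\mathfrak{g}',\operatorname{diag}\mathfrak{g}')$ occurring among (F1)--(F5) are those with $\mathfrak{g}'\simeq\mathfrak{su}(2)$, $\mathfrak{sl}(2,\mathbb{R})$, or $\mathfrak{sl}(2,\mathbb{C})$ (the compact case landing in (F5) as $(\mathfrak{o}(4),\mathfrak{o}(3))$, not (F2)).
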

Built on the nice properties
 in Corollary \ref{cor:1.2-copy}, 
 a meromorphic family 
 of invariant trilinear forms
 of principal series representations
 of the Lorentz group $O(n,1)$
 was studied in \cite{CKOP}.

\end{document}